\newtheorem*{rep@theorem}{\rep@title}
\newcommand{\newreptheorem}[2]{%
	\newenvironment{rep#1}[1]{%
		\def\rep@title{#2 \ref{##1}}%
		\begin{rep@theorem}}%
		{\end{rep@theorem}}}
\theoremstyle{plain}
\newtheorem{theorem}{Theorem}[section]
\newtheorem*{theorem*}{Theorem}
\newtheorem{lemma}[theorem]{Lemma}
\newtheorem{proposition}[theorem]{Proposition}
\newtheorem{corollary}[theorem]{Corollary}
\theoremstyle{definition}
\newtheorem{definition}[theorem]{Definition}
\newtheorem{example}[theorem]{Example}
\newtheorem{notation}[theorem]{Notation}
\theoremstyle{remark}
\newtheorem{remark}[theorem]{Remark}
\newtheorem{assumption}[theorem]{Assumption}
\numberwithin{equation}{section}
\newcommand{\NN}{\mathbb{N}} % Natural numbers
\newcommand{\ZZ}{\mathbb{Z}} % Integers
\newcommand{\MM}{\mathbb{M}} % Some monoid
\newcommand{\QQ}{\mathbb{Q}} % Rationals
\newcommand{\CC}{\mathbb{C}} % Complex numbers
\newcommand{\EE}{\mathbb{E}} % Symmetric spectra
\newcommand{\id}{\mathrm{id}} % Identity map
\newcommand{\im}{\mathrm{im}}
\newcommand{\et}{\mathrm{\acute{e}t}}
\newcommand{\LSym}{\mathrm{LSym}}
\newcommand{\Aut}{\mathrm{Aut}}
\newcommand{\fix}{\mathrm{fix}}
\newcommand{\tv}{\mathrm{tv}}
\newcommand{\QAlg}{\mathrm{QAlg}}
\DeclareMathOperator{\colim}{{colim}}
\newcommand{\mv}{\mathrm{mv}}
\newcommand{\Res}{\mathrm{Res}}
\newcommand{\cInd}{\mathrm{cInd}}
\newcommand{\op}{\mathrm{op}}
\newcommand{\heart}{\heartsuit}
\newcommand{\vir}{\mathrm{vir}}
\newcommand{\ext}{{\mathrm{ext}}}
\newcommand{\intr}{{\mathrm{intr}}}
\newcommand{\Ob}{\mathrm{Ob}}
\newcommand{\ad}{\mathrm{ad}}
\newcommand{\dR}{\mathrm{dR}}
\newcommand{\cd}{\mathrm{cd}}
\renewcommand{\ss}{\mathrm{ss}}
\newcommand\sE{\mathcal{E}} % Generic locally free sheaf
\newcommand\sN{\mathcal{N}}
\newcommand\sO{\mathcal{O}} % Structure sheaf
\newcommand{\sF}{\mathcal{F}}
\newcommand{\sA}{\mathcal{A}}
\newcommand{\sB}{\mathcal{B}}
\newcommand{\sR}{\mathcal{R}}
\newcommand{\sM}{\mathcal{M}}
\newcommand\fg{\mathfrak{g}}
\newcommand\fh{\mathfrak{h}}
\newcommand{\Sch}{\mathsf{Sch}}
\newcommand\Spc{\mathsf{Spc}}
\newcommand{\Set}{\mathsf{Set}}
\newcommand{\Stk}{\mathsf{Stk}}
\newcommand\Mod{\mathsf{Mod}}
\newcommand\Cat{\mathsf{Cat}}
\DeclareMathOperator{\Map}{Map}
\DeclareMathOperator{\Arr}{Arr} % Arrow category
\newcommand{\Rep}{\mathsf{Rep}}
\newcommand{\Alg}{\mathsf{Alg}}
\newcommand{\PPP}{\mathsf{P}}
\newcommand{\CCC}{\mathsf{C}}
\newcommand{\DDD}{\mathsf{D}}
\newcommand{\EEE}{\mathsf{E}}
\newcommand{\HHH}{\mathsf{H}}
\newcommand{\modd}{\mathsf{mod}}
\newcommand{\Aff}{\mathsf{Aff}}
\newcommand{\Poly}{\mathsf{Poly}}
\newcommand{\Fun}{\mathrm{Fun}}
\newcommand{\clStk}{{\cl\Stk}}
\DeclareMathOperator{\Grp}{Grp}
\DeclareMathOperator{\Epi}{Epi}
\DeclareMathOperator{\Grpd}{Grpd}
\newcommand{\Ch}{\mathrm{Ch}}
\renewcommand{\AA}{\mathbb{A}} % Affine space (NB! overwrites A with a ring which could be used in the bibliography!)
\newcommand{\PP}{\mathbb{P}} % Projective bundle
\newcommand{\VV}{\mathbb{V}} % Vector bundle construction = Spec(Sym(-))
\DeclareMathOperator{\Spec}{Spec} % Spec
\DeclareMathOperator{\Proj}{Proj} % Proj
\DeclareMathOperator{\Bl}{Bl}
\DeclareMathOperator{\QCoh}{QCoh} % Quasi-coherent sheaves
\newcommand{\extd}{\mathrm{ext}} % Extended Rees algebra
\newcommand{\cl}{\mathrm{cl}} % Classical scheme
\newcommand{\maxlocus}{{\mathrm{max}}}
\newcommand{\stD}{\mathscr{D}} % Deformation space / G_m
\newcommand{\lL}{\mathscr{L}} % Loop stack
\newcommand{\Gm}{\mathbb{G}_m}
\DeclareMathOperator{\GL}{GL}
\newcommand{\lr}{\longrightarrow}
\def\git{/\!\!/}
\newcommand{\oO}{\mathcal{O}}
\newcommand{\nN}{\mathcal{N}}
\newcommand{\mM}{\mathcal{M}}
\newcommand{\fF}{\mathcal{F}}
\newcommand{\wW}{\mathcal{W}}
\renewcommand{\tilde}{\widetilde}
\renewcommand{\hat}{\widehat}
\newcommand{\BL}{{\mathbb{L}}}
\def\@tocline#1#2#3#4#5#6#7{\relax
	\ifnum #1>\c@tocdepth % then omit
	\else
	\par \addpenalty\@secpenalty\addvspace{#2}%
	\begingroup \hyphenpenalty\@M
	\@ifempty{#4}{%
		\@tempdima\csname r@tocindent\number#1\endcsname\relax
	}{%
		\@tempdima#4\relax
	}%
	\parindent\z@ \leftskip#3\relax \advance\leftskip\@tempdima\relax
	\rightskip\@pnumwidth plus4em \parfillskip-\@pnumwidth
	#5\leavevmode\hskip-\@tempdima
	\ifcase #1
	\or\or \hskip 1em \or \hskip 2em \else \hskip 3em \fi%
	#6\nobreak\relax
	\hfill\hbox to\@pnumwidth{\@tocpagenum{#7}}\par% <---- \dotfill -> \hfill
	\nobreak
	\endgroup
	\fi}
\title[Stabilizer reduction and sheaf-theoretic invariants]{Stabilizer reduction for derived stacks and applications to sheaf-theoretic invariants}
\author{Jeroen Hekking} 
\address{Department of Mathematics, Universität Regensburg, Regensburg, 93053, Germany}
\email{jeroen.hekking@ur.de}
\author{David Rydh}
\address{Department of Mathematics, KTH Royal Institute of Technology, Stockholm, 114 28, Sweden}
\email{dary@math.kth.se}
\author{Michail Savvas}
\address{Department of Mathematics, The University of Texas at Austin, Austin, TX 78712, USA}
\email{msavvas@utexas.edu}
\date{\today}
\begin{document}

	\thanks{The first author was supported by the G\"oran Gustafsson foundation.
		The second author was supported by the G\"oran Gustafsson Foundation for
		Research in Natural Sciences and Medicine.}
	
	\maketitle
	
	\begin{abstract}
		We construct a canonical stabilizer reduction $\widetilde{X}$ for any derived $1$-algebraic stack $X$ over $\CC$ as a sequence of derived Kirwan blow-ups, under mild natural conditions that include the existence of a good moduli space for the classical truncation $X_\cl$. 
		
		Our construction has several desired features: it naturally generalizes Kirwan's classical partial desingularization algorithm to the context of derived algebraic geometry, preserves quasi-smoothness, and is a derived enhancement of the intrinsic stabilizer reduction constructed by Kiem, Li and the third author. Moreover, if $X$ is $(-1)$-shifted symplectic, we show that the semi-perfect and almost perfect obstruction theory of $\widetilde{X}_\cl$ and the associated virtual fundamental cycle and virtual structure sheaf, constructed by the same authors, are naturally induced by $\widetilde{X}$ and its derived tangent complex. As corollaries, we define virtual classes for moduli stacks of semistable sheaves on surfaces, give a fully derived perspective on generalized Donaldson--Thomas invariants of Calabi--Yau threefolds and define new generalized Vafa--Witten invariants for surfaces via Kirwan blow-ups.
	\end{abstract}
	
\setcounter{tocdepth}{1}
\tableofcontents
\setcounter{tocdepth}{2}

	\section*{Introduction}
	
	\subsection*{Some brief history} It has long been understood that in order to effectively study geometric objects and their families, one needs to enlarge the category of algebraic varieties and consider the more general notion of algebraic stacks. While the theory of algebraic stacks now enjoys a vast literature and features prominently within algebraic geometry and beyond, the presence of automorphisms of objects, also called stabilizers, which is typically a desirable feature, can sometimes be problematic regarding the use of standard techniques that apply to algebraic varieties, e.g., (co)homological methods like integration.
	
	A very useful way to associate an algebraic space to an Artin stack is to consider its moduli space, whenever this makes sense. However, depending on the scope of one's considerations, this might not be adequate, given that it is not necessarily a well-behaved operation. For example, a smooth stack need not have a smooth moduli space.

	Stabilizer reduction refers to the process of resolving the stackiness of an algebraic stack to produce a canonical Deligne--Mumford stack, whose stabilizers are all finite. In classical algebraic geometry, Kirwan \cite{Kirwan} was the first to carry this out for smooth quotient stacks $X = [Q^{\mathrm{ss}}/G]$ obtained by Geometric Invariant Theory \cite{MFK}, namely by an action of a reductive group $G$ on a smooth projective variety $Q$ together with a linearization. 
	
	More precisely (assuming that $Q^{s} \neq \emptyset$), Kirwan produced a sequence of GIT quotient stacks 
	$$X_0 = X = [Q^{\mathrm{ss}} / G],\ X_1 = [Q_1^{\mathrm{ss}} / G],\ \dots\ ,\ \widetilde{X} \coloneqq X_n = [\widetilde{Q}^{\mathrm{ss}} / G]$$ 
	by iteratively blowing up $X_i$ along the (smooth) locus $X_i^{\maxlocus} \subseteq X_i$ of points with maximal dimensional stabilizer and then deleting unstable points by a careful study of stability on the blow-up. The final result $\widetilde{X} = [\widetilde{Q}^{\mathrm{ss}} / G]$ is a GIT quotient Deligne--Mumford stack satisfying $\widetilde{Q}^{\mathrm{ss}} = \widetilde{Q}^{s}$. At the level of GIT quotients, each morphism $X_{i+1} \to X_i$ fits into a natural commutative square
	\begin{align}
		\xymatrix{
			X_{i+1} = [Q_{i+1}^{\mathrm{ss}} / G] \ar[r] \ar[d] & X_i = [Q_i^{\mathrm{ss}} / G ] \ar[d] \\
			Y_{i+1} = Q_{i+1} \git G \ar[r] & Y_i = Q_i \git G
		}
	\end{align}
	where the lower horizontal arrow is a birational blow-up map. Thus, since $\widetilde{Y}$ only has finite quotient singularities and admits a projective, birational map $\widetilde{Y} \to Y$, it is a partial desingularization of $Y$. Kirwan's construction has had a wealth of applications, see for example \cite{KirwanHom, KLDesing, KiemHecke}.
	\medskip
	
	Subsequently, Kirwan's construction has been independently generalized by Edidin--Rydh \cite{EdidinRydh}, and Kiem--Li and the third author \cite{KLS, Sav}, to apply to (possibly singular) Artin stacks $X$ with a good moduli space $q \colon X \to Y$ in the sense of Alper \cite{AlperGood}. Such stacks are a natural generalization of GIT quotient stacks $X = [Q^{\mathrm{ss}} / G]$ with good moduli space morphism $q \colon [Q^{\mathrm{ss}} / G] \to Q \git G$. In fact, by deep results of Alper--Hall--Rydh \cite{AHR2, AlperLuna}, the GIT picture is the \'{e}tale local model for this general class of stacks. 
	
	The two constructions, while related, differ in the notion of blow-up being applied at each iterated step: Edidin--Rydh introduce a saturated blow-up by first blowing up $X$ along $X^{\maxlocus}$ and then carefully deleting a closed substack of unstable points, whereas Kiem--Li and the third author use a so-called Kirwan blow-up by first taking an intrinsic blow-up of $X$ along $X^{\maxlocus}$ and then deleting unstable points. The intrinsic blow-up was constructed using local embeddings into smooth quotient stacks and preserved Kuranishi models. It was thus expected to be the classical shadow of a derived blow-up. 
	
	\subsection*{Derived stabilizer reduction: statement of results} In this paper, we generalize Kirwan's algorithm to the setting of derived algebraic stacks $X$ whose classical truncation $X_\cl$ admits a good moduli space $q \colon X_\cl \to Y$. By the hidden smoothness principle, derived stacks should behave in analogy with smooth classical stacks, and hence it is natural to expect that a derived stabilizer reduction $\widetilde{X}$ should be the result of repeatedly blowing up $X$ along $X^{\maxlocus}$ and then deleting unstable points. This is exactly what we do.
	
	Our first result proves the existence of $X^{\maxlocus}$ in the derived setting, under mild finiteness assumptions which we leave implicit here and in the rest of the introduction. Uniqueness refers to uniqueness up to a contractible space of homotopies.
	
	\begin{theorem*}[Theorem \ref{thm existence of X max}]
		Let $X$ be a derived Artin stack whose classical truncation admits a good moduli space $X_\cl \to Y$. Let $d$ be the maximal dimension of stabilizers of points of $X$. 
		
		Then there exists a unique, closed immersion $X^{\maxlocus} \to X$ of derived algebraic stacks such that for any \'{e}tale morphism $[U / G] \to X$, with $U$ an affine derived scheme and $G$ a reductive group of dimension $d$, whose classical truncation fits in a Cartesian diagram
		\begin{align} \label{intro: cart diag}
			\vcenter{\xymatrix{
					[U_\cl/G] \ar[r] \ar[d] & X_\cl \ar[d] \\
					U_\cl \git G \ar[r] & Y,
			}}
		\end{align}
		there exists a Cartesian diagram of derived stacks
		\begin{align*}
			\xymatrix{
				[U^{G^0}/G] \ar[r] \ar[d] & [U / G] \ar[d] \\
				X^{\maxlocus} \ar[r] & X,
			}
		\end{align*}   
		where $G^0 \subseteq G$ is the identity component of $G$. The classical truncation of $X^{\maxlocus}$ is naturally isomorphic to $(X_\cl)^{\maxlocus}$.
	\end{theorem*}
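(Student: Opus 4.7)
The plan is to construct $X^{\maxlocus}$ locally on étale charts of the form $[U/G] \to X$ furnished by the Alper--Hall--Rydh local structure theorem \cite{AHR2, AlperLuna}, and then to glue via étale descent for closed immersions of derived stacks. At any closed point of $X_\cl$ whose stabilizer has dimension $d$, such a chart exists with $G$ reductive of dimension exactly $d$ and satisfying the diagram (\ref{intro: cart diag}); on the complement of the maximal-stabilizer locus, $X^{\maxlocus}$ will pull back to the empty substack, which dictates its behavior on charts with $\dim G<d$.

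On an AHR chart $[U/G] \to X$ with $\dim G = d$, the proposed definition is $X^{\maxlocus}_{[U/G]} := [U^{G^0}/G]$, where $U^{G^0} \hookrightarrow U$ is the derived fixed locus of the identity component $G^0 \trianglelefteq G$. Normality of $G^0$ in $G$ ensures that this fixed locus is $G$-stable, yielding a closed derived substack. The derived fixed locus of a $G^0$-action on an affine derived scheme $U$ is representable by a closed derived subscheme of $U$, and its formation is functorial under equivariant étale maps of derived schemes.

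The crucial step is descent. Given two AHR charts $[U_1/G_1], [U_2/G_2] \to X$ with $\dim G_i = d$, their fiber product over $X$ is, étale-locally, again an AHR chart $[V/H]$, obtained by iterating the structure theorem. On a sufficiently small common refinement the groups $G_1, G_2$ become identified up to conjugation with a common $H$ (since reductive groups of maximal stabilizer dimension at a common closed point are conjugate by Luna slice considerations), and under this identification the two local definitions of the derived fixed locus agree by functoriality. Because closed immersions of derived stacks satisfy étale descent, the local pieces glue to a canonical closed immersion $X^{\maxlocus} \hookrightarrow X$, unique up to a contractible space of homotopies.

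For the classical truncation, the derived fixed locus $U^{G^0}$ truncates to the scheme-theoretic fixed locus $(U_\cl)^{G^0}$, because the formation of $G^0$-fixed points under a reductive group action commutes with taking the classical truncation. Hence $(X^{\maxlocus})_\cl$ is locally given by $[(U_\cl)^{G^0}/G]$, matching the construction of $(X_\cl)^{\maxlocus}$ from \cite{EdidinRydh, KLS}. The main obstacle will be the descent step: one must carefully track how non-canonical choices of reductive groups in different AHR charts interact via étale transition maps, and confirm that the derived fixed point formation is compatible with these transitions---this rests on the Luna-type strength of the local structure theorem combined with the equivariant functoriality of derived fixed loci.
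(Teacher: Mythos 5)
Your overall strategy---local construction on AHR charts followed by gluing---is genuinely different from the paper's, and the difference matters: the descent step you flag as ``the main obstacle'' is a real gap, not just a technicality to be checked. In the $\infty$-categorical setting, a closed immersion of derived stacks is \emph{not} a monomorphism (the space of factorizations of $T \to X$ through a closed substack is a space of nullhomotopies, not a point or empty), so specifying $X^{\maxlocus}$ by gluing requires producing a full homotopy-coherent descent datum over the entire \v{C}ech nerve of the cover, not merely identifications on double overlaps. Your argument only addresses double overlaps, and even there only sketchily: the identification of $G_1$ and $G_2$ with a common $H$ ``up to conjugation'' introduces exactly the kind of non-canonical choice that obstructs coherence on triple and higher overlaps. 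Equivariant functoriality of derived fixed loci under a \emph{fixed} group does not by itself control how these conjugation identifications compose. Uniqueness up to a contractible space of choices, which the theorem asserts, would likewise require this coherence.

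The paper sidesteps gluing entirely by giving a single global, intrinsic definition: $X^{\maxlocus} \coloneqq X^d$ is the stack whose $T$-points are closed derived subgroup schemes $H^0 \to T \times_X \lL X$ of the inertia, smooth over $T$ with connected fibers of dimension $d$. Existence and uniqueness are then automatic, and the content of the theorem becomes the local identification $[U/G]^d \simeq [U^{G^0}/G]$, proved by computing both functors of points (via twisted inner forms $\widetilde{G}^0_{pf}$ and the adjunction $B_T \dashv \Omega_T$, Propositions~\ref{Prop:slice_FOP} and \ref{Prop:slice_FOP_2}), together with \'{e}tale base change (Proposition~\ref{Prop:prop_xmax}(iii)) and the comparison of classical truncations. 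If you want to salvage your approach, you would need to either exhibit the coherent descent datum directly (essentially reconstructing the moduli description) or find an intrinsic characterization of the glued object---at which point you are back to the paper's definition. Your claim that derived $G^0$-fixed points commute with classical truncation is correct and is Proposition~\ref{FixPtProps}(i) in the paper.
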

	
	For a classical stack, a precise treatment of $X^{\maxlocus}$ was given in the appendix of \cite{EdidinRydh}. Our definition gives a derived enhancement, directly inspired by the classical construction.  To prove Theorem \ref{thm existence of X max} and to compare the derived blow-up of $X$ along the locus $X^{\maxlocus}$ to the intrinsic blow-up, we use properties of derived fixed loci, which are defined via Weil restrictions.  A substantial part of the paper is devoted to a thorough treatment of elements of derived equivariant geometry for this reason.
	
	With $X^{\maxlocus}$ at hand, the recent theory of derived blow-ups in arbitrary derived centers---generalized by the first author in \cite{HekkingGraded} from the quasi-smooth case treated in \cite{KhanRydhDBlowup}, and further developed in \cite{Weil}---together with its natural extension to (equivariant) derived stacks, allows us to define the derived intrinsic blow-up $X^\intr \coloneqq\Bl_{X^{\maxlocus}} X$ of $X$ in the center  $X^{\maxlocus}$. Our second result shows that its classical truncation is the intrinsic blow-up $(X_\cl)^\intr$ of $X_\cl$ defined by Kiem--Li and the third author. We may then delete unstable points as in \cite{KLS} to obtain an open substack $\hat{X} \subseteq X^\intr$, which we define to be the derived Kirwan blow-up of $X$.
	
	We summarize these results in the following theorem.
	
	\begin{theorem*}[Theorem~\ref{intr bl is der bl thm}, Proposition~\ref{Prop:stacky intr blow-up is derived blow-up}, Theorem~\ref{der intr kir blow-up thm}]
		There exists a canonical derived Artin stack $\hat{X}$, called the derived Kirwan blow-up of $X$, together with a morphism $\pi \colon \hat{X} \to X$, such that: 
		\begin{enumerate}
			\item Its classical truncation $\hat{X}_\cl$ admits a good moduli space morphism $\hat{q} \colon \hat{X}_\cl \to \hat{Y}$.
			\item The maximum stabilizer dimension of points in $\hat{X}$ is strictly smaller than that of $X$.
			\item For any affine, \'{e}tale, stabilizer-preserving morphism $[U/G] \to X$ whose classical truncation fits in a Cartesian diagram~\eqref{intro: cart diag}, the base change $\hat{X} \times_{X} [U/G]$ is naturally isomorphic to the derived Kirwan blow-up of $[U/G]$. 
			\item $\pi |_{\pi^{-1}(X^{\mathrm{s}})}$ is an isomorphism over the open locus $X^{\mathrm{s}}$ of stable points.
		\end{enumerate}
		
		$\hat{X}$ is the semi-stable locus $(X^{\intr})^{\mathrm{ss}} \subseteq X^{\intr}$, an open substack of the Artin stack $X^{\intr} = \Bl_{X^{\maxlocus}} X$, called the derived intrinsic blow-up of $X$.
		
		The classical truncations of $X^{\intr}$ and $\hat{X}$ are the classical intrinsic and Kirwan blow-ups of the classical truncation $X_\cl$ respectively.
	\end{theorem*}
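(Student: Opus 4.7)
The plan is to construct $X^{\intr}$ as the derived blow-up $\Bl_{X^{\maxlocus}} X$ using the theory of derived blow-ups in arbitrary derived centers from \cite{HekkingGraded, Weil}, extended to derived algebraic stacks by smooth descent. The derived center $X^{\maxlocus} \hookrightarrow X$ is supplied by Theorem~\ref{thm existence of X max}. Since derived blow-ups in arbitrary centers commute with flat (in particular étale) base change, for any affine étale stabilizer-preserving atlas $[U/G] \to X$ as in Theorem~\ref{thm existence of X max}, one obtains
\begin{equation*}
X^{\intr} \times_X [U/G] \simeq \Bl_{[U^{G^0}/G]} [U/G],
\end{equation*}
which pins down the étale local model for $X^{\intr}$ and makes the existence of $X^{\intr}$ as a derived Artin stack a descent question.

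Next I would identify the classical truncation of the local model $\Bl_{[U^{G^0}/G]} [U/G]$ with the intrinsic blow-up of Kiem--Li--Savvas from \cite{KLS}. Both objects are built from Rees-type algebras on the (co)normal complex of the center; the comparison amounts to matching the Rees filtration underlying the derived blow-up with the ad hoc filtration used in \cite{KLS} after passing to $\pi_0$, thereby yielding Theorem~\ref{intr bl is der bl thm}. By étale descent this identification globalizes to $(X^{\intr})_\cl \simeq (X_\cl)^{\intr}$, giving Proposition~\ref{Prop:stacky intr blow-up is derived blow-up}. I expect this step to be the technical heart of the proof, since one must keep careful track of the derived enhancement at each stage and verify that the comparison is compatible with étale base change along atlases of the form $[U/G] \to X$.

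Finally, I would define $\hat{X} \coloneqq (X^{\intr})^{\ss}$ by removing the unstable locus, using the same linearization on the classical level as \cite{KLS}: the semistable locus is the preimage of the classical semistable locus of $(X_\cl)^{\intr}$. Openness and well-definedness follow from the Alper--Hall--Rydh local structure theorem \cite{AHR2, AlperLuna} applied on the local model $[U/G]$. Since $\hat{X}_\cl$ is then precisely the classical Kirwan blow-up of $X_\cl$, items (i) and (ii)—existence of the good moduli space $\hat{q} \colon \hat{X}_\cl \to \hat{Y}$ and strict decrease of the maximum stabilizer dimension—are inherited from \cite{KLS, EdidinRydh}. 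Item (iii) is the content of the étale local comparison already established. For item (iv), note that points of $X^{\mathrm{s}}$ have stabilizers of strictly smaller dimension than the maximum, so $X^{\mathrm{s}} \cap X^{\maxlocus} = \emptyset$; the blow-up center thus misses $X^{\mathrm{s}}$, no points of $X^{\mathrm{s}}$ become unstable in $X^{\intr}$, and $\pi$ restricts to an isomorphism $\pi^{-1}(X^{\mathrm{s}}) \xrightarrow{\sim} X^{\mathrm{s}}$.
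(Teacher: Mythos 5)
Your overall architecture matches the paper's: define $X^{\intr}=\Bl_{X^{\maxlocus}}X$ using the derived blow-up in the arbitrary center supplied by Theorem~\ref{thm existence of X max}, use stability of derived blow-ups under base change to reduce everything to the local models $[U/G]$, identify the classical truncation of the local model with the Kiem--Li--Savvas intrinsic blow-up, and then pass to the semistable locus to get $\hat{X}$. Items (i)--(iv) are indeed inherited from the classical statements in \cite{KLS, Sav, EdidinRydh} once the local comparison is in place, and your argument for (iv) (the center misses $X^{\mathrm{s}}$, so the blow-up is an equivalence there) is the right one. Two minor remarks: derived blow-ups are stable under \emph{arbitrary} base change, not just flat, so you do not need to restrict to the étale case; and the paper's primary definition of $\hat{X}$ is as a derived saturated projective spectrum $\Proj^q_X\sR_{X^{\maxlocus}/X}$, which is then shown (Proposition~\ref{Prop:KirwanSaturated}) to agree with the semistable locus you take as the definition --- this is a cosmetic difference.

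The genuine gap is in the step you yourself flag as the technical heart: the identification $(\Bl_{U^G}U)_\cl\cong U^{\intr}_\cl$. ``Matching the Rees filtration underlying the derived blow-up with the filtration of \cite{KLS} after passing to $\pi_0$'' is not yet an argument, because the derived Rees algebra of the full derived fixed locus $U^G$ is not directly computable, and $\pi_0$ of it is not obviously the Rees-type algebra generated by $\pi^{-1}(I^{\fix})$ and $\xi^{-1}\pi^{-1}(I^{\mv})$. The paper's proof requires three specific inputs that your outline omits. First, the classical truncation of a derived blow-up depends only on the $1$-truncation of the center (Proposition~\ref{cell attachment in deg -2 prop}), which lets one replace $U^G$ --- which in general has cells in all degrees --- by an auxiliary center $F^1$ differing from it only by cells in degree $\geq 2$. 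Second, a pullback argument (Proposition~\ref{pullback square classical truncation blow-ups}) reduces further to the quasi-smooth situation $Z\to X$ where $\sO_X$ is the subalgebra of a $G$-equivariant standard-form model generated in degrees $0$ and $1$. Only then does the explicit formula for the extended Rees algebra of a composite of finite quotients (Proposition~\ref{Prop:RZU}, formula~\eqref{formula [-1,0] case}) apply, and it is this formula --- producing the relations $\sum_j\lambda_{ij}w_j$ from the expressions $f_i=\sum_j\lambda_{ij}x_j$ of the moving generators --- that realizes the ideal $I^{\intr}$ of Definition~\ref{intrinsic blow-up def}. Without the truncation and reduction steps, the comparison you propose does not get off the ground; with them, it is an explicit computation rather than a filtration-matching argument.
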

	
	The main ingredient in the proof is an explicit, non-trivial computation of the classical truncation of the equivariant blow-up of $\Spec A$ along $(\Spec A)^{G^0}$, using several properties of derived blow-ups and fixed loci. Since $(\Bl_{X^{\maxlocus}} X)_\cl$ is in general not the blow-up of $X_\cl$ along $X_\cl^{\maxlocus}$, this theorem explains the difference between the two constructions \cite{EdidinRydh} and \cite{KLS}. When $X$ is smooth, these do coincide and both blow-ups are the same.  
	
	As an immediate consequence of this theorem we get the desired derived stabilizer reduction $\widetilde{X} \to X$ of $X$ by repeatedly taking derived Kirwan blow-ups until the maximal stabilizer dimension of points drops to zero. $\widetilde{X}$ is a derived Deligne--Mumford stack, which by construction is a natural derived enhancement of the intrinsic stabilizer reduction $\widetilde{X}_\cl$ of the classical truncation $X_\cl$ constructed in \cite{Sav} (this in particular also applies to the case where $X = X_\cl$ is classical to begin with). 
	\medskip
	
	With $\tilde{X}$ at our disposal, we investigate its properties for quasi-smooth and $(-1)$-shifted symplectic stacks and obtain associated virtual fundamental classes. These constructions have natural applications towards enumerative invariants of sheaves on surfaces and threefolds.
	
	\subsection*{Quasi-smoothness and invariants of Donaldson-type} Using the properties of derived blow-ups and the definition of derived intrinsic and Kirwan blow-ups, we show that they preserve the property of being quasi-smooth (see Proposition~\ref{prop:intr blowup quasi-smooth}). We thus obtain the following theorem.
	
    \begin{theorem*}[Theorem~\ref{thm:stab red of quasi-smooth}]	The derived stabilizer reduction $\tilde{X}$ of a quasi-smooth derived Artin stack $X$ is quasi-smooth. In particular, the intrinsic stabilizer reduction $\tilde{X}_\cl$ admits a natural virtual fundamental class $[\tilde{X}_\cl]^\vir \in A_\ast(\tilde{X}_\cl)$ and virtual structure sheaf $[\oO_{\tilde{X}_\cl}^\vir] \in K_0(\tilde{X}_\cl)$.
    \end{theorem*}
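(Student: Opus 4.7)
The plan is to combine the already-established behaviour of derived intrinsic blow-ups with the iterative nature of the stabilizer-reduction algorithm, and then deduce the existence of the virtual class and virtual structure sheaf from general theory for quasi-smooth derived Deligne--Mumford stacks.

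First, I would recall from the construction of the stabilizer reduction that $\tilde{X}$ is obtained from $X$ by a finite sequence
\[
X = X_0 \leftarrow X_1 \leftarrow \cdots \leftarrow X_n = \tilde{X},
\]
where each transition $X_{i+1} \to X_i$ is a derived Kirwan blow-up, i.e.\ the open substack of semistable points $(X_i^{\intr})^{\ss} \subseteq X_i^{\intr}$ inside the derived intrinsic blow-up $X_i^{\intr} = \Bl_{X_i^{\maxlocus}} X_i$. The process terminates because the maximal stabilizer dimension strictly decreases by Theorem~\ref{der intr kir blow-up thm}(ii), and at termination $\tilde{X}$ is a derived Deligne--Mumford stack.

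The core step is then to verify that quasi-smoothness is preserved at each transition. By Proposition~\ref{prop:intr blowup quasi-smooth}, if $X_i$ is quasi-smooth then so is the derived intrinsic blow-up $X_i^{\intr} = \Bl_{X_i^{\maxlocus}} X_i$. Passing to the open substack $(X_i^{\intr})^{\ss} \subseteq X_i^{\intr}$ trivially preserves quasi-smoothness, as open immersions preserve the cotangent complex and its Tor-amplitude. A straightforward induction on $i$ starting from the quasi-smooth hypothesis on $X$ therefore yields that every $X_i$, and in particular $\tilde{X}$, is quasi-smooth.

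For the second assertion, once $\tilde{X}$ is a quasi-smooth derived Deligne--Mumford stack, the pullback $\BL_{\tilde{X}}|_{\tilde{X}_\cl}$ of its cotangent complex to the classical truncation is a perfect complex of Tor-amplitude $[-1,0]$ and fits into the canonical morphism $\BL_{\tilde{X}}|_{\tilde{X}_\cl} \to \BL_{\tilde{X}_\cl}$, giving $\tilde{X}_\cl$ a perfect obstruction theory in the sense of Behrend--Fantechi. Applying their construction (and its $K$-theoretic refinement due to Ciocan-Fontanine--Kapranov/Lee) then produces the desired virtual fundamental class $[\tilde{X}_\cl]^{\vir} \in A_\ast(\tilde{X}_\cl)$ and virtual structure sheaf $[\oO_{\tilde{X}_\cl}^{\vir}] \in K_0(\tilde{X}_\cl)$. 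The main obstacle has already been absorbed into Proposition~\ref{prop:intr blowup quasi-smooth}; once that is in hand, the remaining argument is a short bookkeeping of the iterative construction together with an appeal to the standard virtual formalism.
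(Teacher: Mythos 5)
Your proposal is correct and follows essentially the same route as the paper: the paper likewise deduces quasi-smoothness of each Kirwan blow-up from Proposition~\ref{prop:intr blowup quasi-smooth} together with the observation that $\hat{X}$ is an open substack of $X^{\intr}$, then concludes by induction along the sequence of blow-ups, with the virtual class and virtual structure sheaf coming from the standard Behrend--Fantechi formalism applied to $\BL_{\tilde{X}}|_{\tilde{X}_\cl} \to \BL_{\tilde{X}_\cl}$.
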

    
    Quasi-smooth derived Artin stacks naturally appear as derived moduli stacks ${\mM}$ of semistable sheaves or perfect complexes on smooth, projective surfaces. There are several variations of how one can obtain such stacks, e.g., by considering reduced versions when the genus of the surface is positive, or fixing the determinant. For more details, the interested reader can consult \cite{Mochizuki} or \cite{JoyceWC}.
    
    Let $\mM$ be a quasi-smooth moduli stack of this type such that the good moduli space $M_\cl$ of $\mM_\cl$ is proper. Then $\tilde{\mM}_\cl$ is a proper Deligne--Mumford stack and integrals $\int_{[\tilde{\mM}_\cl]^\vir} \gamma$ for cohomology classes $\gamma$ on $\tilde{\mM}_\cl$ may be viewed as Donaldson-type invariants (see Definition~\ref{def:Donaldson invariants}).
    
	\subsection*{$(-1)$-shifted symplectic geometry and Donaldson--Thomas invariants} The main motivation for the introduction of the intrinsic stabilizer reduction $\widetilde{X}_\cl$ in \cite{KLS, Sav} was the construction of intersection-theoretic generalized Donaldson--Thomas invariants. 
	
	Namely, it was shown by local computation that if $X$ is a $(-1)$-shifted symplectic derived Artin stack, then the intrinsic stabilizer reduction $\widetilde{X}_\cl$ admits a canonically induced semi-perfect obstruction theory (cf.\ \cite{LiChang}) of virtual dimension zero, and later in \cite{KiemSavvas} that in fact this can be promoted to an almost perfect obstruction theory. In particular, there are an associated virtual fundamental cycle and virtual structure sheaf
	\begin{align} \label{loc 1.3}
		[\widetilde{X}_\cl]^\mathrm{vir} \in A_0(\widetilde{X}_\cl), \quad [\oO_{\widetilde{X}_\cl}^\vir] \in K_0(\widetilde{X}_\cl),
	\end{align}
	and one can define
	$$\mathrm{DTK}(X) = \int_{[\widetilde{X}_\cl]^\mathrm{vir}} 1 \in \mathbb{Q}$$
	as the numerical generalized Donaldson--Thomas invariant via Kirwan blow-ups (DTK invariant) associated to $X$, along with $K$-theoretic invariants. Since derived moduli stacks of semi-stable sheaves and perfect complexes on Calabi--Yau threefolds are $(-1)$-shifted symplectic by \cite{PTVV}, this gives rise to DTK invariants that act as virtual counts of these objects (after rigidifying scaling automorphisms).
	
	By suitable local computation, we further show that our derived stabilizer reduction procedure gives a derived enhancement of $\widetilde{X}_\cl$ which completely recovers the above obstruction theory.
	
	\begin{theorem*}[Theorem~\ref{thm trunc perf}] Let $X$ be a $(-1)$-shifted symplectic derived Artin stack and $\widetilde{X}$ its derived stabilizer reduction. Then:
		\begin{enumerate}
			\item The $[0,1]$-truncation $E^\bullet = \tau^{[0,1]} \mathbb{T}_{\widetilde{X}}|_{\widetilde{X}_\cl}$ (with cohomological indexing notation) is a perfect complex. 
			\item Its dual $E_\bullet = (E^\bullet)^\vee$ together with the derived structure of $\tilde{X}$ naturally recover the data of the semi-perfect obstruction theory of \cite{KLS, Sav} and almost perfect obstruction theory of \cite{KiemSavvas} and hence the virtual fundamental cycle $[\widetilde{X}_\cl]^\vir$ and virtual structure sheaf $[\oO_{\widetilde{X}_\cl}^\vir]$ in~\eqref{loc 1.3}.
		\end{enumerate}
	\end{theorem*}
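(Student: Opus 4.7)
The plan is to deduce (1) directly from the quasi-smoothness and Deligne--Mumford property of $\widetilde{X}$, and to prove (2) by identifying $E_\bullet = (E^\bullet)^\vee$ with the restriction of the cotangent complex $\mathbb{L}_{\widetilde{X}}|_{\widetilde{X}_\cl}$ and then matching it, \'etale-locally on stabilizer-preserving charts, with the iterative Kuranishi-model construction of the semi-perfect and almost perfect obstruction theories in \cite{KLS, Sav, KiemSavvas}.

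For (1), since $X$ is $(-1)$-shifted symplectic, its cotangent complex has Tor-amplitude $[-1,0]$, so $X$ is quasi-smooth. By Theorem~\ref{thm:stab red of quasi-smooth}, $\widetilde{X}$ is again quasi-smooth, and by construction it is a derived Deligne--Mumford stack, so its tangent complex has cohomological amplitude in $[0,1]$. Restricting to the classical truncation preserves amplitude, hence $\tau^{[0,1]} \mathbb{T}_{\widetilde{X}}|_{\widetilde{X}_\cl} \simeq \mathbb{T}_{\widetilde{X}}|_{\widetilde{X}_\cl}$ is a perfect two-term complex.

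For (2), the candidate obstruction theory is the composition
\[
E_\bullet \;=\; \mathbb{L}_{\widetilde{X}}|_{\widetilde{X}_\cl} \longrightarrow \mathbb{L}_{\widetilde{X}_\cl}.
\]
I would verify that this agrees with the construction of \cite{KLS, Sav, KiemSavvas} \'etale-locally. By Theorem~\ref{thm existence of X max} combined with the local structure theorems of \cite{AHR2, AlperLuna}, $X$ admits a covering by affine, stabilizer-preserving \'etale charts $[U/G] \to X$ of the form appearing in~\eqref{intro: cart diag}. Item (iii) of Theorem~\ref{der intr kir blow-up thm}, applied iteratively, produces corresponding \'etale charts on $\widetilde{X}$ on which the derived Kirwan blow-up is explicitly computable through Proposition~\ref{Prop:stacky intr blow-up is derived blow-up} and Theorem~\ref{intr bl is der bl thm}. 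On each such chart, the restriction of $\mathbb{T}_{\widetilde{X}}$ yields a local perfect obstruction theory arising from a local Kuranishi model; the explicit computation of the classical truncation of the equivariant derived blow-up then identifies this with the local perfect obstruction theory that \cite{KLS, Sav} use as input for their semi-perfect obstruction theory on $\widetilde{X}_\cl$. Compatibility on overlaps is automatic, since all local data arise as restrictions of the single globally defined object $\mathbb{T}_{\widetilde{X}}$.

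The upgrade to the almost perfect obstruction theory of \cite{KiemSavvas} requires a globally coherent identification of the cohomology sheaves of the local perfect obstruction theories, and this is likewise built into the functorial global tangent complex. The main obstacle will be the explicit local matching: one must carefully track the step-by-step iterative construction in \cite{KLS, Sav, KiemSavvas}, which proceeds intrinsic blow-up by intrinsic blow-up through local Kuranishi models, and show it coincides, as a $2$-isomorphism class of perfect obstruction theories on each chart, with the restriction of $\mathbb{T}_{\widetilde{X}}$. This is made tractable precisely by the explicit description of the classical truncation of equivariant derived blow-ups established in Theorem~\ref{intr bl is der bl thm}, which already identifies the classical and derived intrinsic blow-ups at each stage of the reduction procedure.
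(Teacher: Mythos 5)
Your proof of part (1) rests on a false premise: a $(-1)$-shifted symplectic derived Artin $1$-stack is \emph{not} quasi-smooth whenever it has positive-dimensional stabilizers, which is exactly the situation the theorem addresses. The symplectic self-duality $\BL_X \simeq \BL_X^\vee[1]$ forces the Tor-amplitude of $\BL_X$ to be symmetric; since the stacky directions contribute a piece $\fg^\vee \otimes \sO_X$ in homological degree $-1$ (cohomological degree $1$), duality produces a piece in homological degree $2$, so the local model is the four-term complex $K(G,V,W_1,W_2,\delta_1,\delta_2)$ of Lemma~\ref{equivariant darboux lemma}, with amplitude $[-2,1]$, not $[-1,1]$. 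Consequently Theorem~\ref{thm:stab red of quasi-smooth} does not apply, $\widetilde{X}$ is not quasi-smooth, and $\tau^{[0,1]}\mathbb{T}_{\widetilde{X}}|_{\widetilde{X}_\cl}$ is a genuine truncation of a longer complex. Truncations of perfect complexes are not perfect in general; the perfectness of $E^\bullet$ is precisely the nontrivial content of part (1), and it is \emph{not} automatic. (Relatedly, in part (2) you write $E_\bullet = \mathbb{L}_{\widetilde{X}}|_{\widetilde{X}_\cl}$, which is again the full four-term complex rather than the two-term dual of the truncation.)

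The missing idea is the property $(\dagger)$ mechanism of \S\ref{Sec:Shifted Symplectic}. The paper first proves an equivariant Darboux-type statement (Lemma~\ref{equivariant darboux lemma}) putting $X$ \'etale-locally in the form $K(G,V,T_V|_x,\fg,\delta_1,\delta_2)$ with $\delta_2$ the infinitesimal derivative of the action; this local form satisfies $(\dagger)$. An explicit Rees-algebra computation on Luna slices (Corollary~\ref{Cor:6.10} and Lemma~\ref{propagation of property B}) shows that $(\dagger)$ is preserved under derived Kirwan blow-ups, with the degree-one and degree-two generators twisting by $E$ and $2E$ respectively. Finally, on the Deligne--Mumford stack $\widetilde{X}$, property $(\dagger)$ forces $\ker\delta^{2}$ to be a vector bundle (Proposition~\ref{truncation is perfect prop}), which is what makes the truncation perfect and produces the local perfect obstruction theories $E_\bullet|_{(U_\alpha)_\cl} \to (\BL_{(U_\alpha)_\cl})_{\leq 1}$. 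Your observation that overlap compatibility should follow from everything being the restriction of the single global object $\mathbb{T}_{\widetilde{X}}$ is the right instinct and matches the paper's strategy, but even there the verification is not free: the compatibility of obstruction assignments in the semi-perfect framework requires the injectivity of $h^1(\psi|_p) \colon h^1(E^\bullet|_p) \to h^1(\mathbb{T}_{\widetilde{X}}|_p)$, which itself depends on the structure supplied by $(\dagger)$.
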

	
	This gives a fully derived perspective on generalized Donaldson--Thomas invariants via Kirwan blow-ups. As a corollary, we may combine the DTK formalism with virtual torus localization (cf.\ \cite{GrabPand}) in order to define new, generalized Vafa--Witten invariants enumerating Gieseker semi-stable Higgs pairs $(E, \phi \colon E \to E \otimes K_S)$ on $S$ with fixed positive rank and Chern classes, fixed determinant $\det E$ and zero trace $\mathrm{tr}\ \phi = 0$ (see Definition~\ref{Def: gen VW inv}).

	\subsection*{Future directions} Given Kirwan's original algorithm and its generalization in \cite{EdidinRydh}, it is natural to wonder whether there is a theory of derived good moduli spaces for derived algebraic stacks, such that, in particular, our derived stabilizer reduction procedure gives a sequence of diagrams
	\begin{align*}
		\xymatrix{
			X_{i+1} = \hat{X}_i \ar[r] \ar[d] & X_i \ar[d] \\
			Y_{i+1} \ar[r] & Y_i,
		}
	\end{align*}
	where the vertical maps are derived good moduli space morphisms, and which are thus derived enhancements of the diagrams~\eqref{intro: cart diag}. In fact, using the derived \'{e}tale slice theorem and local-to-global methods, one can indeed construct derived good moduli spaces whose existence is preserved by the operation of derived Kirwan blow-up. The details will appear elsewhere to keep the length of this paper under control.
	
	Another interesting direction of inquiry would be to investigate the properties and degeneracy of the pullback of an $n$-shifted symplectic form to the derived stabilizer reduction and possibly develop a theory of $n$-shifted quasi-symplectic derived stacks, of which derived stabilizer reductions are a particular case.
	
	Other natural extensions of this work would be to generalize the stabilizer reduction algorithm to derived stacks over a base of positive or even mixed characteristic. Since our main  arguments ultimately depend on the classical situation and on the formal properties of Weil restrictions, we in fact expect a generalized stabilizer reduction algorithm to exist in any nonconnective algebraic geometry. Such geometries are extensions of derived algebraic geometry---of which derived analytic geometry in the sense of \cite{BassatNonArchimedean}, \cite{BassatAnalytification}, \cite{FDA} is an example---and are proposed by Ben-Bassat and the first author in \cite{GDB}, together with a generalization of derived blow-ups to this setting. 
	
	Finally, there are several purely enumerative potential ramifications for sheaf-theoretic invariants that remain to be explored.
	
	We intend to come back to these questions in the future.
	
	\subsection*{Layout of the paper} In \S\ref{Sec: background dalg} we review the requisite background and language of derived algebraic geometry that we use. In \S\ref{Sec:Derived Blowups}, we recall derived blow-ups and discuss and prove statements that are necessary in subsequent parts of the paper. \S\ref{Sec:Derived_equivariant_geometry} is devoted to a careful study of derived equivariant geometry with a focus on fixed loci of derived group actions. \S\ref{Sec:Xmax} gives the construction of the locus $X^{\maxlocus}$ of points of maximal stabilizers of a derived stack $X$. In \S\ref{Sec:intr bl is der bl} we establish the relationship between intrinsic blow-ups and equivariant derived blow-ups, and in \S\ref{Sec:der stab red} we construct the derived stabilizer reduction of a derived Artin stack. \S\ref{Sec:quasi-smooth} deals with the case of quasi-smooth stacks, while \S\ref{Sec:Shifted Symplectic} with the case of $(-1)$-shifted symplectic stacks. Finally, \S\ref{Sec: DT VW} discusses applications to generalized Donaldson--Thomas and Vafa--Witten invariants. 
	
	\subsection*{Acknowledgements} 
	The first author would like to thank Adeel Khan for valuable conversations about derived algebraic stacks. 
	
	The third author would like to thank Jarod Alper and Daniel Halpern-Leistner for helpful answers to questions related to the \'{e}tale slice theorem and good moduli spaces and Hyeonjun Park for discussions related to fixed loci.
	
	\subsection*{Notation and conventions}
	Throughout, we work over $\CC$. All rings and algebras are assumed to be commutative. We will use both simplicial/derived rings and commutative differential graded algebras interchangeably, assuming their usual equivalence under the Dold--Kan correspondence. The particular model we are working with will be clear from context.

	All derived stacks and all good moduli space morphisms are assumed to have affine diagonal. By a derived algebraic stack we mean a derived $1$-algebraic stack.

	The classical truncation of $X$ is denoted by $X_\cl$ and the topological space of points of $X$ by $\lvert X\rvert$. If $x \in \lvert X\rvert$, then $G_x$ denotes the automorphism group/stabilizer of $x \in X_\cl$. In practice, we will mostly work with stabilizers of closed points $x \in \lvert X\rvert$. These will be reductive for most stacks of interest in this paper, namely stacks whose classical truncation admits a good moduli space.
	
	For a morphism $\rho \colon X \to Y$ and a sheaf or complex $\sE$ on $Y$, we often use $\sE \vert_X$ to denote $\rho^* \sE$, suppressing the pullback from the notation. We also use the phrases ``closed embedding'' and ``closed immersion'' interchangeably, likewise for ``Artin stack'' and ``algebraic stack''.
	
	$G$, $H$ denote complex reductive groups throughout. Since we are working over $\CC$, being reductive is equivalent to being linearly reductive and hence all finite-dimensional and rational $G$-representations are completely reducible in this paper (see, for example, \cite[Theorem~5.2]{Borel}). Usually, $H$ will be a subgroup of $G$. $G^0$ is the identity component of $G$. $T$ is the torus $\Gm = \CC^\times$.
	
	If $X$ is a derived stack with $G$-action, $X^G$ is used to denote the derived fixed point locus of $G$ in $X$, see \S \ref{Subsec:Absolute_fixed_loci}. For $X$ algebraic, $X^\intr, \hat{X}$ are used to denote the derived intrinsic and Kirwan blow-ups of $X$ respectively, see \S \ref{Subsec:Derived_intrinsic_and_Kirwan_blow-ups}. $\widetilde{X}$ denotes the derived stabilizer reduction of a stack $X$, see \S \ref{Subsec:Derived_stabilizer_reduction_of_Artin_stacks}.
	
	Quasi-coherent modules and algebras are written as $\sM, \sN, \dots$ and $\sA,\sB,\dots$  to distinguish them from their affine counterparts $M,N,\dots$ and $A,B,\dots$.
	
	We will use homological indexing notation, unless otherwise stated.
	
	We will mostly ignore set-theoretic issues to avoid distractions. These can be resolved by any of the standard methods, such as assuming the existence of Grothendieck universes.

	\section{Recollections on derived algebraic geometry} \label{Sec: background dalg}
	We review the language of derived algebraic geometry which we will be using. Much of this material can be found in the standard literature on the topic, like \cite{LurieSpectral}, \cite{ToenHAGII}, \cite{GaitsgoryStudy}. For convenience, we collect some constructions and results here, but refer the reader to these sources for more details.

	\subsection{Background on $\infty$-categories}
	Recall that an $\infty$-category is a generalization of a $1$-category that allows us to talk about $n$-morphisms for all $n \in \NN_{\geq 1}$, where all morphisms above $n=1$ are invertible (in other words, these are $(\infty,1)$-categories). We thus have, between any two objects $x,y$ in a given $\infty$-category $\CCC$, a mapping space $\CCC(x,y)$ which has the set of equivalence classes of $1$-morphisms $x \to y$ as connected components. Here, a space is considered as a homotopy type, which can be modelled for example on simplicial sets. 
	
	We use similar shorthands for $\infty$-categories as is customary for $1$-categories, by expressing the existence of a homotopy as a property. In particular, when we say that a diagram is commutative, we mean that there is a homotopy that makes the diagram commutative. Likewise, by uniqueness we will mean uniqueness up to a contractible space of choices.
	
	Write $\Cat$ for the $\infty$-category of $\infty$-categories, and $\Spc$ for the $\infty$-category of spaces. Although we will explicitly distinguish $\infty$-categories from 1-categories throughout, we do adopt the common language of $\infty$-categories, where limits, colimits, Kan extensions, etc., are always taken in their $\infty$-categorical meaning. Recall, if one chooses a point-set model, then these become homotopy limits, colimits, Kan extensions, etc..
	
	\begin{definition}
		Let $\CCC$ be an $\infty$-category. Define the \textit{cocompletion of $\CCC$ under sifted colimits}, written $\PPP_\Sigma(\CCC)$, as the full subcategory of the $\infty$-category $\PPP(\CCC)$ of presheaves on $\CCC$ spanned by those presheaves that send finite coproducts in $\CCC$ to products in $\Spc$. 
	\end{definition}
	\begin{remark}
		The $\infty$-category $\PPP_\Sigma(\CCC)$ is the universal way of adding sifted colimits to $\CCC$. It is also called the \textit{nonabelian derived category} of $\CCC$. See \cite[\S 5.5.8]{LurieHTT} for details. 
	\end{remark}

	\subsection{Derived rings and modules}
	Let $\Mod$ be the symmetric monoidal $\infty$-category of modules over $\CC$ in the stably homotopic sense, i.e., of spectra endowed with a $\CC$-action. Endow $\Mod$ with the standard $t$-structure (with homological indexing convention). Recall that $\Mod$ is also the $\infty$-category associated to the unbounded derived category of $\CC$-vector spaces. 
	
	Write $\Mod^0$ for the full subcategory of $\Mod$ spanned by the discrete, finitely generated $\CC$-modules. Then $\Mod^0$ generates $\Mod_{\geq 0}$ under sifted colimits.
	
	\begin{definition}
		Let $\Alg$ be the $\infty$-category of connective $\EE_\infty$-algebras in $\Mod$.
	\end{definition}
	
	Let $\Poly$ be the category of finitely generated polynomial rings over $\CC$. Since we are working over a ring of characteristic 0, $\Alg$ is equivalent to the $\infty$-category $\PPP_\Sigma(\Poly)$, hence also to the $\infty$-category associated to the model category of simplicial $\CC$-algebras, endowed with the projective model structure. It is also equivalent to the $\infty$-category of cdgas over $\CC$ concentrated in homological degree $\geq 0$. 
	
	An object $R$ of $\Alg$ will be called an \textit{algebra}. We can associate to $R$ a 1-categorical model $R'$, either incarnated as a simplicial ring or as a cdga. Of course, any $R''$ which is weakly equivalent to $R'$ will then also be a 1-categorical model of $R$. To keep the indexing convention consistent, we will write our cdgas as chain complexes in degree $\geq 0$.
	
	The forgetful functor $\Alg \to \Mod_{\geq 0}$ has a left adjoint, written $\LSym$. For $R \in \Alg$, we write $\Alg_R$ for the slice-$\infty$-category $\Alg_{R/}$. Objects in $\Alg_R$ are called \textit{$R$-algebras.} Likewise, we write $\Mod_R$ for the $\infty$-category of $R$-modules in $\Mod$. The forgetful functor $\Alg_R \to (\Mod_R)_{\geq 0}$ again has a left adjoint, written $\LSym_R$. 
	
	Let $\Mod^0_R$ be the full subcategory of $\Mod_R$ spanned by finitely generated free $R$-modules, and let $\Poly_R \subset \Alg_R$ be the essential image of $\LSym_R$ on $\Mod^0_R$. Then $\Mod^0_R$ again generates $(\Mod_R)_{\geq 0}$ under sifted colimits, and the canonical map $\PPP_\Sigma(\Poly_R) \to \Alg_R$ is an equivalence. Moreover, $\Alg_R$ is the $\infty$-category of connective $\EE_\infty$-algebras in $\Mod_R$.
	
	The functor $\pi_0 \colon \Spc \to \Set$ induces adjunctions
	\begin{align*}
		\pi_0: \Mod_{\geq 0} \leftrightarrows \Mod^\heart : i && \pi_0: \Alg \leftrightarrows \Alg^\heart : i
	\end{align*}
	where $\Mod^\heart$ is the heart of $\Mod$ with respect to the given $t$-structure, i.e., the 1-category of $\CC$-vector spaces, and $\Alg^\heart$ is the 1-category of discrete $\CC$-algebras. 
	
	For $R \in \Alg$ and $M \in \Mod_R$, the homotopy groups $\pi_n(M)$ are canonically $\pi_0(R)$-modules, for all $n \in \ZZ$. In particular, $\pi_n(R)$ is a $\pi_0(R)$-module for all $n  \geq 0$.

	\subsection{Cell attachments}
	Let $R \in \Alg$. For $n \geq 0$, we adjoin a free variable in homological degree $n$ to $R$ by writing $R[u] = R[S^n]$, which has the universal property that the space of $R$-algebra maps $R[u] \to B$ is equivalent to the space of maps $(S^n,*) \to (B,0)$ of pointed spaces, i.e., to $\Mod_R(R[n],B)$.
	
	Let a sequence $\underline{\sigma} = (\sigma_1,\dots,\sigma_k)$ of elements $\sigma_i \in \pi_{n_i}(R)$ be given. Then we define the \textit{finite quotient} of $R$ by $\sigma$, written $R/(\underline{\sigma}) = R/(\sigma_1,\dots,\sigma_k)$, via the pushout
	\begin{center}
		\begin{tikzcd}
			{R[u_1,\dots,u_k]} \arrow[r, "z"] \arrow[d, "s"] & R \arrow[d] \\
			R \arrow[r] & R/(\sigma_1,\dots,\sigma_k)
		\end{tikzcd}
	\end{center}
	where the $u_i$ are free in homological degree $n_i$, the map $s$ is induced by lifts $S^{n_i} \to R$ of $\sigma_i$, and $z$ sends each $u_i$ to zero. When $k=1$, we also say that $R/(\underline{\sigma})$ is obtained from $R$ by \textit{attaching an $(n+1)$-cell}. A \textit{zero cell attachment} is the map $R \to R[t]$, where $t$ is in homological degree 0. Note that the input of attaching several $(n+1)$-cells is equivalent to a morphism $\sigma \colon M[n] \to R$ of $R$-modules, where $M$ is free of finite rank. Giving this datum, we write the quotient also as $R/(\sigma)$. 
	
	A finite quotient of the form $R \to R/(f_1,\dots,f_k)$, where each $f_i$ is in homological degree $0$, is called \textit{quasi-smooth}.
	
	\subsection{Standard forms} For the purpose of performing computations, it will be useful to work with certain $1$-categorical models.
	
	Recall that a morphism $A \to B$ of algebras (and the corresponding map on spectra) is \emph{locally of finite presentation} if $B$ is a compact object in $\Alg_A$, i.e., the $\Spc$-valued functor $\Alg_A(B,-)$ preserves filtered colimits. It is \emph{finitely presented} if $B$ can be obtained from $A$ by a finite number of cell attachments. Then the following are equivalent: (1) $B$ is locally of finite presentation over $A$; (2) the map $\Spec B \to \Spec A$ is, Zariski-locally on $\Spec B$, finitely presented; and (3) $B$ is a retract of a finitely presented $A$-algebra.
	
	\begin{definition}
		Let $A$ be an algebra. A cdga model $R$ for $A$ is \emph{in standard form} if $R_0$ is smooth with $\Omega_{R_0}$ free, and the underlying graded-commutative ring of $R$ is freely generated over $R_0$ by a finite number of generators. 
	\end{definition}

	\begin{remark}
		\label{Rem:standard form cell attachment}
		Observe, if a cdga $R$ is in standard form, then it can be obtained from $R_0$ by a finite number of cell-attachments, say $R(0) \to R(1) \to \dots \to R(n)$, in such a way that when we terminate the process at $k \leq n$, then $R(k)$ is also in standard form. In fact, the underlying graded-commutative ring of $R(k)$ is the subring of the underlying graded-commutative ring of $R$ generated by elements of homogeneous degree $\leq k$, and $R(k+1)=R(k)/(\sigma)$ where $\sigma\colon M_{k+1}[k]\to R(k)$ and $M_{k+1}$ is a free module of finite rank over $R(k)$. Moreover $R_{k+1}=R(k)_{k+1} \oplus (M_{k+1})_0$. We say that $R$ is \emph{generated} by the $M_{k+1}$. 
	\end{remark}
	
	If $R$ is a model of standard form for $A$, then the cotangent complex of $\Spec A$ is given by the K\"{a}hler differentials $\Omega_R$, see \cite[\S 2.3]{JoyceSch}.
	
	\begin{definition}
		Let $x \in \Spec A$ with $A \in \Alg$ be given. Then a model $R$ in standard form for $A$ is \emph{minimal at $x$} if the differentials of $\Omega_R|_x$ are all zero.
	\end{definition}
	
	By the following lemma, locally finitely presented derived schemes can Zariski-locally be described by models in standard form.

	\begin{lemma} \label{Lem:Alg are standard form}
		Let $A \in \Alg$ be locally of finite presentation and $x \in \Spec A$. Then, up to Zariski localizing around $x$, we can find a model $R$ in standard form for $A$ such that $R_0$ is a finitely generated polynomial ring, or such that it is minimal at $x$.
	\end{lemma}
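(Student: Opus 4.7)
The plan is to assemble a standard form cdga model via an inductive cell attachment procedure, mirroring the way $A$ itself is built as a finitely presented algebra. Since local finite presentation is equivalent to being, Zariski-locally on $\Spec A$, finitely presented, I would first shrink around $x$ and assume that $A$ is finitely presented over $\CC$. Choose generators $a_1,\dots,a_n$ of $\pi_0(A)$ as a $\CC$-algebra; they give a surjection $R_0 := \CC[x_1,\dots,x_n] \twoheadrightarrow \pi_0(A)$, which lifts to an algebra map $R_0 \to A$. Since $R_0$ is smooth with $\Omega_{R_0}$ free, it is a valid degree-zero piece of a standard form cdga.

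Next, I would attach cells in strictly increasing degree. Let $I_0 := \ker(\pi_0(R_0) \to \pi_0(A))$; this is finitely generated since $\pi_0(A)$ is finitely presented. Lifting finitely many generators of $I_0$ to $R_0$ gives a map $\sigma_1\colon M_1[0] \to R_0$ with $M_1$ free of finite rank, and the pushout $R(1) := R_0/(\sigma_1)$ satisfies $\pi_0(R(1)) \cong \pi_0(A)$. Inductively, suppose a standard form cdga $R(k)$ has been constructed over $R_0$ together with a map $R(k) \to A$ that is an isomorphism on $\pi_j$ for $j<k$. The obstruction module $K_k := \ker(\pi_k(R(k)) \to \pi_k(A))$ is finitely generated over $\pi_0(A)$ by finite presentation of $A$, so it can be killed by attaching $(k+1)$-cells via a map $\sigma_{k+1}\colon M_{k+1}[k] \to R(k)$ lifting a generating set. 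Finite presentation of $A$ also ensures that this process terminates at some finite stage $N$, yielding a model $R = R(N)$ in standard form with $R_0$ a finitely generated polynomial ring, which establishes the first alternative.

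For the minimality alternative, I would perform additional Zariski shrinkings around $x$ at each stage. At the degree-zero stage, after a translation sending the image of $x$ to the origin of $\Spec R_0$, I would discard polynomial generators lying in $\mathfrak{m}_x^2$, shrinking $\Spec R_0$ to a Zariski neighborhood where the remaining ones map to a basis of $\mathfrak{m}_x/\mathfrak{m}_x^2$ in $\pi_0(A)$; here $R_0$ becomes a localization of a polynomial ring, which is why we give up the polynomial property. At each subsequent stage, rather than lifting an arbitrary generating set of $K_k$, I would lift a $\kappa(x)$-basis of $K_k \otimes_{\pi_0(A)} \kappa(x)$, then shrink $\Spec R_0$ further so that, by Nakayama, these lifts already generate $K_k$ on the smaller neighborhood. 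Using the identification of $\Omega_R$ with the K\"ahler differentials of the cdga from \cite[\S 2.3]{JoyceSch}, each $M_{k+1}$ contributes a summand of $\Omega_R|_x$ whose differential is the image of $\sigma_{k+1}$ in $\Omega_{R(k)}|_x$; the minimal choice of lifts forces this image to lie in $\mathfrak{m}_x\cdot\Omega_{R(k)}$ and hence to vanish at $x$, giving minimality.

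The main obstacle will be the bookkeeping for the minimality step: one must verify that finitely many Zariski shrinkings suffice across all stages (this is where termination of the cell attachment process and the Noetherian property of $\pi_0(A)$ are essential), and that the successive minimal choices remain compatible, i.e.\ that a basis chosen at stage $k+1$ does not spoil the minimality already achieved at stage $k$. The resolution of this compatibility is that each new cell attachment only affects $\Omega_R$ in a single new degree, so minimality at lower degrees is preserved, while the inductive hypothesis $R(k) \to A$ being an isomorphism on $\pi_{<k}$ implies that $K_k$ is computed entirely from the previously chosen generators, making the minimality conditions independent across stages.
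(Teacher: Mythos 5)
The paper itself disposes of this lemma by citing \cite[Theorem~4.1]{JoyceSch}, and your plan is essentially a reconstruction of that proof, so the overall strategy is the right one. However, there is a genuine gap in your inductive step: you only attach $(k+1)$-cells along generators of the kernel $K_k = \ker(\pi_k(R(k)) \to \pi_k(A))$, but you never arrange \emph{surjectivity} of $\pi_k(R(k)) \to \pi_k(A)$ for $k \geq 1$. Killing $K_k$ makes the map injective on $\pi_k$ without making it surjective, so the inductive conclusion ``isomorphism on $\pi_j$ for $j < k+1$'' does not follow. Concretely, take $A = \LSym(\CC[1])$, the free algebra on one generator in homological degree $1$: then $\pi_0(A) = \CC$, so $R_0 = \CC$ and $R(1) = \CC$, all the kernels $K_k$ vanish, your procedure attaches nothing further, and it outputs $R = \CC \not\simeq A$. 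The standard repair is to attach, at each stage, additional free generators in degree $k$ (cells attached along the zero map) chosen to hit generators of $\mathrm{coker}\bigl(\pi_k(R(k-1)) \to \pi_k(A)\bigr)$, alongside the cells that kill the kernel one degree lower.

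A second, smaller issue is the termination claim. ``Finite presentation of $A$ ensures the process terminates'' is not self-evident: a finitely presented algebra can have nonzero homotopy in infinitely many degrees (e.g.\ the free algebra on a single degree-$2$ generator), so one cannot argue that the homotopy groups of $A$ eventually vanish. The correct mechanism is that local finite presentation forces $\BL_A$ to be perfect, hence of Tor-amplitude contained in $[0,n]$ near $x$ for some $n$; since the number of generators a minimal-at-$x$ model needs in degree $k+1$ equals $\dim \pi_{k+1}(\BL_A|_x)$ (cf.\ \cite[Prop.~2.12]{JoyceSch}, used again in the proof of Proposition~\ref{Prop:3.27}), no generators are required above degree $n$ and the construction stops. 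With these two repairs, your minimality bookkeeping --- choosing a $\kappa(x)$-basis of $K_k \otimes_{\pi_0(A)} \kappa(x)$, invoking Nakayama after a further Zariski shrinking, and noting that each new attachment affects $\Omega_R|_x$ only in one new degree --- goes through as you describe, and correctly explains why the polynomiality of $R_0$ must be sacrificed in the minimal case.
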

	
	\begin{proof}
		This follows for example from \cite[Theorem~4.1]{JoyceSch}.
	\end{proof}
	
	\begin{remark}
		In general, the two conditions cannot be met simultaneously, see \cite[Remark~3.2]{JoyceSch}.
	\end{remark}

	\subsection{Graded derived rings}
	\label{Par:Graded_derived_rings}
	Derived blow-ups in arbitrary centers are defined  along similar lines as the classical construction---namely, through a derived version of the Rees algebra, of which one then takes the projective spectrum. To explain this, let us first review graded algebras in the derived setting as defined in \cite{HekkingGraded}.
	
	Let $\MM$ be a commutative monoid, considered as a discrete symmetric monoidal category in the obvious way. Let $\Mod^\MM$ be the functor $\infty$-category $\Fun(\MM,\Mod)$, endowed with symmetric monoidal structure via Day convolution and with $t$-structure inherited from $\Mod$. An object $M$ of $\Mod^\MM$ is called an \textit{$\MM$-graded module}, written as $\bigoplus_{d \in \MM} M_d$, where $M_d$ is $M$ evaluated at $d$. 
	
	For $d \in \MM$ and $N \in \Mod$, we write $N(d)$ for the $\MM$-graded module which is $N$ concentrated in homogeneous degree $-d$. Let then $(\Mod^\MM)^0$ be the full subcategory of $\Mod^\MM$ generated by finite coproducts of modules of the form $N(d)$, where $N \in \Mod^0$ and $d \in \MM$. Then $(\Mod^\MM)^0$ generates $(\Mod^\MM)_{\geq 0}$ under sifted colimits.
	
	Define the $\infty$-category of \textit{$\MM$-graded algebras}, written $\Alg^{\MM}$, as the $\infty$-category of connective $\EE_\infty$-algebras in $\Mod^{\MM}$. As in the ungraded case, the forgetful functor $\Alg^{\MM} \to (\Mod^{\MM})_{\geq 0}$ has a left adjoint, written $\LSym^\MM$. Let $\Poly^\MM$ be the essential image of $\LSym^\MM$ evaluated on $(\Mod^\MM)^0$. Then the canonical map $\PPP_\Sigma(\Poly^\MM) \to \Alg^\MM$ is an equivalence.
	
	A key result from \cite{HekkingGraded} is that, as expected, the $\infty$-category $\Alg^\ZZ$ is (contravariantly) equivalent to the $\infty$-category of affine derived schemes with $\Gm$-action. The general theory of $\infty$-actions is reviewed in \S \ref{Par:actions}, and we give an in-depth analysis of $G$ actions on derived stacks for reductive algebraic groups $G$ in \S \ref{Sec:Derived_equivariant_geometry}.
	
	For $B \in \Alg^\MM$, write $\Mod^\MM_B$ for the $\infty$-category of \textit{$\MM$-graded $B$-modules}, which by definition are $B$-modules in $\Mod^\MM$. As in the ungraded case, $\Mod^\MM_B$ is stable and symmetric monoidal, and $(\Mod^\MM_B)_{\geq 0}$ is generated under sifted colimits by the $\infty$-category of finitely generated, free, $\MM$-graded $B$-modules. Once again, $\Alg^\MM_B \coloneqq (\Alg^\MM)_{B/}$ is generated under sifted colimits by the finitely generated, $\MM$-graded polynomial $B$-algebras, which are defined similarly as in the ungraded case. 
	
	We are primarily interested in $\NN$-graded and $\ZZ$-graded rings. These exhibit familiar behavior: to a $\ZZ$-graded ring $B$ we can associate an $\NN$-graded ring $B_{\geq 0}$ by discarding the pieces in negative homogeneous degrees, and an $\NN$-graded ring $B'$ can be promoted to a $\ZZ$-graded ring by putting $0$ in negative homogeneous degrees. We can also take the homogeneous degree-zero part $B_0$ of a $\ZZ$-graded ring, which gives us morphisms $B_0 \to B_{\geq 0} \to B$ of $\ZZ$-graded rings (where we think of $B_0,B_{\geq 0}$ as $\ZZ$-graded by the procedure just described, which we suppress from notation). If $B$ is $\NN$-graded, we also have a morphism $B \to B_0$ such that the composition $B_0 \to B \to B_0$ is homotopic to the identity.
	
	Let $\MM$ be either $\NN$ or $\ZZ$, and let $B \in \Alg^\MM$. As in the ungraded case, we can adjoin a free variable to $B$ in homological degree $n \in \NN$ and homogeneous degree $d \in \MM$, which produces an $\MM$-graded $B$-algebra $B[u]$ with the universal property that the space of $\MM$-graded $B$-algebra maps $B[u] \to C$ is equivalent to the space of maps $(S^n,*) \to (C_d,0)$, i.e.,\ to $\Mod^{\MM}(B[n]_d,C)$, where $B[n]_d$ is the homogeneous degree-$d$ part of the module $B[n]$.
	
	The $\MM$-grading on $B$ induces an $\MM$-grading on each $\pi_n(B)$. We can thus define a \textit{finite quotient} of $B$ by a sequence $\underline{\sigma} = (\sigma_1,\dots,\sigma_k)$ of elements $\sigma_i \in \pi_{n_i}(B)_{d_i}$, with $n_i \in \NN$ and $d_i \in \MM$, using the graded free variables construction just described and performing cell attachments in the same way as in the ungraded case. The result is again written as $B /(\underline{\sigma})$.
	
	\subsection{Derived stacks}
	Let $\mathbf{P}$ be one of the following properties: \'{e}tale, a (Zariski) open immersion, smooth. A morphism $A \to B$ of derived rings is called $\mathbf{P}$ if $\pi_0A \to \pi_0B$ is $\mathbf{P}$, and the morphism $(\pi_nA) \otimes_{\pi_0 A} (\pi_0B) \to \pi_nB$ is an isomorphism, for any $n \geq 0$. Define the $\infty$-category of \textit{affine derived schemes} as $\Aff \coloneqq \Alg^\op$, and write $\Spec(-)\colon \Alg^\op \to \Aff$ for the obvious functor. Declare morphisms in $\Aff$ to be $\mathbf{P}$ if the corresponding map in $\Alg$ is $\mathbf{P}$. For $U = \Spec A$ in $\Aff$, the \textit{underlying classical scheme} is $U_\cl \coloneqq \Spec \pi_0A$. 
	
	We endow $\Aff$ with the \textit{\'{e}tale topology} by declaring a family $\{U_i \to U\}_i$ in $\Aff$ to be a cover if all $U_i \to U$ are \'{e}tale, and the induced maps on classical schemes $(U_i)_\cl \to U_\cl$ are jointly surjective. 
	
	A \textit{derived prestack} is a functor $X\colon \Aff^\op \to \Spc$. A \textit{derived stack} is a derived prestack which is a sheaf (in the $\infty$-categorical sense) for the \'{e}tale topology. We write $\Stk$ for the $\infty$-category of derived stacks. For $X \in \Stk$, we write $\Stk_X$ for the $\infty$-category $\Stk_{/X}$ of derived stacks over $X$. The Yoneda embedding induces an embedding $\Spec(-)\colon \Aff \to \Stk$.
	
	Recall that a space $K$ is called \emph{$n$-truncated} if $\pi_m(K,x)$ is trivial for all points $x \in K$ and all $m>n$. A  stack $X$ is called \emph{$n$-truncated} if $X(T)$ is $n$-truncated for all classical affine schemes $T$. A morphism $X \to Y$ of derived stacks is \textit{affine} if $X \times_Y (\Spec A)$ is affine, for all $\Spec A \to Y$. Then a derived stack $X$ has \textit{affine diagonal} if the canonical morphism $\Delta\colon X \to X \times X$ is affine. Observe that $X$ has affine diagonal if and only if the intersection $U \times_X V$ is affine, for any pair of affine derived schemes $U,V$ over $X$.
	
	\begin{assumption}
		From here on, assume that all derived stacks are 1-truncated and have affine diagonal, unless otherwise stated.
	\end{assumption}

	Let $\clStk$ be category of classical stacks. Composition with the inclusion $\Alg^\heart \to \Alg$ gives a functor $(-)_\cl\colon \Stk \to \clStk$ which sends $X \in \Stk$ to the \textit{underlying classical stack} $X_\cl$. This functor has a fully faithful left adjoint $\iota$, and both functors $\iota, (-)_\cl$ preserve affines. A morphism $X \to Y$ of derived stacks is \textit{quasi-compact}/\textit{a closed immersion} if the morphism $X_\cl \to Y_\cl$ is quasi-compact/a closed immersion.

	\subsection{Derived schemes}
	In this and the next paragraph, let $\mathbf{P}$ be one of the properties: \'{e}tale, smooth, an open immersion. Although we could add here any property which is \'{e}tale-local in an appropriate sense, we restrict ourselves to what we need.
	
	Let $X$ be a derived stack. A family $\{U_i \to X\}_i$ of morphisms in $\Stk$ is a \textit{cover} if the canonical map $\colim \check{C}(f) \to X$ is an equivalence, where $\check{C}(f)$ is the \v{C}ech nerve of $f\colon \bigsqcup U_i \to X$, and the colimit is taken in $\Stk$. A morphism $T \to X$ in $\Stk$ with affine source \textit{is $\mathbf{P}$} if $S \times_X T \to S$ is $\mathbf{P}$ for all affine derived scheme $S$ over $X$. The latter is well-defined, since we assume that all derived stacks have affine diagonal. 
	
	A derived stack $U$ is called a \textit{derived scheme} if there is cover $\{U_i \to U\}$ of open immersions $U_i \to U$ by affine derived schemes $U_i$. Observe, the adjunction $\iota: \clStk \leftrightarrows \Stk : (-)_\cl$ restricts to an adjunction between classical schemes (which we assume to also have affine diagonal) and derived schemes. The $\infty$-category $\Sch$ of derived schemes is closed under fiber products in $\Stk$.
	
	A morphism $U \to X$ of derived stacks with $U$ a derived scheme \textit{is $\mathbf{P}$} if $U \times_X V \to V$ is  $\mathbf{P}$, for any derived scheme $V$ over $X$.

	\subsection{Derived algebraic spaces and stacks}
	A \textit{derived algebraic space} is a $0$-truncated derived stack $Y$ for which there is an \'{e}tale cover $U \to Y$ by a derived scheme $U$. A morphism $X' \to X$ of derived stacks is \textit{representable} if for all  derived schemes $U$ over $X$ the fiber product $U \times_X X'$ is a derived algebraic space. Observe that any morphism $Y \to X$ from a derived algebraic space to a derived stack is representable. Again, this is because  $X$ has affine diagonal, and since for any affine morphism $X' \to V$ in $\Stk$ with $V$ a derived scheme it holds that $X'$ is a derived scheme as well.
	
	A morphism $Y' \to Y$ of algebraic spaces \textit{is $\mathbf{P}$} if there is an \'{e}tale cover $V \to Y'$ by a derived scheme $V$ such that the composition $V \to Y$ is $\mathbf{P}$. A representable morphism $X' \to X$ of derived stacks \textit{is $\mathbf{P}$} if for any derived scheme $U$ over $X$, the map $U \times_X X' \to U$ is $\mathbf{P}$. 
	
	A \textit{derived algebraic stack} is a derived stack $X$ such that there is a smooth cover $Y \to X$ with $Y$ a derived algebraic space. Observe, the adjunction $\iota: \clStk \leftrightarrows \Stk : (-)_\cl$ restricts to an adjunction between classical algebraic stacks and derived algebraic stacks, and also between classical algebraic spaces and derived algebraic spaces.
	
	Let $X' \to X$ be a morphism of algebraic stacks. Then $X' \to X$ is \textit{smooth} if there is a smooth cover $Y' \to X'$ by a derived algebraic space $Y'$ such that the composition $Y' \to X$ is smooth. Likewise, $X' \to X$ is \textit{\'{e}tale} if, smooth-locally on $X$, there is an \'{e}tale cover $Y' \to X'$ by a derived algebraic space $Y'$, such that the composition $Y' \to X$ is \'{e}tale. Finally,  $X' \to X$ is an \textit{open immersion} if it is representable, \'{e}tale and a monomophism. 
	
	\subsection{Quasi-coherent modules and algebras}
	 
	Let $f\colon\Spec B \to \Spec A$ be a morphism of affine derived schemes. Then we have an adjunction $f^*: \Mod_A \leftrightarrows \Mod_B : f_*$, where $f^*$ sends $M \in \Mod_A$ to $M \otimes_AB$. This gives us a functor $
	\Aff^\op \to \Cat$ that sends $\Spec R$ to $\Mod_R$.
	Taking the right Kan extension yields the functor
	\begin{align*}
		\Stk^\op \to \Cat \colon X \mapsto \QCoh(X)
	\end{align*}
	which sends $X$ to the $\infty$-category of \textit{quasi-coherent $\sO_X$-modules}. Concretely, for $X \in \Stk$, we can write $\QCoh(X)$ as the limit of the diagram $\Spec R \mapsto \Mod_R$ over the comma $\infty$-category $\Aff^\op_{/X}$ consisting of derived affine schemes over $X$. This description tells us that we can think of a quasi-cohorent $\sO_X$-module $\sF$ as a homotopy coherent diagram $\{\sF_A\}_{\Spec A \to X}$, where each $ \sF_A$ is in $\Mod_A$. We also write $\sF(A) \coloneqq \sF(\Spec A) \coloneqq \sF_A$.
	
	For $X \in \Stk$ algebraic, we can restrict the indexing category in the formula $\QCoh(X) \simeq \lim_{\Spec R \to X} \Mod_R$ to derived affine schemes which are smooth over $X$, and still get the same result. Then $\QCoh(X)$ is naturally a stable, presentable, symmetric monoidal $\infty$-category for which the tensor product commutes with colimits in each variable separately, and moreover $\QCoh(X)$ carries a $t$-structure such that $\sF \in \QCoh(X)$ is connective if and only if $\sF_A$ is, for all $\Spec A \to X$. For a morphism $f\colon X \to Y$ of derived algebraic stacks, by the adjoint functor theorem we have an adjunction $f^*:\QCoh(Y) \leftrightarrows \QCoh(X) :f_*$, where $f^*$ is symmetric monoidal and $f_*$ is right-lax symmetric monoidal.
	
	Following the same procedure, we have a functor $\Aff^\op \to \Cat$ which sends $\Spec R$ to $\Alg_R$. Right-Kan extending this yields a functor $\Stk^\op \to \Cat$, which we write as $X \mapsto \QAlg(X)$. Objects in $\QAlg(X)$ are called \textit{quasi-coherent $\sO_X$-algebras}. When restricted to derived algebraic stacks, this functor lands in the $\infty$-category of presentable, symmetric monoidal $\infty$-categories, and symmetric monoidal colimit preserving functors between them. Since we are in characteristic zero, we can also describe $\QAlg(X)$ as the $\infty$-category of $\EE_\infty$-algebras in $\QCoh(X)_{\geq 0}$. We furthermore have a left adjoint to the forgetful functor $\QAlg(X) \to \QCoh(X)_{\geq 0}$, written $\LSym_X$.
	
	\begin{example}
		For $X \in \Stk$, let $\sO_X \in \QAlg(X)$ be such that $\sO_X(A) =  A$, for all $\Spec A \to X$, which is a unit in $\QAlg(X)$. We also write $\sO_X$ for the underlying quasi-coherent module.
	\end{example} 
	
	Let $X \in \Stk$ be algebraic. For  $\sA \in \QAlg(X)$, the (relative) \textit{spectrum} of $\sA$ is the derived algebraic stack $\Spec \sA$ over $X$ such that for $f\colon T \to X$, the space $(\Spec \sA)(T)$ is the space of quasi-coherent $\sO_T$-algebra maps $f^*\sA \to \sO_T$. 
	
	Write $\Aff_X$ for the full subcategory of $\Stk_X$ spanned by derived stacks which are affine over $X$, i.e., for which the structure map is affine. For $\sA \in \QAlg(X)$, it holds that $\Spec \sA$ is affine over $X$. In fact, the functor $\sA \mapsto \Spec \sA$ yields an equivalence $\QAlg(X)^\op \simeq \Aff_X$.
	
	Let $X \in \Stk$ be algebraic, and let $\sE \in \QCoh(X)_{\geq (-1)}$ be given. Define $\VV(\sE)$ as the derived stack over $X$ with the universal property that for $f\colon T \to X$ it holds that $\VV(\sE)$ is the space of $\sO_T$-module maps $f^*\sE \to \sO_T$. If $\sE$ is connective, then $\VV(\sE) \simeq \Spec \LSym_X(\sE)$.

	\subsection{The cotangent complex and the normal bundle}
	Let $X$ be a derived algebraic stack. A quasi-coherent $\sO_X$-module $\sF$ is called \textit{almost connective} if for all $\Spec A \to X$ there is some $n \in \ZZ$ such that $\sF_A \in (\Mod_A)_{\geq n}$. 
	
	Let $T = \Spec A$ be an affine derived scheme, and $M$ a connective $A$-module. Then there is a derived version of the square-zero extension of $A$ by $M$, written $A \oplus M$, which is an $A$-algebra over $A$. We write $T[M]$ for $\Spec (A \oplus M)$.
	
	Let $f\colon X \to Y$ be a morphism of derived algebraic stacks. The \textit{cotangent complex}  of $f$ is the  $(-1)$-connective, quasi-coherent $\sO_X$-module $\BL_{X/Y}$ with the following universal property. For any $f\colon T \to X$ with $T$ affine, and any connective $\sO_T$-module $M$, the space of maps $f^*\BL_{X/Y} \to M$ of $\sO_T$-modules is equivalent to the space of diagram fillers 
	\begin{center}
		\begin{tikzcd}
			T \arrow[r, "f"] \arrow[d] & X \arrow[d] \\
			T[M] \arrow[r] \arrow[ur,dashed] & Y
		\end{tikzcd}
	\end{center}
	
	Let $X \xrightarrow{f} Y \to Z$ be a morphism of derived algebraic stacks. Then we have an exact sequence $
	f^* \BL_{Y/Z} \to \BL_{X/Z} \to \BL_{X/Y}$ in $\QCoh(X)$. Moreover, for $X' \to Y'$ obtained by pulling back a derived algebraic stack $Y'$ over $Y$ along $f$, it holds that $h^*\BL_{X/Y} \simeq \BL_{X'/Y'}$, where $h$ is the structure map $X' \to X$.
	
	For $f\colon X \to Y$ a morphism of derived algebraic stacks, we let $N_{X/Y}$ be the shifted cotangent complex $\BL_{X/Y}[-1]$, and define the \textit{normal bundle} of $f$ as $\mathbb{N}_{X/Y} \coloneqq \VV(N_{X/Y})$. Note that $\VV(N_{X/Y})$ will in general be 2-truncated, contrary to our standing assumption. This does not play a role in what follows.
	
	\section{Overview of derived blow-ups} \label{Sec:Derived Blowups}
	We continue with developing the background for our main results, but specify to the recent theory on derived blow-ups and derived Rees algebras as developed in \cite{KhanRydhDBlowup}, \cite{HekkingGraded}, \cite{Weil} and \cite{GDB}. We also establish a few statements about derived blow-ups and their local models that will be very useful in subsequent sections.

	\subsection{$\infty$-group actions}
	\label{Par:actions}
	Although the construction of the derived blow-up only uses $G$-actions on derived stacks for reductive algebraic groups $G$---which we analyse in-depth in \S \ref{Sec:Derived_equivariant_geometry}---we will use actions by $\infty$-groups in $\Stk$ in \S \ref{Sec:Xmax} for the construction of $X^\maxlocus$. In this paragraph, we therefore review the parts of this theory that we will be using from \cite{NikolausPrincipal},  \cite{LurieHTT}. We omit proofs since we only deviate  slightly from the literature in terminology and notation.
	
	Throughout, let $\HHH$ be an $\infty$-topos. Although we will only use the case $\HHH =\Stk$, presenting the theory in full generality highlights the formal character of the story. 
	
	\begin{definition}
		Let $X\colon\Delta^\op \to \HHH$ be a simplicial diagram in $\HHH$. Then $X$ satisfies the \textit{Segal condition} if, for all $n \geq 0$, the natural map
		\begin{align*}
			\varphi_n \colon X_n \to X_1 \times_{X_0} X_1 \times_{X_0} \cdots \times_{X_0} X_1
		\end{align*}
		is an equivalence, where the morphisms $\varphi_n$ come about from the spine inclusions $(0 \to 1 \to \dots \to n) \to \Delta[n]$. If this is the case, then we say that \textit{morphisms are invertible} in $X$ if the map $X_2 \to X_1 \times_{X_0}X_1$ induced by the horn inclusion $\Lambda^2[2] \to \Delta[2]$ is an equivalence.
		
		$X$ is called an \textit{$\infty$-groupoid} (in $\HHH$) if it satisfies the Segal condition such that morphisms are invertible. If moreover $X_0 \simeq *$, then $X$ is called an \textit{$\infty$-group} (in $\HHH$). Write $\Grp(\HHH)$ for the full subcategory of $\HHH^{\Delta^\op}$ spanned by $\infty$-groups in $\HHH$.
	\end{definition}
	
	For $G$ an $\infty$-group, we think of $G_1$ as the underlying object of $G$, and often conflate the two in our notation. We write $B(*,G)$ to specifically mean the full simplicial diagram $[n] \mapsto G_n \simeq G^{\times n}$, and call it the \emph{bar construction}. 
	
	\begin{example}
		An $\infty$-group(oid) in $\Set$ is a group(oid) in the classical sense. An $\infty$-group in $\Spc$ is a loop space, which one can think of as a topological group where the group structure is relaxed in a homotopy coherent way. If $\HHH$ is the category of derived stacks on an $\infty$-site $\CCC$, then $\Grp(\HHH)$ is equivalent to $\Grp(\Spc)$-valued stacks on $\CCC$. In particular, classical group stacks are $\infty$-groups in $\Stk$.
	\end{example}

	For an $\infty$-category $\CCC$, write $\Arr(\CCC)$ for the arrow category $\Fun(\Delta[1],\CCC)$, and let $\Epi(\CCC)$ be the full subcategory of $\Arr(\CCC)$ spanned by the effective epimorphisms. For an object $T$ in $\CCC$, write $\Epi_{T/}(\CCC)$ for the full subcategory of $\Epi(\CCC)$ spanned by morphisms with source $T$.
	
	\begin{proposition}
		\label{Prop:B_Omega_equiv}
		$\infty$-groupoids in $\HHH$ are effective, meaning that the \v{C}ech nerve construction gives an adjoint equivalence of $\infty$-categories $\check{C} \colon \Epi(\HHH) \simeq \Grpd(\HHH)$,
		with left adjoint induced by taking the colimit. Restricting to $\Grp(\HHH)$ gives an adjoint equivalence
		\begin{align*}
			B: \Grp(\HHH)  \rightleftarrows  \Epi_{*/}(\HHH) :  \Omega 
		\end{align*}
		where $\Omega(* \to X)$ is the derived based loop stack $* \times_X *$ (with suppressed $\infty$-group action). 
	\end{proposition}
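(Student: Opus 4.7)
The plan is to reduce the statement to the effectivity of groupoid objects in an $\infty$-topos, which is one of the Giraud--Rezk--Lurie axioms (\cite[Theorem~6.1.0.6]{LurieHTT}), and then to check that the equivalence restricts correctly to the pointed case.

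First I would recall that, by the standing hypothesis that $\HHH$ is an $\infty$-topos, the \v{C}ech nerve of any morphism $f\colon Y \to X$ in $\HHH$ is an $\infty$-groupoid object $\check{C}(f)\colon \Delta^\op \to \HHH$, and conversely every $\infty$-groupoid object $X_\bullet$ is effective, i.e., the canonical map $X_1 \to |X_\bullet| \times_{X_0} |X_\bullet|$ is an equivalence, where $|X_\bullet|$ denotes the colimit. Combined with the fact that a morphism $f$ is an effective epimorphism if and only if $|\check{C}(f)| \to X$ is an equivalence (essentially by definition of effective epimorphism in an $\infty$-topos), this immediately yields the adjoint equivalence $\check{C}\colon \Epi(\HHH) \rightleftarrows \Grpd(\HHH) \colon |\cdot|$, with the colimit functor as left adjoint.

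Next, I would restrict to the pointed setting. Given an effective epimorphism $f\colon * \to X$, its \v{C}ech nerve has $\check{C}(f)_0 \simeq *$, so the Segal condition forces $\check{C}(f)_n \simeq \check{C}(f)_1^{\times n}$, and invertibility of morphisms is automatic since every morphism has source and target $*$; this shows $\check{C}(f)$ lies in $\Grp(\HHH)$. Conversely, for $G \in \Grp(\HHH)$ the bar construction $B(*,G)$ has $B(*,G)_0 \simeq *$, and the canonical map $*\to |B(*,G)| \eqqcolon BG$ is an effective epimorphism (it is the colimit of a simplicial diagram starting at $*$, hence its \v{C}ech nerve has $0$th term equivalent to $*$, so surjectivity on $\pi_0$-sheaves follows from the Segal structure). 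Effectivity of groupoid objects identifies $\Omega(BG) = * \times_{BG} * \simeq G_1$ as the underlying object of $G$ with its $\infty$-group structure, proving the two constructions $B$ and $\Omega$ are mutually inverse.

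The only subtle point, and the main thing to verify carefully, is that the two functors $B$ and $\Omega$ indeed land in the claimed subcategories and that the equivalence of the first part restricts precisely to them---i.e., that under $\check{C} \colon \Epi(\HHH) \simeq \Grpd(\HHH)$, the full subcategory $\Epi_{*/}(\HHH)$ corresponds exactly to $\Grp(\HHH) \subset \Grpd(\HHH)$. One direction is the observation about \v{C}ech nerves above; the other direction uses that if $X_\bullet \in \Grpd(\HHH)$ has $X_0 \simeq *$, then the tautological map $X_0 \to |X_\bullet|$ is an effective epimorphism (again by the defining property of effective epimorphisms in $\HHH$ together with effectivity of $X_\bullet$). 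Since both $\check{C}$ and $|\cdot|$ are already known to be mutually inverse equivalences on the larger categories, this restriction to the essential images suffices, and no further compatibility needs to be checked. As indicated, full details are in \cite[\S 7]{LurieHTT} and \cite{NikolausPrincipal}, so the proof primarily consists of assembling these references.
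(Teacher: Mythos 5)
Your proof is correct and takes the same route the paper implicitly relies on: the paper omits the argument and defers to \cite{LurieHTT} and \cite{NikolausPrincipal}, and what you assemble---effectivity of groupoid objects in an $\infty$-topos, the characterization of effective epimorphisms via colimits of \v{C}ech nerves, and the identification of $\Epi_{*/}(\HHH)$ with $\Grp(\HHH)$ under the resulting equivalence---is exactly the standard content of those references. The one step you could phrase more cleanly is that $* \to BG$ is an effective epimorphism: rather than a $\pi_0$-surjectivity argument, note that effectivity gives $B(*,G) \simeq \check{C}(* \to BG)$ and the colimit of $B(*,G)$ is $BG$ by construction, which is precisely the definition of effective epimorphism.
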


	Let $G$ be an $\infty$-group in $\HHH$ and $X$ an object in $\HHH$. Then a \textit{$G$-object} in $\HHH$ with underlying object $X$ is an $\infty$-groupoid $B(X,G)$ over $B(*,G)$ of the form $[n] \mapsto G^n \times X$ such that $d_1\colon G \times X \to X$ is the projection. Informally, we say that $X$ is endowed with a $G$-action $\sigma\colon G \times X \to X$ if there is a $G$-object $B(X,G)$ such that $d_0 \simeq \sigma$, and refer to $B(X,G)$ as the bar-construction of $\sigma$.
	
	The $\infty$-category of $G$-objects is the full subcategory of $\Fun(\Delta^\op, \HHH)_{B(*,G)}$ spanned by $G$-objects. Informally, if $X,Y$ are endowed with a $G$-action, then a \textit{$G$-equivariant morphism} $f\colon X \to Y$ is a morphism of the corresponding $G$-objects $f_\bullet\colon B(X,G) \to B(Y,G)$ such that $f_1 \simeq f$. For a $G$-object $B(X,G)$, we write the colimit of the diagram $[n] \mapsto G^n \times X$ as $[X/G]$. If $X=*$, we simply write $BG$. Then the $\infty$-category of $G$-objects is equivalent to the $\infty$-category $\HHH_{/BG}$. 
	
	Let $f\colon T \to [X/G]$ be a morphism. Pulling back along the projection map, we obtain a $G$-object $P$  and a $G$-equivariant morphism $P \to X$, such that $[P/G] \to [X/G]$ is equivalent to $f$. A morphism $P \to T$ obtained this way is a \textit{$G$-torsor} or \textit{$G$-bundle}.
	
	For an object $V$ in $\HHH$, let $\underline{\Aut}(V)$ be the internal automorphism $\infty$-group of $V$. This is constructed as follows. Up to size-issues which we will ignore, there is an object classifier $\widehat{\Ob(\HHH)} \to \Ob(H)$, meaning that any object $X$ of $\HHH$ fits in a Cartesian diagram
	\begin{center}
		\begin{tikzcd}
			X \arrow[d] \arrow[r] & \widehat{\Ob(\HHH)} \arrow[d] \\
			* \arrow[r] & \Ob(\HHH).
		\end{tikzcd}
	\end{center}
	Then $B\underline{\Aut}(V)$ is the $\infty$-image of the morphism $* \to \Ob(\HHH)$ which classifies $V \to *$, where the $\infty$-image is by definition the colimit of the \v{C}ech nerve of $* \to \Ob(\HHH)$. In particular, $* \to B\underline{\Aut}(V)$ is an effective epimorphism, and $\underline{\Aut}(V)$ is by definition the associated $\infty$-group.
	
	For $G$ an $\infty$-group in $\HHH$, the \textit{adjoint action} of $G$ on itself is defined as the $G$-object associated to the projection $\lL(BG) \to BG$. Here, $\lL(BG)$ is the derived (free) loop stack, which is the self-intersection of the diagonal of $BG$, see \S \ref{Par:Loops}.  We write $G_\ad$ for this $G$-object, which has $G$ as underlying object. By definition, this means that $[G_\ad/G] \simeq \lL (BG)$.

	\begin{definition}
		For $\HHH = \Stk$, we introduce the following terminology. An $\infty$-group object, whose underlying object in $\Stk$ is a derived stack/algebraic stack/scheme, is called a \textit{derived group stack/algebraic stack/scheme}. For a derived group scheme $G$, a \textit{derived closed subgroup scheme} is a morphism $H \to G$ of derived group schemes which is a closed immersion on underlying derived schemes. Beware that, despite the name, a derived closed subgroup scheme $H \to G$ is in general not a monomorphism.
	\end{definition}
	
	\subsection{Derived projective spectra}
	Let $X \in \Stk$ be algebraic, let $\MM$ be either $\NN$ or $\ZZ$. As in the ungraded case, we have an $\infty$-category $\QAlg^\MM(X)$ of $\MM$-graded, quasi-coherent $\sO_X$-algebras. The operations in the affine picture on $\NN$-graded and $\ZZ$-graded algebras mentioned in \S \ref{Par:Graded_derived_rings} carry over to the global picture, for which we use the same notation.
	
	Let $\sB \in \QAlg^\NN(X)$ be given. The \textit{irrelevant ideal} $\sB_+$ of $\sB$ is the fiber of the map $\sB \to \sB_0$. Write $V(\sB_+) \to \Spec \sB$ for the closed immersion $\Spec(\sB_0) \to \Spec \sB$. The $\NN$-grading on $\sB$ induces a $\Gm$-action on $\Spec \sB$, which restricts to a $\Gm$-action on $(\Spec \sB )\setminus V(\sB_+)$. Define the (relative) \textit{projective spectrum} of $\sB$ as the quotient
	\begin{align*}
		\Proj \sB \coloneqq [((\Spec \sB )\setminus V(\sB_+)) / \Gm]
	\end{align*}
	
	Projective spectra commute with base-change in the obvious way. It follows that $\Proj \sB$ is always algebraic.  Moreover, if $\sB$ is generated in homogeneous degree 1 over $\sO_X$, then for all derived schemes $f\colon T \to X$, the derived stack $(\Proj \sB) \times_X T \simeq \Proj(f^*\sB)$ is a derived scheme as well. Here and in what follows, an $A$-algebra $B$ is generated by a submodule $B_0 \subset B$ if the natural map $\LSym_A(B_0) \to B$ is surjective (on $\pi_0$).

	\subsection{Weil restrictions}
	Let $f\colon X \to Y$ be a morphism of stacks. Consider the pullback functor $f^*\colon \Stk_Y \to \Stk_X$. Suppose that $f^*$ has a right adjoint $f_*$. Then, for $Z \to X$, we call $f_*Z$ the \textit{Weil restriction} of $Z$ along $f$, written $\Res_f(Z)$.
	
	\begin{remark}
		Although in general the existence of the Weil restriction $\Res_f(-)$ is only a set-theoretic issue, we remark that it exists without any such issues by \cite[Con.~19.1.2.3]{LurieSpectral} when $f$ is representable. Although Lurie's result is more general, and there are more general statements still, this version covers all cases that we will encounter.
	\end{remark}
	
	Since colimits are universal in any $\infty$-topos (up to size issues), the $\infty$-category $\Stk$ is Cartesian closed. For a morphism of derived stacks $X \to Y$, the right adjoint of $X \times_Y (-)\colon \Stk_Y \to \Stk_Y$ is written $\underline{\Map}_{/Y}(X,-)$, and is called the \textit{derived mapping stack}. Then $\Res_f(Z)$ is alternatively given by the stack
	\begin{align} \label{Eq:weil res}
		\Res_f(Z) = \underline{\Map}_{/Y}(X,Z) \times_{\underline{\Map}_{/Y}(X,X)} Y
	\end{align}
	Under certain finiteness conditions, the derived stack $\Res_fZ$ will be algebraic, see \cite[Thm.~5.1.1]{HalpernleistnerMapping}, \cite{Weil}. In terms of higher derived stacks, the algebraicity of $\Res_fZ$ is controlled by the algebraicity of $Z \to X$ and the Tor-amplitude of $X \to Y$. Instead of giving a general statement, we will indicate the algebraicity of the Weil restriction in particular cases when relevant.

	\subsection{Derived Rees algebras}
	Endow $\AA^1 = \Spec \CC[t^{-1}]$ with the $\Gm$-action which corresponds to declaring $t^{-1}$ to be of homogeneous degree $-1$. For $X \in \Stk$ algebraic, consider the map $\zeta_X\colon B\Gm \times X \to [\AA^1/ \Gm] \times X$ induced by the zero section. Then we can Weil-restrict along $\zeta_X$, i.e., we have a right adjoint $\Res_\zeta\colon \Stk_{B\Gm \times X} \to \Stk_{[\AA^1/\Gm] \times X}$ to the pullback functor. 
	
	Let now $Z \to X$ be a morphism of derived stacks, with $X$ still algebraic. Define the \textit{deformation space} of $Z \to X$ as the Weil restriction $\stD_{Z/X} \coloneqq \Res_{\zeta_X}(Z \times B\Gm)$ of $Z \times B\Gm$ along $\zeta_X$. Write $D_{Z/X}$ for the pullback of $\stD_{Z/X}$ along $\AA^1 \times X \to [\AA^1/\Gm] \times X$. In \cite{Weil} it is shown that the deformation space is not only functorial in $Z$, but also in the morphism $Z \to X$. The resulting functor $\stD_{(-)/(-)}$ is a right adjoint, hence in particular is stable under base-change. Consequently, the same is true for $D_{(-)/(-)}$.
	
	From here on, let $Z \to X$ be a closed immersion of derived algebraic stacks. Then $D_{Z/X} \to \AA^1 \times X$ is affine by \cite{HekkingGraded}, and $D_{Z/X}$ carries a $\Gm$-action such that $[D_{Z/X}/\Gm] \simeq \stD_{Z/X}$. We define the (derived) \textit{extended Rees algebra} of $Z$ over $X$ as the $\ZZ$-graded, quasi-coherent $\sO_X[t^{-1}]$-algebra $\sR_{Z/X}^\ext$ such that $D_{Z/X} = \Spec \sR_{Z/X}^\ext$. We let $\sR_{Z/X} \coloneqq (\sR_{Z/X}^\ext)_{\geq 0}$ be the \textit{Rees algebra} of $Z$ over $X$. In the affine case, say with $Z= \Spec B$ and $X = \Spec A$, we  write $R_{B/A}^\ext, R_{B/A}$ for the algebras corresponding to the quasi-coherent algebras $\sR_{Z/X}^\ext, \sR_{Z/X}$.
	
	The main properties of this construction are as follows. The (extended) Rees algebra is stable under base-change in the obvious way. Since $Z \to X$ is affine, we can write $Z = \Spec \sB$ for some quasi-coherent $\sO_X$-algebra $\sB$. We can then recover $\sB$ from the Rees algebra as $(\sR^\extd_{Z/X}/(t^{-1}))_0 \simeq \sB$. In fact, the pullback of $D_{Z/X} \to \AA^1 \times X$ to $\{0\} \times X$ gives the normal bundle $\mathbb{N}_{Z/X}$, living over $\{0\} \times X$ via the map $Z \to \{0\} \times X$.
	
	\begin{example}
		\label{Ex:Rees}
		Let $A \in \Alg$, let $\sigma_1,\dots,\sigma_k$ be cycles $\sigma_i \in \pi_{n_i}(A)$, and let $B$ be the finite quotient $A/(\sigma_1,\dots,\sigma_k)$. Then 
		\begin{align*}
			R^\ext_{B/A} \simeq \frac{A[t^{-1},u_1,\dots,u_n]}{(u_1t^{-1}-\sigma_1,\dots,u_nt^{-1}-\sigma_n)}
		\end{align*}
		where the $u_i$ are free in homogeneous degree 1 and homological degree $n_i$.
	\end{example}
	
	\subsection{Derived blow-ups and virtual Cartier divisors}
	We continue with the closed immersion $Z \to X$ of derived algebraic stacks. We define the \textit{derived blow-up} of $X$ in $Z$ as the projective spectrum of $\sR_{Z/X}$ over $X$, written $\Bl_ZX$. 
	
	The basic properties here are as follows. The construction is stable under arbitrary base-change in the obvious way, and if $Z, X$ are derived schemes, then so is $\Bl_ZX$. It follows that, in general, $\Bl_ZX$ is a derived algebraic stack. The structure map $\Bl_ZX \to X$ is always an equivalence outside $Z$. Over $Z$, we have the \textit{exceptional divisor $E_{Z}X$}, which is the projective normal bundle $\PP(N_{Z/X}) = [(\mathbb{N}_{Z/X} \setminus 0)/\Gm]$, and lives over $Z \to X$ as a virtual Cartier divisor (see below). We recover the classical blow-up of $X_\cl$ in $Z_\cl$ as the schematic closure of $(\Bl_ZX)_\cl \setminus (E_{Z}X)_\cl$ inside $(\Bl_ZX)_\cl$.

	Let $j\colon D \to T$ be a closed immersion of derived algebraic stacks. Then $j$ is called \textit{quasi-smooth of virtual codimension $n$} if, for any derived scheme $T' \to T$, there  are locally defined sections $f_1,\dots,f_n \in \sO_{T'}$ such that, Zariski locally on $T'$, the pullback $D\times_T T'$ is the derived vanishing locus $V(f_1,\dots,f_n) = \Spec (\sO_{T'}/(f_1,\dots,f_n))$. A \textit{virtual Cartier divisor} is a quasi-smooth closed immersion of virtual codimension 1. 
	
	\begin{example}
		The map $\zeta\colon B\Gm \to [\AA^1/\Gm]$ is a virtual Cartier divisor. In fact, this is the universal example: for any virtual Cartier divisor $j\colon D \to T$ there is a map $g\colon T \to [\AA^1/\Gm]$ which classifies $j$ in the sense that $j$ is the pullback of $\zeta$ along $g$.
	\end{example}
	
	\begin{definition}
		Let $Z \to X$ be a morphism of derived algebraic stacks, and $T$ a derived algebraic stack. A \emph{virtual Cartier divisor over $Z\to X$ (on $T$)} is a $T$-point of $\stD_{Z/X}$.
	\end{definition}
	
	The universality of $\zeta$ implies that a virtual Cartier divisor over $Z \to X$ is a commutative diagram of the form 
	\begin{equation}
		\label{E:VCD}
		\begin{tikzcd}
			D \arrow[d, "g"] \arrow[r] & T \arrow[d] \\
			Z \arrow[r] & X
		\end{tikzcd}
	\end{equation}
	where $D \to T$ is a virtual Cartier divisor. 
	
	\begin{definition}
		If $Z \to X$ is a closed immersion, then a virtual Cartier divisor over $Z \to X$ classified by the diagram \ref{E:VCD} is \emph{strict} when the underlying classical diagram is Cartesian, and the induced homomorphism  $g^\ast N_{Z/X} \to N_{D/T}$ is surjective on $\pi_0$. (This terminology deviates from \cite{KhanRydhDBlowup}, but agrees with \cite{Weil}).
	\end{definition}
	
	The derived blow-up represents strict virtual Cartier divisors over $Z \to X$ in the sense that $\Bl_ZX(T)$ is equivalent to the space of strict virtual Cartier divisors over $Z \to X$ on $T$, with the exceptional divisor as the universal example.

	\subsection{Properties and formulas of derived blow-ups}
	
	We now prove and collect a few additional properties of derived blow-ups, which we will use in Section~\ref{Sec:intr bl is der bl}.
	
	As a consequence of the functorial description of the derived blow-up, we have the following proposition, which shows that the classical truncation $(\Bl_Z U)_\cl$ depends on $Z_{\leq 1} \coloneqq \pi_{\leq 1} Z$, hence in particular is invariant under attaching $n$-cells to $\sO_Z$ for $n \geq 2$.
	
	\begin{proposition} \label{cell attachment in deg -2 prop}
		Suppose that $Z' \to Z$ is a closed embedding such that $Z'_{\leq 1} \to Z_{\leq 1}$ is an equivalence. Then the natural morphism $\Bl_{Z'} U \to \Bl_Z U$ induces an isomorphism on classical truncations.
	\end{proposition}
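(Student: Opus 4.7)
The plan is to apply the representability of derived blow-ups by strict virtual Cartier divisors and check that on classical test objects the functors of points of $\Bl_{Z'}U$ and $\Bl_Z U$ coincide. Fix a classical affine scheme $T$; a $T$-point of $\Bl_Z U$ consists of a map $T \to U$, a virtual Cartier divisor $i\colon D \to T$, and a compatible map $g\colon D \to Z$, subject to the strictness condition that the underlying classical square is Cartesian and $g^\ast N_{Z/U} \to N_{D/T}$ is surjective on $\pi_0$. The key observation is that, since $T$ is classical and $D \to T$ is Zariski-locally the derived vanishing of a single function on $T$, the derived scheme $D$ is automatically $1$-truncated: $\pi_n \mathcal{O}_D = 0$ for $n \geq 2$.

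Consequently, the datum $g$ depends only on $Z_{\leq 1}$. More precisely, for any $1$-truncated derived stack $D$ and any derived algebraic stack $W$, the canonical morphism $\Map(D, W) \to \Map(D, W_{\leq 1})$ is an equivalence: this follows from the adjunction between the Postnikov truncation $(-)_{\leq 1}$ and the inclusion of $1$-truncated derived stacks. In the affine case $W = \Spec A$, $D = \Spec B$, it reduces to the standard equivalence $\Map_{\Alg}(A, B) \simeq \Map_{\Alg}(\tau_{\leq 1}A, B)$ valid whenever $B$ is $1$-truncated, and the general case follows by smooth descent along an atlas of $W$. Applied to both $Z$ and $Z'$ and combined with the hypothesis $Z'_{\leq 1} \simeq Z_{\leq 1}$, this yields a natural equivalence $\Map(D, Z') \simeq \Map(D, Z)$ identifying the $g$-data for the two derived blow-ups.

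It remains to check that this equivalence preserves the strictness condition. Both the classical Cartesianness of the square and the $\pi_0$-surjectivity $g^\ast N_{Z/U} \twoheadrightarrow N_{D/T}$ depend only on the classical truncation $Z_\cl = (Z_{\leq 1})_\cl$ and on the classical conormal sheaf of $Z_\cl$ in $U_\cl$, and these both agree for $Z$ and $Z'$ by hypothesis. We therefore obtain equivalences $(\Bl_{Z'}U)(T) \simeq (\Bl_Z U)(T)$ natural in classical affine $T$, giving the desired isomorphism on classical truncations. The most delicate step is the descent argument upgrading the affine truncation identity for mapping spaces to the case where the target is a general algebraic stack; once that is granted, everything else is a bookkeeping check on the strictness data.
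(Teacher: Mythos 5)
Your proof is correct and takes essentially the same route as the paper's: identify $T$-points of the blow-up with strict virtual Cartier divisors over $Z \to U$, observe that over a classical affine $T$ any virtual Cartier divisor $D$ is $1$-truncated (locally it is the Koszul complex of a single function on a discrete ring), and conclude that maps $D \to Z$ factor uniquely through $Z'$ because $Z'_{\leq 1}\simeq Z_{\leq 1}$; the paper's proof is exactly this, stated more tersely. One small correction to your strictness check: $\pi_0 N_{Z/U} = \pi_1 \BL_{Z/U}$ is in general \emph{not} the classical conormal sheaf of $Z_\cl$ in $U_\cl$ --- for $Z = \Spec(\CC/(0)) \to U = \Spec \CC$ the former is $\CC$ while the latter is $0$. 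What is true, and is all you need, is that $\tau_{\leq 1}\BL_{Z/U}$ depends only on $Z_{\leq 1} \to U$, since $\BL_{Z'/Z}$ is $3$-connective whenever $Z' \to Z$ is an equivalence on $1$-truncations; this makes the strictness conditions for $Z$ and $Z'$ agree.
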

	
	\begin{proof}
		The classical truncations $(\Bl_{Z'} U)_\cl$ and $(\Bl_Z U)_\cl$ are determined by their $T$-points, where $T$ is a classical affine scheme, and a virtual Cartier divisor $D \to T$ is locally (on $T$) the vanishing locus of a section $f \in \sO_T$. In particular, since $T$ is classical, the truncation $D_{\leq 1} \to D$ is an equivalence. Since also $Z'_{\leq 1} \simeq Z_{\leq 1}$, any map $D \to Z$ factors uniquely through $Z'$.
	\end{proof}
	
	We also have the following simple proposition regarding classical truncations of derived blow-ups and pullback squares.
	
	\begin{proposition} \label{pullback square classical truncation blow-ups}
		Suppose we are given a Cartesian diagram
		\begin{center}
			\begin{tikzcd}
				Z' \arrow[r] \arrow[d] & U' \arrow[d] \\
				Z \arrow[r] & U
			\end{tikzcd}
		\end{center}
		of derived schemes, such that $U'_{\cl} \to U_{\cl}$ is an isomorphism. Then the canonical map $(D_{Z'/U'})_\cl \lr (D_{Z/U})_\cl$ is an isomorphism.
	\end{proposition}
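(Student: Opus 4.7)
The plan is to reduce the claim to a simple fiber-product identity using two formalities already recalled in the excerpt: base-change stability of the deformation space, and preservation of limits by classical truncation. No local Rees-algebra computation will be needed.

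First, since $D_{(-)/(-)}$ is stable under base change (noted just before Example~\ref{Ex:Rees}, ultimately because $\stD_{(-)/(-)}$ is a right adjoint), the given Cartesian square induces a canonical equivalence
\begin{equation*}
D_{Z'/U'} \simeq D_{Z/U} \times_U U',
\end{equation*}
under which the canonical map $D_{Z'/U'} \to D_{Z/U}$ is the projection onto the first factor.

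Second, the classical truncation functor $(-)_\cl\colon \Stk \to \clStk$ is the right adjoint of the fully faithful inclusion $\iota$, and therefore preserves all limits; in particular it preserves fiber products of derived schemes. Applying it to the equivalence above yields
\begin{equation*}
(D_{Z'/U'})_\cl \simeq (D_{Z/U})_\cl \times_{U_\cl} U'_\cl.
\end{equation*}
Since by hypothesis $U'_\cl \to U_\cl$ is an isomorphism, the right-hand side collapses to $(D_{Z/U})_\cl$, and the canonical map becomes the identity.

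The main (non-)obstacle: there is no real difficulty once the two adjunctions are unpacked. The substantive input---base-change stability of $\stD_{(-)/(-)}$, imported from \cite{Weil}---has already been recorded, and the statement then falls out of the preservation of limits by $(-)_\cl$.
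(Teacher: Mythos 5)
Your argument is correct and is essentially identical to the paper's own proof: both use base-change stability of the deformation space to write $D_{Z'/U'} \simeq D_{Z/U} \times_U U'$ and then the fact that $(-)_\cl$, being a right adjoint, commutes with pullbacks, so the hypothesis $U'_\cl \simeq U_\cl$ finishes the argument. No further comment is needed.
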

	
	\begin{proof}
		Since the deformation space is stable under pullbacks, we have
		\begin{align*}
			(D_{Z'/U'})_\cl \simeq (D_{Z/U} \times_U U')_\cl \simeq (D_{Z/U})_\cl
		\end{align*}
		where the last equivalence follows since $(-)_\cl$ commutes with pullbacks, and since $U_\cl \simeq U'_\cl$ by assumption. 
	\end{proof}
	
	In order to perform explicit computations, we will also need the following lemma. Note that $\stD_{U/U} \simeq Y \times [\AA^1/\Gm]$, hence $D_{U/U} \simeq Y \times \AA^1$.
	
	\begin{lemma} \label{functoriality of deformation spaces lemma}
		Let a sequence $Z \to U \to V$ of derived stacks be given. Then the natural map
		\begin{align*}
			\stD_{Z/U} \to \stD_{U/U} \times_{\stD_{U/V}} \stD_{Z/V}
		\end{align*}
		induced by naturality of $\stD_{(-)/(-)}$, is an equivalence.
		Consequently, the same holds true if we replace $\stD_{(-)/(-)}$ by $D_{(-)/(-)}$ and 	$\Stk_{[\AA^1/\Gm]}$ by $\Stk_{\AA^1}^{\Gm}$.
	\end{lemma}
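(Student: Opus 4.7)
The plan is to leverage the fact, invoked from \cite{Weil}, that the deformation space construction assembles into a limit-preserving functor $\stD_{(-)/(-)}\colon \Arr(\Stk) \to \Stk$ (landing in the appropriate slice), since it is a right adjoint. This reduces the problem to identifying the source arrow $(Z \to U)$ as a suitable pullback in the arrow $\infty$-category $\Arr(\Stk)$.

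First I would form, in $\Arr(\Stk)$, the pullback of the cospan
\[ (U \xrightarrow{\id} U) \longrightarrow (U \to V) \longleftarrow (Z \to V), \]
where the first morphism of arrows is $(\id_U, U \to V)$ and the second is $(Z \to U, \id_V)$. Since limits in $\Arr(\Stk)$ are computed componentwise on source and target, the pullback arrow has source $U \times_U Z \simeq Z$ and target $U \times_V V \simeq U$, and is canonically identified with $(Z \to U)$. Applying the limit-preserving functor $\stD_{(-)/(-)}$ then delivers the equivalence $\stD_{Z/U} \simeq \stD_{U/U} \times_{\stD_{U/V}} \stD_{Z/V}$ as a formal consequence.

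For the consequence about $D_{(-)/(-)}$, I would pull back everything along $\AA^1 \times U \to [\AA^1/\Gm] \times U$: by construction $D_{Z/X}$ is the pullback of $\stD_{Z/X}$ along the cover $\AA^1 \to [\AA^1/\Gm]$, equipped with the induced $\Gm$-action. Since pullbacks commute with pullbacks, this base-change turns the first equivalence into the corresponding one for $D_{(-)/(-)}$ in $\Stk_{\AA^1}^{\Gm}$, compatibly with the $\Gm$-equivariant structures.

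The main bookkeeping subtlety is that the three stacks in the pullback live over different bases: $\stD_{U/U}$ over $[\AA^1/\Gm] \times U$, while $\stD_{U/V}$ and $\stD_{Z/V}$ live over $[\AA^1/\Gm] \times V$. The connecting maps cover $U \to V$, so the pullback is well-defined and naturally lives over $[\AA^1/\Gm] \times U$, matching $\stD_{Z/U}$. Apart from this bookkeeping, there is no substantive obstacle: the right-adjoint property from \cite{Weil} does the real work. Should one prefer to avoid relying on that external input, an alternative is to verify the equivalence directly on $T$-points by unpacking the description of a virtual Cartier divisor over $Z \to X$ as a square with $D \to T$ a virtual Cartier divisor, and checking that the universal property of the pullback recovers exactly the data of such a square with bottom row $Z \to U$.
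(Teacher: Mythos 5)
Your argument is correct, but it is worth noting that the paper does not actually prove this lemma: its ``proof'' is a one-line deferral to the forthcoming reference \cite{Weil}. What you supply is a genuine derivation from the one fact about $\stD_{(-)/(-)}$ that the paper does record from \cite{Weil}, namely that it is a right adjoint (on an arrow category, valued in stacks over the varying base $[\AA^1/\Gm]\times X$). Your identification of $(Z\to U)$ as the pullback of the cospan $(U\xrightarrow{\id}U)\to(U\to V)\leftarrow(Z\to V)$ in $\Arr(\Stk)$ is right — limits there are computed componentwise, giving source $U\times_U Z\simeq Z$ and target $U\times_V V\simeq U$ — and the comparison map to the pullback induced by this cone is exactly the ``natural map'' in the statement, so limit-preservation of the right adjoint finishes the first claim; the $D_{(-)/(-)}$ statement then follows by base change along $\AA^1\times U\to[\AA^1/\Gm]\times U$, as you say. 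The only caveat is that the whole argument hinges on the precise shape of the adjunction in \cite{Weil}: one needs $\stD_{(-)/(-)}$ to be a right adjoint on a (sub)category of $\Arr(\Stk)$ containing this cospan and its pullback, with values in a category where the displayed fiber product computes the limit componentwise over $[\AA^1/\Gm]\times(-)$. Since the paper itself only asserts right-adjointness in passing and cites the same unavailable source, your proof is conditional on that input in exactly the same way the paper's is — but unlike the paper, you make the deduction explicit. Your suggested fallback of checking the equivalence on $T$-points via the moduli description of virtual Cartier divisors over $Z\to X$ would make the argument independent of \cite{Weil} and would be worth carrying out if one wanted a self-contained treatment.
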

	
	\begin{proof}
		This will appear in \cite{Weil}.
	\end{proof}
	
	Suppose now that $A$ is any algebra and we are given a sequence of closed embeddings $Z \to W \to U \to V = \Spec A$, corresponding to
	\begin{align*}
		A \to B \to C \to D
	\end{align*}
	where $B = A / (f_1, ..., f_n)$ for certain $f_i \in \pi_0 A$, and $C = B / (\sigma_1, ..., \sigma_k)$ for certain $\sigma_i \in \pi_{d_i} B$, and $D = A / (g_1, ..., g_m)$ for certain $g_i \in \pi_0A$. 
	
	We use multi-index notation, so that $B = A/(\underline{f})$ and so on. From the theory of Rees algebras, we have a diagram
	\begin{center}
		\begin{tikzcd}
			(R^\extd_{U/V})_1 \arrow[r] \arrow[d] & A \arrow[r] \arrow[d] & B \arrow[d] \\
			(R^\extd_{Z/V})_1 \arrow[r]  & A \arrow[r] & D,
		\end{tikzcd}
	\end{center}
	where the rows are fiber sequences. Write $R^\ext_{U/V} = A[t^{-1},\underline{v}]/(t^{-1}\underline{v} - \underline{f})$, and $R^\ext_{Z/V} = A[t^{-1},\underline{w}]/(t^{-1}\underline{w} - \underline{g})$, and $p$ for the composition $\pi_0A[t^{-1},\underline{v}] \to \pi_0R^\ext_{U/V} \to \pi_0R^\ext_{Z/V}$. Here, $\underline{v} = (v_1,\dots,v_n)$ and $\underline{w} = (w_1, \dots, w_m)$ are free in homological degree 0 and homogeneous degree 1.
	
	For each $i$, let $v_i'$ be the image of $v_i$ under $p$. Since $p$ sends $t^{-1}v_i - f_i$ to zero, we get $\lambda_{ji} \in \pi_0A[t^{-1},\underline{w}]$ such that
	\begin{align*}
		t^{-1}v_i' - f_i = \sum\nolimits_j \lambda_{ji}(t^{-1}w_j-g_j)
	\end{align*}
	holds in $\pi_0A[t^{-1},\underline{w}]$. Putting $t^{-1} = 0$ gives us that $f_i = \sum_j \lambda_{ji} g_j$, in particular that $\lambda_{ji} \in \pi_0A$. Putting $t^{-1}=1$ then gives $v_i' = \sum_j \lambda_{ji} w_j$. It follows that $R_{U/V}^\extd \to R_{Z/V}^\extd$ sends $v_i$ to $\sum_j \lambda_{ij} g_j$, in the sense that precomposing with $A[t^{-1},\underline{v}] \to R_{U/V}^\extd$ gives the unique map $A[t^{-1},\underline{v}] \to R_{Z/V}^\ext$ of $A[t^{-1}]$-algebras that sends $v_i$ to $\sum_j \lambda_{ij} g_j$.

	Write $R^\extd_{W/U} = B[t^{-1}\underline{u}]/(t^{-1}\underline{u} - \underline{\sigma})$ where $\underline{u} = (u_1,\dots,u_k)$ are free in homogeneous degree 1 and homological degrees $d_1,\dots,d_k$. Then, with the same argument as before, we get elements $(\tau_1,\dots,\tau_k)$ in $\pi_*R_{Z/U}^\extd$, with $\tau_i$ in homological degree $d_i$ and homogeneous degree 1 such that $\pi_*R^\extd_{W/U} \to \pi_*R^\ext_{Z/U}$ sends $u_i$ to $\tau_i$.

	\begin{proposition}
		\label{Prop:RZU}
		We have 
		\begin{align} \label{formula [-1,0] case}
			R^\extd_{Z/U} \simeq \frac{A[t^{-1},\underline{w}]}{(t^{-1}\underline{w}-\underline{g}, \sum_j \lambda_{1j}w_j,\dots,\sum_j \lambda_{nj}w_j)}
		\end{align}
		where $\underline{w} = (w_1,\dots,w_m)$ are in homological degree 0 and homogeneous degree 1. Likewise, we have
		\begin{align} \label{formula [-2,0] case}
			R^\extd_{Z/W} \simeq R^\extd_{Z/U}/(\underline{\tau})
		\end{align}
		where $\underline{\tau} = (\tau_1,\dots,\tau_k)$ are the elements with $\tau_i$ in homological degree $d_i$ and homogeneous degree 1 described above.
	\end{proposition}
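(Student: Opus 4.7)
The plan is to apply Lemma~\ref{functoriality of deformation spaces lemma} to each of the chains $Z \to U \to V$ and $Z \to W \to U$, translate the resulting equivalences of deformation spaces into pushout squares of extended Rees algebras, and then compute each pushout explicitly using the presentations listed in the text and the universal property of cell attachments.

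For \eqref{formula [-1,0] case}, the lemma applied to $Z \to U \to V$ yields a pushout square in $\ZZ$-graded $A[t^{-1}]$-algebras, so that
\[
R^\extd_{Z/U} \simeq R^\extd_{U/U} \otimes_{R^\extd_{U/V}} R^\extd_{Z/V}.
\]
The key first step is to exhibit $R^\extd_{U/U} \simeq B[t^{-1}]$ as the cell-quotient of $R^\extd_{U/V} = A[t^{-1},\underline{v}]/(t^{-1}\underline{v}-\underline{f})$ obtained by killing the $v_i$'s: setting $v_i = 0$ forces $f_i = t^{-1}v_i = 0$, giving $R^\extd_{U/V}/(\underline{v}) \simeq A[t^{-1}]/(\underline{f}) \simeq B[t^{-1}]$. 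Since cell-attachments are stable under pushout, the right-hand side collapses to $R^\extd_{Z/V}/(\underline{v}')$, where $v'_i \in \pi_0 R^\extd_{Z/V}$ is the image of $v_i$ under the map $R^\extd_{U/V} \to R^\extd_{Z/V}$. The paragraph immediately preceding the proposition computes $v'_i = \sum_j \lambda_{ij} w_j$, and combining this with the presentation of $R^\extd_{Z/V}$ from Example~\ref{Ex:Rees} yields the desired formula \eqref{formula [-1,0] case}.

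For \eqref{formula [-2,0] case}, I would apply the same strategy to $Z \to W \to U$, yielding a pushout
\[
R^\extd_{Z/W} \simeq R^\extd_{W/W} \otimes_{R^\extd_{W/U}} R^\extd_{Z/U}.
\]
Identifying $R^\extd_{W/W} \simeq C[t^{-1}]$ as the cell-quotient of $R^\extd_{W/U} = B[t^{-1},\underline{u}]/(t^{-1}\underline{u}-\underline{\sigma})$ by the $u_i$'s (now in homological degrees $d_i$ and homogeneous degree $1$) by the same mechanism---killing $u_i$ forces $\sigma_i = 0$---the pushout collapses to $R^\extd_{Z/U}/(\underline{u})$, where each $u_i$ is interpreted via the map $R^\extd_{W/U} \to R^\extd_{Z/U}$ discussed right before the proposition, namely $u_i \mapsto \tau_i$. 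This gives precisely \eqref{formula [-2,0] case}.

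The main obstacle is to justify carefully, at the level of graded $\EE_\infty$-algebras rather than just on $\pi_0$, the identifications $R^\extd_{U/U} \simeq R^\extd_{U/V}/(\underline{v})$ and $R^\extd_{W/W} \simeq R^\extd_{W/U}/(\underline{u})$. This amounts to showing that, as derived pushouts, imposing the relations $(\underline{v}, \, t^{-1}\underline{v} - \underline{f})$ is equivalent to imposing $(\underline{v}, \underline{f})$, and analogously for the second chain; this uses the cell-attachment definition of finite quotients recalled in \S\ref{Par:Graded_derived_rings} together with a homotopy that trivializes $t^{-1}v_i$ once $v_i$ is nulhomotopic, and requires keeping careful track of both homological and homogeneous gradings. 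Once these identifications are in hand, the remainder is a direct substitution via the universal property of pushouts.
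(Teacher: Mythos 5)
Your proposal is correct and follows essentially the same route as the paper: apply Lemma~\ref{functoriality of deformation spaces lemma} to the chains $Z \to U \to V$ and $Z \to W \to U$, pass to the corresponding pushouts of extended Rees algebras, and evaluate them using the cell-attachment presentations and the maps $v_i \mapsto \sum_j \lambda_{ij}w_j$, $u_i \mapsto \tau_i$. The ``main obstacle'' you flag --- identifying $R^\extd_{U/U}$ with $R^\extd_{U/V}/(\underline{v})$ and likewise for $W$ --- is handled in the paper not by an explicit homotopy but by assembling the defining cell attachments into a grid of pushout squares and invoking the pasting law, so that the outer rectangle directly computes $A[t^{-1}]/(\underline{f}) \simeq B[t^{-1}]$ (resp.\ $C[t^{-1}]$) without any separate comparison of relations.
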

	
	\begin{remark}
		As the proof will make clear, there is only a contractible space of choices involved here, since the equivalence comes about from a \textit{description} of canonically given maps.
	\end{remark}
	
	\begin{proof}
		By Lemma~\ref{functoriality of deformation spaces lemma}, we have $R^\extd_{Z/U} \simeq R^\extd_{U/U} \otimes_{R^\extd_{U/V}} R^\extd_{Z/V}$. We thus get the following diagram, where the transition maps in the bottom right corner are the canonical maps induced by functoriality of $R^\ext_{(-)/(-)}$.
		\[
		\xymatrix{%
			\ZZ[\underline{F}] \ar[rr]^{\underline{F} \mapsto 0} \ar[d]^{\underline{F} \mapsto t^{-1}\underline{v}-\underline{f}} && \ZZ \ar[d] \\
			A[t^{-1},\underline{v}] \ar[rr] \ar[d]^{\underline{v} \mapsto 0} && \frac{A[t^{-1},\underline{v}]}{(t^{-1}\underline{v}-\underline{f})} \ar[d] \ar[rr]^{v_i \mapsto \sum_j \lambda_{ij}w_j} && \frac{A[t^{-1},\underline{w}]}{(t^{-1}\underline{w} - \underline{g})} \ar[d] \\
			A[t^{-1}] \ar[rr] && B[t^{-1}] \ar[rr] && R_{Z/U}^\extd }
		\]   
		where all squares are pushouts, and $\underline{v} = (v_1,\dots,v_n)$ are free in homogeneous degree 1 and homological degree 0, and likewise for $\underline{F} = (F_1,\dots,F_n)$. The first claim follows.
		
		For the second claim, we use $R^\extd_{Z/W} \simeq R^{\extd}_{W/W} \otimes_{R^\extd_{W/U}} R^\extd_{Z/U}$ to get the diagram
		\[
		\xymatrix{%
			\ZZ[\underline{U}] \ar[rr]^{\underline{U} \mapsto \underline{u}t^{-1}-\underline{\sigma}} \ar[d]^{\underline{U} \mapsto 0} && B[t^{-1},\underline{u}] \ar[d] \ar[rr]^{\underline{u} \mapsto 0} && B[t^{-1}] \ar[d] \\
			\ZZ \ar[rr] && \frac{B[t^{-1},\underline{u}]}{(\underline{u}t^{-1}-\underline{\sigma})} \ar[d]^{\underline{u} \mapsto \underline{\tau}} \ar[rr] && C[t^{-1}] \ar[d] \\
			&& R^\extd_{Z/U} \ar[rr] && R_{Z/W}^\extd }
		\]	
		where $\underline{U} = (U_1,\dots,U_k)$ are free in homogeneous degree 1 and homological degrees $d_1,\dots,d_k$. Since all squares are pushouts, the second claim follows. 
	\end{proof}

	\section{Equivariant derived algebraic geometry}
	
	\label{Sec:Derived_equivariant_geometry}
	A main ingredient in the canonical reduction of stabilizers algorithm for algebraic stacks from \cite{EdidinRydh} is the locus $X^{\maxlocus}$ of points with maximal-dimensional stabilizers, for a given algebraic stack $X$. This locus is a closed substack of $X$ in the case that $X$ admits a good moduli space, as shown in \cite[Prop.~B.4]{EdidinRydh}.
	
	In the next section, \S \ref{Sec:Xmax}, we give  a derived analogue of $X^{\maxlocus}$ in the case that $X$ is a derived algebraic stack for which $X_\cl$ admits a good moduli space. In order to do this, in this section we give a treatment of several elements of equivariant geometry and study fixed loci in the derived setting. This allows us to obtain equivariant versions of the results of the previous section, including equivariant derived blow-ups. We note that there is some overlap between what we present here and what is presented in \cite[\S A.3]{LocThm}. 
	
	The statements of this section are derived analogues of familiar statements from classical equivariant geometry, and the arguments follow similar lines to arguments for derived analogues of ordinary (i.e., non-equivariant) classical geometry.
	
	\subsection{Equivariant rings, modules and stacks}
	\label{Par:Equivariant_rings_modules_and_stacks}
	Fix a reductive algebraic group $G$ over $\CC$. By definition, $G$ is a linear algebraic group, hence a smooth closed subscheme of some $\GL_n$ (smooth by Cartier's theorem). A \textit{classical $G$-module $V$} will be an algebraic representation of $G$ on a vector space $V$, i.e., a morphism of group schemes $G \to \GL(V)$. Write $\Rep^G$ for the category of finite dimensional $G$-modules $V$. Recall, since algebraic representations are rational, every $M \in \Rep^G$ splits into a direct sum of simple modules.

	\begin{definition}
		Let the symmetric monoidal $\infty$-category of \textit{$G$-modules}, written $\Mod^G$, be the stabilization of $\PPP_\Sigma(\Rep_G)$ with symmetric monoidal structure induced from $\Rep_G$, and let $\Alg^G$ be the $\infty$-category of connective $\EE_\infty$-algebras in $\Mod^G$.
	\end{definition}
	Recall that the heart $\QCoh(BG)^\heartsuit$ is the category of classical $G$-modules. In particular, we have a canonical inclusion $\Rep_R \to \QCoh(BG)$. Since $\QCoh(BG)_{\geq 0}$ is generated under colimits by the image of $\Rep_G \to \QCoh(BG)$, the canonical map $\PPP_\Sigma(\Rep_G) \to \QCoh(BG)_{\geq 0}$ is an equivalence, hence $\Mod^G \simeq \QCoh(BG)$. We endow $\Mod^G$ with the canonical $t$-structure induced from $\QCoh(BG)$.
	
	Since the $\infty$-category $\Mod^G_{\geq 0}$ of connective $G$-modules is presentable and the tensor product of $\Mod^G$ commutes with colimits in each variable separately, the forgetful functor $\Alg^G \to \Mod^G_{\geq 0}$ admits a left adjoint,  written as
	\begin{align*}
		\LSym^G\colon \Mod^G_{\geq 0} \to \Alg^G.
	\end{align*}
	Let $\Poly^G$ be the full subcategory of $\Alg^G$ spanned by objects of the form $\LSym^G(M)$, for $M \in \Rep^G$. Then, since we are in characteristic zero and the $G$-actions we consider are rational, $\Alg^G$ is freely generated under sifted colimits by $\Poly^G$.  
	
	The forgetful functor $\Mod^G \to \Mod$ is symmetric monoidal, which gives us a canonical forgetful functor $\Alg^G \to \Alg$. Also, the functor $\Mod \to \Mod^G$ which endows a module with trivial $G$-action preserves colimits. We write the right adjoint of this functor as $(-)^{\fix}$. Note that $(-)^\fix$ corresponds to taking the classical fixed part on $\Rep^G$.

	\begin{definition}
		Write $\Aff^G$ for the $\infty$-category $(\Alg^G)^\op$. Endow $\Aff^G$ with the \textit{{\'{e}tale topology}} by declaring a family $\{U_i\to X\}$ in $\Aff^G$ to be a cover if the underlying family of derived affine schemes is an \'{e}tale cover. Write $\Stk^G$ for the $\infty$-category of derived stacks on $\Aff^G$ with respect to the \'{e}tale topology, and $\Spec^G\colon (\Alg^G)^\op \to \Stk^G$ for the Yoneda embedding.
	\end{definition}

	For a derived stack $X$, write $\Aff(X)$ for the $\infty$-category of stacks which are affine over $X$, and $\QAlg(X)$ for the $\infty$-category of quasi-coherent $\sO_X$-algebras, so that we have an equivalence
	\begin{align*}
		\Spec(-) \colon \QAlg(X)^\op \to \Aff(X).
	\end{align*}
	Since we are in characteristic zero, $\QAlg(X)$ is also the $\infty$-category of connective $\EE_\infty$-algebras in $\QCoh(X)$. The case $X=BG$ therefore gives us
	\begin{align*}
		\Aff^G \simeq \Aff(BG)
	\end{align*}
	
	Recall that pulling back along the quotient map $\varphi\colon* \to BG$ induces an equivalence between $\Stk_{BG}$ and the $\infty$-category of stacks with a $G$-action. Since $\varphi$ is affine, this implies that $\Aff^G$ is also the $\infty$-category of affine derived schemes with a $G$-action.

	\begin{proposition}
		\label{Prop:StkG_is_StkBG}
		The $\infty$-category $\Stk^G$ is equivalent to $\Stk_{BG}$.
	\end{proposition}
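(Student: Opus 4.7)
The plan is to identify both $\Stk^G$ and $\Stk_{BG}$ with sheaves on a common site---namely $\Aff^G$ with its \'{e}tale topology---by setting up an equivalence of sites $\Aff^G \simeq \Aff_{/BG}$ and then appealing to the slice $\infty$-topos formalism.

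First, I would identify $\Aff_{/BG}$ with $\Aff^G$. Since $G$ is affine, the stack $BG$ has affine diagonal, and thus for every affine derived scheme $T$ any morphism $T \to BG$ is automatically affine. This means the comma $\infty$-category $\Aff_{/BG}$ coincides with the $\infty$-category $\Aff(BG)$ of stacks affine over $BG$, which by the paragraph immediately preceding the statement is equivalent to $\Aff^G$. Second, since $\Stk$ is the $\infty$-topos of \'{e}tale sheaves on $\Aff$, standard slice $\infty$-topos formalism identifies $\Stk_{/BG}$ with sheaves on the slice site $\Aff_{/BG}$ equipped with the induced topology, in which a family over $(T \to BG)$ is covering iff the underlying family $\{T_i \to T\}$ is an \'{e}tale cover in $\Aff$. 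Third, under the identification $\Aff_{/BG} \simeq \Aff^G$ this induced topology matches the \'{e}tale topology on $\Aff^G$ defined in the paper---both are characterized precisely by the requirement that the underlying family of affine derived schemes be an \'{e}tale cover---so we may conclude $\Stk_{BG} \simeq \Stk^G$.

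The main obstacle is the second step: rigorously matching the slice $\infty$-topos $\Stk_{/BG}$ with sheaves on a small site. The statement is standard in $\infty$-topos theory, but one must invoke a version suited to our setup so that the generating site is in fact $\Aff_{/BG}$ (rather than something larger) and so that the induced Grothendieck topology has the expected description. Once this is in place, the remaining identifications and the matching of topologies are essentially formal, and no further input is required from the particular structure of $G$ beyond its being affine.
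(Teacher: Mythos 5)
Your first step contains a genuine error: the identification $\Aff_{/BG} \simeq \Aff(BG)$ is false, and with it the claimed equivalence of sites $\Aff_{/BG} \simeq \Aff^G$. The observation that every morphism $T \to BG$ from an affine derived scheme is affine (because $BG$ has affine diagonal) only shows that $\Aff_{/BG}$ is a \emph{full subcategory} of $\Aff(BG)$; it does not show the two coincide. They do not: $\Aff(BG) \simeq \Aff^G$ has as objects all affine derived schemes with $G$-action (equivalently, all quotient stacks $[U/G] \to BG$ with $U$ affine), whereas the objects of $\Aff_{/BG}$ correspond under $\Stk_{BG} \simeq \Stk^G$ to $G$-torsors over affine bases, i.e.\ to affine derived schemes with a free action and affine quotient. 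For instance, $\mathrm{id} \colon BG \to BG$ is affine over $BG$ and corresponds to $\Spec \CC$ with trivial $G$-action in $\Aff^G$, but $BG$ is not an affine derived scheme, so this object lies in $\Aff(BG) \setminus \Aff_{/BG}$. Consequently the two sites are genuinely different, and your strategy of matching topologies on "the same" site does not get off the ground.

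The real content of the statement is precisely the comparison of sheaves on these two \emph{different} sites, which is what the paper's proof supplies: writing $f \colon \Aff_{/T} \to \Aff(T)$ for the fully faithful (but not essentially surjective) inclusion, one forms the adjunction $f_! \dashv f^*$ on stack categories by left Kan extension; $f_!$ is fully faithful since $f$ is, its essential image contains all stacks over $T$ (in particular all representables of $\Stk(\Aff(T))$) because $\Stk_T \simeq \Stk(\Aff_{/T})$, and since $\Stk(\Aff(T))$ is generated under colimits by these representables, $f_!$ is an equivalence. Your second and third steps (slice-topos formalism for $\Stk_{/BG}$ and the matching of the \'{e}tale topologies) are fine as far as they go, but without an argument of the above kind replacing your first step, the proof has a gap that cannot be closed by merely invoking affineness of $G$.
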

	\begin{proof}
		Let $T$ be any derived stack. Since $T$ has affine diagonal,  the $\infty$-category $\Aff_{/T}$ of derived affine schemes over $T$ is a full subcategory of $\Aff(T)$. Composition with the inclusion $f \colon \Aff_{/T} \to \Aff(T)$ and left Kan extension gives an adjunction
		\begin{align*}
			f_! : \Stk(\Aff_{/T}) \rightleftarrows \Stk(\Aff(T)) : f^*
		\end{align*}
		Since $f$ is fully faithful, so is $f_!$. Moreover, since $\Stk_T \simeq \Stk(\Aff_{/T})$, the image of $f_!$ contains all stacks over $T$. Since $\Stk(\Aff(T))$ is generated under colimits by stacks affine over $T$, it follows that $f_!$ is essentially surjective. Hence $\Stk_{/T} \simeq \Stk(\Aff(T))$, and putting $T = BG$ gives us what we want.
	\end{proof}
	
	Although we will use the equivalence $\Stk^G \simeq \Stk_{BG}$ seamlessly, we will not conflate a derived stack $X$ over $BG$ with the corresponding derived $G$-stack $P \coloneqq X \times_{BG} \{*\}$. In particular, if $X$ is endowed with a $G$-action as a derived stack over $BG$, then this contains no information regarding the action of $P$. 
	
	\begin{notation}
		\label{Not:BOmega}
		Let $G,G'$ be reductive groups. The equivalences $\Stk^G \simeq \Stk_{BG}$ and $\Stk^{G'} \simeq \Stk_{BG'}$ induce equivalences on functor-$\infty$-categories, which we write as
		\begin{align*}
			B: \Fun(\Stk^G,\Stk^{G'}) \leftrightarrows \Fun(\Stk_{BG}, \Stk_{BG'}) : \Omega
		\end{align*}	
	\end{notation}
	
	\begin{remark}
		Consider the map $\psi\colon BG \to *$. Then the functor $\Stk \to \Stk_{BG}$ given by pulling back along $\psi$ is right adjoint to the functor $\Stk_{BG} \to \Stk$ given by composing with $\psi$. Under the equivalence $\Stk_{BG} \simeq \Stk^G$, this corresponds to the adjunction
		\begin{align*}
			[-/G]: \Stk^G \rightleftarrows \Stk: \iota
		\end{align*}
		where $\iota$ endows a derived stack with the trivial action.

		We also have an adjunction
		\begin{align*}
			(-) \times G: \Stk \leftrightarrows \Stk^G:U 
		\end{align*}
		induced by $\varphi \colon * \to BG$, where $U$ is the forgetful functor, and the $G$-action on $Y \times G$ is given by translation, for $Y \in \Stk$. Essentially, this is because any $G$-equivariant map $Y \times G \to X$ is determined by its restriction to $Y \times \{e\} \to X$.
	\end{remark}

	Let $B \in \Alg^G$. Define the $\infty$-category $\Mod^G_B$ of \textit{$(G,B)$-modules} as the $\infty$-category of $B$-modules in $\Mod^G$, with canonical $t$-structure. Write $\Alg^G_B$ for the slice category $(\Alg^G)_{B/}$. As in the absolute case, we have an adjunction
	\begin{align*}
		\LSym^G_B : \Mod^G_B \leftrightarrows \Alg^G_B : U
	\end{align*}
	where $U$ is the forgetful functor. Let $\Rep^G_B$ be the full-$\infty$-subcategory of $\Mod^G_B$ spanned by objects of the form $B \otimes M$, where $M \in \Rep^G$. As shown in \cite{GDB}, $\Mod_B^G$ is again a derived algebraic context, and $\Rep^G_B$ generates $(\Mod^G_B)_{\geq 0}$ under sifted colimits.

	As in the non-equivariant case, we can globalize this picture via a right Kan extension to a functor
	\begin{align*}
		\QCoh^G(-) \colon (\Stk^G)^\op \to \Cat
	\end{align*}
	such that $\QCoh^G(X)$ is stable, presentable and symmetric monoidal for $X$ algebraic. Moreover, for $f\colon X \to Y$ an equivariant map of derived algebraic stacks, we have an equivariant pullback functor
	\begin{align}
		\label{Eq:equiv_pb}
		(f^G)^* \colon \QCoh^G(Y) \to \QCoh^G(X)
	\end{align}
	which is symmetric monoidal and a left adjoint.
	
	With the same procedure, we get a functor $\QAlg^G(-)$. Now for $X \in \Stk^G$ and $\sA \in \QAlg^G(X)$, it holds that $\sA$ is an $\EE_\infty$-algebra in $\QCoh^G(X)$. We can thus define the $\infty$-category $\QCoh_{\sA}^G(X)$ of $(G,\sA)$-modules as the $\infty$-category of $\sA$-modules in $\QCoh^G(X)$. The subcategory of $\QCoh_{\sA}^G(X)$ spanned by modules with trivial $G$-action is written $\QCoh_{\sA}(X)$, which is equivalent to the $\infty$-category of $\sA$-modules in $\QCoh(X)$.

	\subsection{Restriction and co-induction}
	\label{Par:Restriction_and_coinduction}
	Suppose that a reductive algebraic group $G$ acts trivially on $A \in \Alg$. Then $\Mod^G_A$ is equivalent to the $\infty$-category of $A$-modules endowed with a $G$-action, i.e., we have an equivalence
	\begin{align}
		\label{Eq:ModGA}
		\Mod^G_A \simeq \Mod^G \otimes_{\Mod} \Mod_A \simeq \Fun_\sigma((\Rep^G)^\op,\Mod_A)
	\end{align}
	where $\Fun_\sigma((\Rep^G)^\op,\Mod_A)$ is the $\infty$-category of functors $(\Rep^G)^\op \to \Mod_A$ that send finite coproducts in $\Rep^G$ to products in $\Mod_A$, and $(-)\otimes(-)$ is the Lurie tensor product, see \cite[Rem.~2.2.8]{RaksitHKR}.
	
	Let now $f\colon G \to G'$ be a homomorphism of reductive algebraic groups, and let $G'$ also act trivially on $A$. Write the induced morphism on stacks as $\varphi\colon \Spec A \times BG \to \Spec A \times BG'$, and the adjunction $\QCoh(\Spec A \times BG') \leftrightarrows \QCoh(\Spec A \times BG)$ induced by $\varphi$ as
	\begin{align*}
		\Res_\varphi : \Mod^{G'}_A \leftrightarrows \Mod^G_A : \cInd_\varphi
	\end{align*}
	We call $\Res_\varphi$ the \textit{restriction} functor, and $\cInd_\varphi$ the \textit{co-induction} functor.
	
	Recall (see, e.g., \cite[\S 3.2]{GaitsgoryStudy}), that $\Res_\varphi$ is symmetric monoidal and that $\cInd_\varphi$ is right-lax symmetric monoidal. Since
	$BG'$ has affine and flat diagonal, $BG\to BG'$ is flat and cohomologically
	affine. It follows that the functors $\Res_\varphi$ and $\cInd_\varphi$
	are $t$-exact. The adjunction therefore lifts to the level of algebras, which we again write as
	\begin{align*}
		\Res_\varphi: \Alg^{G'}_A \leftrightarrows \Alg^G_A : \cInd_\varphi
	\end{align*}
	
	\begin{notation}
		\label{Not:Res-cInd}
		Consider the obvious maps $\varphi\colon \Spec A \to \Spec A \times BG'$ and $\psi\colon \Spec A \times BG \to \Spec A$. Then the functor $\Res_\varphi\colon \Mod^{G'}_A \to \Mod_A$ is forgetting the $G'$-action, which we will write as $(-)^u$;  the functor $\Res_\psi\colon \Mod_A \to \Mod^G_A$ endows an $A$-module with trivial $G$-action, which we will write as $(-)^\tv$; and $\cInd_\psi\colon \Mod^G_A \to \Mod_A$ sends an $A$-module with $G$-action $M$ to the $G$-fixed points $M^{\fix }$. Again, we use the same notation for the induced functors on algebras.
	\end{notation}

	Globalizing $(-)^\tv \dashv (-)^\fix$ via right Kan extensions  gives us an adjunction
	\begin{align}
		\label{Eq:i_fix_adj}
		i:\QCoh(X) \leftrightarrows \QCoh^G(X) : (-)^{\fix}
	\end{align}
	for any stack $X$ with trivial $G$-action. 
	
	\begin{definition}
		Let $X \in \Stk$ algebraic with trivial $G$-action and $\sF \in \QCoh^G(X)$ be given. Consider the counit $\sF^\fix \to \sF$ of the adjunction $i \dashv (-)^\fix$. The \textit{moving part} of $\sF$ is the cofiber of the counit, written $\sF \to \sF^\mv$. We write $\QCoh^\mv(X)$ for the essential image of the functor $(-)^\mv \colon \QCoh^G(X) \to \QCoh^G(X)$.
	\end{definition}
	
	Recall that a sequence of colimit-preserving functors 
	\begin{align*}
		\CCC \xrightarrow{i} \DDD \xrightarrow{g} \EEE
	\end{align*}
	between presentable stable $\infty$-categories is \textit{exact} if $gi \simeq 0$, $i$ is fully faithful, and $\EEE$ is the Verdier quotient $\DDD/\CCC$, i.e., it is the localization of $\DDD$ at the morphisms for which the cofiber is in $\CCC$. Such a sequence is \textit{split-exact} if the right adjoints $i \dashv f$ and $g \dashv j$, which exist by the adjoint functor theorem, are such that $fi \simeq \id_{\CCC}$ and $gj \simeq \id_{\EEE}$. 
	
	\begin{remark}
		\label{Rem:Split_exact}
		The proof of the third point of Proposition \ref{Prop:OXG-modules} is inspired by \cite[App.~A]{BerghRydhDestack}, and uses the following facts on (split) exact sequences that can be found in the detailed exposition \cite[App.~A]{MoiHermitian} on the matter. 
		
		A sequence $\CCC \xrightarrow{i} \DDD \xrightarrow{g} \EEE$ with adjoints $j,f$ as above is exact if and only if $i$ is a fully faithful functor with essential image spanned by the objects $M \in \DDD$ such that $gM \simeq 0$. If the sequence is moreover split, then the sequence $\EEE \xrightarrow{j} \DDD \xrightarrow{f} \CCC$ is also a split exact sequence. In this case, the square 
		\begin{center}
			\begin{tikzcd}
				M \arrow[r] \arrow[d] & jg(M) \arrow[d] \\
				if'(M) \arrow[r] & if'jg(M)
			\end{tikzcd}
		\end{center}
		induced by the units of $g \dashv j$ and $f'\dashv i$, is Cartesian, for any $M \in \DDD$.
	\end{remark}

	\begin{proposition}
		\label{Prop:OXG-modules}
		Let $X$ be a derived algebraic stack with a $G$-action and $\varphi_X \colon X \to [X/G]$ the projection map. Consider the equivariant pullback functor 
		\begin{align*}
			(\varphi_X^G)^* \colon \QCoh^G([X/G]) \to \QCoh^G(X). 
		\end{align*}
		as displayed in \ref{Eq:equiv_pb}, where the $G$-action on $[X/G]$ is trivial.
		\begin{enumerate}
			\item Composing $(\varphi_X^G)^*$ with $i_{[X/G]} \colon\QCoh([X/G]) \to \QCoh^G([X/G])$ from (\ref{Eq:i_fix_adj})  induces an equivalence $$\QCoh([X/G]) \simeq \QCoh^G(X).$$
			\item Suppose that the $G$-action on $X$ is trivial and let $\psi_X\colon [X/G] \to X$ be the structure map. Composing the adjunction $\psi_X^* \dashv {\psi_X}_*$ with the equivalence from (i) yields the adjunction
			\begin{align*}
				i: \QCoh(X) \leftrightarrows \QCoh^G(X) : (-)^{\fix}
			\end{align*}
			from (\ref{Eq:i_fix_adj}).
			\item Suppose that the $G$-action on $X$ is trivial. Then the sequence
			\begin{align*}
				\QCoh(X) \xrightarrow{\quad i\quad} \QCoh^G(X) \xrightarrow{(-)^\mv} \QCoh^\mv(X)
			\end{align*}
			is split exact.
		\end{enumerate}	
	\end{proposition}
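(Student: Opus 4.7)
The plan is to handle the three parts in order: part (i) via descent for $G$-torsors, part (ii) by unravelling the identification from (i) in the special case of a trivial action, and part (iii) by applying the criterion of Remark~\ref{Rem:Split_exact}.

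For (i), observe that $\varphi_X\colon X \to [X/G]$ is a $G$-torsor and in particular an effective epimorphism whose \v{C}ech nerve is the bar construction $[n]\mapsto G^n \times X$. Descent for quasi-coherent sheaves along a $G$-torsor gives $\QCoh([X/G]) \simeq \lim_{[n]\in\Delta} \QCoh(G^n \times X)$, and unwinding this totalization identifies it with $\QCoh^G(X)$ (this is the standard identification of equivariant sheaves with descent data). It remains to check that the resulting equivalence is realized by the composite $(\varphi_X^G)^* \circ i_{[X/G]}$, which is essentially tautological: $i_{[X/G]}$ equips a sheaf on $[X/G]$ with the trivial equivariant structure, and then equivariantly pulling back along $\varphi_X$ recovers the canonical $G$-equivariant sheaf on $X$ coming from descent.

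For (ii), when $G$ acts trivially on $X$ we have $[X/G] \simeq X \times BG$ and $\psi_X$ is the projection onto $X$. Under the equivalence of (i), the pullback $\psi_X^*\sF = \sF \boxtimes \sO_{BG}$ corresponds to $\sF$ equipped with the trivial $G$-action, i.e.\ to $i(\sF)$. By uniqueness of right adjoints, $(\psi_X)_*$ is then forced to correspond to $(-)^\fix$.

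For (iii), we verify the hypotheses of Remark~\ref{Rem:Split_exact}. First, $i$ is fully faithful: for $\sF \in \QCoh(X)$ the sheaf $i(\sF)$ has trivial action, so $(i\sF)^\fix \simeq \sF$ and the unit of $i \dashv (-)^\fix$ is an equivalence. Second, the essential image of $i$ coincides with $\{\sF \in \QCoh^G(X) : \sF^\mv \simeq 0\}$, since by definition $\sF^\mv$ is the cofiber of the counit $i(\sF^\fix) \to \sF$, which is an equivalence exactly when $\sF$ has trivial action. This establishes exactness. For splitness, $f \coloneqq (-)^\fix$ is right adjoint to $i$ with $f\circ i \simeq \id$; a right adjoint $j$ to $(-)^\mv$ exists by the adjoint functor theorem, and we claim $j$ is the inclusion of $\QCoh^\mv(X)$. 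The key calculation is that $(\sF^\mv)^\fix \simeq 0$ for every $\sF$: applying $(-)^\fix$ to the cofiber sequence $i(\sF^\fix) \to \sF \to \sF^\mv$ and using that $(-)^\fix$ is exact (as $G$ is linearly reductive in characteristic zero, so $(-)^\fix$ is $t$-exact in both directions and hence preserves cofibers in the stable setting) yields $\sF^\fix \xrightarrow{\sim} \sF^\fix \to (\sF^\mv)^\fix$, where the first arrow is an equivalence by the triangle identity for $i \dashv (-)^\fix$. Hence $\QCoh^\mv(X) = \{\sG : \sG^\fix \simeq 0\}$, and for such $\sG$ the cofiber sequence reads $0 \to \sG \to \sG^\mv$, giving $\sG^\mv \simeq \sG$ and thus $(-)^\mv \circ j \simeq \id$. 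The main obstacle is the exactness of $(-)^\fix$ used here: this is where linear reductivity of $G$ is essential, and without it one cannot conclude the splitting.
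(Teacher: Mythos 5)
Your proposal is correct, and parts (ii) and (iii) follow essentially the same route as the paper: for (ii) you identify $\psi_X^*$ with the trivial-action functor and invoke uniqueness of adjoints, exactly as the paper does via $\QCoh([X/G])\simeq \QCoh(BG)\otimes_{\Mod}\QCoh(X)$; for (iii) both arguments reduce to Remark~\ref{Rem:Split_exact} together with the exactness of $(-)^{\fix}$ coming from linear reductivity, and your explicit computation that $(\sF^{\mv})^{\fix}\simeq 0$ (hence $\QCoh^{\mv}(X)=\{\sG:\sG^{\fix}\simeq 0\}$ and the right adjoint of $(-)^{\mv}$ is the inclusion) is a somewhat more direct packaging of the paper's $3\times 3$ diagram chase. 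The genuine divergence is in (i): the paper reduces by descent to $X=\Spec A$ affine and then computes the composite $(\varphi_X^G)^*\circ i_{[X/G]}$ explicitly through the identifications $\QCoh([X/G])\simeq\QCoh_{\sA}(BG)\simeq\Mod_A^G$ and the Lurie tensor product description of $\QCoh^G([X/G])$, whereas you invoke descent along the $G$-torsor $\varphi_X\colon X\to[X/G]$ and appeal to ``the standard identification of equivariant sheaves with descent data.'' That identification is true, but under the paper's definition of $\QCoh^G(X)$ (right Kan extension from affines of $\Mod_R^G=\Mod_R(\QCoh(BG))$ for $R\in\Alg^G$), matching the totalization $\lim_{[n]}\QCoh(G^n\times X)$ with $\QCoh^G(X)$ is essentially the content of statement (i) itself, so you should either prove it (e.g.\ by comparing both sides as limits of the same cosimplicial diagram of module categories, or via Barr--Beck--Lurie) or follow the paper's affine reduction; as written, the key step of (i) is asserted rather than established. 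The trade-off is that your torsor-descent formulation is more conceptual and generalizes beyond reductive $G$, while the paper's computation is self-contained given its chosen definitions.
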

	\begin{proof} 
		\noindent\textit{(i).} By descent, this is a local question, so we may assume that $X$ is affine, say $X = \Spec A$. Let $\sA \in \QAlg(BG)$ correspond to $A \in \Alg^G$ under the equivalence $\Alg^G \simeq \QAlg(BG)$, so that $[X/G] = \Spec \sA$ and $\QCoh([X/G]) \simeq \QCoh_{\sA}(BG)$. Now the $G$-actions on $BG$, on $[X/G]$, and on $\sA$ are all trivial. Hence, with the same argument as in (\ref{Eq:ModGA}), we also have 
		\begin{align*}
			\QCoh^G([X/G]) \simeq \QCoh_{\sA}^G(BG) \simeq \QCoh^G(BG) \otimes_{\QCoh(BG)} \QCoh_{\sA} (BG)
		\end{align*}
		Moreover, $\QCoh^G(X) \simeq \Mod^G_A \simeq  \QCoh_{\sA}(BG)$ holds by definition of $\Mod^G_A$. The composition $(\varphi_X^G)^* \circ i_{[X/G]}$ is then equivalent to the composition
		\begin{align*}
			\QCoh_{\sA}(BG) \to \QCoh^G(BG) \otimes_{\QCoh(BG)} \QCoh_{\sA}(BG) \to \QCoh_{\sA}(BG)
		\end{align*}
		where the first arrow sends $M \in \QCoh_{\sA}(BG)$ to the external tensor product $\sO_{BG} \boxtimes M$, where $\sO_{BG}$ has trivial $G$-action, and the second arrow sends $N \boxtimes K \in \QCoh^G(BG) \otimes_{\QCoh(BG)} \QCoh_{\sA}(BG)$ to $N^u \otimes K$. The claim follows.
		
		\medskip
		\noindent\textit{(ii).}
		It suffices to show that $i \simeq (\varphi^G_X)^* \circ i_{[X/G]}  \circ \psi_X^*$. Since the action on $X$ is now trivial, we have $\QCoh([X/G]) \simeq \QCoh(BG) \otimes_{\Mod} \QCoh(X)$ by \cite[Prop.~3.3.5]{GaitsgoryStudy}, and  $\psi_X^*$ is equivalent to the functor
		\begin{align*}
			\sO_{BG} \boxtimes (-) \colon  \QCoh(X) \to \QCoh(BG) \otimes_{\Mod} \QCoh(X)
		\end{align*}
		Since $(\varphi^G_X)^* \circ i_{[X/G]}$ is equivalent to the functor $N \boxtimes K \to N \otimes K$ from $\QCoh(BG) \otimes_{\Mod} \QCoh(X)$  to $\QCoh^G(X)$,  the claim follows.
		
		\medskip
		\noindent\textit{(iii).}
		The functor $(-)^{\mv}\colon\QCoh^G(X) \to \QCoh^\mv(X)$ preserves colimits, hence has a right adjoint, say $j\colon \QCoh^\mv(X) \to \QCoh^G(X)$. For $M \in \QCoh^\mv(X)$, take $N \in \QCoh^G(X)$ such that $N^\mv \simeq M$. Consider the following commutative diagram
		\begin{center}
			\begin{tikzcd}
				N^\fix \arrow[r] \arrow[d] & N^\fix \arrow[r] \arrow[d] & (jM)^\fix \arrow[d] \\
				N^\fix \arrow[r] \arrow[d] & N \arrow[r] \arrow[d] & jM \arrow[d] \\
				0 \arrow[r] & N^\mv \arrow[r] & (jM)^\mv
			\end{tikzcd}
		\end{center}
		The middle row is exact, hence so is the top one by exactness and idempotency of $(-)^\fix$, and the right column is exact by definition.  It follows that $(jM)^\fix = 0$, hence $jM \simeq (jM)^\mv$. The diagram also shows that $(-)^\mv \circ (-)^\fix \simeq 0$. 
		
		By construction, for any $K \in \QCoh^G(X)$, it holds that $K^\mv \simeq 0$ if and only if $K^\fix \to K$ is an equivalence.	Since $i$ is fully faithful, the sequence
		\begin{align}
            \label{Eq:fix_mv_split}
			\QCoh(X) \xrightarrow{i} \QCoh^G(X) \xrightarrow{(-)^\mv} \QCoh^\mv(X)
		\end{align}
		is thus split exact, by Remark \ref{Rem:Split_exact}.
	\end{proof}

	\begin{corollary}
		\label{Cor:fix_mv_cofiber}
		Let $G$ act trivially on a derived algebraic stack $X$. Then for each $\sF \in \QCoh^G(X)$ the canonical sequence
		\begin{align}
			\label{Eq:fix_mv_cofiber}
			\sF^\fix \to \sF \to \sF^\mv
		\end{align}
		is split. Consequently, $(-)^\fix$ is also a left adjoint of $i$.
	\end{corollary}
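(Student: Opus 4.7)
My plan is to show the first assertion---that the fiber sequence $\sF^\fix \to \sF \to \sF^\mv$ is split---by upgrading the split exactness of Proposition \ref{Prop:OXG-modules}(iii) to an honest product decomposition $\QCoh^G(X) \simeq \QCoh(X) \times \QCoh^\mv(X)$. First I would apply Remark \ref{Rem:Split_exact} to obtain that the dual sequence $\QCoh^\mv(X) \xrightarrow{j} \QCoh^G(X) \xrightarrow{(-)^\fix} \QCoh(X)$ is also split exact. This forces $(-)^\fix \circ j \simeq 0$ and identifies the essential image of $j$ as the full subcategory of objects with vanishing fixed part. Combined with $(-)^\mv \circ i \simeq 0$ established in the proof of (iii), and the retractions $(-)^\fix \circ i \simeq \id$ and $(-)^\mv \circ j \simeq \id$, this orthogonality and idempotency data is exactly what is needed to exhibit $((-)^\fix, (-)^\mv) \colon \QCoh^G(X) \to \QCoh(X) \times \QCoh^\mv(X)$ as an equivalence of stable presentable $\infty$-categories, with inverse $(\sG, \sH) \mapsto i\sG \oplus j\sH$.

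An alternative, more hands-on approach would be to reduce to the affine case $X = \Spec A$ with trivial $G$-action by faithfully flat descent, where $\QCoh^G(X) \simeq \Mod^G_A$. In characteristic zero $\Rep^G$ is semisimple, and splits canonically as a direct sum of the full subcategories on trivial and on purely non-trivial representations. I would then argue that this decomposition is preserved by the cocompletion under sifted colimits $\PPP_\Sigma$, by stabilization, and by base change along $\Mod \to \Mod_A$, yielding $\Mod^G_A \simeq \Mod_A \oplus \Mod^{G,\mv}_A$. Under this equivalence any $M \in \Mod^G_A$ decomposes canonically as $M \simeq M^\fix \oplus M^\mv$, and I would globalize by descent to get the splitting on $\QCoh^G(X)$.

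Once the product decomposition is in place, the second assertion will be formal: a projection onto a factor of a product of stable presentable $\infty$-categories is simultaneously a left and right adjoint to the inclusion of that factor, so $(-)^\fix \dashv i$ follows at once. The hardest step will be rigorously establishing the product decomposition itself---the first approach requires a clean general statement that split exactness together with the appropriate orthogonality and retraction relations implies a product decomposition in the $\infty$-categorical setting, while the second approach requires verifying naturality of the semisimple decomposition of $\Rep^G$ through $\PPP_\Sigma$, stabilization, and base change. Both should be tractable but will require attention to $\infty$-categorical coherence.
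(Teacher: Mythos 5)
Your primary approach has a genuine gap at exactly the step you flag as the hardest one. The data you extract from Remark \ref{Rem:Split_exact} --- split exactness of both sequences, the retractions $(-)^\fix\circ i\simeq \id$ and $(-)^\mv\circ j\simeq \id$, the vanishings $(-)^\mv\circ i\simeq 0$ and $(-)^\fix\circ j\simeq 0$, and the identification of the essential image of $j$ with $\ker((-)^\fix)$ --- holds in \emph{every} stable recollement and does not force a product decomposition. Concretely, take $\DDD=\Fun(\Delta^1,\Spct)$, $i\colon X\mapsto (X\to 0)$, $g=\mathrm{ev}_1$, $f=\mathrm{fib}$, $j\colon Y\mapsto (Y\xrightarrow{\id}Y)$: all of the identities above hold, yet $\DDD$ is not the product of the two copies of $\Spct$. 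What separates a product from a general recollement is the vanishing of the gluing functor $f'\circ j$, where $f'$ is the \emph{left} adjoint of $i$ --- equivalently the vanishing of $\Map(j\sH,i\sG)$, i.e.\ of maps from moving objects \emph{into} fixed objects. Your vanishing $(-)^\fix\circ j\simeq 0$ only controls the other direction $\Map(i\sG,j\sH)\simeq \Map(\sG,(j\sH)^\fix)$ via $i\dashv(-)^\fix$ (in the example, $f'\circ j=\mathrm{ev}_0\circ j=\id\neq 0$ while $f\circ j=0$). So the ``clean general statement'' you hope for is false, and the entire content of the corollary is the non-formal vanishing $f'\circ j\simeq 0$.

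This is precisely what the paper's proof isolates: from the Cartesian fracture square of Remark \ref{Rem:Split_exact} it reduces both claims to showing $if'j\sF^\mv\simeq 0$, and proves that by reducing to the classical case --- the functors involved preserve colimits, so one checks the vanishing on the generators coming from $\Rep^G$, where it is the semisimplicity statement that a nontrivial irreducible representation admits no nonzero maps to the trivial one. Your second, ``hands-on'' approach (isotypic decomposition of $\Rep^G$ propagated through $\PPP_\Sigma$, stabilization and base change along $\Mod\to\Mod_A$, then descent) supplies exactly this missing input and would work; it is essentially the same reduction to semisimplicity that the paper performs, packaged as a decomposition of the whole category rather than as the vanishing of a single functor. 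If you keep approach 1, you must add the computation of $f'\circ j$ on generators; the rest of your formal framework is then fine and does recover both the splitting of \eqref{Eq:fix_mv_cofiber} and the identification of $(-)^\fix$ as a left adjoint of $i$.
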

	
	\begin{proof}
        Since the sequence (\ref{Eq:fix_mv_split}) is split, we have a left adjont $f' \dashv i$, and for any $\sF \in \QCoh^G(X)$ a Cartesian diagram
        \begin{center}
            \begin{tikzcd}
                \sF \arrow[r] \arrow[d] & j\sF^\mv \arrow[d] \\
                if'\sF \arrow[r] & if'j\sF^\mv
            \end{tikzcd}
        \end{center}
        For both claims, it suffices to show that $if'j\sF^\mv \simeq 0$. Since $f', i$ are both left adjoint, this reduced to the classical case, which is know. 
	\end{proof}

	\subsection{Relative fixed loci of derived stacks}
	Let $f\colon G \to G'$ be a surjective homomorphism of reductive algebraic groups, and $\varphi\colon BG \to BG'$ the induced map on stacks. 
	\begin{definition}
		Let $f_*\colon \Stk^G \to \Stk^{G'}$ be the functor that sends a derived stack $X\colon \Alg^G \to \Spc$ to 
		\begin{align*}
			\Alg^{G'} \xrightarrow{\Res_\varphi} \Alg^G \xrightarrow{X} \Spc
		\end{align*}
		Let $B, \Omega $ be as in Notation \ref{Not:BOmega}. Write $\varphi_*$ for the functor $Bf_* \colon \Stk_{BG} \to \Stk_{BG'}$. Conversely, let $\varphi^*\colon\Stk_{BG'} \to \Stk_{BG}$ be the pullback functor along $\varphi$, and let $f^*$ be the functor $\Omega \varphi^* \colon \Stk^{G'} \to \Stk^G$.
	\end{definition}
	
	\begin{lemma}
		\label{Lem:Res_fully_faith}
		The functor $\Res_\varphi\colon \Alg^{G'} \to \Alg^G$ is fully faithful.
	\end{lemma}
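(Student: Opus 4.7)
The plan is to first reduce the fully faithfulness of $\Res_\varphi$ on algebras to the corresponding statement on modules via symmetric monoidality, and then verify the latter using the adjunction $\Res_\varphi \dashv \cInd_\varphi$ and linear reductivity of $\ker(f)$.

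I would first set $H \coloneqq \ker(f)$ and observe that $H$ is linearly reductive: its unipotent radical $R_u(H)$ is characteristic in $H$ and hence normal in $G$ (since $H$ is normal in $G$); as $G$ is reductive, $R_u(G) = 1$, which forces $R_u(H) = 1$. The short exact sequence $1 \to H \to G \to G' \to 1$ then induces a fiber sequence $BH \to BG \to BG'$, i.e.\ a Cartesian square with corners $BH, BG, \ast, BG'$ whose right vertical map is $\varphi$ and whose left vertical map is $\pi \colon BH \to \ast$.

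Next, I would reduce the algebra-level statement to its module-level counterpart. Since $\Res_\varphi\colon \Mod^{G'} \to \Mod^G$ is symmetric monoidal and $t$-exact, and $\Alg^{G'}, \Alg^G$ are the $\infty$-categories of connective $\EE_\infty$-algebras in their respective module categories, mapping spaces between algebras can be computed as limits of mapping spaces between tensor powers of the underlying modules. Hence it suffices to show that $\Res_\varphi$ is fully faithful on modules.

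Finally, by the adjunction $\Res_\varphi \dashv \cInd_\varphi$, fully faithfulness on modules is equivalent to the unit $V \to \cInd_\varphi \Res_\varphi V$ being an equivalence for all $V \in \Mod^{G'}$. Since $\psi \colon \ast \to BG'$ is an effective epimorphism, $\psi^\ast$ is conservative, so it suffices to verify this after applying $\psi^\ast$. Base change along the Cartesian square above yields $\psi^\ast \cInd_\varphi \Res_\varphi V \simeq \pi_\ast \pi^\ast \psi^\ast V$, and the projection formula together with $\pi_\ast \sO_{BH} \simeq \CC$ (which holds since $H$ is linearly reductive in characteristic zero) identifies this with $\psi^\ast V$. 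The main technical obstacle is justifying the base change and projection formulas in this derived setting, both of which are standard here since $\varphi$ is representable and cohomologically affine; the remaining reductions and computations are formal manipulations of adjunctions and symmetric monoidal functors.
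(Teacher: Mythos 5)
Your argument is correct in outline but takes a genuinely different route from the paper's. The paper performs the same initial reduction to full faithfulness of $\Res_\varphi$ on connective modules, but then argues via the universal property of the nonabelian derived category: since $\Res_\varphi$ on $\Mod^{G'}_{\geq 0} \simeq \PPP_\Sigma(\Rep^{G'})$ preserves colimits and restricts to the classical restriction functor $h\colon \Rep^{G'} \to \Rep^G$ on the generators, it is the left derived functor of $h$, and by \cite[Prop.~5.5.8.22]{LurieHTT} full faithfulness follows once $h$ is fully faithful with essential image in the compact projective objects --- both immediate from surjectivity of $f$ and reductivity of $G$. Your route instead identifies full faithfulness with invertibility of the unit of $\Res_\varphi \dashv \cInd_\varphi$ and computes that unit by descending to the fiber $BH$ of $\varphi$, using $\pi_\ast\sO_{BH}\simeq\CC$. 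The paper's argument is more elementary (no base change or projection formula is needed, only a classical fact about finite-dimensional representations), whereas yours is more geometric and would transfer to settings where one has control over the fiber but no convenient generating family of compact projectives.

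One justification in your last step is wrong as stated: $\varphi\colon BG \to BG'$ is \emph{not} representable --- its fiber over $\psi\colon \ast \to BG'$ is $BH$, which is an algebraic space only when $H$ is trivial. The base-change equivalence $\psi^\ast\varphi_\ast \simeq \pi_\ast\iota^\ast$ and the projection formula do hold here, but for a different reason: $\varphi$ is cohomologically affine with affine diagonal in characteristic zero (equivalently, $BG$, $BG'$, $BH$ are perfect stacks and all the pushforwards involved are $t$-exact and commute with colimits), so both formulas can be checked on the compact projective generators $\Rep^{G'}$. With that repair, and your (correct) verification that $H=\ker f$ is linearly reductive, the argument goes through.
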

	
	\begin{proof}
		Since  $\Res_\varphi\colon \Mod^{G'} \to \Mod^G$ is $t$-exact, we can restrict it to a functor between connective objects, written $H\colon \Mod^{G'}_{\geq 0} \to \Mod^G_{\geq 0}$.  It suffices to show that $H$ is fully faithful. Let $h\colon\Rep^{G'} \to \Rep^G$ be the classical restriction functor.
		
		Since $H$ preserves colimits, it is determined by its restriction to $\Rep^{G'}$. Since $H$ restricted to $\Rep^{G'}$ is $h$, we see that $H$ is the left derived functor of $h$ by \cite[Prop.~5.5.8.15]{LurieHTT}. By \cite[Prop.~5.5.8.22]{LurieHTT}, it therefore suffices to show that $H \colon \Rep^{G'} \to \Mod^G_{\geq 0}$ is fully faithful and with essential image contained in the compact projective objects of $\Mod^G_{\geq 0}$.
		
		We already know that the essential image of $H$ on $\Rep^{G'}$ consists of compact projective objects, since $H=h$ on $\Rep^{G'}$. What remains to show is that $h$ is fully faithful, which follows from the fact that $f$ is surjective.
	\end{proof}

	\begin{proposition}\mbox{}
		\label{Prop:Equiv_Weil_Adj}
		The functors $f^*,f_*$ fit into an adjunction
		\begin{align*}
			f^* : \Stk^{G'} \leftrightarrows \Stk^G : f_*
		\end{align*}
		where $f^*$ is fully faithful. Consequently, the functor $\varphi_*$ is the Weil restriction along $\varphi \colon BG \to BG'$.
	\end{proposition}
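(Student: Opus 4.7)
The plan is to first verify the computation $f^*\Spec^{G'}A' \simeq \Spec^G \Res_\varphi A'$ on representables, then derive the adjunction and the fully faithfulness claim formally, with Lemma~\ref{Lem:Res_fully_faith} playing the key role for the latter.

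For the computation, I would work under the equivalence of Proposition~\ref{Prop:StkG_is_StkBG}. The representable $\Spec^{G'}A'$ corresponds to $[\Spec A'/G'] \to BG'$, whose pullback $\varphi^*[\Spec A'/G'] = [\Spec A'/G'] \times_{BG'} BG$ I would identify with $[\Spec \Res_\varphi A'/G] \to BG$. A $T$-point of the fiber product is a $G$-torsor $P \to T$ together with a $G'$-equivariant map $G' \times_G P \to \Spec A'$; by the induction-restriction adjunction $\Map_{G'}(G'\times_G P,\Spec A') \simeq \Map_G(P,\Res_\varphi \Spec A')$, this is the same as a $G$-equivariant map $P \to \Spec A'$ for the restricted action, which is a $T$-point of $[\Spec \Res_\varphi A'/G]$. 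Applying the equivalence $\Omega$ yields $f^*\Spec^{G'}A' \simeq \Spec^G\Res_\varphi A'$.

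Next, for the adjunction $f^* \dashv f_*$: since $f^* = \Omega\varphi^*$ and pullback in the $\infty$-topos $\Stk$ preserves all colimits, $f^*$ admits a right adjoint $R$ by the adjoint functor theorem. Evaluating on representables, I would compute $RX(A') = \Map_{\Stk^G}(\Spec^G\Res_\varphi A', X) = X(\Res_\varphi A') = f_*X(A')$, and since $R$ and $f_*$ are stacks determined by their values on representables, $R \simeq f_*$. For fully faithfulness of $f^*$, I would show that the unit $\eta_Y\colon Y \to f_*f^*Y$ is an equivalence: on representables, $(f_*f^*\Spec^{G'}A')(B') = \Map_{\Alg^G}(\Res_\varphi A', \Res_\varphi B') = \Map_{\Alg^{G'}}(A', B') = \Spec^{G'}A'(B')$, using full faithfulness of $\Res_\varphi$ from Lemma~\ref{Lem:Res_fully_faith}. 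The extension to all $Y$ invokes the standard fact that the left Kan extension of a fully faithful functor between small $\infty$-categories remains fully faithful on presheaf categories, combined with compatibility of sheafification, which holds since the étale topologies on $\Aff^G$ and $\Aff^{G'}$ are both inherited from $\Aff$.

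The consequence on Weil restriction is immediate: under $B, \Omega$, the adjunction $f^* \dashv f_*$ translates to $\varphi^* \dashv \varphi_*$ with $\varphi_* = Bf_*$, so $\varphi_*$ is the right adjoint of $\varphi^*$, which is by definition the Weil restriction along $\varphi$. The main obstacle I anticipate is the first step: the $T$-points computation requires care in the derived setting to ensure the descent argument along $G' \times_G P$ behaves appropriately at the level of $\infty$-stacks, but this ultimately reduces to the universal property of the fiber product combined with the induction-restriction adjunction.
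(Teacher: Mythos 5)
Your proof is correct and follows essentially the same route as the paper's: the paper takes the left adjoint $F$ of $f_*$ via left Kan extension, observes that $F$ and $f^*$ agree on representables (the identification $\varphi^*[\Spec A'/G'] \simeq [\Spec \Res_\varphi A'/G]$, which you verify explicitly through the induction--restriction adjunction for torsors), and concludes by colimit-preservation, with full faithfulness resting on Lemma~\ref{Lem:Res_fully_faith} in both cases. Your only deviation is running the identification in the opposite direction --- producing a right adjoint of $f^*$ by the adjoint functor theorem and matching it with $f_*$ on representables --- which is an equivalent formal manoeuvre.
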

	\begin{proof}
		Take a left adjoint $F$ to $f_*$ via left Kan extensions. Since $\Res_\varphi$ is fully faithful by Lemma \ref{Lem:Res_fully_faith}, it holds that $F$ and $f^*$ agree when restricted to $\Aff^{G'}$. Since both functors commute with colimits and $\Aff^{G'}$ generates $\Stk^{G'}$ under colimits, $F$ and $f^*$ therefore agree on all of $\Stk^{G'}$.
	\end{proof}

	\subsection{Absolute fixed loci of derived stacks}
	\label{Subsec:Absolute_fixed_loci}
	Consider now, for a reductive algebraic group $G$, the adjunction induced by the canonical map $G \to *$ via Proposition \ref{Prop:Equiv_Weil_Adj}, written as 
	\begin{align*}
		\iota: \Stk \leftrightarrows \Stk^G : (-)^G
	\end{align*}
	\begin{remark} 
		\label{Rem:HLP}
		Let $U \in \Stk^G$ and put $X \coloneqq [U/G]$. Observe that the derived mapping stack $\underline{\Map}(BG, X)$ exists without any set-theoretic issues. Define $\underline{\Map}^G(\ast, X)$ via the Cartesian diagram
		\begin{align*}
			\xymatrix{
				\underline{\Map}^G(\ast, X) \ar[r] \ar[d] & \underline{\Map}(BG, X) \ar[d]^-{(X \to BG)_*} \\
				\ast \ar[r] & \underline{\Map}(BG, BG).
			}
		\end{align*}
		Proposition~\ref{Prop:Equiv_Weil_Adj} and the formula~\eqref{Eq:weil res} for the Weil restriction imply that $\underline{\Map}^G(\ast, X)$ is the Weil restriction $\psi_*X$ along $\psi \colon BG \to *$. Moreover, it holds
		\begin{align*}
			\Stk(T,\psi_*X) \simeq \Stk_{BG}(T \times BG,X) \simeq \Stk^G(T,U) \simeq \Stk(T,U^G)
		\end{align*}
		giving another description of $U^G$ in terms of mapping stacks.
		
		Assume now that $X$ is algebraic and locally of finite presentation. Observe that $\psi \colon BG \to \ast$ is formally proper by \cite[Prop.~4.3.4]{HalpernleistnerMapping}, that $\psi_*\sO_{BG} \simeq \psi_*\psi^*\CC \simeq \CC$ since $\psi^*$ is the fully faithful functor $(-)^\tv \colon \Mod_{\CC} \to \Mod^G_{\CC}$, and that $\psi_*$ is of Tor-amplitude $\geq 0$ by the previous point. Therefore, because $X$ is locally of finite presentation and all stacks are assumed to have affine diagonal, \cite[Thm.~5.1.1]{HalpernleistnerMapping} implies that all  mapping stacks in the above diagram are algebraic, hence that $U^G$ is as well.
	\end{remark}

	Write $\cl\Stk$ for the category of classical stacks, and $\cl\Stk^G$ for the category of classical stacks with $G$-action. We then have an adjunction
	\begin{align*}
		\iota_\cl : \cl\Stk \rightleftarrows \cl\Stk^G : (-)^{G,\cl}
	\end{align*}
	where $\iota_\cl$ endows a stack with trivial $G$-action, and $(-)^{G,\cl}$ takes the classical fixed points. We thus have a diagram
	\begin{equation}
		\label{Eq:FixVsCl}
		\begin{tikzcd}
			\Stk \arrow[r, "\iota"] \arrow[d, swap, "(-)_\cl"] & \Stk^G \arrow[r, "(-)^G"] \arrow[d, "(-)_{\cl,G}"] & \Stk \arrow[d, "(-)_\cl"] \\
			\cl\Stk \arrow[r, swap,  "\iota_\cl"] & \clStk^G \arrow[r, swap,  "(-)^{G,\cl}"] & \clStk
		\end{tikzcd}
	\end{equation}
	where $(-)_{\cl,G}$ is taking the underlying classical $G$-stack. We will often write $(-)_{\cl, G}$ resp.~$(-)^{G,\cl}$ simply as $(-)_\cl$ resp.~as $(-)^G$, which is justified by the first point of the following result.
	
	\begin{proposition}\mbox{}
		\label{FixPtProps}
		\begin{enumerate}
			\item The diagram (\ref{Eq:FixVsCl}) is commutative. 
			\item Suppose that $X \in \Stk^G$ is a derived scheme such that $X_\cl$ is separated. Then $X^G$ is a derived scheme, and the unit $\iota X^G \to X$ of the adjunction $\iota \dashv (-)^G$ is a closed immersion. In particular, for $B \in \Alg^G$, it holds that $(\Spec^G B)^G$ is affine.
		\end{enumerate}
	\end{proposition}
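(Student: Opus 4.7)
The plan is to reduce both parts to formal consequences of the adjunctions and to the classical fact that fixed loci of separated schemes are closed.

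For (i), the commutativity of the left square is immediate since endowing a derived stack with the trivial $G$-action commutes with classical truncation on the nose. For the right square, I would unfold the universal properties: for $X \in \Stk^G$ and $T$ a classical affine scheme,
\begin{align*}
(X^G)_\cl(T) \simeq X^G(T) \simeq \Stk^G(\iota T, X),
\end{align*}
where the first equivalence uses that $T$ is classical and the second is the adjunction $\iota \dashv (-)^G$. Since $\iota T$ is $0$-truncated with trivial $G$-action, any map to $X$ in $\Stk^G$ factors through the closed immersion $X_\cl \hookrightarrow X$, so this mapping space reduces to $\clStk^G(\iota_\cl T, X_\cl)$. The classical adjunction $\iota_\cl \dashv (-)^{G,\cl}$ then identifies it with $(X_\cl)^{G,\cl}(T)$, naturally in $T$, yielding the desired equivalence of classical stacks.

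For (ii), the first step is to apply (i), which identifies $(X^G)_\cl$ with the classical fixed locus $(X_\cl)^G$. Since $X_\cl$ is a separated classical scheme with action of the algebraic group $G$, a standard classical result (going back to Fogarty, cf.~the appendix of \cite{EdidinRydh}) guarantees that $(X_\cl)^G \hookrightarrow X_\cl$ is a closed immersion. By the convention adopted in this paper for closed immersions of derived stacks, which depends only on classical truncations, this implies that the canonical map $\iota X^G \to X$ is itself a closed immersion of derived stacks.

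It remains to check that $X^G$ is a derived scheme. Since closed immersions are affine, the map $\iota X^G \to X$ is affine; pulling back along any affine open cover of the derived scheme $X$ then exhibits $X^G$ locally as an affine derived scheme, so $X^G$ is a derived scheme. For the final assertion, when $X = \Spec^G B$ is affine the underlying derived scheme of $X^G$ is a closed subscheme of the affine derived scheme $\Spec B$, and hence is itself affine. I expect the main conceptual step to be the reduction in (i) --- once the right square is phrased cleanly via the adjunctions and the $0$-truncation of $T$, everything reduces to a classical mapping space computation, and the remainder of the proposition follows formally from the classical closedness theorem.
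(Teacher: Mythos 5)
Your proof is correct and follows essentially the same route as the paper: part (i) is the paper's argument that the two composite right adjoints must agree because the composite left adjoints do, merely unwound pointwise on mapping spaces out of classical affine $T$, and part (ii) is the same reduction via (i) to the classical closedness of fixed loci of separated $G$-schemes (the paper cites Conrad--Gabber--Prasad rather than Fogarty, but either reference suffices) combined with the paper's convention that closed immersions of derived stacks are detected on classical truncations. Your extra paragraph spelling out why $X^G$ is a derived scheme and why the affine case follows is detail the paper leaves implicit; the only nitpick is that the factorization of $\Stk^G(\iota T,X)$ through $X_\cl$ uses that $T$ is \emph{classical} (i.e., in the image of $j^G$), not merely that it is $0$-truncated.
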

	
	\begin{proof}
		\noindent\textit{(i)}
		Clearly, it holds $\iota_\cl \circ (-)_\cl \simeq (-)_{\cl,G} \circ \iota$. Let now $j\colon\cl\Stk \to \Stk$ and $j^G\colon\cl\Stk^G \to \Stk^G$ be the inclusions. Then also $\iota \circ j = j^G \circ \iota_\cl$. By composing adjunctions, we have
		\begin{align*}
			\iota \circ j \dashv (-)_\cl \circ (-)^G && \& && j^G \circ \iota_\cl \dashv (-)^{G,\cl} \circ (-)_{\cl, G}
		\end{align*}
		and therefore $(-)_\cl \circ (-)^G \simeq (-)^{G,\cl} \circ (-)_{\cl,G}$.
		
		\medskip
		\noindent\textit{(ii)}	
		Let $X \in \Stk^G$ be a derived scheme such that $X_\cl$ is separated. By \cite[Prop.~A.8.10]{PseudoRed} it holds that $X_\cl^G$ is a closed subscheme of $X_\cl$, hence that $X^G$ is a closed subscheme of $X$.
	\end{proof}
	
	We can now give an explicit description of $\sO_{(\Spec B)^G}$ for $B \in \Poly^G$. By Proposition \ref{FixPtProps}, the adjunction $\iota \dashv (-)^G$ restricts to affine objects. On the algebra side, this gives us an adjunction
	\begin{align*}
		(-)_G : \Alg^G \leftrightarrows \Alg : (-)^\tv
	\end{align*}
	where $(-)^\tv$ is the functor which endows an algebra with trivial $G$-action. 
	
	\begin{definition}
		Let $A \in \Alg^G$ and $(M \in \Mod^G_A)_{\geq 0}$. Define 
		\begin{align*}
			A[M] \coloneqq \LSym^G_A(M)
		\end{align*} 
		
		Let now $B \in \Alg^G_A$ and $\sigma\colon M \to B$ a map of $(G,A)$-modules be given. Define $B/(\sigma)$ as the pushout
		\begin{center}
			\begin{tikzcd}
				A[M] \arrow[d, "\sigma"] \arrow[r, "p"] & A \arrow[d] \\
				B \arrow[r] & B/(\sigma)
			\end{tikzcd}
		\end{center}
		where $p$ is induced by the map $M \to 0$. 
	\end{definition}
	
	Suppose that $M\in \Rep^G$, say $\dim M =d$. Then $\sigma\colon \CC[M[k]] \to B$ induces elements $v_1,\dots,v_d \in \pi_{k}(B)$, and the underlying algebra of $B/(\sigma)$ is equivalent to the derived quotient $B/(v_1,\dots,v_d)$. This follows because forgetting the action is a left adjoint, see Notation \ref{Not:Res-cInd}.
	
	\begin{proposition} \label{Prop:3.20} 
		For $M \in \Mod^G_{\geq 0}$ it holds $\CC[M]_G \simeq \CC[M^\fix]$.
	\end{proposition}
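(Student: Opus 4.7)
My plan is to establish the equivalence purely formally, by computing the functor of points of $\CC[M]_G$ through a chain of adjunctions and invoking the Yoneda lemma. For an arbitrary test algebra $A \in \Alg$, I would first apply the adjunction $(-)_G \dashv (-)^\tv$ to get
$$\Map_{\Alg}(\CC[M]_G, A) \simeq \Map_{\Alg^G}(\LSym^G M,\, A^\tv),$$
and then use the free-forget adjunction $\LSym^G \dashv U$ to rewrite this as $\Map_{\Mod^G}(M,\, U(A^\tv))$. The compatibility $U(A^\tv) \simeq i(U A)$ should be essentially tautological: both $(-)^\tv$ on algebras and $i$ on modules encode the same ``trivial action'' operation, and they intertwine the forgetful functors by construction of the $t$-exact restriction as in \S\ref{Par:Restriction_and_coinduction}.

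The decisive move is then to apply the \emph{left} adjunction $(-)^\fix \dashv i$ supplied by Corollary~\ref{Cor:fix_mv_cofiber}, obtaining
$$\Map_{\Mod^G}(M,\, i(U A)) \simeq \Map_{\Mod}(M^\fix,\, U A).$$
One final application of $\LSym \dashv U$ yields $\Map_{\Alg}(\LSym(M^\fix), A) = \Map_{\Alg}(\CC[M^\fix], A)$, and Yoneda then delivers $\CC[M]_G \simeq \CC[M^\fix]$.

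The only non-formal input is precisely that $(-)^\fix$ is \emph{both} a left and a right adjoint of $i$, which is exactly the content of Corollary~\ref{Cor:fix_mv_cofiber} and ultimately rests on reductivity of $G$ in characteristic zero (so that every equivariant module splits canonically as $\sF^\fix \oplus \sF^\mv$). I expect no serious obstacle: there is nothing to check at the level of homotopy groups, no cell-attachment manipulation, and the equivalence is natural in $M$ as all the adjunctions are. The structural observation that makes the argument work — and the conceptual takeaway — is that self-adjointness of $(-)^\fix$ forces the coinvariants of a free equivariant algebra to coincide with the free algebra on the invariants.
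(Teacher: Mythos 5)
Your proof is correct and is essentially the paper's own argument: the paper establishes the same chain of equivalences $\Alg(\CC[M]_G,R) \simeq \Alg^G(\CC[M],R^\tv) \simeq \Mod^G(M,R^\tv) \simeq \Mod(M^\fix,R)$ using the adjunction $(-)_G \dashv (-)^\tv$, the free--forget adjunction, and the left adjunction $(-)^\fix \dashv i$ from Corollary~\ref{Cor:fix_mv_cofiber}, then concludes by Yoneda. You have merely made explicit the (indeed tautological) compatibility $U(A^\tv) \simeq i(UA)$ that the paper leaves implicit.
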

	
	\begin{proof}
		Let $R \in \Alg$. By Corollary \ref{Cor:fix_mv_cofiber}, we have that $(-)^\fix$ is a left adjoint of $(-)^\tv$. We thus have
		\begin{align*}
			\Alg(\CC[M]_G,R) \simeq \Alg^G(\CC[M],R^\tv) \simeq  \Mod^G(M, R^\tv) \simeq \Mod(M^\fix,R)
		\end{align*}
		from which the claim follows.
	\end{proof}

	The same methods also imply the following statement about relative fixed loci.

	\begin{proposition} \label{Prop:Rel fixed locus}
		Let $G \to G'$ be a surjective homomorphism of reductive groups with kernel $H$ and $X \in \Stk^G$. Then $H$ is reductive and the Weil restriction of $[X/G]$ along the morphism $BG \to BG'$ is given by the $H$-fixed locus $[X^H / G']$.
	\end{proposition}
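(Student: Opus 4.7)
The proof has two parts. First I would verify the reductivity of $H$: as the kernel of $f$, $H$ is a closed normal subgroup of $G$ with quotient $G/H \simeq G'$ affine, so Matsushima's theorem (which applies over characteristic zero) yields that $H$ is reductive. For the main claim, the plan is to invoke Proposition~\ref{Prop:Equiv_Weil_Adj} applied to $f \colon G \to G'$, which provides an adjunction $f^* \dashv f_*$ with $\varphi_* \simeq B \circ f_* \circ \Omega$ identified as the Weil restriction along $\varphi \colon BG \to BG'$. It will then suffice to produce a natural equivalence $f_*X \simeq X^H$ of $G'$-stacks, since applying $B$ would give $\varphi_*[X/G] \simeq [X^H/G']$.

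The key step is to identify $f^*S$ explicitly for $S \in \Stk^{G'}$. The short exact sequence $1 \to H \to G \to G' \to 1$ yields a fiber sequence $BH \to BG \to BG'$, and a direct check on $R$-points, using that a $G$-torsor on $R$ together with an identification of its associated $G'$-torsor with a given one is the same datum as a plain $G$-torsor, should show that the square
\begin{equation*}
\xymatrix{
[S/G] \ar[r] \ar[d] & BG \ar[d]^{\varphi} \\
[S/G'] \ar[r] & BG'
}
\end{equation*}
is Cartesian, where $G$ acts on $S$ through $f$. Applying $\Omega$ then identifies $f^*S$ with $S$ equipped with this induced $G$-action, whose restriction to $H \subseteq G$ is trivial. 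I expect the careful derived bookkeeping required for this step to be the main obstacle, as it is the derived incarnation of the fiber sequence $BH \to BG \to BG'$ and must be checked functorially.

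With $f^*S$ thus identified, the adjunction $\iota \dashv (-)^H$ from \S\ref{Subsec:Absolute_fixed_loci}, applied to the trivial $H$-action on $S$, yields
\begin{equation*}
\Stk^G(f^*S, X) \simeq \Stk^{G'}(S, X^H),
\end{equation*}
where the $G'$-action on $X^H$ is the natural one induced from the $G$-action on $X$ (well-defined precisely because $H$ is normal in $G$). Combining with $\Stk^G(f^*S, X) \simeq \Stk^{G'}(S, f_*X)$ from the adjunction $f^* \dashv f_*$, the Yoneda lemma will give $f_*X \simeq X^H$ as $G'$-stacks, completing the proof upon applying $B$.
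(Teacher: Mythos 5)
The paper states this proposition without proof, remarking only that ``the same methods'' as for Proposition~\ref{Prop:3.20} apply, so your proposal is being measured against an intended argument rather than a written one. Your outline is the right one: reductivity of $H$ via Matsushima's criterion is fine (even more simply, $R_u(H)$ is characteristic in $H$, hence normal and unipotent in the reductive group $G$, hence trivial), and invoking Proposition~\ref{Prop:Equiv_Weil_Adj} together with the identification of $f^*S$ as $S$ with the $G$-action restricted along $f$ is exactly how the machinery is set up --- indeed the proof of Proposition~\ref{Prop:Equiv_Weil_Adj} already shows that $f^*$ agrees with $\Res_\varphi$ on $\Aff^{G'}$ and hence everywhere by continuity, so the torsor computation of your Cartesian square, while correct, is not needed.

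The genuine gap is the step $\Stk^G(f^*S,X)\simeq \Stk^{G'}(S,X^H)$. The adjunction $\iota\dashv(-)^H$ you cite lives between $\Stk$ and $\Stk^H$: it gives $\Stk^H(\iota T, X|_H)\simeq\Stk(T,X^H)$ with $X^H$ a plain derived stack, and says nothing about a $G'$-action on $X^H$ nor about $G$-equivariant (rather than merely $H$-equivariant) maps out of $f^*S$. For the proposition to be well-posed you must first produce the residual $G'$-action on $X^H$ and check naturality of your equivalence in $S$; in this formalism the most natural construction of that action \emph{is} $f_*X$, which makes the concluding Yoneda step circular as written. The clean way to close the gap is to use that Weil restriction commutes with base change on the target: the fiber of $\varphi_*[X/G]\to BG'$ over $*\to BG'$ is the Weil restriction of $[X/G]\times_{BG'}*\simeq [X/H]$ along $BH\simeq BG\times_{BG'}*\to *$, which is $X^H$ by the very definition of the absolute fixed locus (Remark~\ref{Rem:HLP}). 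This simultaneously proves the claim and exhibits the $G'$-action on $X^H$ via the $G'$-torsor $X^H\to\varphi_*[X/G]$, rather than requiring you to construct that action independently and then match it against $f_*X$.
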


	\subsection{The cotangent complex of the fixed locus}
    Let $X$ be a derived algebraic stack with a $G$-action, and write $\varphi : X \to [X/G]$ for the projection map. By Proposition \ref{Prop:OXG-modules}, the pullback $\varphi^*$ is the forgetful functor $\QCoh^G(X) \to \QCoh(X)$. Since $\BL_X \simeq \varphi^*\BL_{[X/G]/BG}$, it follows that $\BL_X$ carries a canonical $G$-action, with which we tacitly endow it throughout.

    \begin{example}
		\label{Ex:LXcanGact}
	Recall that the Lie algebra $\fg$ is the tangent bundle at the identity $e \colon * \to G$, hence $\fg^\vee \simeq e^* \BL_G \simeq \BL_{*/BG}$. We thus have an exact sequence
 \begin{align*}
     \varphi^* \BL_{[X/G]} \to \BL_X \to \fg^\vee \otimes \sO_X
 \end{align*}
 In particular, when $\BL_X$ is connective, then we can think of $\varphi^*\BL_{[X/G]}$ pictorially as the chain complex $\BL_X \to \fg^\vee \otimes \sO_X$.
	\end{example}
 
	\begin{proposition}
		\label{Prop:CotXG}
		Let $X$ be a derived algebraic $G$-stack, write $i\colon X^G \to X$ for the canonical map. Then the exact sequence
		\begin{align*}
			i^* \BL_X \to \BL_{X^G} \to \BL_{X^G/X}
		\end{align*}
		is equivalent to the exact sequence
		\begin{align*}
			i^* \BL_X \to (i^* \BL_X)^{\fix} \to (i^* \BL_X)^{\mv}[1]
		\end{align*}
		induced by the splitting $i^*\BL_X \simeq (i^* \BL_X)^{\fix} \oplus (i^* \BL_X)^{\mv}$ from Corollary \ref{Cor:fix_mv_cofiber}.
	\end{proposition}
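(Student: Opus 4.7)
The plan is to identify $\BL_{X^G}$ with $(i^*\BL_X)^\fix$ via the universal property of $X^G$ as the right adjoint to $\iota$, combined with the universal property of $\BL_X$ in its $G$-equivariant form, and then to verify that the canonical map $i^*\BL_X \to \BL_{X^G}$ in the relative cotangent sequence is the splitting projection from Corollary \ref{Cor:fix_mv_cofiber}. The identification of $\BL_{X^G/X}$ with $(i^*\BL_X)^\mv[1]$ will then follow as the cofiber of this projection via the split exact triangle in Corollary \ref{Cor:fix_mv_cofiber}.

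First I would chase universal properties. For an affine $T \to X^G$ and a connective $\sO_T$-module $M$, the space $\Map(\BL_{X^G}|_T, M)$ is, by the universal property of the cotangent complex, the space of dashed fillers of $T \to X^G$ through $T[M]$. By the adjunction $\iota \dashv (-)^G$ of Proposition \ref{Prop:Equiv_Weil_Adj}, such fillers correspond to $G$-equivariant fillers $\iota T[M] \to X$ of the induced diagram, where $\iota T$ and $\iota T[M]$ carry the trivial $G$-action. Since $\BL_X$ carries a canonical $G$-action (Example \ref{Ex:LXcanGact}), arising via $\BL_X \simeq \varphi^*\BL_{[X/G]/BG}$, the universal property of $\BL_X$ in the equivariant setting---which can be derived from the ordinary universal property of $\BL_{[X/G]/BG}$ over $BG$ combined with the equivalence $\QCoh([X/G]) \simeq \QCoh^G(X)$ of Proposition \ref{Prop:OXG-modules}(i)---identifies these equivariant fillers with $\Map_{\QCoh^G(T)}(i^*\BL_X, iM)$, where $iM$ denotes $M$ endowed with the trivial $G$-action. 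Applying the adjunction $(-)^\fix \dashv i$ from Corollary \ref{Cor:fix_mv_cofiber} then yields
\begin{align*}
\Map_{\QCoh^G(T)}(i^*\BL_X, iM) \simeq \Map_{\QCoh(T)}((i^*\BL_X)^\fix, M),
\end{align*}
so an application of Yoneda gives the natural equivalence $\BL_{X^G} \simeq (i^*\BL_X)^\fix$ in $\QCoh(X^G)$.

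Second, I would identify the canonical map $i^*\BL_X \to \BL_{X^G}$ of the relative cotangent sequence with the splitting projection $i^*\BL_X \to (i^*\BL_X)^\fix$. Under $\Map(-, M)$ with $M \in \QCoh(T)$, this canonical map is characterized by sending the class of a filler $T[M] \to X^G$ to the class of its composition $\iota T[M] \to \iota X^G \to X$. Tracing this operation through the chain of equivalences of the first step shows that the resulting map $\Map((i^*\BL_X)^\fix, M) \to \Map(i^*\BL_X, M)$ is precomposition with the underlying morphism of the unit $i^*\BL_X \to i(i^*\BL_X)^\fix$ of $(-)^\fix \dashv i$, which is precisely the projection onto the fixed summand under the splitting in Corollary \ref{Cor:fix_mv_cofiber}. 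The cofiber $\BL_{X^G/X}$ is therefore the cofiber of this projection; since the splitting provides the exact triangle $(i^*\BL_X)^\mv \to i^*\BL_X \to (i^*\BL_X)^\fix$, the fiber of the projection is $(i^*\BL_X)^\mv$, and hence its cofiber is $(i^*\BL_X)^\mv[1]$, with the connecting map providing the second arrow of the target sequence.

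The main obstacle I expect is the rigorous formulation of the $G$-equivariant universal property of $\BL_X$ invoked in the first step, and in particular the identification of the induced pullback of $\BL_{[X/G]/BG}$ to $T$---seen as an object of $\QCoh^G(T)$ via Proposition \ref{Prop:OXG-modules}(i)---with $i^*\BL_X$ carrying its canonical $G$-action. Once this bookkeeping of equivariant structures is set up and one checks that the square-zero extension $\iota T[M]$ in $\Stk^G$ corresponds to $(T \times BG)[\widetilde{M}]$ over $BG$ for $\widetilde{M}$ the object of $\QCoh([T/G])$ associated to $iM$, the remainder of the argument is a formal consequence of the adjunctions $\iota \dashv (-)^G$ and $(-)^\fix \dashv i$ together with Yoneda.
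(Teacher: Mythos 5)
Your argument is correct, but it reaches the key identification $\BL_{X^G} \simeq (i^*\BL_X)^{\fix}$ by a genuinely different route than the paper. The paper works entirely on the quotient-stack side: writing $j \colon [X^G/G] \to [X/G]$ and $p \colon [X^G/G] \to X^G$, it uses Proposition \ref{Prop:OXG-modules} to identify $p_*j^*\BL_{[X/G]/BG}$ with $(i^*\BL_X)^{\fix}$, and then cites the general formula for the cotangent complex of a Weil restriction from \cite{Weil} (applied to $X^G = \psi_*[X/G]$, $\psi \colon BG \to *$, as in Remark \ref{Rem:HLP}) to identify the same object with $\BL_{X^G}$; the comparison of the two exact sequences is then left implicit. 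You instead re-derive exactly this special case of the Weil-restriction formula by hand, unwinding the deformation-theoretic universal property of $\BL_{X^G}$ through the adjunctions $\iota \dashv (-)^G$ and $(-)^{\fix} \dashv i$ and concluding by Yoneda against connective test modules. Your route buys self-containedness (the paper's citation is to a result in a forthcoming reference) and, more substantively, an explicit verification that the transition map $i^*\BL_X \to \BL_{X^G}$ of the relative cotangent sequence is the splitting projection of Corollary \ref{Cor:fix_mv_cofiber} --- the step the paper compresses into ``The claim follows.'' The cost is precisely the bookkeeping you flag: one must check that $\iota T[M]$ corresponds to the square-zero extension of $T \times BG$ by the module corresponding to $iM$, that the equivariant structure on the pullback of $\BL_{[X/G]/BG}$ matches the canonical $G$-action on $i^*\BL_X$ from Example \ref{Ex:LXcanGact}, and that the universal property of the cotangent complex (stated in the paper only for affine test objects) applies to the stacky test object $T \times BG$ over $BG$. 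All of this is standard, and your assessment of where the remaining work lies is accurate; I see no gap.
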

	\begin{proof}
		By construction, the $G$-action on $X^G$ is trivial. Let $j \colon [X^G/G] \to [X/G]$ and $p \colon [X^G/G]  \to X^G$ be the canonical maps. By Proposition \ref{Prop:OXG-modules}, the functor $j^*$ is equivalent to the $G$-equivariant pullback $(i^G)^*$, and the functor $p_*$ is equivalent to $(-)^\fix$. It follows that $p_*j^*\BL_{[X/G]/BG} \simeq (i^* \BL_X)^{\fix}$. 
		
		Recall from Remark \ref{Rem:HLP} that the Weil restriction $\psi_*[X/G]$ along $\psi \colon BG \to *$ is $X^G$. Then the pullback of $\psi$ is the map $p$, and $j$ is induced by the counit of the adjunction $\psi^* \dashv \psi_*$. Moreover, by Proposition \ref{Prop:OXG-modules}, it holds that $p_*$ is also a left adjoint of $p^*$. Therefore, by the formula for the cotangent complex of Weil restrictions given in \cite{Weil}, it also holds  $p_*j^*\BL_{[X/G]/BG} \simeq \BL_{X^G}$. The claim follows.
	\end{proof}

	\subsection{Base-change of fixed loci}
	
	We have the following \'{e}tale base change lemma. Let $G$ be a reductive algebraic group as usual.
	
	\begin{lemma} \label{Lem:etale base change of fixed locus}
		Let $U \to V$ be a $G$-equivariant morphism of derived schemes, write $f \colon [U/G] \to [V/G]$ for the induced map,  and consider the natural commutative square
		\begin{center}
			\begin{tikzcd}
				{[U^{G^0}/G]} \arrow[d] \arrow[r] & {[V^{G^0}/G]} \arrow[d] \\
				{[U/G]} \arrow[r, "f"] & {[V/G]},
			\end{tikzcd}
		\end{center}
		where $G^0 \subseteq G$ denotes the identity component of $G$. If $f$ is \'{e}tale and $f_\cl$ is separated, then the square is Cartesian.
	\end{lemma}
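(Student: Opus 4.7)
The plan is to pull the square back along the smooth surjection $V \to [V/G]$ and reduce to showing that the induced square
\[
\begin{tikzcd}
U^{G^0} \arrow[r] \arrow[d] & V^{G^0} \arrow[d] \\
U \arrow[r, "f"] & V
\end{tikzcd}
\]
of derived schemes is Cartesian. This reduction is valid because $[X/G]\times_{[V/G]}V \simeq X$ for any $G$-equivariant $X \to V$, and Cartesianness descends along smooth covers. Setting $P \coloneqq V^{G^0} \times_V U$ and writing $\pi \colon U^{G^0} \to P$ for the natural map, I will show $\pi_\cl$ is an isomorphism and $\BL_{U^{G^0}/P} = 0$, which together force $\pi$ to be an equivalence (formally étale plus isomorphism on classical truncations, under the implicit finite presentation assumptions that make $U^{G^0}$ and $P$ well-behaved).

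For $\pi_\cl$: Proposition~\ref{FixPtProps}(i) reduces the question to the classical statement $(U_\cl)^{G^0} \simeq (V_\cl)^{G^0} \times_{V_\cl} U_\cl$. Since $f$ is étale and $f_\cl$ is separated, the diagonal $\Delta_{f_\cl} \colon U_\cl \to U_\cl \times_{V_\cl} U_\cl$ is simultaneously étale and a closed immersion, hence an open and closed immersion. Given a classical $T$-point $(a \colon T \to U_\cl, b \colon T \to V_\cl^{G^0})$ of the right-hand side, the $G^0$-invariance of $b$ implies that the two maps $\sigma_U \circ (1_{G^0} \times a)$ and $a \circ \pi_T$ from $G^0 \times T$ to $U_\cl$ coincide after postcomposition with $f_\cl$, so together they define a morphism $G^0 \times T \to U_\cl \times_{V_\cl} U_\cl$ that hits the clopen subscheme $\Delta_{f_\cl}(U_\cl)$ along $\{e\} \times T$ by unitality of the action. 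Connectedness of $G^0$ propagates this factorization to all of $G^0 \times T$, so the two maps agree and $a$ is $G^0$-invariant.

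For the vanishing of $\BL_{U^{G^0}/P}$: Proposition~\ref{Prop:CotXG} identifies $\BL_{V^{G^0}/V} \simeq (i_V^* \BL_V)^{\mv}[1]$ and $\BL_{U^{G^0}/U} \simeq (i_U^* \BL_U)^{\mv}[1]$. Étaleness of $f$ provides a $G$-equivariant equivalence $f^* \BL_V \simeq \BL_U$, and since the moving-part decomposition from Proposition~\ref{Prop:OXG-modules} and Corollary~\ref{Cor:fix_mv_cofiber} is natural under pullback along $g \colon U^{G^0} \to V^{G^0}$, one obtains an equivalence $g^* \BL_{V^{G^0}/V} \simeq \BL_{U^{G^0}/U}$. Étaleness of $f$ also yields $\BL_{P/V^{G^0}} = \BL_{U/V}|_P = 0$ and $\BL_{U^{G^0}/V} \simeq \BL_{U^{G^0}/U}$, so $\BL_{P/V}|_{U^{G^0}} \simeq g^* \BL_{V^{G^0}/V}$ and the transition map $\BL_{P/V}|_{U^{G^0}} \to \BL_{U^{G^0}/V}$ in the cotangent triangle for $U^{G^0} \to P \to V$ is precisely this equivalence, forcing $\BL_{U^{G^0}/P} = 0$.

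The hard part is not the classical argument or the formal cotangent manipulation individually, but rather ensuring that the identifications in the last paragraph are genuinely induced by the structural maps of the commutative square, so that the resulting natural transformations---and not merely abstract isomorphisms---witness the vanishing of $\BL_{U^{G^0}/P}$. This is guaranteed by the functorial description of the $\fix/\mv$ splitting in Proposition~\ref{Prop:OXG-modules} combined with the Beck--Chevalley compatibility of $g^*$ with the two-sided adjunction between $i$ and $(-)^\fix$, which I would verify explicitly to close the argument.
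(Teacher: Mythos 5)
Your proof is correct and follows the same overall strategy as the paper's: reduce to showing that the comparison map to the fibre product is an isomorphism on classical truncations and has vanishing relative cotangent complex, where the latter comes from Proposition~\ref{Prop:CotXG} together with the compatibility of the $\fix/\mv$ splitting with the \'{e}tale equivalence $f^*\BL_V \simeq \BL_U$. Two differences are worth recording. First, for the classical statement the paper cites the discussion after Proposition~C.5 of \cite{EdidinRydh} (after first disposing of the case $V^{G^0}=\emptyset$), whereas you give a self-contained argument using that $\Delta_{f_\cl}$ is an open and closed immersion and that $G^0$ has connected fibres; this is correct and slightly cleaner, since it treats all cases uniformly and does not pass through the identification of $[U_\cl^{G^0}/G]$ with a maximal-stabilizer locus. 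Second, the paper packages the derived step by first showing $\BL_{[U^{G^0}/G]/[V^{G^0}/G]}\simeq 0$ (so the top map is \'{e}tale) and then noting that the induced map to $[U/G]\times_{[V/G]}[V^{G^0}/G]$ is \'{e}tale and an isomorphism on classical truncations, while you run the transitivity triangle for $U^{G^0}\to P\to V$ directly; these are equivalent. The naturality point you flag at the end---that the splitting-induced equivalence really is the transition map in the cotangent triangle---is precisely the step the paper also leaves implicit in the commutativity of its displayed diagram, so your proposal is, if anything, more explicit about where the content lies.
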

	
	\begin{proof}
		Write $X = [U/G]$ and $Y = [V/G]$. By naturality of $(-)^{G^0}$, if $V^{G^0} = \emptyset$, then $U^{G^0} = \emptyset$, and there is nothing to prove. So we assume that $V^{G^0} \not= \emptyset$. Now $U^{G^0} \not = \emptyset$ if and only if $X$ and $Y$ have the same maximal stabilizer dimension. Hence, this is true for the underlying classical truncations, since we have a natural isomorphism $[U_\cl^{G^0} /G] = (X_\cl)^{\maxlocus} \to (Y_\cl)^{\maxlocus} \times_{Y_\cl} X_\cl = [V_\cl^{G^0} / G] \times_{Y_\cl} X_\cl$ by the discussion following the statement of Proposition~C.5 in \cite{EdidinRydh}.	
		
		By $G$-equivariance and \'{e}taleness of the morphism $X \to Y$, we have a commutative diagram
		\begin{align*}
			\xymatrix@C=0.01em{
				\BL_Y |_{X^{G^0}} \ar[d] & \simeq & (\BL_Y |_{Y^{G^0}})^{\fix}|_{X^{G^0}} \ar[d] & \oplus & (\BL_Y|_{Y^{G^0}})^{\mv}|_{X^{G^0}} \ar[d] \\
				\BL_X |_{X^{G^0}} & \simeq & (\BL_X |_{X^{G^0}})^{\fix} & \oplus & (\BL_X|_{X^{G^0}})^{\mv}
			}
		\end{align*}
		where the vertical arrows are equivalences and the splittings are with respect to the $G^0$-action.
		
		But $\BL_{X^{G^0}} \simeq (\BL_X|_{X^{G^0}})^{\fix}$ and $\BL_{Y^{G^0}} \simeq (\BL_Y|_{Y^{G^0}})^{\fix}$ and it thus follows that $\BL_{X^{G^0} / Y^{G^0}} \simeq 0$ and $X^{G^0} \to Y^{G^0}$ is \'{e}tale.
		
		Now $X^{G^0} \to Y^{G^0}$ factors through the \'{e}tale morphism $X \times_Y Y^{G^0} \to Y^{G^0}$, so the map $X^{G^0} \to X \times_Y Y^{G^0}$ must also be \'{e}tale and hence an isomorphism, as it induces an isomorphism on classical truncations.
	\end{proof}
	
	\subsection{Equivariant standard forms} Our discussion so far allows us to give natural equivariant extensions of standard forms and their existence. 
	
	To this end, let $\modd^G$ be the 1-category of presheaves on $\Rep^G$ that send finite coproducts in $\Rep^G$ to products of sets. Then $\modd^G$ is the category of discrete $G$-modules $(\Mod^G)^\heartsuit$, and $\Mod^G$ is the unbounded derived category of $\modd^G$. A \emph{$G$-cdga} is a commutative algebra object in the category $\Ch_{\geq 0}(\modd^G)$ of chain complexes in homological degree $\geq 0$, with the natural symmetric monoidal structure. Similarly, we define a \emph{graded-commutative $G$-ring} as a graded-commutative algebra object in $\modd^G$.
	
	\begin{definition}
		Let an algebra $A \in \Alg^G$ and $x \in \Spec A$  fixed by $G$ be given. A $G$-cdga model $R$ of $A$ is said to be \textit{in standard form} if $R_0$ is smooth with $\Omega_{R_0}$ free, and the underlying graded-commutative $G$-ring of $R$ is freely generated over $R_0$ on a finite number of generators. If this holds, then we say that $R$ is \emph{minimal at $x$} if the underlying cdga (which is non-equivariantly in standard form) is minimal at $x$.
	\end{definition}
	
	Remark \ref{Rem:standard form cell attachment} also holds in the equivariant case. That is, if a $G$-cdga $R$ is in standard form, then it can be obtained from $R_0$ by equivariant cell-attachements $R(0) \to R(1) \to \dots \to R(n)$ such that, for all $k$,  the underlying graded-commutative $G$-ring of $R(k)$ is the subring of the underlying graded-commutative $G$-ring of $R$ generated by elements in homogeneous degree $\leq k$, and $R(k+1) = R(k) /(\sigma)$ for a $G$-equivariant map $\sigma : M_{k+1}[k] \to R(k)$, where $M_{k+1}$ is a free $G$-module over $R(k)$.

	As in the non-equivariant case, we say that $B \in \Alg^G_A$ (or the corresponding map on spectra) is \textit{locally of finite $G$-presentation} if $B$ is a compact object in $\Alg^G_A$. Likewise, we call $B$ \emph{finitely $G$-presented} if it is contained in the smallest subcategory of $\Alg^G_A$ that contains $\Poly^G_A$ and is closed under finite colimits. We note that any classical, finitely presented, affine scheme $T \in \Stk^G$ admits a $G$-equivariant closed embedding into a smooth classical scheme $\Spec^G(\CC[M])$ for some $M \in \Rep^G$. Moreover, $B$ is finitely $G$-presented if and only if $B$ is finitely presented and admits a $G$-action (one can see this by the argument sketched in the proof of the following lemma). Then $\Spec^G B \to \Spec^G A$ is locally of finite $G$-presentation if and only if $\Spec^G B$ is Zariski locally in $\Stk^G$ finitely $G$-presented over $\Spec^G A$. We then have the following.
	
	\begin{lemma}\label{Lem:Equiv alg are standard form}
		Let an algebra $A \in \Alg^G$ and $x \in \Spec A$  fixed by $G$ be given. Suppose that $A$ is locally of finite $G$-presentation over $\CC$. Then, up to equivariant Zariski localizing around $x$, there is a $G$-cdga model $R$ of $A$ in standard form. We can arrange $R$ to be such that $R_0 = \CC[M]$ for some $M \in \Rep^G$. Alternatively, we can arrange $R$ to be minimal at $x$.
		
		In addition, in both cases, up to further equivariant Zariski shrinking, the underlying $R(i)$-modules $M_{i+1}$ may be taken, for all $i\geq 0$, in the form $M_{i+1} = R(i) \otimes_{\CC} W_{i+1}$ for $G$-representations $W_{i+1} \in \Rep^G$.
	\end{lemma}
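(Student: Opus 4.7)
The plan is to carry out the proof of the non-equivariant Lemma~\ref{Lem:Alg are standard form} inductively, making every choice $G$-equivariantly. Two facts from the representation theory of reductive groups in characteristic zero are the key enablers: first, every rational $G$-module decomposes as a direct sum of simples, so all short exact sequences of rational $G$-modules split; second, any finitely generated $(G,R)$-module over an equivariantly Noetherian $G$-algebra $R$ admits a $G$-stable finite-dimensional generating subspace, by taking the $G$-closure of an ordinary finite generating set. The phrase ``equivariant Zariski localization around $x$'' is legitimate because the fixed point $x$ admits a $G$-stable affine open neighbourhood (given e.g.\ by the intersection of the finitely many $G$-translates of any affine open through $x$).

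\textbf{Construction of $R_0$ and the equivariant induction.} Since $\pi_0 A$ is finitely $G$-presented and rational, one picks a finite-dimensional $G$-subrepresentation $M \subseteq \pi_0 A$ whose elements generate $\pi_0 A$ as an algebra, giving a smooth $G$-algebra $R_0 := \CC[M]$ with a $G$-equivariant surjection onto $\pi_0 A$. For the alternative minimal-at-$x$ arrangement, one instead applies this procedure inside the maximal ideal $\mathfrak{m}_x \subseteq \pi_0 A$, which is $G$-stable because $x$ is a $G$-fixed point: lift a $G$-stable finite-dimensional subspace of $\mathfrak{m}_x/\mathfrak{m}_x^2$ to $M$, and equivariantly Zariski-shrink so that $R_0$ is smooth at a preimage of $x$; by construction, the differential of $R_0 \to A$ vanishes at $x$, which is precisely minimality. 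Now inductively suppose $R(k)$ has been built with $R(k) \to A$ an equivalence on $\pi_i$ for $i < k$ and a surjection on $\pi_k$. The kernel $K_k := \ker(\pi_k(R(k)) \to \pi_k(A))$ is a $(G,\pi_0 R(k))$-module; after further equivariant Zariski shrinking around $x$ so that $K_k$ is finitely generated, choose a $G$-stable finite-dimensional $W_{k+1} \in \Rep^G$ with $W_{k+1} \hookrightarrow K_k$ whose image generates $K_k$ as a $\pi_0 R(k)$-module. Set $M_{k+1} := R(k) \otimes_\CC W_{k+1}$, extend the inclusion to a $G$-equivariant $R(k)$-linear map $\sigma_{k+1} \colon M_{k+1}[k] \to R(k)$ (using projectivity of $W_{k+1}$ in $\Rep^G$), and put $R(k+1) := R(k)/(\sigma_{k+1})$. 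By construction, the freeness required for standard form, and the specific form $M_{k+1} = R(k) \otimes_\CC W_{k+1}$ with $W_{k+1} \in \Rep^G$ demanded by the second clause of the lemma, are built in from the start.

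\textbf{Main obstacle: termination.} The principal technical point is to verify that this process stabilizes after finitely many steps, yielding an equivalence $R := R(n) \xrightarrow{\sim} A$. This rests on $A$ being locally of finite $G$-presentation: after further equivariant Zariski shrinking, $A$ is a compact object in $\Alg^G$, so it is a retract of a finitely $G$-presented algebra, which by definition admits a finite equivariant cell presentation. A cleaner route is to invoke the non-equivariant Lemma~\ref{Lem:Alg are standard form} as a bound: the number of cells needed at each homological degree is controlled by the rank of the underlying non-equivariant generating module, and the equivariant construction above attaches cells in exactly the same degrees with total rank at least as large, so one finite length bounds the other. Finally, the compatibility with the minimality variant is immediate: the choices of $W_{k+1}$ in $K_k$ can always be made to land in the maximal ideal at $x$, preserving minimality, because $K_k|_x$ carries a $G$-action and its $G$-equivariant projection to the fibre at $x$ may be split off equivariantly.
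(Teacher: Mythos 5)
Your overall strategy is the same as the paper's: run the Joyce--Sch\"urg standard-form construction equivariantly, using complete reducibility of rational $G$-representations to make every choice $G$-stably, and build the modules $M_{i+1}$ in the form $R(i)\otimes_\CC W_{i+1}$ from the outset (the paper instead proves the last assertion by a separate lemma trivializing an equivariant vector bundle near the fixed point; your direct choice of $W_{i+1}\in\Rep^G$ is a legitimate streamlining). However, there is a genuine gap in your inductive step: you attach the $(k+1)$-cells along a generating subrepresentation of $K_k=\ker\bigl(\pi_k(R(k))\to\pi_k(A)\bigr)$. Killing this kernel makes $R(k+1)\to A$ an isomorphism on $\pi_{\le k}$, but it does nothing to make it \emph{surjective} on $\pi_{k+1}$, which is exactly what your inductive hypothesis requires at the next stage. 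Concretely, for $A=\LSym(V[1])$ with $V\in\Rep^G$ nontrivial one has $R_0=\CC$ and $K_0=0$, so your procedure attaches no cells and terminates at $R=\CC$, which is not equivalent to $A$. The correct module to kill is $\pi_k(F)$ where $F$ is the fiber of $R(k)\to A$ (this is what the paper does, following Joyce--Sch\"urg): the exact sequence $\pi_{k+1}(R(k))\to\pi_{k+1}(A)\to\pi_k(F)\to\pi_k(R(k))\to\pi_k(A)$ shows that generators of $\pi_k(F)$ account both for the kernel on $\pi_k$ and for the missing classes in $\pi_{k+1}(A)$. All your equivariance arguments (a $G$-stable finite-dimensional generating subspace after equivariant shrinking, splitting via reductivity) apply verbatim to $\pi_k(F)$, so the fix is routine, but as written the induction does not close.

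Two smaller points. Your termination argument via ``the equivariant construction attaches cells in the same degrees with total rank at least as large, so one finite length bounds the other'' has the inequality pointing the wrong way for a bound on the number of cells; what you actually need is that $\pi_k(F)$ vanishes for $k$ large (finite $G$-presentation) and is finitely generated in each degree, so that a finite-dimensional $G$-stable generating subspace exists --- the relevant finiteness is in the \emph{degrees}, not the ranks. And your treatment of minimality in positive degrees is slightly off: ``landing in the maximal ideal at $x$'' is the degree-zero condition; for $k\ge 1$ minimality is arranged by choosing $W_{k+1}$ so that $W_{k+1}\to\pi_k(F)$ induces an isomorphism on fibers at $x$ (take $W_{k+1}=\pi_k(F)|_x$ as a $G$-representation, lift equivariantly by reductivity, and shrink so that it generates), which is how the paper phrases it.
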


	\begin{proof}
		The same argument that proves Lemma~\ref{Lem:Alg are standard form} applies equivariantly to establish everything, but the last assertion. 
		
		We briefly sketch how to equivariantly adapt the steps of the proof of \cite[Theorem~4.1]{JoyceSch}. Write $X = \Spec^G A$ for brevity. Since $A$ is locally of finite $G$-presentation over $\CC$, we may assume, up to $G$-equivariant Zariski open localizing around $x$, that $A$ is finitely $G$-presented. Following \cite[Theorem~4.1]{JoyceSch}, we may now either choose a $G$-equivariant closed embedding $X_\cl \to \Spec^G(\CC[M])$ for some $M \in \Rep^G$ or, using the reductivity of $G$, after further equivariant Zariski open localizing around $x \in X_\cl$, a $G$-equivariant closed embedding $X_\cl \to \Spec^G (R_0)$ into a smooth scheme, that is minimal at $x$ for some finitely presented $R_0 \in \Alg^G$. 
		
		We now continue with the minimal case, as the case with $R_0 = \CC[M]$ proceeds similarly.
		We observe that since $[\Spec^G (R_0) / G]$ is smooth over $BG$, the closed embedding $[X_\cl / G] \to [\Spec^G (R_0) / G]$ can be lifted over $BG$ to a closed embedding $[X/G] \to [\Spec^G (R_0) / G]$, which by Proposition~\ref{Prop:StkG_is_StkBG} amounts to a $G$-equivariant map $R_0 \to A$, lifting $R_0 \to \pi_0(A)$. Let $F$ be the fiber of $R_0 \to A$. Then, up to possible further equivariant Zariski open shrinking around $x$, there exists a free $G$-module $M_1$ over $R_0$ that surjects onto $\pi_0(F)$ and whose fiber at $x$ is isomorphic to $\pi_0(F)|_x$. For this to be possible, we use the reductivity of $G$ and that $x$ is fixed by $G$. The morphism $M_1 \to \pi_0(F) \to R_0$ determines a $G$-cdga $R(1)$, together with a $G$-equivariant morphism $R(1)\to A$ that is an isomorphism on $\pi_0$. This showcases the first step of the construction. One argues similarly to construct the remaining $R(i)$, by following the proof of \cite[Theorem~4.1]{JoyceSch}.

		To establish the last assertion, it clearly suffices to prove that for any $G$-equivariant vector bundle $\wW$ on a derived affine $G$-scheme $U$ there exists a $G$-invariant Zariski open neighbourhood $x \in U_x \subseteq U$ and a $G$-equivariant isomorphism $\wW|_{U_x} \cong W \otimes_{\CC} \oO_U$, where $W$ is a $G$-representation.
		
		Since $x$ is fixed by $G$, $G$ acts on the fiber $W \coloneqq \wW|_x$ and we have a $G$-equivariant surjection $\wW = \Gamma(U,\wW) \to W$. Since $G$ is reductive, we may split this surjection and consider $W \subseteq \Gamma(U,\wW)$. This induces a $G$-equivariant morphism $W \otimes_{\CC} \oO_U \to \wW$. For $U$ classical, the locus where this map is an isomorphism is clearly a $G$-invariant open subscheme of $U$ which contains $x$, and the general case reduces to the classical case, since $\wW$ is a vector bundle. This concludes the proof.
	\end{proof}
	
	Now, our prior discussion of fixed loci immediately implies an explicit description for $(\Spec B)^G$ when $B$ has a model in standard form, given in the next propositions.
	
	For $R$ a model in standard form for $B \in \Alg$, we write $B(i)$ for the algebra in $\Alg$ corresponding to the cdga $R(i)$. 
	
	\begin{proposition} \label{Prop:3.27}
		Suppose that $B \in \Alg^G$ has a model $R$ in standard form such that $M_{i+1} = R(i) \otimes_{\CC} W_{i+1}$ for some $G$-representations $W_{i+1}$. Then $B_G$ also has a model $R_G$ in standard form with $(R_G)_0 = \sO_{(\Spec R_0)^G}$ and generated by the modules $N_{i+1}$ defined inductively by $N_{i+1} = R(i)_G \otimes_{\CC} W_{i+1}^{\fix}$, where $R(i)_G$ is the model in standard form for $B(i)_G$ generated by $N_j$ for $j \leq i$.
		
		In addition, for a point $x \in \Spec B$ fixed by $G$, if $R$ is minimal at $x$, we can arrange $R_G$ to be minimal at $x$ as well.
	\end{proposition}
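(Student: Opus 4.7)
The proof proceeds by induction on $i$, building $R(i)_G$ as a cell attachment on $R(i-1)_G$. The central tool is that the functor $(-)_G \colon \Alg^G \to \Alg$, being left adjoint to $(-)^{\tv}$ (see the discussion immediately following Proposition~\ref{FixPtProps}), preserves all colimits; in particular it preserves pushouts (i.e.\ cell attachments) and coproducts in $\Alg$ (i.e.\ tensor products of $\CC$-algebras).

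For the base case $i=0$, set $(R_G)_0 \coloneqq (R_0)_G = \sO_{(\Spec R_0)^G}$. When $R_0 = \CC[M]$ for some $M \in \Rep^G$, Proposition~\ref{Prop:3.20} gives $(R_0)_G \simeq \CC[M^{\fix}]$, which is manifestly smooth with free cotangent sheaf. In the minimal case, the classical fixed locus of a smooth scheme under a linearly reductive group action is smooth (Fogarty), and after further equivariant Zariski shrinking around $x$ we may trivialize $\Omega_{(R_0)_G}$; either way, $(R_G)_0$ is in standard form.

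For the inductive step, recall from Remark~\ref{Rem:standard form cell attachment} (in its equivariant incarnation) that $R(i+1) = R(i)/(\sigma)$ arises as the pushout along an attaching map $\sigma \colon M_{i+1}[i] \to R(i)$ with $M_{i+1} = R(i) \otimes_\CC W_{i+1}$, so that $R(i)[M_{i+1}[i]] \simeq R(i) \otimes_\CC \CC[W_{i+1}[i]]$. Applying $(-)_G$ to this pushout and using Proposition~\ref{Prop:3.20} on $W_{i+1}[i]$, we obtain
\begin{align*}
R(i)[M_{i+1}[i]]_G \;\simeq\; R(i)_G \otimes_\CC \CC[W_{i+1}[i]]_G \;\simeq\; R(i)_G \otimes_\CC \CC[W_{i+1}^{\fix}[i]] \;\simeq\; R(i)_G[N_{i+1}[i]],
\end{align*}
where $N_{i+1} \coloneqq R(i)_G \otimes_\CC W_{i+1}^{\fix}$. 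Consequently $R(i+1)_G \simeq R(i)_G/(\sigma_G)$ for the unique attaching map $\sigma_G \colon N_{i+1}[i] \to R(i)_G$ making the $(-)_G$-transformed pushout commute. By construction, $R(i+1)_G$ is again in standard form, freely generated over $(R_G)_0$ by $N_1,\dots,N_{i+1}$.

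Finally, for the minimality assertion, $\sigma_G$ is obtained from $\sigma$ by a natural (adjunction-induced) operation, and since $x$ is $G$-fixed the fibers of $R(i)$ and $R(i)_G$ at $x$ agree canonically; vanishing of the attaching map $\sigma$ on the fiber at $x$ therefore forces vanishing of $\sigma_G$ on the fiber at $x$, so $R_G$ is minimal at $x$. The main point to watch throughout is that the nice free-generator form $M_{i+1} = R(i) \otimes_\CC W_{i+1}$ persists upon applying $(-)_G$; this is exactly what the identification $\CC[W_{i+1}[i]]_G \simeq \CC[W_{i+1}^{\fix}[i]]$ of Proposition~\ref{Prop:3.20} provides, and it is what makes the inductive description of the generators $N_{i+1}$ come out so cleanly.
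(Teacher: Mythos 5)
Your overall strategy coincides with the paper's: induct over the cell attachments of the standard form, use that $(-)_G$ is a left adjoint (hence preserves pushouts and tensor products), and invoke Proposition~\ref{Prop:3.20} to identify $\CC[W_{i+1}[i]]_G$ with $\CC[W_{i+1}^{\fix}[i]]$. The inductive step is essentially the paper's diagram~\eqref{Eq:Ri+1} rewritten in words, and is fine.

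There is, however, a genuine gap in your base case. You need to know that $(R_0)_G$ --- the \emph{derived} fixed-locus construction applied to the smooth classical ring $R_0$ --- is itself smooth and classical, i.e.\ that it has no higher homotopy and coincides with the classical fixed subscheme. Citing Fogarty only gives you smoothness of the \emph{classical} fixed locus; it says nothing about whether the derived fixed locus $\Spec(R_0)_G = (\Spec R_0)^G$ agrees with it. A priori the left adjoint $(-)_G$ could introduce nontrivial derived structure on a ring that is not of the form $\CC[M]$. The paper closes exactly this gap in the first paragraph of its proof: it covers $\Spec R_0$ by $G$-invariant opens admitting equivariant \'{e}tale maps to tangent spaces $T_xX = \Spec\CC[M]$ (Drezet), applies the \'{e}tale base-change Lemma~\ref{Lem:etale base change of fixed locus} to reduce to the linear case, and then uses Proposition~\ref{Prop:3.20} to conclude that $(\Spec R_0)^G$ is smooth and classical. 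Your argument needs this step (or an equivalent one) to get off the ground. Separately, in the minimality paragraph the assertion that ``the fibers of $R(i)$ and $R(i)_G$ at $x$ agree canonically'' is not correct as stated: the fiber of $\Omega_{R_G}$ at $x$ is the $G$-fixed \emph{summand} $W_\bullet^{\fix}|_x$ of the fiber $W_\bullet|_x$ of $\Omega_R$, and the differentials of the former are the restrictions of those of the latter; vanishing then passes to the restriction. The conclusion is right, but you should phrase it that way.
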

	
	\begin{proof}
		Recall that $R_0$ is smooth and classical by definition. We first show that $(\Spec R_0)^G$ is smooth and classical and coincides with the classical fixed locus. To this end, write $X = \Spec R_0$. Then, by \cite[Lemma~5.1]{Drezet}, for any $G$-fixed point $x$, there is a Zariski open $G$-invariant neighbourhood $U_x \subseteq X$ together with an \'{e}tale $G$-equivariant morphism $U_x \to T_x X$, where $T_x X$ denotes the tangent space of $X$ at $x$. Let $U$ be the disjoint union of the $U_x$ as $x$ ranges over the fixed points of $X$. We obtain a $G$-equivariant \'{e}tale morphism $f \colon U \to X$. Since $f = f_\cl$ is separated, by Lemma~\ref{Lem:etale base change of fixed locus}, $U^G = X^G \times_{X} U$. Applying the same lemma to each morphism $U_x \to T_x X$, which is \'{e}tale and separated, we get $(U_x)^G = U_x \times_X (T_x X)^G$. By Proposition~\ref{Prop:3.20}, since $T_x X = \Spec ( \CC[M] )$ for some $M \in \Rep^G$, it follows that $(T_x X)^G$ is smooth and classical, and hence so is $(U_x)^G$, and therefore $U^G$ as well. Then, $U^G = X^G \times_{X} U$, and the fact that $U^G \to X^G$ is surjective on points by construction, implies that $X^G$ must be smooth and classical. In particular, it must coincide with the classical fixed locus in this case.
		
		For $i\geq 0$, we assume by induction that the cdga $R(i)_G$, generated by $N_j$ for $j \leq i$, is a model for $B(i)_G$, and put $N_{i+1} \coloneqq R(i)_G \otimes_{\CC} W_{i+1}^{\fix}$. Then we have a (strictly) commutative diagram of cdgas
		\begin{equation}
			\label{Eq:Ri+1}
			\begin{tikzcd}
				\CC \arrow[d] \arrow[r] & \CC[W_{i+1}[i]] \arrow[d, "\alpha"] \arrow[rr, "{W_{i+1}[i] \mapsto 0}"] && \CC \arrow[d] \\
				R(i) \arrow[r] & \LSym_{R(i)}(M_{i+1}[i]) \arrow[rr, "{M_{i+1}[i] \mapsto 0}"] \arrow[d] && R(i) \arrow[d]\\
				& R(i) \arrow[rr] && R(i+1)
			\end{tikzcd}
		\end{equation}
		where the square on the left is the natural one coming from the fact that $M_{i+1} = R(i) \otimes W_{i+1}$, and the bottom square comes from the definition of $R(i+1)$. One shows that all squares are homotopy pushouts, using that $M_{i+1}$ is free and \cite[\S 2.2]{JoyceSch}. Define $R(i+1)_G$ as the homotopy pushout of the map $\CC[W_{i+1}^\fix[i]] \to R(i)_G$ (induced by $\alpha$) along the map $\CC[W_{i+1}^\fix[i]] \to \CC$ induced by the zero map. Then, since $(-)_G$ commutes with pushouts, we see that $R(i+1)_G$ is a model for $B(i+1)_G$, generated by $N_j$ for $j \leq i+1$. This concludes the first claim.
		
		Let a point $x \in \Spec B$ be fixed by $G$ such that $R$ is minimal at $x$. By \cite[Prop.~2.12]{JoyceSch}, the cotangent complex for $R$  restricted to $x$  has underlying chain complex
		\begin{align*}
			\cdots \to W_{i+1}|_x \to W_i|_x \to W_{i-1}|_x \to \cdots
		\end{align*}
		and the cotangent complex for $R_G$ restricted to $x$ has underlying chain complex
		\begin{align*}
			\cdots \to W_{i+1}^\fix|_x \to W_i^\fix|_x \to W_{i-1}^\fix|_x \to \cdots
		\end{align*}
		The claim thus holds by definition of minimality.
	\end{proof}
	
        \subsection{Equivariant blow-ups}
        With similar arguments as in the non-equivariant case, we can apply the machinery of \cite{HekkingGraded, Weil} to the $G$-equivariant setting to produce $G$-equivariant Rees algebras and blow-ups. This is carried out in \cite{GDB} in much more generality. We summarise the results here. 
	
        For $Z \to X$ a $G$-equivariant closed immersion, we have a \emph{$G$-equivariant extended Rees algebra} $\sR_{Z/X}^{G,\extd}$ with a similar universal property as $\sR_{Z/X}^{\extd}$, but now against all $G$-equivariant $T \to X$. We define the \emph{$G$-equivariant blow-up} as
        \[ \Bl^G_ZX \coloneqq [ ((\Spec^G \sR_{Z/X}^{G}) \setminus V ((\sR_{Z/X}^{G})_+) ] \]
        where $\sR_{Z/X}^{G} \coloneqq (\sR_{Z/X}^{G,\extd})_{\geq 0}$.  The following can be shown by comparing universal properties.
	
	\begin{proposition}
		\label{Prop:G_equiv_blow-up}
		It holds that $[\Bl^G_{Z/X}/G] \simeq \Bl_{[Z/G]/[X/G]}$.
	\end{proposition}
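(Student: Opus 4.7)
The strategy is to compare universal properties. By construction, $\Bl^G_Z X$ represents $G$-equivariant strict virtual Cartier divisors over $Z \to X$: for any $P \in \Stk^G$ with $G$-equivariant map $P \to X$, the space of lifts to $\Bl^G_Z X$ is equivalent to the space of $G$-equivariant strict virtual Cartier divisors on $P$ over $Z \to X$. On the other hand, $\Bl_{[Z/G]/[X/G]}$ represents strict virtual Cartier divisors over $[Z/G] \to [X/G]$ in the ordinary sense. So I would construct, for every $T \to [X/G]$, a natural equivalence between $T$-points of the two sides.

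Given such $T \to [X/G]$, a $T$-point of $[\Bl^G_Z X / G]$ unfolds as a $G$-torsor $\pi\colon P \to T$ together with a $G$-equivariant morphism $P \to \Bl^G_Z X$ over $X$; by the equivariant universal property, this is the datum of a $G$-equivariant strict virtual Cartier divisor $\tilde D \to P$ over $Z \to X$. Quotienting by the free $G$-action yields a commutative diagram
\begin{equation*}
\begin{tikzcd}
{[\tilde D/G]} \arrow[r] \arrow[d] & T \arrow[d] \\
{[Z/G]} \arrow[r] & {[X/G]}
\end{tikzcd}
\end{equation*}
in which the left vertical map is a strict virtual Cartier divisor on $T$ over $[Z/G] \to [X/G]$. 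Conversely, given a strict virtual Cartier divisor on $T$ over $[Z/G] \to [X/G]$, pulling back along $\pi$ produces a $G$-equivariant one on $P$ over $Z \to X$. These two operations are mutually inverse by construction, since $P \to T$ is the universal $G$-torsor determined by $T \to [X/G]$.

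The key technical point is that the notion of strict virtual Cartier divisor descends along $G$-torsors. This can be seen via the universal example: since $\zeta\colon B\Gm \to [\AA^1/\Gm]$ classifies virtual Cartier divisors, specifying a VCD on $P$ is the same as giving a morphism to $[\AA^1/\Gm]$, and $G$-equivariant such morphisms on $P$ correspond to morphisms out of $T = [P/G]$. Compatibility with the ambient closed immersion is encoded by the deformation space $\stD_{(-)/(-)}$, whose formation is stable under base-change and commutes with passage to $G$-quotients—essentially by the same construction used to produce $\sR^{G,\extd}_{Z/X}$ from $\sR^{\extd}_{Z/X}$ in the equivariant setting. Strictness (Cartesianness of classical truncations together with surjectivity on the conormal map) is preserved because $\pi$ is representable, smooth and surjective, so both classical truncation and the normal bundle commute with this base change and can be checked after pulling back to $P$.

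The main obstacle, and the step I would treat most carefully, is the verification that the $G$-equivariant Rees algebra $\sR^{G,\extd}_{Z/X}$ really does represent $G$-equivariant strict virtual Cartier divisors in the appropriate sense, and that its relative $\Proj$ commutes with the quotient by $G$ to yield $\Bl_{[Z/G]/[X/G]}$. Once this equivariant representability is in hand—which is exactly the content imported from \cite{GDB}—the remainder of the argument is a formal comparison of torsor-level and quotient-level descriptions.
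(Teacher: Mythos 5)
Your proposal takes essentially the same route as the paper: the paper proves this proposition precisely ``by comparing universal properties,'' relying on the equivariant Rees algebra and its representability imported from \cite{GDB}, and your torsor-unwinding argument is a correct elaboration of that comparison. The descent points you flag (strictness and the deformation space being smooth-local, hence checkable after pulling back along $P \to T$) are exactly the right things to verify.
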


        One can show the next result along similar lines as done in \cite{HekkingGraded} or as in \cite{GDB}.
	\begin{proposition}
		\label{Prop:equivariant_quotient_formula}
		Take $A \in \Alg^G$ and let $\sigma\colon M \to A$ be a map of connective $(G,A)$-modules. Put $B \coloneqq A/(\sigma)$. Then we can compute the $G$-equivariant extended Rees algebra of $B$ over $A$ as
		\begin{align*}
			R^{G,\ext}_{B/A} \simeq \frac{A[t^{-1},M]}{(t^{-1}M-\sigma)}
		\end{align*}
		where the quotient is with respect to the map $M \to A[t^{-1},M]$ which is induced by substracting $\sigma\colon M \to A \to A[t^{-1},M]$ from the map 
		\begin{align*}
			\LSym^G_{A[t^{-1}]}(\times t^{-1}) \colon \LSym^G_{A[t^{-1}]}(M[t^{-1}]) \to \LSym^G_{A[t^{-1}]}(M[t^{-1}])
		\end{align*}
	\end{proposition}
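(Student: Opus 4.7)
The plan is to mirror the non-equivariant proof of Example \ref{Ex:Rees} (given in \cite{HekkingGraded}), using that the full Weil-restriction and deformation-space machinery, together with the equivariant analogue of Lemma \ref{functoriality of deformation spaces lemma}, is available in the $G$-equivariant context as developed in \cite{GDB}. The starting point is that $B = A/(\sigma)$ is, by construction, the pushout
\begin{equation*}
\begin{tikzcd}
A[M] \arrow[d, "s"'] \arrow[r, "p"] & A \arrow[d] \\
A \arrow[r] & B
\end{tikzcd}
\end{equation*}
in $\Alg^G_A$, where $s$ is the adjoint of $\sigma \colon M \to A$ and $p$ is induced by $M \to 0$; dually, $\Spec^G B \simeq \Spec^G A \times_{\Spec^G A[M]} \Spec^G A$ in $\Aff^G$.

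First I would handle the base case, namely the equivariant extended Rees algebra of the zero section $A[M] \to A$. Since this zero section of the equivariant vector bundle $\mathbb{V}(M^\vee) = \Spec^G A[M]$ has conormal module $M$, and the deformation to the normal cone of a vector bundle's zero section is canonically the trivial $\AA^1$-family, one obtains $R^{G,\ext}_{A/A[M]} \simeq A[t^{-1}, M]$ with $M$ in homogeneous degree $1$ and $t^{-1}$ in homogeneous degree $-1$. Next, I would apply the equivariant version of Lemma \ref{functoriality of deformation spaces lemma} to the factorization $\Spec^G B \to \Spec^G A \xrightarrow{\sigma} \Spec^G A[M]$, and combine it with the facts that the equivariant deformation space commutes with base change (being a right adjoint) and that $\Spec^G B$ has the Cartesian description above. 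After simplification using the base case, this presents $R^{G,\ext}_{B/A}$ as the displayed quotient of $A[t^{-1}, M]$ by the image of the map $m \mapsto t^{-1}m - \sigma(m)$.

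The main technical obstacle lies not in the argument itself but in setting up the equivariant Weil-restriction formalism and the equivariant version of Lemma \ref{functoriality of deformation spaces lemma}; both are deferred to \cite{GDB}, where nonconnective derived algebraic geometry is developed with sufficient flexibility to formally accommodate reductive group actions. Once these tools are in place, the computation becomes essentially formal: the underlying non-equivariant formula is precisely Example \ref{Ex:Rees}, and everything takes place naturally over $BG$, so the $G$-action is carried along functorially at every step. Alternatively, one can bypass the Cartesian decomposition by verifying the universal property directly, observing that a $T$-point of $\Spec^G$ of the right-hand side relative to $\AA^1 \times \Spec^G A$ encodes exactly a map $T \to \AA^1$ together with a factorization of $\sigma$ through multiplication by $t^{-1}$, which is precisely the data of an equivariant deformation of $\Spec^G B$ over $\Spec^G A$.
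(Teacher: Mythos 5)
Your proposal is correct and follows exactly the route the paper itself indicates: the paper gives no actual proof of Proposition~\ref{Prop:equivariant_quotient_formula}, stating only that it can be shown ``along similar lines as done in \cite{HekkingGraded} or as in \cite{GDB}'', and your argument --- the pushout presentation of $A/(\sigma)$ as $A\otimes_{A[M]}A$, the trivial deformation space of the zero section of $\Spec^G A[M]$ giving $R^{G,\ext}_{A/A[M]}\simeq A[t^{-1},M]$, and then base change of the Rees algebra along the $\sigma$-section --- is precisely that line of argument, and is the same technique the paper deploys explicitly in the proof of Proposition~\ref{Prop:RZU}. The one thing worth making explicit when you write it up is that $\Spec^G B\to\Spec^G A$ is literally the base change of the zero section along the $\sigma$-section, so stability of $R^{G,\ext}$ under base change alone already yields the displayed quotient, without needing the full force of Lemma~\ref{functoriality of deformation spaces lemma}.
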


	\begin{example}
		\label{Ex:equiv_Rees}
		Suppose that $M \in (\Mod_A^G)_{\geq 0}$ is of the form $M = M_1[k_1] \oplus \dots \oplus M_n[k_n]$, where each $M_i \in \Rep_A^G$ is of the form $M_i = A \otimes V_i$, for $V_i \in \Rep^G$ of dimension $d_i$, and $k_1,\dots,k_n \in \NN$. For each $i$, let $\underline{v}_i= (v_{i1},\dots,v_{id_i})$ be a basis for the underlying $A$-module of $M_i$. 
		
		Let a map $\sigma\colon M \to A$ be given, and let $B$ be the quotient $A/(\sigma)$. Then the restrictions $M_i[k_i] \to A$ of $\sigma$ induce elements $\underline{a}_i = (a_{i1},\dots,a_{id_i}) \in \pi_{k_i}(A)$. From Proposition \ref{Prop:equivariant_quotient_formula}, it follows that the underlying $A[t^{-1}]$-algebra of $R^{G,\ext}_{B/A}$ is 
		\begin{align*}
			\frac{A[t^{-1},\underline{x}_1,\dots,\underline{x}_n]}{(\underline{x}_1t^{-1}-\underline{a}_1,\dots,\underline{x}_nt^{-1}-\underline{a}_n)}
		\end{align*}
		where each $\underline{x}_i$ is a sequence of free variables $x_{i1},\dots,x_{id_i}$ in homogeneous degree 1 and homological degree $k_i$.
	\end{example}

	\begin{example}
		\label{Ex:Rees_BG}
		Let $A \in \Alg^G$ be given. Take a connective $(G,A)$-module $M$ of the form $M = M_1[k_1] \oplus \dots \oplus M_n[k_n]$, with each $M_i \in \Rep^G_A$, say $M_i = A \otimes V_i$ for some $V_i \in \Rep^G$. Let $\sigma\colon M \to A$ be a given map of $(G,A)$-modules, and put $B \coloneqq A/(\sigma)$. We will give a formula for $R^{G,\ext}_{B_G/B}$. 
		
		Observe that
		\begin{align*}
			A[M]_G \simeq (\LSym_A^G(A \otimes V))_G \simeq (\LSym^G(V) \otimes A)_G \simeq A_G[M^{\mv}]
		\end{align*}
		It follows that $B_G$ fits into the following pushout square
		\begin{center}
			\begin{tikzcd}
				A_G[M^{\mv}] \arrow[r, "s"] \arrow[d, "z"] & A_G \arrow[d] \\
				A_G \arrow[r] & B_G
			\end{tikzcd}
		\end{center}
		where $z$ is induced by the zero map and $s$ by the restriction of $\sigma$ to $M^{\mv}$. It follows that $B^{\mv}$ is the cofibre of $M^{\mv} \to A^{\mv}$.
		
		Write $M$ as $M' \oplus M''$, where $M'$ is the direct sum of those $M_i[k_i] \subset M$ for which the restriction $M' \to A$ is nonzero. Then $M'' \to A$ is zero. It follows that
		\begin{align*}
			B^\mv \simeq N \coloneqq A^{\mv}/M'' \oplus M'[1]
		\end{align*}
		hence, by Example \ref{Ex:equiv_Rees},
		\begin{align*}
			R^{G,\ext}_{B_G/B} \simeq \frac{B[t^{-1},N]}{(t^{-1}N -\tau)}.
		\end{align*}	
	\end{example}

	\section{Derived loci of maximal stabilizer dimension}
	\label{Sec:Xmax}
	Let $X$ be a derived algebraic stack whose classical truncation $X_\cl$ is Noetherian and admits a good moduli space. In \cite{EdidinRydh}, it is shown that there is a canonical closed substack $X_\cl^{\maxlocus}$ of $X_\cl$ which is the locus of points of $X_\cl$ with stabilizer group of maximal dimension. Our objective in this section is to establish one of the central results of this paper, the existence of a derived locus of maximal-dimensional stabilizers $X^{\maxlocus}$. Our definition of $X^{\maxlocus}$ is directly inspired by the definition of $X_\cl^\maxlocus$ in \cite{EdidinRydh}, and gives a derived enhancement. Along the way we will review derived loop stacks and derived bundles twisted by inner automorphisms, and give a derived Luna \'{e}tale slice theorem.
	
	\subsection{Derived loop stacks}
	\label{Par:Loops}
	Let $X \in \Stk$ be a derived stack. Recall that the \textit{derived loop stack $\lL X$} of $X$ is the self-intersection of the diagonal $\Delta_X\colon X \to X \times X$, which is algebraic if $X$ is. Let $k\colon T \to X$ be a morphism of derived stacks. Then the \textit{stabilizer} of $X$ in $T$ is the pullback
	\begin{align*}
		G_k \coloneqq G_T \coloneqq T \times_X \lL X
	\end{align*}
	Since $(\lL X)_\cl$ is the inertia stack of $X_\cl$, it holds that $(G_T)_\cl$ is the stabilizer $I_{X_\cl}$ of $X_\cl$ at $T_\cl$. We call $G_T$ \textit{connected} if $G_T\to T$ has connected classical fibers. If $\sO_T$ is a field, then $k\colon T \to X$ corresponds to a point $x \in X$, and we call $G_x \coloneqq G_k$ the \textit{stabilizer} (at $x$). Note that $(G_x)_\cl$ is a classical affine group scheme.
	
	\begin{example} \label{Example:loop stack of quotient}
		The basic example is when $X$ is of the form $[U/G]$, where $U$ is a derived scheme, for which it holds
		\begin{align*}
			\lL X \simeq [S_U /G].
		\end{align*}
		Here, $S_U$ is the derived stabilizer group scheme $S_U$, defined as the fibre product $(G \times U) \times_{U \times U} U$ of the map $(\sigma,\pi_2)\colon G \times U \to U \times U$ (where $\sigma$ is the $G$-action) with the diagonal. See \cite[\S 4.4]{ToenDerived}.
	\end{example}
	
	Let $f\colon X \to Y$ be a morphism of derived stacks. Then we have a natural morphism $\lL f\colon\lL X \to \lL Y \times_Y X$ of derived stacks over $X$. We call $f$ \textit{monodromy-free} if $\lL f$ is an equivalence. Likewise, we call $f$ \textit{stabilizer-preserving} if $(\lL f)_\cl$ is an equivalence. If the latter is the case, then for any $k\colon T \to X$ with $T$ classical, the morphism $(\lL f)_\cl$ induces an equivalence $(G_k)_\cl \simeq (G_{fk})_\cl$.
	
	We call a morphism of derived algebraic stacks $f\colon X \to Y$ \textit{separated} (\textit{proper}) if $f_\cl$ is separated (proper), as in \cite[\href{https://stacks.math.columbia.edu/tag/04YW}{Tag 04YW}]{stacks-project}. If $f\colon  X \to Y$ is representable, then $f$ is separated if and only if $\Delta_f$ is a closed immersion \cite[\href{https://stacks.math.columbia.edu/tag/04YS}{Tag 04YS}]{stacks-project}. Hence, since in the general case the diagonal is representable, one can also define $f$ to be separated by asking $\Delta_f$ to be of finite type, universally closed, and separated. The basic examples are: affine morphisms are separated \cite[\href{https://stacks.math.columbia.edu/tag/06TZ}{Tag 06TZ}]{stacks-project}, and closed immersions are proper \cite[\href{https://stacks.math.columbia.edu/tag/0CL8}{Tag 0CL8}]{stacks-project}.
	
	\begin{example}
		\label{Ex:GMS_separated}
		Let $X$ be a derived algebraic stack. Since we assume that all stacks have affine diagonal, in particular $X$ has separated diagonal. Therefore, for $T$ a separated derived algebraic stack, any morphism $T \to X$ is separated by \cite[\href{https://stacks.math.columbia.edu/tag/050M}{Tag 050M}]{stacks-project}.
		
		Suppose that we have a Cartesian diagram
		\begin{center}
			\begin{tikzcd}
				X \arrow[r, "f"] \arrow[d] & Y \arrow[d] \\
				M \arrow[r] & N
			\end{tikzcd}
		\end{center}
		of derived algebraic stacks, where the vertical arrows are good moduli spaces. Then if $M$ is separated, so is $f$. 
	\end{example}
	
	\begin{remark}
		\label{Rem:lLf_pullback_diagonal}
		Let $f\colon X \to Y$ be a morphism of derived algebraic stacks. Then we have a Cartesian diagram
		\begin{equation}
			\label{Eq:loop diagram}
			\begin{tikzcd}
				\lL X \arrow[d, "\lL f"] \arrow[r] & X \arrow[d, "\Delta_f"] \\
				f^* \lL Y \arrow[d]\arrow[r] & X \times_Y X \arrow[d] \arrow[r] & Y \arrow[d] \\
				X \arrow[r, "\Delta_X"] & X \times X \arrow[r] & Y \times Y
			\end{tikzcd}
		\end{equation}
		which shows that $\lL f$ is a pullback of $\Delta_f$. In particular, if $f$ is representable and separated, then $\lL f$ is a closed immersion.
	\end{remark}

	\begin{proposition}
		\label{Rem:etale_stabilizer_preserving}
		Let $f\colon X \to Y$ be an \'{e}tale morphism of derived algebraic stacks such that $f_\cl$ is stabilizer-preserving. Then $f$ is monodromy-free.
	\end{proposition}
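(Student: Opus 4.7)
The plan is to combine the Cartesian diagram of Remark \ref{Rem:lLf_pullback_diagonal}, which exhibits $\lL f$ as a pullback of the relative diagonal $\Delta_f$, with the standard fact that étale morphisms are detected on classical truncations. More precisely, I aim to show that $\lL f$ is étale, and then deduce the equivalence from the stabilizer-preserving hypothesis.

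First I would observe that $\Delta_f\colon X \to X \times_Y X$ is étale. Since $f$ is étale, $\BL_{X/Y} \simeq 0$; applying the cotangent triangle for the composition $X \xrightarrow{\Delta_f} X \times_Y X \to X$ (whose composite is the identity, and whose second map is étale as a pullback of $f$), one concludes that $\BL_{\Delta_f} \simeq 0$. Combined with the local finite presentation of $\Delta_f$ (inherited from $f$), this shows that $\Delta_f$ is étale. Pulling back along $X \to X \times_Y X$, the map $\lL f\colon \lL X \to f^*\lL Y$ exhibited in the diagram of Remark \ref{Rem:lLf_pullback_diagonal} is then étale as well.

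By hypothesis, $f_\cl$ is stabilizer-preserving, which by definition means $(\lL f)_\cl$ is an equivalence. Now any étale morphism $g\colon A \to B$ of derived algebraic stacks with $g_\cl$ an equivalence is itself an equivalence: indeed, étaleness gives $\pi_0(A) \cong \pi_0(B)$ (from $g_\cl$) together with $\pi_n(A) \cong \pi_n(B) \otimes_{\pi_0(B)} \pi_0(A)$ for all $n \geq 0$, so all homotopy sheaves of $g$ are isomorphisms. Applied to $g = \lL f$, this concludes the argument that $f$ is monodromy-free.

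No step is a serious obstacle; the only point that requires minor care is the étaleness of $\Delta_f$, which goes through the vanishing of $\BL_{\Delta_f}$ via the cotangent triangle.
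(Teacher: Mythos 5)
Your proposal is correct and follows essentially the same route as the paper: both arguments establish that $\Delta_f$ is \'{e}tale (the paper via left-cancellation along the identity $X \xrightarrow{\Delta_f} X\times_Y X \to X$, you via vanishing of $\BL_{\Delta_f}$ from the cotangent triangle, which amounts to the same thing), deduce that $\lL f$ is \'{e}tale as a pullback of $\Delta_f$ using the diagram of Remark \ref{Rem:lLf_pullback_diagonal}, and conclude from $(\lL f)_\cl$ being an equivalence that $\lL f$ is one. No gaps.
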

	
	\begin{proof}
		Since $f$ is \'{e}tale, the Cartesian diagram
		\begin{align*}
			\xymatrix{
				X \times_Y X \ar[d] \ar[r] & X \ar[d]^-{f} \\
				X \ar[r]^-{f} & Y
			}
		\end{align*}
		implies that $X \times_Y X \to X$ is \'{e}tale. Since the composition $X \xrightarrow{\Delta_f} X \times_Y X \to X$ is the identity, we conclude that $\Delta_f$ is \'{e}tale. By diagram~\eqref{Eq:loop diagram}, since $\lL f$ is a pullback of $\Delta_f$, it must also be \'{e}tale. But $(\lL f)_\cl = \lL (f_\cl)$ is an isomorphism, since $f_\cl$ is stabilizer-preserving, so $\lL f$ is an equivalence.
	\end{proof}

	\begin{lemma}
		\label{Lem:Llf_clopen_repsepet}
		Suppose that $f\colon X \to Y$ is representable, separated and \'{e}tale. Then $\Delta_f$, hence $\lL f$, is a closed and open immersion.
	\end{lemma}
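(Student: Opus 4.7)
The plan is to combine the two hypotheses on $f$ separately and show they force $\Delta_f$ into being simultaneously a closed immersion and an étale monomorphism, whence a clopen immersion; then one transports this to $\lL f$ by the Cartesian diagram~\eqref{Eq:loop diagram}.

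First, since $f$ is representable and separated, the characterization of separatedness recalled in the text (via \cite[\href{https://stacks.math.columbia.edu/tag/04YS}{Tag 04YS}]{stacks-project}) gives immediately that $\Delta_f \colon X \to X \times_Y X$ is a closed immersion. Next, since $f$ is étale, both projections $\mathrm{pr}_i \colon X \times_Y X \to X$ are étale as base changes of $f$. Because $\mathrm{pr}_i \circ \Delta_f \simeq \id_X$ and both $\id_X$ and $\mathrm{pr}_i$ are étale, the cancellation property for étale morphisms forces $\Delta_f$ to be étale as well. (This step is identical to the argument already used in the proof of Proposition~\ref{Rem:etale_stabilizer_preserving}.)

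Now $\Delta_f$ is at once a closed immersion and étale. A closed immersion is in particular a monomorphism, and an étale monomorphism is an open immersion (this is standard for classical algebraic stacks; in the derived setting it follows by checking it on the underlying classical truncation, since both the properties ``closed immersion'' and ``étale'' are defined via the classical truncation together with flatness/connectivity data that forces the derived map to be an isomorphism onto its image). Hence $\Delta_f$ is both an open and a closed immersion, i.e., identifies $X$ with a clopen substack of $X \times_Y X$.

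Finally, by diagram~\eqref{Eq:loop diagram}, the map $\lL f \colon \lL X \to f^*\lL Y$ is obtained as a pullback of $\Delta_f$ along $X \times_Y X \to X \times X$. Since open immersions and closed immersions are both stable under arbitrary base change, $\lL f$ inherits the property of being a closed and open immersion. The only nontrivial step, and hence the one I would expect to need a careful justification, is the passage from ``étale closed immersion'' to ``clopen immersion'' in the derived setting; this is where I would invoke the classical-truncation criterion for closed immersions together with the fact that étaleness forces the cotangent complex of $\Delta_f$ to vanish, so that the map is an equivalence onto its image.
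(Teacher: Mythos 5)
Your proof is correct and follows essentially the same route as the paper: closedness of $\Delta_f$ from representability and separatedness, étaleness of $\Delta_f$ by cancellation against the projections, reduction of the key step (étale closed immersion $\Rightarrow$ open immersion) to the known classical statement via the classical-truncation characterization of étale maps, and transport to $\lL f$ by base change along diagram~\eqref{Eq:loop diagram}. One small caveat: under the paper's conventions a derived closed immersion is only required to be a closed immersion on classical truncations, so it need not be a monomorphism of derived stacks, and your parenthetical reduction to the classical truncation (using that étaleness forces the map to be determined by $\pi_0$) is therefore not an optional remark but exactly where the argument lives --- which is precisely how the paper's own proof proceeds.
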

	
	\begin{proof}
		By Remark \ref{Rem:lLf_pullback_diagonal}, it suffices to show that an \'{e}tale closed immersion $h\colon S \to T$ is an open immersion. This reduces to the affine case since $h$ is affine, and then to the classical case since $h$ is \'{e}tale. This last case is known, see \cite[\href{https://stacks.math.columbia.edu/tag/025G}{Tag 025G}]{stacks-project}.
	\end{proof}
	
	We recall some facts about derived loop stacks from \cite{BenZviLoop}. Let $S^1$ be the stackification of the presheaf that sends any $T \in \Aff$ to the circle. Then it holds
	\begin{align*}
		S^1 \simeq B\ZZ \simeq * \bigsqcup_{* \sqcup *} *
	\end{align*}
	where $B\ZZ$ is the classifying space of the constant group scheme $\ZZ$. Then for any derived stack $X$ it holds
	\begin{align*}
		\lL X \simeq \underline{\Map}(S^1,X)
	\end{align*}
	For any $f\colon X \to Y$ it follows that $f^* \lL Y$ is the stack over $X$ which sends $T \to X$ to the space of commutative diagram
	\begin{center}
		\begin{tikzcd}
			T \arrow[d] \arrow[r] & X \arrow[d] \\
			S^1 \times T \arrow[r] & Y   
		\end{tikzcd}
	\end{center}
	which we think of as loops on $Y$ based at $f$. The morphism $\lL X \to f^* \lL Y$ then sends $T \times S^1 \to X$ to the loop $T \times S^1 \to X \to Y$, which is based at $f$. In particular, $\lL f$ is a morphism of derived group stacks. This also explains the term ``monodromy-free''. 
	
	\begin{notation}
		For a morphism $f \colon x \to y$ in an $\infty$-category $\CCC$, we write $\CCC_{x / \cdot / y}$ for the double slice-category $(\CCC_{/y})_{f/}$, so that an object of $\CCC_{x/\cdot /y}$ is a factorization $x \to w \to y$ of $f$.
	\end{notation}
	
	\begin{remark}
		\label{Rem:B_loop_adjunction}
		Let $T$ be a derived stack. Throughout, we will write $B_T$  for the functor $B_T\colon  \Grp(\Stk_{/T}) \to \Stk_{T/}$ which sends a derived group stack $G$ over $T$ to the colimit of the bar-construction $B(T,G)$ in $\Stk_T$. In analogy to the loop space-suspension adjunction from topology, there is an adjunction
		\begin{align*}
			B_T :  \Grp(\Stk_T)\rightleftarrows  \Stk_{T/}: \Omega_T
		\end{align*}
		such that $\Omega_T(T \to X) \simeq \lL X\times_X T$, and with $B_T$ fully faithful, which we now explain.
		
		We have a sequence of adjunctions
		\begin{align*}
			\Grp(\Stk_T)\rightleftarrows  \Epi_{T/}(\Stk_T) \rightleftarrows \Stk_{T/\cdot / T} \rightleftarrows \Stk_{T/}
		\end{align*}
		where: the first adjunction on the left is the adjoint equivalence from Proposition \ref{Prop:B_Omega_equiv}; the second adjunction in the middle has left adjoint the inclusion, and as right adjoint the functor which sends $g \colon T \to Y$ over $T$ to the $\infty$-image $T \to \im (g)$; and the third adjunction has left adjoint the forgetful functor, and as right adjoint the functor which sends $T \to X$ to $T \to X \times T$. 
		
		The composition of all of these left adjoints is $B_T$. Write $\Omega_T$ for the right adjoint of $B_T$. The for $f \colon T \to X$ given, it holds that 
		\[ \Omega_T(X) \simeq T \times_{\im (f,\id_T)} T \simeq T \times_{X \times T} T \] 
		where the second equivalence follow from the fact that $\im(f, \id_T) \to X \times T$ is a monomorphism. Then the first claim, $\Omega_T(T \to X) \simeq \lL X\times_X T$, follows from the Cartesian diagram
		\begin{center}
			\begin{tikzcd}
				\lL X \times_X T \arrow[r] \arrow[d] & T \arrow[d] \arrow[r] & X \arrow[d] \\
				T \arrow[r] & X \times T \arrow[r] & X \times X
			\end{tikzcd}
		\end{center}
		and the second claim follows from the familiar fact that $G \simeq T \times_{BG} T$ for any $\infty$-group $G$ over $T$.
	\end{remark}
	
	\subsection{Twisted forms}
	We introduce $\infty$-bundles which are twisted versions of principal $G$-bundles, for a given reductive group $G$. This will be used to describe $[U/G]^{\maxlocus}$ in the cases relevant for the proof of Theorem \ref{thm existence of X max}.
	
	\begin{definition}
		For $f \colon  T \to BG$, say classified by the $G$-bundle $P \to T$, define the \textit{inner form} $\widetilde{G}_f$ of $G$ via the pullback
		\begin{equation}  \label{Eq:def of G_f tilde}
			\begin{tikzcd}
				\widetilde{G}_f \arrow[r] \arrow[d] & \lL (BG) \arrow[d] \\
				T \arrow[r] & BG.
			\end{tikzcd}
		\end{equation}
	\end{definition}
	Observe that $\widetilde{G}_f$ is a pullback of an $\infty$-group, hence is an $\infty$-group itself. In the terminology from  \cite[Prop.~4.10]{NikolausPrincipal} it holds that $\widetilde{G}_f$ is the $G_\ad$-fiber $\infty$-bundle $P \times_G G_\ad$, since $\lL (BG) = [G_\ad/G]$ by definition. Clearly, we have $\tilde{G}_f \simeq \Omega_T(BG)$.

	Recall that the identity component $G^0$ is an open and closed, normal subgroup of $G$, such that $G/G^0$ is finite, hence reductive. Since $G^0 \subset G$ is normal, the adjoint action on $G_\ad$ restricts to the identity component, and we write $G^0_\ad$ for the corresponding $G$-object. Then we define the \emph{twisted form} $\tilde{G}_f^0$ of $G$ via the pullback
	\begin{center}
		\begin{tikzcd}
			\tilde{G}_f^0 \arrow[r] \arrow[d] & {[G^0_\ad/G]} \arrow[d] \\
			T \arrow[r] & BG
		\end{tikzcd}
	\end{center}
	The group structure on $G^0$ induces a group structure on $\tilde{G}_f^0$. The open and closed inclusion $G^0 \to G$ induces a morphism $\tilde{G}_f^0 \to \tilde{G}_f$, which exhibits $\tilde{G}_f^0$ as open and closed subgroup of $\tilde{G}_f$. 
	
	\begin{lemma} 
		\label{Lem:inner_forms}
		Let $G$ be an algebraic group of dimension $d$, and $\widetilde{G}_f$ an inner form of $G$, classified by $f\colon  T \to BG$ which has corresponding $G$-bundle $P$. 
		\begin{enumerate}
			\item $\widetilde{G}_f$ and $\widetilde{G}_f^0$ are derived algebraic group schemes that are smooth of dimension $d$ over $T$, and 
			$\widetilde{G}_f^0 \to T$ has connected fibers. 
			\item The canonical map
			$B_T(\widetilde{G}_f^0) \to T \times_{B(G/G^0)} BG$ is an equivalence.
		\end{enumerate}
	\end{lemma}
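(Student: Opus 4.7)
The plan is to prove (i) by a base change argument and (ii) via the $B_T \dashv \Omega_T$ adjunction from Remark~\ref{Rem:B_loop_adjunction}.

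For (i), I would first observe that $[G_\ad/G] \to BG$ is representable, affine, and smooth of relative dimension $d$: pulling back along $\ast \to BG$ yields $G_\ad \simeq G \to \ast$, and for any $T \to BG$ classifying a $G$-bundle $P$, the pullback is the associated bundle $P \times_G G_\ad$, which is affine and smooth of relative dimension $d$ over $T$ by \'etale-local triviality. Restricting to the open-closed subgroup $[G^0_\ad/G] \subseteq [G_\ad/G]$, the same base-change gives that $\widetilde{G}_f^0 \to T$ is affine and smooth of relative dimension $d$ with geometric fibers isomorphic to $G^0$, hence connected. The group structure over $T$ is inherited from the loop concatenation on $\lL BG$ over $BG$, and is compatible with restriction to $[G^0_\ad/G]$ since $G^0 \subseteq G$ is normal.

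For (ii), Remark~\ref{Rem:B_loop_adjunction} gives $B_T \dashv \Omega_T$ with $B_T$ fully faithful and essential image the effective epimorphisms $T \to X$ in $\Stk_{T/}$. Setting $W \coloneqq T \times_{B(G/G^0)} BG$, it thus suffices to show (a) the section $s\colon T \to W$ is an effective epimorphism, and (b) the canonical map $\widetilde{G}_f^0 \to \Omega_T(W) \simeq T \times_W T$ is an equivalence of $\infty$-groups over $T$. For (a), note $W \to T$ is the pullback of $BG \to B(G/G^0)$ along $\bar f \colon T \to B(G/G^0)$, so \'etale-locally on $T$ (where $\bar f$ trivializes) the map $W \to T$ becomes $T \times BG^0 \to T$ and $s$ becomes $\id_T \times (\ast \to BG^0)$; since $\ast \to BG^0$ is an effective epimorphism and effective epimorphisms are stable under base change and local for the \'etale topology, (a) follows.

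For (b), I would use the following general formula for the diagonal of a pullback: for $X = A \times_C B$ with section $D \to X$ given by $D \xrightarrow{s_A} A$ and $D \xrightarrow{s_B} B$ compatible over $C$,
\[
D \times_X D \simeq (D \times_A D) \times_{D \times_C D} (D \times_B D).
\]
Applied to $D = T$, $A = T$, $B = BG$, $C = B(G/G^0)$, $s_A = \id$, $s_B = f$, this gives
\[
T \times_W T \simeq T \times_{T \times_{B(G/G^0)} T} (T \times_{BG} T) \simeq T \times_{\widetilde{(G/G^0)}_{\bar f}} \widetilde{G}_f,
\]
where the map $\widetilde{G}_f \to \widetilde{(G/G^0)}_{\bar f}$ is induced by $G \to G/G^0$ and $T \to \widetilde{(G/G^0)}_{\bar f}$ is the identity section. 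Since the fiber of $G \to G/G^0$ over the identity is the open-closed subgroup $G^0 \subseteq G$, this fiber product is exactly $T \times_{BG} [G^0_\ad/G] = \widetilde{G}_f^0$, and the identification is compatible with the group structures since everything is obtained by base change from $[G^0_\ad/G] \to BG$. The main technical obstacle is the bookkeeping in this iterated pullback computation and verifying that the group structure on $T \times_W T$ inherited from $\Omega_T$ matches the one on $\widetilde{G}_f^0$ inherited from $[G^0_\ad/G]$; once the combinatorial identification is set up, normality of $G^0$ in $G$ makes both sides match.
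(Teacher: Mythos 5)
Your proof of (i) is essentially the paper's: both arguments base-change the fiberwise smooth, $d$-dimensional group $[G_\ad/G]=\lL(BG)$ (resp.\ its open-and-closed subgroup $[G^0_\ad/G]$) along $f$, and check connectedness of the fibers of $\widetilde G^0_f\to T$ after pulling back to the total space $P$ (the paper) or after \'etale-local trivialization (you) --- the same computation. For (ii) you take a genuinely different route. The paper deloops globally over $BG$: it endows $BG^0$ with the conjugation $G$-action, observes that $[BG^0_\ad/G]\to BG$ is Cartesian over $BG\to B(G/G^0)$ (checked on the fiber over $*\to BG$), and then transports this to $\widetilde G^0_f=f^*[G^0_\ad/G]$ using that geometric realizations commute with pullbacks and descent along the effective epimorphisms $\widetilde G^0_f\to B_T(\widetilde G^0_f)$ and $[G^0_\ad/G]\to[BG^0_\ad/G]$. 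You instead work directly over $T$: you show the section $s\colon T\to W\coloneqq T\times_{B(G/G^0)}BG$ is an effective epimorphism, so that $W$ lies in the essential image of $B_T$, and you compute $\Omega_T(W)=T\times_WT$ by the Fubini formula for iterated pullbacks, identifying it with $T\times_{\widetilde{(G/G^0)}_{\bar f}}\widetilde G_f\simeq\widetilde G^0_f$ (using normality of $G^0$ so that the fiber of $[G_\ad/G]\to[(G/G^0)_\ad/G]$ over the identity section is $[G^0_\ad/G]$). Both arguments are correct; the paper's version produces the universal identification $[BG^0_\ad/G]\simeq BG\times_{B(G/G^0)}BG$ once and for all so that the $T$-statement is pure base change, while yours avoids introducing the auxiliary $G$-action on $BG^0$ at the cost of the pullback bookkeeping. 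Two small points: after \'etale-locally trivializing $\bar f$, the section of $T\times BG^0\to T$ need not be the trivial one, but any section of a map with connected fibers is surjective on $\pi_0$-sheaves, so your effective-epimorphism claim stands; and the group-structure compatibility you flag as the main technical obstacle is automatic, since the canonical map $B_T(\widetilde G^0_f)\to W$ loops to a homomorphism $\widetilde G^0_f\to\Omega_T(W)$ of $\infty$-groups over $T$, and a homomorphism that is an equivalence on underlying stacks is an equivalence of $\infty$-groups.
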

	
	\begin{proof}
		\noindent\textit{(i)} Since $\lL (BG) \simeq [G_\ad/G]$, it holds that $\lL (BG)$ is fiberwise an algebraic group scheme, smooth and of dimension $d$ over $BG$, so by diagram~\ref{Eq:def of G_f tilde} the same is true for $\widetilde{G}_f \to T$. A similar argument goes for $\tilde{G}_f^0$.
		
		Pulling back $\tilde{G}_f^0$ along the $G$-bundle $P \to T$, we obtain $q \colon P \times G^0_\ad \to P$. Since $G^0$ is connected, $q$ has connected fibers.
		
		\medskip
		\noindent\text{(ii)} 
        Note that the bar-construction $B(*,G^0)$ is $G$-equivariant, where $G$ acts on $B(*,G^0)_n = (G^0)^{\times n}$ diagonally and by conjugation. Since $[-/G]$ commutes with colimits, we thus obtain a $G$-action on $BG^0$, written $BG^0_\ad$, such that $[G^0_\ad/G] \to BG$ factors through $[BG_\ad^0/G] \to BG$.
  
        Consider the diagram
		\begin{center}
			\begin{tikzcd}
				\tilde{G}_f^0 \arrow[d, "\varphi",""{name=D1}] \arrow[r] & {[G^0_\ad/G]} \arrow[d, "\psi",""{name=D2}]  \\
				B_T(\tilde{G}_f^0) \arrow[d,""{name=E1}] \arrow[r] & {[BG^0_\ad / G]}\arrow[r] \arrow[d,""{name=E2}] & BG \arrow[d,""{name=E3}] \\
				T \arrow[r] & BG \arrow[r] & B(G/G^0)
				\arrow[phantom,from=D1,to=D2,"\scriptstyle(\alpha)"]
				\arrow[phantom,from=E1,to=E2,"\scriptstyle(\gamma)"]
				\arrow[phantom,from=E2,to=E3,"\scriptstyle(\delta)"]
			\end{tikzcd}
		\end{center}
		By pulling back along $* \to BG$, we see that $(\delta)$ is Cartesian. Since geometric realizations commute with pullbacks, also $(\alpha)$ is Cartesian. Now because the rectangle $(\alpha)+(\gamma)$ is Cartesian and the morphisms $\varphi, \psi$ are effective epimorphisms, it follows that the square $(\gamma)$ is Cartesian as well, which implies the claim.
	\end{proof}
	
	\subsection{{A derived Luna \'{e}tale slice theorem}}
	We give a derived version of the \'{e}tale local structure of algebraic stacks in terms of quotient stacks.
	
	\begin{notation}
		For $X \in \Stk$, write $\Aff^{\et}_X$ for the $\infty$-category of affine
		\'etale morphisms $T\to X$.
	\end{notation}
	
	Write $\lvert X \rvert$ for the equivalence class of morphisms $x\colon \Spec k \to X$ with $k$ a field, where $x$ is equivalent to $x'\colon \Spec k' \to X$ if there is a common field extension $k,k' \subset K$ such that $x_{\mid \Spec K} \simeq x'_{\mid \Spec K}$. Consider the set of points, written $\lvert X \rvert$, as a topological space by declaring $U_0 \subset \lvert X \rvert$ to be open if there is an open immersion $U \to X$ such such that $\lvert U \rvert \to \lvert X \rvert$ has image $U_0$. Since open immersions are monomorphisms, passing from an open immersion $U \to X$ to the corresponding open subset $\lvert U \rvert \subset \lvert X \rvert$ constitutes an equivalence of partially ordered sets. The induced map $\lvert X_\cl \rvert \to \lvert X \rvert$ is a homeomorphism, as the following result shows.
	
	\begin{lemma}[Topological invariance]
		\label{Lem:topological_invariance}
		Let $X$ be a quasi-compact derived algebraic stack. Then pulling back along $X_\cl \to X$ induces an equivalence $\Aff_X^{\et} \simeq \Aff_{X_\cl}^{\et}$.
	\end{lemma}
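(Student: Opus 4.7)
The strategy is to reduce to the case $X = \Spec A$ affine by smooth descent, and then invoke the standard topological invariance of the \'{e}tale site of an affine derived scheme.

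First, observe that $Y \mapsto \Aff^{\et}_Y$ satisfies smooth descent: affine morphisms glue along smooth covers, and being \'{e}tale is smooth-local on the target. Since $X$ is quasi-compact, choose a smooth atlas $p \colon U \to X$ with $U$ a derived affine scheme. Because $p$ is flat, the formation of the \v{C}ech nerve commutes with $(-)_\cl$, giving
\[
(U \times_X \cdots \times_X U)_\cl \simeq U_\cl \times_{X_\cl} \cdots \times_{X_\cl} U_\cl,
\]
and $U_\cl \to X_\cl$ is again a smooth atlas. Applying smooth descent on both sides yields compatible equivalences
\[
\Aff^{\et}_X \simeq \lim_{[n]\in \Delta} \Aff^{\et}_{U^{[n]}}, \qquad \Aff^{\et}_{X_\cl} \simeq \lim_{[n]\in\Delta} \Aff^{\et}_{(U_\cl)^{[n]}},
\]
which, together with naturality of the pullback functor in $[n]$, reduces the lemma to the case in which $X = \Spec A$ is a derived affine scheme.

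In the affine case, under the anti-equivalence $\Aff \simeq \Alg^{\op}$, the statement becomes that the truncation functor
\[
\pi_0 \colon \Alg^{\et}_A \lr \Alg^{\et}_{\pi_0(A)}, \qquad B \mapsto \pi_0(B),
\]
between $\infty$-categories of \'{e}tale algebras is an equivalence. This is a well-known consequence of derived deformation theory: for any \'{e}tale morphism $A \to B$ one has $\BL_{B/A} \simeq 0$, so the lifting problem along each step of the Postnikov tower
\[
\cdots \to \tau_{\leq n+1} A \to \tau_{\leq n} A \to \cdots \to \pi_0(A),
\]
each of which is a square-zero extension by $\pi_{n+1}(A)[n+1]$, is canonically and uniquely solvable. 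The unique lifts assemble in the limit to produce a quasi-inverse to $\pi_0$, using that $A \simeq \lim_n \tau_{\leq n} A$.

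The conceptual content lives entirely in this last step, which is standard and available in the literature on derived algebraic geometry. The descent reduction is straightforward given the assumption of affine diagonal and the fact that $(-)_\cl$ is preserved under pullbacks along flat morphisms. The main obstacle, therefore, is essentially bookkeeping: arranging that both towers of $\infty$-categories match strictly enough under pullback along $X_\cl \to X$; once reduced to the affine statement, no further substantive work is needed.
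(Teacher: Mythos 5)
Your proof is correct and follows essentially the same route as the paper: reduce to the affine case by smooth descent along an affine atlas (using that flatness of the atlas makes the square relating $U_\cl$, $U$, $X_\cl$, $X$ Cartesian), and then invoke the standard topological invariance of the \'{e}tale site of an affine derived scheme, which the paper simply cites from the literature rather than re-deriving via the Postnikov tower.
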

	
	\begin{proof}
		The affine case can be found in \cite[Lem.~2.15]{GaitsgoryStudy}.
		
		For the general case, let $U \to X$ be a smooth atlas, where $U$ is a derived scheme. We can assume that $U$ is affine since $X$ is quasi-compact. Then the commutative diagram
		\begin{center}
			\begin{tikzcd}
				U_\cl \arrow[d] \arrow[r] & U \arrow[d] \\
				X_\cl \arrow[r] & X
			\end{tikzcd}
		\end{center}
		is Cartesian. Now the claim follows from the affine case by descent.
	\end{proof}

	Suppose that $X \in \Stk$ is such that $X_\cl$ admits a good moduli space $X_\cl \to Y$. We say that a morphism $f \colon [U/G] \to X$ is \textit{strongly \'{e}tale} if $U \in \Aff^G$, $f$ is \'{e}tale and $f_\cl$ fits in a Cartesian diagram 
	\begin{align*}
		\xymatrix{
			[U_\cl/G] \ar[r]^-{f_\cl} \ar[d] & X_\cl \ar[d] \\
			U_\cl \git G \ar[r] & Y.
		}
	\end{align*}
	
	Observe that $f_\cl$ then has to be stabilizer-preserving and therefore, by Proposition~\ref{Rem:etale_stabilizer_preserving}, also monodromy-free. Moreover, $f$ is representable and separated by Example \ref{Ex:GMS_separated}, since  $U_\cl \git G$ is affine, hence separated. We record these observations in the following proposition. 
	\begin{proposition} \label{Prop:strongly etale is monodr-pres and separated}
		Let $X$ be a derived  algebraic stack such that $X_\cl$ admits a good moduli space. Any strongly \'{e}tale morphism $f \colon [U/G] \to X$ is also representable, separated and monodromy-free.
	\end{proposition}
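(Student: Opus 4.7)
The plan is to unpack the three assertions one at a time, following the sketch given in the paragraph immediately above the proposition. The central input is that, by the Cartesian square in the definition of strongly \'etale, the classical truncation $f_\cl \colon [U_\cl/G] \to X_\cl$ is obtained by base change along a morphism of algebraic spaces. Since both $U_\cl \git G$ and $Y$ are algebraic spaces and stabilizers behave well under Cartesian products, the automorphism group of a classical point $\bar u \in [U_\cl/G]$ equals the fiber product of the automorphism groups of its images in $X_\cl$ and $U_\cl \git G$ over that of its image in $Y$, which reduces to $\Aut_{X_\cl}(f_\cl(\bar u))$. In particular the induced map on stabilizers is an isomorphism, so $f_\cl$ is stabilizer-preserving, and is in particular a representable morphism of classical stacks.

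From these two observations each conclusion of the proposition follows quickly. For monodromy-freeness, $f$ is \'etale and $f_\cl$ is stabilizer-preserving, so Proposition~\ref{Rem:etale_stabilizer_preserving} applies directly. For separatedness, I would invoke the second half of Example~\ref{Ex:GMS_separated} with the given Cartesian square: both vertical arrows are good moduli space morphisms and the source $U_\cl \git G$ of the bottom row is affine, hence separated, so $f_\cl$ is separated, and therefore so is $f$ by definition. For representability, I would combine the classical representability of $f_\cl$ with the fact that a derived algebraic stack $Z$ is a derived algebraic space if and only if $Z_\cl$ is a classical algebraic space (since the $0$-truncation condition is detected on classical affine test schemes). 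Hence for any derived scheme $T \to X$, the pullback $T \times_X [U/G]$ has classical truncation $T_\cl \times_{X_\cl} [U_\cl/G]$, which is a classical algebraic space, and the derived fiber product is therefore a derived algebraic space.

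The only mild subtlety is that Example~\ref{Ex:GMS_separated} on its own addresses only separatedness, so representability requires the additional (but standard) reduction of derived representability to the classical case. Everything else is a direct unpacking of the relevant definitions together with the two prior statements just cited.
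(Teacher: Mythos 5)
Your proof is correct and follows essentially the same route as the paper: stabilizer-preservation is read off from the Cartesian square (the good moduli spaces being algebraic spaces kill the stabilizers of the corners), monodromy-freeness then comes from Proposition~\ref{Rem:etale_stabilizer_preserving}, and separatedness from Example~\ref{Ex:GMS_separated} via the affineness of $U_\cl \git G$. The only difference is that you spell out the representability step (classical representability from stabilizer-preservation, then detection of derived algebraic spaces on classical truncations using that $(-)_\cl$ preserves fiber products), which the paper compresses into its citation of Example~\ref{Ex:GMS_separated}; this is a reasonable and correct elaboration rather than a different argument.
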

	
	We now prove a derived Luna \'{e}tale slice result.
	
	\begin{proposition}
		\label{Prop:luna_etale_slic}
		Let $X$ be a derived algebraic stack such that $X_{\cl}$ is Noetherian and admits a good moduli space. Then $X$ is covered by derived stacks of the form $h\colon [U/G] \to X$, where $U \in \Aff^G$ is such that $U^G \not = \emptyset$, $G$ is reductive, and $h$ is affine and strongly \'{e}tale (hence monodromy-free by Remark  \ref{Rem:etale_stabilizer_preserving}).
	\end{proposition}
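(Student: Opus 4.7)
The plan is to reduce the statement to the classical étale slice theorem of Alper--Hall--Rydh \cite{AHR2, AlperLuna} applied to $X_\cl$, and then transport the resulting covers to the derived setting by topological invariance of the \'{e}tale site (Lemma~\ref{Lem:topological_invariance}). Fix a point $x \in |X| = |X_\cl|$. Classical AHR applied to $X_\cl$ produces a reductive group $G$, an affine $G$-scheme $V_\cl$ with $V_\cl^G \neq \emptyset$, and a strongly \'{e}tale morphism $f_\cl\colon [V_\cl/G] \to X_\cl$ whose image contains $x$; such morphisms jointly cover $|X|$. Under our standing affine diagonal assumption, $f_\cl$ is affine, so the composite $p_\cl \colon V_\cl \to [V_\cl/G] \to X_\cl$ is an affine \'{e}tale morphism to $X_\cl$.

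Next, I would invoke Lemma~\ref{Lem:topological_invariance} to lift $p_\cl$ uniquely to an affine \'{e}tale morphism $p \colon V \to X$ whose classical truncation is $V_\cl$; in particular $V$ is a derived affine scheme. The classical $G$-action $\sigma_\cl\colon G \times V_\cl \to V_\cl$ then extends to a derived $G$-action on $V$ by the same principle: the action map and the second projection both yield affine \'{e}tale morphisms $G \times V_\cl \to X_\cl$, whose unique affine \'{e}tale lifts to $X$ must coincide with the morphism $G \times V \to V \to X$ obtained from an action map $\sigma\colon G \times V \to V$. The associativity and unit diagrams for $\sigma$ lift uniquely from their classical counterparts by the same argument. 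Thus $V \in \Aff^G$ and $\sigma$ descends to a morphism $h \colon [V/G] \to X$ with $h_\cl \simeq f_\cl$.

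It remains to verify the required properties of $h$. By construction $h$ is \'{e}tale, and $h_\cl = f_\cl$ is strongly \'{e}tale, so $h$ itself is strongly \'{e}tale by the definition given before Proposition~\ref{Prop:strongly etale is monodr-pres and separated}. For affineness of $h$, note that for any affine $T \to X$ the pullback $T \times_X [V/G] \to T$ is \'{e}tale over the derived affine scheme $T$, with classical truncation affine (since $h_\cl$ is affine), and therefore itself affine by topological invariance applied on $T$. Finally, $(V^G)_\cl \simeq V_\cl^G \neq \emptyset$ by Proposition~\ref{FixPtProps}(i), so $V^G$ is non-empty.

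The main technical obstacle is the construction of the derived $G$-action on $V$ compatibly with the classical $G$-action on $V_\cl$, together with all of its higher coherence data. This is handled uniformly by repeatedly invoking the uniqueness clause of topological invariance: every structure morphism of the action becomes affine \'{e}tale after composing with $p$, so it has a unique derived lift, and all relations between these morphisms lift automatically. This is also the reason we must work with the affine étale scheme $V$ rather than attempting to directly promote $[V_\cl/G]$ to a derived quotient stack.
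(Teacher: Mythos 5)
There is a genuine gap at the very first step of your lifting strategy. You claim that the composite $p_\cl \colon V_\cl \to [V_\cl/G] \to X_\cl$ is affine and \'{e}tale, and then lift it via Lemma~\ref{Lem:topological_invariance}. But the atlas map $V_\cl \to [V_\cl/G]$ is a $G$-torsor, hence smooth of relative dimension $\dim G$; it is \'{e}tale only when $G$ is finite, which is precisely the case this proposition is not about (the whole point is positive-dimensional reductive stabilizers). So $p_\cl$ is smooth but not \'{e}tale, topological invariance of the \'{e}tale site does not apply to it, and the unique lift $V \to X$ you base everything on does not exist by that lemma. The same objection kills the reconstruction of the action: the two maps $G \times V_\cl \rightrightarrows V_\cl \to X_\cl$ are smooth of relative dimension $2\dim G$ over $X_\cl$, not \'{e}tale, so there is no uniqueness clause available to force their lifts to assemble into an action map $\sigma$, let alone its higher coherences. (Had the maps actually been affine \'{e}tale, your coherence argument could be made clean in one stroke -- the equivalence $\Aff_X^{\et}\simeq\Aff_{X_\cl}^{\et}$ is an equivalence of $\infty$-categories, so the entire simplicial bar diagram would lift at once -- but the hypothesis fails.)

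The paper's proof sidesteps this by applying topological invariance not to the atlas but to the morphism $h_\cl \colon [U_\cl/G] \to X_\cl$ itself: the Luna/Alper--Hall--Rydh slice theorem produces $h_\cl$ as an \emph{affine} (as a morphism) and \'{e}tale map of stacks, which is exactly an object of $\Aff^{\et}_{X_\cl}$ in the sense of Lemma~\ref{Lem:topological_invariance}, so it lifts uniquely to an affine \'{e}tale $V \to X$ with $V_\cl \simeq [U_\cl/G]$. The step your argument is implicitly trying to replace -- recognizing this derived lift $V$ as a quotient stack $[U/G]$ for a derived $G$-scheme $U$ -- is supplied by an external input, \cite[Lem.~4.2.5]{HL}, and is not something one gets for free from uniqueness of \'{e}tale lifts. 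If you want to avoid that citation you would need a different deformation-theoretic argument for promoting the $G$-action, not topological invariance.
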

	
	\begin{proof}
		Since $X_\cl$ is Noetherian, the set of closed points is dense in $\lvert X_\cl \rvert$. By the Luna \'{e}tale slice theorem for classical algebraic stacks from \cite[Thm.~A.1]{EdidinRydh}, we therefore have a cover of $X_\cl$ by  stacks of the form $h_\cl\colon [U_\cl/G] \to X_\cl$, where $U_\cl$ is a classical $G$-scheme such that $U_\cl^G \not=\emptyset$, the map $h_\cl$ is \'{e}tale, affine and stabilizer-preserving, and $G$ is a stabilizer of some closed point in the image of $U_\cl \to X_\cl$.
		
		Since $X_\cl$ is Noetherian, it is quasi-compact. By Lemma \ref{Lem:topological_invariance}, we thus have a Cartesian diagram
		\begin{center}
			\begin{tikzcd}
				{[U_{\cl}/G]} \arrow[d, "h_\cl"] \arrow[r, "f"] & V \arrow[d, "h"] \\
				X_\cl \arrow[r] & X
			\end{tikzcd}
		\end{center}
		where the vertical maps are affine and \'{e}tale, and $f_\cl$ is an isomorphism. By \cite[Lem.~4.2.5]{HL}, the derived stack $V$ is of the form $[U/G]$, for some derived $G$-scheme $U$, and $f$ is induced by a $G$-equivariant map $f'\colon U_\cl \to U$ such that $f'_\cl$ is an isomorphism.
		
		The collection of morphisms $h\colon [U/G] \to X$, indexed by the elements $h_\cl\colon [U_\cl/G] \to X_\cl$ appearing in the cover coming from the Luna \'{e}tale slice theorem, is an \'{e}tale cover, because $\lvert X_\cl \rvert \simeq \lvert X \rvert$.
	\end{proof}

	\subsection{The derived locus $X^{\maxlocus}$ of maximal-dimensional stabilizers}
	
	Throughout this subsection, let $X$ be a derived algebraic stack whose classical truncation $X_\cl$ admits a good moduli space $Y$. We moreover assume that $X_\cl$ is Noetherian. Recall that all stacks have affine diagonal by assumption. In what follows, $G^0 \subseteq G$ denotes the identity component of any (possibly disconnected) algebraic group $G$.
	
	The goal is to show the following.
	
	\begin{theorem} \label{thm existence of X max}
		Let $d$ be the maximal dimension of stabilizers of points of $X$. Then there exists a canonical closed immersion $X^{\maxlocus} \to X$ with the following property: For any affine, strongly \'{e}tale morphism $[U/G] \to X$, where $U \in \Aff^G$ and $G$ is reductive with $\dim G = d$, there exists a Cartesian square
		\begin{align*}
			\xymatrix{
				[U^{G^0} / G] \ar[r] \ar[d] &  \ar[d] [U/G] \ar[d] \\
				X^{\maxlocus} \ar[r] & X.
			}
		\end{align*}
		Then $(-)_\cl$ commutes with $(-)^\maxlocus$, and $X_\cl^\maxlocus$ is equivalent to the classical locus of points of $X_\cl$ with maximal stabilizer dimension defined in \cite[Prop.~C.5]{EdidinRydh}.
	\end{theorem}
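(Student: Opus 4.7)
The plan is to construct $X^{\maxlocus}$ by étale descent from local candidates on a Luna slice-type cover. Since $X_\cl$ is Noetherian and admits a good moduli space, the stabilizer dimension function on $|X| \simeq |X_\cl|$ is upper semicontinuous and bounded, so a maximum $d < \infty$ exists. Apply Proposition~\ref{Prop:luna_etale_slic} to cover $X$ by strongly étale morphisms $f_\alpha \colon [U_\alpha/G_\alpha] \to X$ with $G_\alpha$ reductive and $U_\alpha \in \Aff^{G_\alpha}$. By Proposition~\ref{Prop:strongly etale is monodr-pres and separated}, each $f_\alpha$ is monodromy-free, hence preserves stabilizer dimensions; it follows that $\dim G_\alpha \leq d$, with equality iff $U_\alpha^{G_\alpha^0} \neq \emptyset$. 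For each $\alpha$ with $\dim G_\alpha = d$, define the local candidate $Z_\alpha \coloneqq [U_\alpha^{G_\alpha^0}/G_\alpha] \hookrightarrow [U_\alpha/G_\alpha]$, which is a closed immersion by Proposition~\ref{FixPtProps}(ii); for $\dim G_\alpha < d$ set $Z_\alpha \coloneqq \emptyset$.

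The main step is to glue the $Z_\alpha$ into a global closed substack of $X$. For a pair $\alpha,\beta$, set $W_{\alpha\beta} \coloneqq [U_\alpha/G_\alpha] \times_X [U_\beta/G_\beta]$; we must verify that $Z_\alpha \times_X [U_\beta/G_\beta] \simeq [U_\alpha/G_\alpha] \times_X Z_\beta$ as closed substacks of $W_{\alpha\beta}$. Apply Proposition~\ref{Prop:luna_etale_slic} again to $W_{\alpha\beta}$ to obtain a strongly étale cover $[V_\gamma/H_\gamma] \to W_{\alpha\beta}$. Each composition $[V_\gamma/H_\gamma] \to [U_\alpha/G_\alpha]$ is étale, representable, separated, and monodromy-free. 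Pulling back the atlas $U_\alpha \to [U_\alpha/G_\alpha]$ yields a $G_\alpha$-equivariant étale morphism of affine derived schemes to which Lemma~\ref{Lem:etale base change of fixed locus} applies; this identifies the pullback of $Z_\alpha$ with $[V_\gamma^{H_\gamma^0}/H_\gamma]$ when $\dim H_\gamma = d$ (monodromy-freeness providing an identification of $G_\alpha^0$ with $H_\gamma^0$, compatibly with the intrinsic Weil-restriction description of $(-)^{G^0}$ from Proposition~\ref{Prop:Rel fixed locus}) and with $\emptyset$ when $\dim H_\gamma < d$. The symmetric statement holds for $\beta$, so the two candidate pullbacks agree on the cover and glue by étale descent. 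Cocycle conditions on triple overlaps are checked by the same argument applied to refined covers.

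This produces the canonical closed immersion $X^{\maxlocus} \to X$, unique by construction. The Cartesian square of the statement is then immediate from the same application of Lemma~\ref{Lem:etale base change of fixed locus} to any affine strongly étale $[U/G] \to X$ with $\dim G = d$. The identification of classical truncations follows from Proposition~\ref{FixPtProps}(i), which yields $(U_\alpha^{G_\alpha^0})_\cl \simeq (U_{\alpha,\cl})^{G_\alpha^0}$, together with \cite[Prop.~C.5]{EdidinRydh} identifying $[(U_{\alpha,\cl})^{G_\alpha^0}/G_\alpha]$ with the classical max-stabilizer locus. The principal technical obstacle is the derived compatibility statement in the gluing step, where one must identify $[V_\gamma^{H_\gamma^0}/H_\gamma]$ with $Z_\alpha \times_{[U_\alpha/G_\alpha]} [V_\gamma/H_\gamma]$ across local slices presented by \emph{different} structure groups; this is precisely where monodromy-freeness of the strongly étale cover and the presentation-independent Weil-restriction characterization of $(-)^{G^0}$ become essential.
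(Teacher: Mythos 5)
Your overall strategy---glue the local candidates $Z_\alpha=[U_\alpha^{G_\alpha^0}/G_\alpha]$ over a strongly \'etale cover---is genuinely different from the paper's, and it conceals a gap precisely at the step you flag as the ``principal technical obstacle.'' The paper does not glue: it first defines $X^{\maxlocus}=X^d$ \emph{intrinsically}, as the moduli functor sending $T\to X$ to the space of closed derived subgroup schemes of $T\times_X\lL X$ that are smooth over $T$ with connected $d$-dimensional fibers, and only afterwards identifies $[U/G]^d\simeq[U^{G^0}/G]$ on a slice (Proposition~\ref{Prop:prop_xmax}(v), via Propositions~\ref{Prop:slice_FOP} and~\ref{Prop:slice_FOP_2}, the inner forms $\widetilde{G}_f^0$, and the adjunction $B_T\dashv\Omega_T$). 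With that intrinsic definition in hand, compatibility under \'etale, representable, separated maps is a formal consequence of functoriality of the loop stack (parts (ii)--(iii) of Proposition~\ref{Prop:prop_xmax}), and no gluing is ever performed. Your route instead needs, as an input, that $[V^{H^0}/H]\simeq[U^{G^0}/G]\times_{[U/G]}[V/H]$ for an \'etale, stabilizer-preserving map $[V/H]\to[U/G]$ with $H\neq G$; but Lemma~\ref{Lem:etale base change of fixed locus} only treats a $G$-equivariant map $U\to V$ with the \emph{same} group on both sides, and Proposition~\ref{Prop:Rel fixed locus} concerns Weil restriction along $BG\to BG'$ for a surjection $G\to G'$, which is not a presentation-independent description of $[U^{G^0}/G]$ as a closed substack of $[U/G]$. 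Monodromy-freeness identifies the stabilizer group schemes, but passing from that to an identification of the fixed loci is exactly the content of the paper's Proposition~\ref{Prop:prop_xmax}(v); citing it as an application of the cited lemmas is circular in spirit, since once you have an intrinsic characterization strong enough to make the cross-presentation comparison, you no longer need to glue.

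A second, independent problem is the descent step itself. Closed immersions in derived geometry are not monomorphisms (e.g.\ $\Spec(A/f)\to\Spec A$), so the closed substacks of $X$ do not form a poset and gluing them requires a full homotopy-coherent descent datum over the entire \v{C}ech nerve, not agreement on double overlaps plus a cocycle condition on triple overlaps. Your identifications would have to be produced functorially (e.g.\ from a universal property) for the coherences to come for free---which again points back to needing an intrinsic, functor-of-points definition of the locus, i.e.\ the paper's $X^d$. The parts of your argument that are unproblematic are the upper semicontinuity of stabilizer dimension, the use of Proposition~\ref{Prop:luna_etale_slic} and Proposition~\ref{Prop:strongly etale is monodr-pres and separated}, and the identification of classical truncations via Proposition~\ref{FixPtProps}(i) and \cite[Prop.~C.5]{EdidinRydh}; these match the paper's Proposition~\ref{Prop:prop_xmax}(vii).
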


	Since $\lvert X_\cl \rvert \simeq \lvert X \rvert$, the map $x \mapsto \dim G_x$ is upper semi-continuous on $\lvert X \rvert$ by the same argument as for the classical case, see \cite{EdidinRydh}. We let $X^{\leq d} \to X$ be the open immersion corresponding to the open subset of $\lvert X \rvert$ consisting of points $x$ with $\dim G_x \leq d$. Observe, if $X$ is classical then so is $X^{\leq d}$, and in general it holds $(X^{\leq d})_\cl \simeq (X_\cl)^{\leq d}$.
	
	\begin{definition}
		For $d\in \NN$, let $X^d$ be the stack over $X$ such that $X^d(T \to X)$ is the space of closed derived subgroup schemes $H^0 \to T \times_X \lL X$ which are smooth over $T$ with connected fibers of dimension $d$. Naturality in $T$ is defined by pullback. If $d$ is the maximal dimension of stabilizers of $X$, we define $X^{\maxlocus} \coloneqq X^d$.
	\end{definition}
	
	\begin{remark}
		\label{Rem:Xmax_FOPonStacks_quotient}
		Let $T \in \Stk_X$ be of the form $[V/G]$ and $X$ of the form $[U/G]$, where $U, V$ are  affine derived $G$-schemes, and $G$ is reductive. Then it holds that $X^d(T \to [U/G])$ is the space of derived closed subgroup stacks $H^0 \to T \times_X \lL X$, such that after pulling back along $q\colon  V \to [V/G]$ it holds that $q^*H^0 $ is a derived closed subgroup scheme smooth over $V$ with connected fibers of dimension $d$. 
	\end{remark}
	
	In order to prove several properties satisfied by $X^{\maxlocus}$, we will need a few auxiliary results.
	
	\begin{proposition}
		\label{Prop:slice_FOP}
		Suppose that $X$ is of the form $[U/G]$ for an affine derived scheme $U$ and a reductive group $G$. Then the derived stack $[U^{G^0}/G]$  has functor of points
		\begin{align*}
			[U^{G^0}/G](f\colon  T \to X) \simeq \Stk_{T/ \cdot /BG}(T \times_{B(G/G^0)} BG,X),
		\end{align*}
	\end{proposition}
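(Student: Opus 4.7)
The plan is to identify both sides using the universal property of the Weil restriction, via Proposition~\ref{Prop:Rel fixed locus}. First, I would establish the base change identity
\[ [U^{G^0}/G] \simeq BG \times_{B(G/G^0)} [U^{G^0}/(G/G^0)] \]
as derived stacks over $BG$. This can be checked on functors of points: both sides classify a $G$-bundle $P$ on $T$ together with a $G$-equivariant map $P \to U^{G^0}$, since the $G$-action on $U^{G^0}$ factors through $G/G^0$, so that a $G$-equivariant map $P \to U^{G^0}$ is the same datum as a $(G/G^0)$-equivariant map $P/G^0 \to U^{G^0}$, which in turn is a $T$-point of $[U^{G^0}/(G/G^0)]$ lifting the $(G/G^0)$-bundle $P/G^0$ on $T \to B(G/G^0)$.

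Next, Proposition~\ref{Prop:Rel fixed locus} applied to the surjection $G \to G/G^0$ with kernel $G^0$ identifies $[U^{G^0}/(G/G^0)]$ with the Weil restriction $\pi_\ast X$ of $X=[U/G]$ along $\pi\colon BG \to B(G/G^0)$, and under Step~1 the structure morphism $[U^{G^0}/G] \to X$ is identified with the counit $\pi^\ast \pi_\ast X \to X$. Given $f\colon T \to X$, the space $[U^{G^0}/G](f)$ is by definition the fiber over $f$ of $\Map_{BG}(T,[U^{G^0}/G]) \to \Map_{BG}(T,X)$, where $T$ is regarded over $BG$ via $T \xrightarrow{f} X \to BG$. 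Combining the pullback identity of Step~1 (giving $\Map_{BG}(T,[U^{G^0}/G]) \simeq \Map_{B(G/G^0)}(T,\pi_\ast X)$ by the universal property of pullback) with the Weil restriction adjunction $\Map_{B(G/G^0)}(T,\pi_\ast X) \simeq \Map_{BG}(T \times_{B(G/G^0)} BG, X)$ identifies this fiber with the fiber over $f$ of the precomposition map
\[ (-) \circ s \colon \Map_{BG}(T \times_{B(G/G^0)} BG, X) \to \Map_{BG}(T, X), \]
where $s\colon T \to T \times_{B(G/G^0)} BG$ is the canonical section classified by the lift $T \xrightarrow{f} X \to BG$ of $T \to B(G/G^0)$. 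This fiber is by definition $\Stk_{T/\cdot/BG}(T \times_{B(G/G^0)} BG, X)$.

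The main obstacle I anticipate is verifying that, under the composite identification above, the projection induced by the counit $[U^{G^0}/G] \to X$ really does translate into precomposition with $s$. This is a formal but careful chase of the three involved adjunctions (pullback along $BG \to B(G/G^0)$, Weil restriction along $\pi$, and slicing over $X$): concretely, under the Weil restriction adjunction a map $\bar g \colon T \to \pi_\ast X$ corresponds to $\tilde g = c_X \circ \pi^\ast \bar g$, so that $\tilde g \circ s$ recovers $c_X$ applied to the map $T \to \pi^\ast \pi_\ast X = [U^{G^0}/G]$ built from $\bar g$, which is the desired composition $T \to [U^{G^0}/G] \xrightarrow{c_X} X$. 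No additional geometric input is needed beyond Proposition~\ref{Prop:Rel fixed locus}.
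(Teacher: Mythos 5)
Your proposal is correct and follows essentially the same route as the paper: both identify $[U^{G^0}/G]$ with $\pi^*\pi_*X$ via Proposition~\ref{Prop:Rel fixed locus} and then run the chain of adjunctions (pullback along $\pi$, Weil restriction, slicing over $X$) to land in $\Stk_{T/\cdot/BG}(T\times_{B(G/G^0)}BG,X)$. The only difference is presentational: you spell out the base-change identification $[U^{G^0}/G]\simeq BG\times_{B(G/G^0)}[U^{G^0}/(G/G^0)]$ and the compatibility of the counit with precomposition by the section, which the paper leaves implicit in its three-line computation.
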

	
	\begin{proof}
		Write $\pi\colon  BG \to B(G/G^0)$ and $p\colon X = [U/G] \to BG$ for the projections. Let $f\colon  T \to X$ be given, and let $\pi_!$ be the left adjoint of $\pi^*$, the functor which sends $S \to BG$ to $S \to B(G/G^0)$. Then it holds, using the universal property of the Weil restriction,
		\begin{align*}
			\Stk_{X}(T,\pi^*\pi_*X) & \simeq \Stk_{BG}(T,\pi^*\pi_*X) \times_{\Stk_{BG}(T,X)} \{f\} \\
			& \simeq \Stk_{BG}(\pi^*\pi_!T,X) \times_{\Stk_{BG}(T,X)} \{f\} \\
			& \simeq \Stk_{T/ \cdot /BG}(T \times_{B(G/G^0)} BG,X). & \qedhere
		\end{align*}
	\end{proof}
	
	Let $p\colon X \to BG$ and $f\colon  T \to X$ be given. Then we have a commutative diagram
	\begin{center}
		\begin{tikzcd}
			f^* \lL (X) \arrow[d] \arrow[r] & \lL (BG) \arrow[d] \\
			T \arrow[r, "pf"] & BG
		\end{tikzcd}
	\end{center}
	and thus a morphism $h\colon f^* \lL (X) \to \widetilde{G}_{pf}$ over $T$ by definition of inner forms. In what follows, we will consider $f^* \lL (X)$ as derived stack over $\widetilde{G}_{pf}$ via $h$. 
	
	\begin{proposition}
		\label{Prop:slice_FOP_2}
		Suppose that $X$ is of the form $[U/G]$ for an affine derived scheme $U$ and a reductive group $G$ of dimension $d$, write $p\colon  [U/G] \to BG$ for the projection. Then the derived stack $X^d$  has functor of points
		\begin{align*}
			X^d(f\colon  T \to X) \simeq \Grp(\Stk_{\widetilde{G}_{pf}})(\widetilde{G}^0_{pf}, f^*\lL X).
		\end{align*}
	\end{proposition}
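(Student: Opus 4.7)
The plan is to unwind both sides as spaces attached to $f\colon T \to X$ and exhibit mutually inverse natural constructions, upgrading to an equivalence of derived stacks by naturality in $T$. The central geometric input is that the canonical morphism $h\colon f^{*}\lL X \to \widetilde{G}_{pf}$ is a closed immersion of derived group stacks over $T$: since $X = [U/G] \to BG$ has separated affine fibres $U$, the relative diagonal $\Delta_{p}$ is a closed immersion; by Remark~\ref{Rem:lLf_pullback_diagonal}, $\lL p$ is a pullback of $\Delta_{p}$, and pulling back further along $f$ preserves this. That $h$ is a morphism of group stacks over $T$ likewise follows by pullback from $\lL p$ being such over $p$.

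In one direction, I start with a $T$-point of $X^d$, namely a derived closed subgroup stack $H^0 \to f^{*}\lL X$ smooth over $T$ with connected fibres of dimension $d$ (using the local description from Remark~\ref{Rem:Xmax_FOPonStacks_quotient}). Composing with $h$, I view $H^0$ as a derived closed subgroup of $\widetilde{G}_{pf}$. On classical truncations, $H^0_{\cl}$ contains the identity section and has connected fibres, so it factors through $\widetilde{G}^0_{pf,\cl}$, which by Lemma~\ref{Lem:inner_forms} is smooth of the same relative dimension $d$; a closed immersion between smooth $T_{\cl}$-group schemes of the same relative dimension with connected fibres must be an isomorphism by dimension counting. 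To lift this to a derived equivalence $H^0 \simeq \widetilde{G}^0_{pf}$, I will use that $\BL_{H^0/\widetilde{G}^0_{pf}}$ lives in cohomological degrees $[-1,0]$, while on the classical truncation both the degree-$0$ part (cokernel of the map on differentials of equal-rank vector bundles) and the degree-$(-1)$ part (classical conormal sheaf) vanish, forcing the whole complex to vanish and the closed immersion to be étale, hence an equivalence. The resulting composite $\widetilde{G}^0_{pf} \simeq H^0 \to f^{*}\lL X$ is automatically a morphism of group stacks over $\widetilde{G}_{pf}$ since all group structures involved are inherited by restriction from that of $\widetilde{G}_{pf}$. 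Conversely, any morphism $\widetilde{G}^0_{pf} \to f^{*}\lL X$ of group stacks over $\widetilde{G}_{pf}$ realizes $\widetilde{G}^0_{pf}$ as a derived closed subgroup of $f^{*}\lL X$ (using that $\widetilde{G}^0_{pf} \to \widetilde{G}_{pf}$ is an open and closed immersion and $h$ is a closed immersion, so the factorization is itself a closed immersion), and by Lemma~\ref{Lem:inner_forms} this is a $T$-point of $X^d$. Mutual inverseness and naturality in $T$ follow by construction.

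The main obstacle will be the derived upgrade of the classical identification $H^0_{\cl} \simeq \widetilde{G}^0_{pf,\cl}$ to a derived equivalence $H^0 \simeq \widetilde{G}^0_{pf}$, which rests on the cotangent complex vanishing argument outlined above and is the principal derived-geometric content of the proof; the remainder of the argument is a formal manipulation of universal properties for loops, Weil restrictions, and inner forms.
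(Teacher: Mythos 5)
Your overall strategy coincides with the paper's: both directions of the comparison are built from the observation that $f^*\lL X \to \widetilde{G}_{pf}$ is a closed immersion of group objects and that $\widetilde{G}^0_{pf} \to \widetilde{G}_{pf}$ is an open and closed monomorphism, with the classical identification $H^0_\cl \simeq \widetilde{G}^0_{pf,\cl}$ supplied by the Edidin--Rydh appendix and then upgraded to a derived equivalence. However, two steps are looser than they should be. First, the derived upgrade: the principle you invoke --- that vanishing of the classical degree-$0$ and degree-$(-1)$ parts of $\BL_{H^0/\widetilde{G}^0_{pf}}$ forces the whole complex to vanish --- is false for general derived closed immersions (the derived vanishing locus of $0$ in $\Spec A$ has trivial classical conormal sheaf but nonzero $\pi_1$ of its cotangent complex). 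It is rescued here only because $H^0$ and $\widetilde{G}^0_{pf}$ are both \emph{smooth over $T$}, so that $\BL_{H^0/\widetilde{G}^0_{pf}}$ is the cofibre of a map of rank-$d$ vector bundles, and such a cofibre vanishes as soon as it does so after restriction to $H^0_\cl$; you gesture at this but draw the conclusion from the wrong hypothesis. The paper sidesteps the cotangent complex entirely: since $\alpha_\cl$ is an isomorphism and both source and target are flat over $T$, the map $\alpha$ is itself an equivalence. That argument is shorter and avoids the étaleness detour.

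Second, "mutual inverseness and naturality follow by construction" only identifies the two sides on $\pi_0$; the statement asserts an equivalence of \emph{spaces}, so you must also control the higher homotopy. The paper's device is to observe that the fibre of the comparison map $\varphi \colon \Grp(\Stk_{\widetilde{G}_{pf}})(\widetilde{G}^0_{pf}, f^*\lL X) \to X^d(T)$ over any point is the space of automorphisms of $\widetilde{G}^0_{pf}$ as a group object over $\widetilde{G}_{pf}$, which is contractible because $\widetilde{G}^0_{pf} \to \widetilde{G}_{pf}$ is a monomorphism. Some argument of this kind is needed to close the proof; without it you have only a bijection of connected components.
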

	
	\begin{proof}
		Recall that $\tilde{G}_{pf}^0\to \tilde{G}_{pf}$ is a closed and open subgroup
		which is smooth with connected fibers over $T$. Since $f^*\lL X\to \tilde{G}_{pf}$ is a closed subgroup, it follows that any homomorphism
		$\tilde{G}_{pf}^0\to f^*\lL X$ is a closed immersion. We thus obtain a natural map
		\begin{align}
			\varphi \colon \Grp(\Stk_{\widetilde{G}_{pf}})(\widetilde{G}^0_{pf}, f^*\lL X) \to  X^d(f \colon T \to X)
		\end{align}
		which we claim is an equivalence.
		
		Let $ H^0 \to f^\ast \lL X$ be any closed derived subgroup scheme, smooth over $T$ and with connected fibers of dimension $d$. Consider the diagram
		\begin{align*}
			\xymatrix{
				& & {\widetilde{G}_{pf}^0} \ar[d]^-{\beta} \\
				H^0 \ar[r] \ar@{..>}[urr]^-{\alpha} & f^\ast \lL X \ar[r] & f^\ast p^* \lL (BG) = {\widetilde{G}_{pf}}
			}
		\end{align*}
		Since $\beta$ is a closed and open immersion, the existence of the dotted arrow can be checked on classical truncations, on which this holds by \cite[Appendix~C]{EdidinRydh}, and on which moreover it is an isomorphism [ibid.]. Since $\beta$ is a monomorphism, the arrow $\alpha$ is canonically unique and satisfies that $\alpha_\cl$ is an isomorphism. Since $H^0, \widetilde{G}_{pf}^0$ are flat over $T$, also $\alpha$ is invertible. Clearly, the inverse of $\alpha$ induces a morphism of derived group stacks $\tilde{G}^0_{pf} \to f^* \lL X$ over $\tilde{G}_{pf}$, which shows that $\varphi$ is surjective on $\pi_0$. 
		
		By our argument above, the fiber of $\varphi$ over any point is the space of automorphisms of the derived group stack $\widetilde{G}^0_{pf}$ over $\widetilde{G}_{pf}$, which is contractible, since the morphism $\widetilde{G}^0_{pf} \to \widetilde{G}_{pf}$ is a monomorphism. This concludes the proof.
	\end{proof}

	Theorem~\ref{thm existence of X max} will now be an immediate consequence of the following.
	\begin{proposition} 
		\label{Prop:prop_xmax}
		Let $X,Y$ be a derived algebraic stacks such that $X_\cl,Y_\cl$ are Noetherian and admit good moduli spaces.
		\begin{enumerate}
			\item $\lvert X^d \rvert \to \lvert X \rvert$ is injective, and identifies $\lvert X^d \rvert$ with the subset $\lvert X \rvert^d \subset \lvert X \rvert$ of points with stabilizer dimension $d$.
			\item The construction $(-)^d$ is functorial in representable and separated morphisms.
			\item If $g\colon X \to Y$ is representable, separated and \'{e}tale, then the natural map $X^d \to X \times_Y Y^d$ is an equivalence.
			\item Suppose that $X$ is of the form $[U/G]$ for an affine derived scheme $U$ and a $d$-dimensional reductive group $G$. If $G^0$ acts trivially on $U$, then $[U/G]^d \to [U/G]$ is an equivalence. 
			\item In general, for $X=[U/G]$ as in the previous point but with possibly non-trivial $G^0$-action on $U$, the morphism $[U^{G^0}/G] \to [U/G]$ induces an equivalence $[U^{G^0}/G] \simeq [U/G]^d$. 
			\item The derived stack $X^d$ is algebraic.
			\item The canonical map $(X^d)_\cl\to X_\cl$ is the closed substack
			described in \cite[Appendix~C]{EdidinRydh}.
			\item $X^d \to X$ factors through $X^{\leq d}$ via a (necessarily unique) closed immersion $X^d \to X^{\leq d}$.
		\end{enumerate}
	\end{proposition}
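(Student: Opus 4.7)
The plan is to establish these items in an order that allows each to bootstrap from earlier ones, with (iv) and (v) providing the key local computations from which (vi)--(viii) and (i) descend via the Luna cover of Proposition~\ref{Prop:luna_etale_slic}.

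I would begin with the formal parts (ii) and (iii). For (ii), a representable separated morphism $g\colon Z \to X$ induces a closed immersion $\lL g \colon \lL Z \to \lL X \times_X Z$ via Remark~\ref{Rem:lLf_pullback_diagonal}, so closed subgroups of $T \times_Z \lL Z$ push forward into $T \times_X \lL X$ while preserving the smooth/connected-fibre hypotheses. When $g$ is moreover \'etale, Lemma~\ref{Lem:Llf_clopen_repsepet} upgrades $\lL g$ to a closed-open immersion: a closed subgroup $H^0 \to T \times_X \lL X$ smooth over $T$ with connected fibres contains the identity section, which factors through the closed-open subgroup $T \times_Z \lL Z$, forcing $H^0$ to factor through it uniquely; this gives (iii).

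For the local model (v), with $X = [U/G]$, $\dim G = d$, $p \colon X \to BG$ the projection, and $f \colon T \to X$, I would combine Propositions~\ref{Prop:slice_FOP} and~\ref{Prop:slice_FOP_2} with the adjunction $B_T \dashv \Omega_T$ of Remark~\ref{Rem:B_loop_adjunction} (which gives $\Omega_T X \simeq f^* \lL X$) and the identification $B_T(\tilde G^0_{pf}) \simeq T \times_{B(G/G^0)} BG$ from Lemma~\ref{Lem:inner_forms}(ii) to produce an equivalence
\[
\Stk_{T/\cdot/BG}(T \times_{B(G/G^0)} BG,\, X) \;\simeq\; \Grp(\Stk_{\tilde G_{pf}})(\tilde G^0_{pf},\, f^*\lL X),
\]
matching the functors of points of $[U^{G^0}/G]$ and $[U/G]^d$. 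Part (iv) is then the degenerate case where $G^0$ acts trivially on $U$, so that $U^{G^0} = U$. The main subtlety here is bookkeeping the slice $\infty$-categories (over $T$, over $BG$, and over $T/\cdot/BG$) when threading the adjunction; the fully-faithfulness of $B_T$ is essential.

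For (vi) and (vii), I would cover $X$ by strongly \'etale affine morphisms $h\colon [U/G] \to X$ with $\dim G = d$ using Proposition~\ref{Prop:luna_etale_slic}; Proposition~\ref{Prop:strongly etale is monodr-pres and separated} ensures (iii) applies, giving $[U/G] \times_X X^d \simeq [U/G]^d \simeq [U^{G^0}/G]$ by (v). Algebraicity of $X^d$ then follows by smooth descent, using that $U^{G^0}$ is algebraic by Remark~\ref{Rem:HLP}. The classical locus of \cite[Appendix~C]{EdidinRydh} satisfies the same local description, producing the canonical equivalence $(X^d)_\cl \simeq X_\cl^{\maxlocus}$ for (vii), and (i) follows from the corresponding classical statement together with the homeomorphism $|X_\cl| \simeq |X|$. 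Finally for (viii), the factorisation through $X^{\leq d}$ holds on points by (i), and being a closed immersion is smooth-local in the target; in the local model it reduces to $U^{G^0} \to U^{\leq d}$ being a closed immersion, which follows from Proposition~\ref{FixPtProps}(ii) combined with $U^{G^0} \hookrightarrow U^{\leq d} \hookrightarrow U$. I expect the main obstacle to be the careful adjunction/bookkeeping argument for (v); once that is in place, the remaining points proceed smoothly by descent.
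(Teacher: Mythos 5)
Your proposal is correct and, for the core items, matches the paper's own route: (ii) and (iii) via Remark~\ref{Rem:lLf_pullback_diagonal}, Lemma~\ref{Lem:Llf_clopen_repsepet} and connectedness of fibres; (v) via Propositions~\ref{Prop:slice_FOP} and~\ref{Prop:slice_FOP_2}, the adjunction $B_T \dashv \Omega_T$ and Lemma~\ref{Lem:inner_forms}(ii); and (vi) by reducing along the Luna cover using (iii). The one genuine divergence is the logical order of (iv) and (v). The paper proves (iv) first by a standalone construction --- it builds a section $\varphi\colon [U/G] \to [U/G]^d$ from the closed immersion $G^0 \times U \to S_U$ and shows $\varphi\chi \simeq \id$ by a universality argument for the classifying subgroup $F^0$ --- and then uses (ii) and (iv) to manufacture the canonical comparison map $h\colon [U^{G^0}/G] \to [U/G]^d$, whose $T$-points are then identified with the displayed adjunction map. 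You instead establish (v) directly as a natural equivalence of functors of points and read off (iv) as the degenerate case $U^{G^0} = U$. This buys you the omission of the paper's explicit argument for (iv); the price is that you must check that the abstract equivalence produced by the adjunction chain is the one induced by the canonical closed immersion $[U^{G^0}/G] \to [U/G]$ (it is, since each step is natural over $X(T)$, but this deserves a sentence). Two smaller reorganizations: the paper proves (i) directly from uniqueness of the smooth connected $d$-dimensional subgroup $(G_x)^0_\cl \subseteq G_x$, and (vii) by observing that over a classical $T$ any smooth $H^0 \to T$ is itself classical, whereas you derive (i) from (vii) plus the classical statement --- fine, provided you keep this order. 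For (viii) the paper reduces to the classical case via (vii), while your smooth-local argument also works, but should be run over a cover of $X^{\leq d}$ (where the slice groups have dimension $\leq d$ and $[U/G]^d$ is either empty or $[U^{G^0}/G]$), not over a cover of $X$.
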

	\begin{proof}
		\noindent\textit{(i)}
		Let $x \colon  \Spec k \to X$ be given. Then 
		\begin{align*}
			(G_x)^0_\cl \subset G_x = \lL (X) \times_X \Spec k
		\end{align*}
		is the unique closed subgroup of $G_x$ of dimension $d' \coloneqq \dim G_x$ which is smooth and connected. From this, (i) follows.
		
		\medskip
		\noindent\textit{(ii)}
		Let $g\colon X \to Y$ be representable and separated. For any $T \to X$, we have a Cartesian square
		\begin{equation}
			\label{Eq:naturality_Xd}
			\begin{tikzcd}
				T \times_X \lL X \arrow[r] \arrow[d] & \lL X \arrow[d, "\lL g"] \\
				T \times_Y \lL Y \arrow[r] & X \times_Y \lL Y,
			\end{tikzcd}
		\end{equation}
		hence $T \times_X \lL X \to T \times_Y \lL Y$ is a closed immersion by Remark \ref{Rem:lLf_pullback_diagonal}. The claim follows.

		\medskip
		\noindent\textit{(iii)} Assume moreover that $g \colon  X \to Y$ is \'{e}tale. Then $\lL g \colon  \lL X \to X \times_Y \lL Y$ is a closed and open immersion by Lemma~\ref{Lem:Llf_clopen_repsepet}. From the diagram (\ref{Eq:naturality_Xd}), it follows that $\varphi \colon T \times_X \lL X \to T \times_Y \lL Y$ is an open and closed immersion, hence a monomorphism, for any $f \colon T \to X$. Let a $T$-point of $Y^d$ be classified by $\alpha \colon H^0_Y  \to T\times_Y \lL Y$, and let $H^0_X \to T \times_X \lL X$ be the pullback of $\alpha$ along $\varphi$. Then the projection $\gamma \colon H^0_X \to H^0_Y$ is an open and closed immersion over $T$, which is an equivalence since $H^0_Y\to T$ has connected fibers. We have thus described a $T$-point of $X^d$ such that composing with $\varphi$ recovers $\alpha$.
		
		\medskip
		\noindent\textit{(iv)}
		Let $X = [U/G]$ with $U$ affine and $\dim G = d$, and suppose that $G^0$ acts trivially on $U$. Let $S_U$ be the derived stabilizer group scheme $(G \times U) \times_{U \times U} U$ as in Example \ref{Example:loop stack of quotient}. Since $G^0$ acts trivially on $U$, we have a natural map $G^0 \times U \to S_U$ over $G \times U$, and thus morphisms
		\begin{align*}
			[G^0 \times U/G] \xrightarrow{\beta} \lL X \xrightarrow{\alpha} [U/G]
		\end{align*}
		over $[U/G]$. Since $* \to BG$ is smooth, so is $U \to [U/G]$. Likewise, $G^0 \times U \to U$ is smooth, as it is the pullback of the smooth map $G^0 \to *$ along $U \to *$. It follows that $\alpha \beta$ is smooth. Clearly, the fibers of $\alpha\beta$ are connected and of dimension $d$. Moreover, since $U$ is affine, the morphism $S_U \to G \times U$ is separated, hence $G^0 \times U \to S_U$ is a closed immersion, since $G^0 \times U \to G \times U$ is.  By Remark \ref{Rem:Xmax_FOPonStacks_quotient}, we therefore have a morphism $\varphi\colon  [U/G] \to [U/G]^d$ over $[U/G]$. To see that it is an equivalence, write $\chi\colon  [U/G]^d \to [U/G]$ for the natural map. By construction, it holds $\chi \varphi \simeq \id_{[U/G]}$. 
		
		To show that also $\varphi \chi \simeq \id_{[U/G]^d}$, let $F^0 \to \chi^*\lL X$ be the derived closed subgroup stack classifying the identity on $[U/G]^d$ as stacks over $[U/G]$. Then $F^0$ is universal in the following sense: for a $T$-point $f \in X(T)$, we have that any $f' \in X^d(T)$, say classified by $H^0 \to f^* \lL X$, trivially satisfies $\chi f' \simeq f$, and hence that $H^0 \simeq {f'}^*F^0$ by definition of $(-)^d$. In particular, for $\psi \colon [U/G]^d \to [U/G]^d$ any map over $[U/G]$,  since $\chi \psi \simeq \chi$, it follows that $\psi$ is classified by $F^0$ as well, hence $\psi \simeq \id_{[U/G]^d}$. Since $\chi \varphi \chi \simeq \chi$, the claim follows by applying this observation to $\psi \coloneqq \varphi\chi$.
		
		\medskip
		\noindent\textit{(v)}
		Let still $X = [U/G]$, put $Y \coloneqq  [U^{G^0}/G]$, and write $p \colon X \to BG$ for the projection. Since $Y \to X$ is representable and separated, we have a natural map $Y^d \to X^d$ by (ii). Also, since $G^0$ acts trivially on $U^{G^0}$, by (iv) the structure map $Y^d \to Y$ is an equivalence. We therefore get a natural morphism $h \colon Y \to X^d$ over $X$, which we claim to be an equivalence.
		
		For any map $f \colon T \to X$, using Propositions~\ref{Prop:slice_FOP} and \ref{Prop:slice_FOP_2}, the morphism $h(T) : Y(T) \to X^d(T)$ is equivalent to
		\begin{align} \label{loc 5.3}
			\Stk_{T/\cdot/BG}(T \times_{B(G/G^0)} BG, X) \xrightarrow{h(T)} \Grp(\Stk_{\widetilde{G}_{pf}})(\widetilde{G}_{pf}^0, f^\ast \lL X) .
		\end{align}
		
		We need to check that this morphism is an equivalence. To this end, observe that
		\[    X^d(T) \simeq \Grp(\Stk_T)_{/\Omega_T(BG)}(\tilde{G}^0_{pf}, \Omega_T(X)) \simeq \Stk_{T/\cdot/BG}(B_T(\tilde{G}^0_{pf}),X)   \]
		by the adjunction $B_T \dashv \Omega_T$ from Remark \ref{Rem:B_loop_adjunction}. Since $B_T(\widetilde{G}_f^0) \simeq T \times_{B(G/G^0)} BG$ by Lemma  \ref{Lem:inner_forms}, the claim follows. 
		
		\iffalse
		Recall, in the adjunction $B_T \dashv \Omega_T$ from Remark \ref{Rem:B_loop_adjunction}, it holds that $B_T$ is fully faithful. We therefore have
		\begin{align*}
			\widetilde{G}_{pf}^0 &\simeq \Omega_TB_T\widetilde{G}_{pf}^0 \simeq \Omega_T (T \times BG^0)
		\end{align*}
		by Lemma \ref{Lem:inner_forms}. By the same lemma and by adjunction, it follows that
		\begin{align}
			\label{Eq:slice_FOP}
			Y(T) \simeq \Stk_{T/\cdot/BG}(B_T\widetilde{G}_{pf}^0, X) \simeq \Grp(\Stk_T)_{/\widetilde{G}_{pf}}(\widetilde{G}_{pf}^0,\Omega_T X)
		\end{align}
		
		Again by Remark \ref{Rem:B_loop_adjunction}, we have
		\begin{align*}
			\Grp(\Stk_T)_{/ \widetilde{G}_{pf}}(\widetilde{G}_{pf}^0,\Omega_T X) \simeq  \Grp(\Stk_{\widetilde{G}_{pf}})(\widetilde{G}_{pf}^0,f^* \lL X) \simeq X^d(T)
		\end{align*}
		Combining with the equivalence in (\ref{Eq:slice_FOP}) shows that (\ref{loc 5.3}) is an equivalence, which finishes the argument. 
		\fi
		
		\medskip
		\noindent\textit{(vi)}
		By the \'{e}tale local structure of $X$ (Propositions \ref{Prop:luna_etale_slic} and \ref{Prop:strongly etale is monodr-pres and separated}), we can reduce to the case where $X=[U/G]$ by (iii). The statement then follows from (v).
		
		\medskip
		\noindent\textit{(vii)} Let $f \colon T \to X$ be given with $T$ a classical scheme, so that we have a unique factorization $f \colon T \to X_\cl \to X$. The space $(X^d)_\cl(T)$  thus classifies derived closed subgroup schemes $H^0 \to T \times_X \lL X$ over $T$. Since $T$ is classical and $H^0 \to T$ is smooth, we must have $H^0 = (H^0)_\cl$, and $H^0 \to T \times_X \lL X$ is naturally equivalent to the closed embedding of group schemes $H^0 \to (T \times_X \lL X)_\cl = T \times_{X_\cl} (\lL X)_\cl = T \times_{X_\cl} \lL X_\cl$. These are the $T$-points of $(X_\cl)^d$ by \cite[Prop.~C.5]{EdidinRydh}.
		
		\medskip
		\noindent\textit{(viii)} Since $X^{\leq d} \to X$ is an open immersion, the factorization exists uniquely by (i). To check that the induced map $X^d \to X^{\leq d}$ is a closed immersion, we reduce to the classical case by (vii). By the argument for (vii), the statement then follows from \cite[Prop.~C.5]{EdidinRydh}.
	\end{proof}

	\begin{proof}[Proof of Theorem~\ref{thm existence of X max}]
		Everything follows from Proposition \ref{Prop:luna_etale_slic} and Proposition \ref{Prop:prop_xmax}.
	\end{proof}
	
	The functoriality of $(-)^d$ shown in Proposition \ref{Prop:prop_xmax}, together with the functoriality of $(-)^{\maxlocus}$ in the classical case from \cite[Appendix C]{EdidinRydh}, has the following immediate consequence:
	
	\begin{corollary}
		If $X' \to X$ is \'{e}tale, representable and separated, then $X^{\maxlocus} \times_X X' \simeq {X'}^{\maxlocus}$, provided that $X$ and $X'$ have the same maximal stabilizer dimension. If not, then the maximal stabilizer dimension of $X'$ is strictly smaller than the one for $X$, so that $X^{\maxlocus} \times_X X' \simeq \emptyset$.
	\end{corollary}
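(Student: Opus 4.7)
The plan is to reduce everything to Proposition~\ref{Prop:prop_xmax} by carefully tracking what the maximal stabilizer dimensions are on each side. Let $d$ denote the maximal stabilizer dimension of $X$, so by definition $X^{\maxlocus} = X^d$, and let $d'$ denote the maximal stabilizer dimension of $X'$.

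First I would observe that for a representable, separated and \'etale morphism $g\colon X' \to X$, the morphism $\lL g\colon \lL X' \to X' \times_X \lL X$ is a closed and open immersion by Lemma~\ref{Lem:Llf_clopen_repsepet}. Pulling back along any geometric point $x'\colon \Spec k \to X'$ with image $x\colon \Spec k \to X$ shows that the induced morphism of stabilizer group schemes $G_{x'} \to G_x$ is a closed and open immersion, hence $\dim G_{x'} = \dim G_x$. In particular $d' \leq d$, with equality if and only if the image of $|X'| \to |X|$ meets $|X|^d$.

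In the first case, when $d'=d$, part (iii) of Proposition~\ref{Prop:prop_xmax} applied to $g$ gives an equivalence $(X')^d \simeq X' \times_X X^d$, which is precisely $(X')^{\maxlocus} \simeq X' \times_X X^{\maxlocus}$. In the second case, when $d' < d$, the same part (iii) of Proposition~\ref{Prop:prop_xmax} still yields $(X')^d \simeq X' \times_X X^d = X' \times_X X^{\maxlocus}$; but now by part (i) of the same proposition, $|(X')^d|$ is identified with the set of points in $|X'|$ with stabilizer dimension $d$, which is empty since every point of $X'$ has stabilizer dimension at most $d' < d$. Hence $X' \times_X X^{\maxlocus} \simeq \emptyset$.

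The statement that $d' < d$ in the second case is the contrapositive of the first paragraph's observation (if $|X'| \to |X|$ hits a point of $X^{\maxlocus}$, then $d' = d$). No step here is really an obstacle: the whole corollary is essentially a bookkeeping consequence of Proposition~\ref{Prop:prop_xmax}(i)--(iii) combined with the fact that \'etale representable separated morphisms are stabilizer-preserving on points in the strong sense of preserving stabilizer dimension.
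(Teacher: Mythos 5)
Your proof is correct and follows essentially the same route as the paper, which deduces the corollary directly from the functoriality statements in Proposition~\ref{Prop:prop_xmax} (notably part (iii)) together with the fact that \'etale, representable, separated morphisms preserve stabilizer dimensions. The only difference is cosmetic: you derive the dimension-preservation explicitly from Lemma~\ref{Lem:Llf_clopen_repsepet} (a clopen subgroup contains the identity component, hence has the same dimension), whereas the paper cites the classical functoriality of $(-)^{\maxlocus}$ from \cite[Appendix C]{EdidinRydh} for the same bookkeeping.
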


	\section{Intrinsic blow-ups as equivariant derived blow-ups} \label{Sec:intr bl is der bl}
	
	Let $U$ be a classical scheme with an action by a reductive group $G$. The main aim of this section is to prove that the $G$-intrinsic blow-up of $U$, introduced in \cite{KLS}, has a natural derived enhancement which is obtained by a corresponding derived blow-up construction, namely blowing up $U$ along its derived fixed locus $U^G$.
	
	To this end, we begin with a review of intrinsic blow-ups and then proceed to establish the main result comparing intrinsic and derived blow-ups by using standard equivariant local models for $U$. For simplicity, throughout, we initially assume that $G$ is connected and then explain how to adapt the constructions in the non-connected case. We conclude with some remarks and a few instructive examples.
	
	The operations of intrinsic blow-up and its derived enhancement will be fundamental building blocks in our derived stabilizer reduction procedure in Section~\ref{Sec:der stab red}.

	\subsection{Background on intrinsic blow-ups}
	
	Let $U$ be a classical affine scheme with an action of a reductive group $G$. Assume for simplicity that $G$ is connected. 
	
	Take an equivariant closed embedding $U \to V$ into a smooth $G$-scheme $V$ and let $I$ be the ideal defining $U$. Since $U \subseteq V$ is $G$-equivariant, $G$ acts on $I$ and we have a decomposition $I = I^{\fix} \oplus I^{\mv}$ into the fixed part of $I$ and its complement as $G$-representations.

	Let $V^G$ be the fixed point locus of $G$ inside $V$, defined by the ideal generated by $\oO_V^{\mv}$, and let $\pi \colon \mathrm{bl}_G(V) \to V$ be the blow-up of $V$ along $V^G$.
	Let $E\subseteq \mathrm{bl}_G(V)$ be its exceptional divisor and $\xi \in \oO_{\mathrm{bl}_G(V)}(E)$ the tautological defining equation of $E$. 
	
	$G$-equivariance implies that $\pi^{-1} (I^{\mv}) \subseteq \xi \cdot \oO_{\mathrm{bl}_G(V)}(-E)$ (cf.\ \cite[Section~2.2]{KLS}), and consequently,
	\begin{equation*}\label{xi}
		\xi^{-1} \pi^{-1} (I^{\mv}) \subseteq \oO_{\mathrm{bl}_G(V)}(-E) \subseteq \oO_{\mathrm{bl}_G(V)}.
	\end{equation*}
	We define $I^{\intr}\subseteq\oO_{\mathrm{bl}_G(V)}$ to be the ideal sheaf
	\begin{align} \label{tilde I}
		I^{\intr} = \text{ideal sheaf generated by } \pi^{-1}(I^{\fix}) \text{ and } \xi^{-1} \pi^{-1} (I^{\mv}).
	\end{align}

	\begin{definition}[\cite{KLS}] \label{intrinsic blow-up def}
		The (classical) \emph{$G$-intrinsic blow-up} of $U$ is the classical subscheme $U^{\intr}_\cl\subseteq \mathrm{bl}_G(V)$ defined by the ideal $I^{\intr}$. Observe that $U^{\intr}_\cl \subset \mathrm{bl}_G(V)$ is $G$-invariant. For $X = [U/G]$, we write $X^{\intr}_\cl \coloneqq [U^{\intr}/G]$. 
	\end{definition}
	
	In \cite{KLS}, it is proved by hands-on computation that $U^{\intr}_\cl$ is intrinsic to the $G$-scheme $U$. Moreover, by \cite{KLS, Sav}, the construction globalizes to any scheme $U$ with a $G$-action and a Zariski open cover by $G$-invariant subschemes.

	\begin{example} \label{intrinsic blow-up example}
		Suppose that $G = \Gm$ is the one-dimensional torus acting on the affine plane $V = \CC^2_{x,y}$ with weights $1$ and $-1$ on the coordinates $x$ and $y$ respectively, and $U \subseteq V$ is the closed $G$-invariant subscheme cut out by the ideal $I = (x^2 y, x y^2)$. 
		
		$V^{\intr}_{\cl}$ is the blow-up of $V$ along the fixed locus $V^G = \lbrace 0 \rbrace$ cut out by the ideal $\oO_V^{\mv} = (x,y)$. 
		Writing $u,v$ for the homogeneous coordinates on the exceptional divisor $\PP^1$, we have that $G$ acts linearly on $u, v$ with weights $1, -1$ respectively, and the blow-down map $V^{\intr}_{\cl} \to V$ is locally given on coordinates by $x \mapsto \xi u, y \mapsto \xi v$.
		
		Since $I^{\mv} = (x^2 y, xy^2)$,  the closed subscheme $U^{\intr}_{\cl} \subseteq V^{\intr}_{\cl}$ is locally cut out by the ideal $I^{\intr} = (\xi^2 u^2 v, \xi^2 u v^2)$.
	\end{example}
	
	We now briefly explain how one can proceed if $G$ is not connected. 
	
	Suppose that $U \to V$ is a $G$-equivariant embedding into a smooth $G$-scheme $V$. Let, as before, $I$ be the ideal of $U$ in $V$. Let $G^0$ be the connected component of the identity. This is a normal, connected subgroup of $G$ of finite index. Let $I = I^{\fix} \oplus I^{\mv}$ be the decomposition of $I$ into fixed and moving parts with respect to the action of $G^0$. Using the normality of $G^0$, we see that the fixed locus $V^{G^0}$ is a closed, smooth $G$-invariant subscheme of $V$, and also that $I^{\fix},\ I^{\mv}$ are $G$-invariant. 
	
	Let $\pi \colon \mathrm{bl}_{V^{G^0}} V \to V$ be the blow-up of $V$ along $V^{G^0}$ with exceptional divisor $E$ and local defining equation $\xi$. Then, as before, take $I^{\intr}$ to be the ideal generated by $\pi^{-1}(I^{\fix})$ and $\xi^{-1}\pi^{-1}(I^{\mv})$. Everything is $G$-equivariant, and we define $U^{\intr}_{\cl}$ as the subscheme of $\mathrm{bl}_{V^{G^0}} V$ defined by the ideal $I^{\intr}$. At the level of quotient stacks, $X^{\intr}_{\cl} \coloneqq [U^{\intr}_{\cl} / G]$ is the intrinsic blow-up of $X \coloneqq [U/G]$.

	\subsection{Intrinsic blow-ups are classical truncations of derived blow-ups}
	
	Consider a derived affine scheme $U$, quasi-compact and locally of finite presentation, with an action by a connected reductive group $G$ such that the (derived) fixed locus $(U_\cl)^G$ is non-empty. The following is the main result of this section.
	
	\begin{theorem} \label{intr bl is der bl thm}
		The $G$-intrinsic blow-up of the classical truncation $U_\cl$ is naturally isomorphic, as a classical scheme with $G$-action, to the classical truncation $(\Bl_{U^G} U)_\cl$ of the derived blow-up of $U$ along the fixed locus $U^G$. 
	\end{theorem}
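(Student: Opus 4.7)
The plan is to work Zariski-locally on $U$ using an equivariant standard form model. By Lemma~\ref{Lem:Equiv alg are standard form}, we may assume $U = \Spec R$ where $R$ is a $G$-cdga in standard form with $R_0 = \CC[M]$ for some $M \in \Rep^G$ and generators $M_k = R(k-1) \otimes_\CC W_k$ for $W_k \in \Rep^G$. Setting $V \coloneqq \Spec R_0$ yields a smooth $G$-scheme equipped with a $G$-equivariant closed embedding $U \hookrightarrow V$, providing the ambient data needed to construct the classical intrinsic blow-up. By Proposition~\ref{Prop:3.27}, the derived fixed locus $U^G$ admits the standard-form model $R_G$ with $(R_G)_0 = \CC[M^{\fix}]$, so $V^G = \Spec \CC[M^{\fix}]$ simultaneously realises the classical and derived $G$-fixed locus of $V$, and $U^G \hookrightarrow V$ factors through $V^G$.

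Next, reduce to the quasi-smooth case. Since only the classical truncation of the derived blow-up matters, Proposition~\ref{cell attachment in deg -2 prop} lets us replace the center $U^G$ with any closed subscheme having the same $\pi_{\leq 1}$, which in conjunction with Proposition~\ref{pullback square classical truncation blow-ups} allows us to discard the higher cells $W_k^{\fix}$ ($k \geq 2$) from $R_G$ and correspondingly simplify $R$. After these reductions, $R = \CC[M]/(\sigma_1)$ is a quasi-smooth quotient by a single equivariant map $\sigma_1 \colon \CC[M] \otimes W_1 \to \CC[M]$, and both $U \to V$ and $U^G \to V$ are quasi-smooth closed immersions. Proposition~\ref{Prop:RZU} applied to the sequence $U^G \to U \to V$, in its equivariant form coming from Proposition~\ref{Prop:equivariant_quotient_formula}, then yields an explicit presentation of the extended Rees algebra $R^{G,\ext}_{R_G/R}$ with free variables dual to generators of the ideal of $U^G$ in $V$.

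Unpacking this formula, the generators in homogeneous degree $1$ are indexed by a basis $\underline{x}^{\mv}$ of $M^{\mv}$ together with a basis of $\sigma_1(W_1^{\fix}) \subseteq I^{\fix}$, and the defining relations are $t^{-1}\underline{w} = \underline{x}^{\mv}$ (plus analogous relations for the fixed-part variables, which force those variables to vanish in the classical truncation) together with relations $\sum_j \mu_{ij} w_j = 0$ whenever $\sigma_1(v_i^{\mv}) = \sum_j \mu_{ij} x_j^{\mv}$---such an expression exists by $G$-equivariance, since every moving generator $\sigma_1(v_i^{\mv})$ of $I^{\mv}$ lies in the ideal $(M^{\mv})$ of $V^G$ in $V$. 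Taking classical truncation and applying $\Proj$, the $\underline{w}$'s become homogeneous coordinates on the exceptional $\PP(M^{\mv})$ of the classical blow-up $\Bl_{V^G} V$, the identification $t^{-1} \leftrightarrow \xi$ matches the tautological equation of the exceptional divisor, and the relations $\sum_j \mu_{ij} w_j = 0$ become precisely $\xi^{-1}\pi^{-1}\sigma_1(W_1^{\mv})$, which spans $\xi^{-1}\pi^{-1} I^{\mv}$; together with the surviving fixed-part generators $\pi^{-1}\sigma_1(W_1^{\fix}) \subseteq \pi^{-1} I^{\fix}$, this recovers exactly the ideal $I^{\intr}$ from Definition~\ref{intrinsic blow-up def}.

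The main technical obstacle is this final combinatorial identification, which involves juggling three gradings at once---the $G$-action giving the fixed/moving decomposition of $I$, the Rees grading encoding the exceptional divisor, and the $\Gm$-action implicit in $\Proj$---and explaining why on the classical truncation the derived relation $t^{-1}\underline{w} = \underline{x}^{\mv}$ becomes the tautological equation of $\Bl_{V^G} V$. This forces the moving generators $\sigma_1(W_1^{\mv})$ of $I$ to be \emph{divisible} by $t^{-1}$ (equivalently by $\xi$), which is exactly the classical division by $\xi$ built into the definition of $I^{\intr}$ and explains why the derived blow-up of the derived fixed locus $U^G$ recovers the intrinsic blow-up rather than the naive classical blow-up of $U_\cl$ along $U_\cl^{G}$.
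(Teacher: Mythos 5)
Your proposal follows the same route as the paper's proof: reduce to an equivariant standard form with smooth ambient $V=\Spec R_0$ via Lemma~\ref{Lem:Equiv alg are standard form} and Proposition~\ref{Prop:3.27}, strip the cells in homological degree $\geq 2$ using Propositions~\ref{cell attachment in deg -2 prop} and~\ref{pullback square classical truncation blow-ups} to reach a quasi-smooth situation, and then match $\pi_0$ of the extended Rees algebra from Proposition~\ref{Prop:RZU} with the Rees algebra of the ideal $I^{\intr}$. Your final identification --- Rees variables $\underline{w}$ dual to the moving coordinates, $t^{-1}\leftrightarrow\xi$, the relations $\sum_j\lambda_{ij}w_j$ realizing $\xi^{-1}\pi^{-1}(I^{\mv})$, and the degree-one fixed cells killing $\pi^{-1}(I^{\fix})$ --- is exactly the paper's computation, the only cosmetic difference being that you carry the fixed part of $I$ as extra Rees generators (which then vanish), whereas the paper absorbs the fixed degree-one cells into the ambient algebra $A$ before invoking Proposition~\ref{Prop:RZU}; both bookkeepings give the same $\pi_0$.

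The one place where your sketch is too quick, and where a literal reading would fail, is the reduction to the quasi-smooth case. Proposition~\ref{cell attachment in deg -2 prop} requires the replacement center to have the same $\pi_{\leq 1}$ as $U^G$, but simply discarding the cells $W_k^{\fix}$, $k\geq 2$, from $R_G$ does not preserve $\pi_1$ of the center: the degree-two fixed generators may have nonzero differential into the degree-one part, so they contribute to $\im(d_2)$ and hence to $\pi_1$. The paper's fix is an intermediate center $F^1 = Z\times_X U$ (with $X$ the quasi-smooth truncation of $U$ and $Z$ the quasi-smooth center), which retains \emph{all} cells of $U$ in degrees $\geq 2$, both fixed and moving, over the truncated degree-$\leq 1$ part; the extra moving generators have zero differential by $G$-equivariance (they would have to land in the moving degree-one part, which is no longer present), so $U^G\to F^1$ is a $\pi_{\leq 1}$-equivalence and Proposition~\ref{cell attachment in deg -2 prop} applies, after which Proposition~\ref{pullback square classical truncation blow-ups}, applied to the Cartesian square with vertices $F^1$, $U$, $Z$, $X$, removes the higher cells from center and ambient simultaneously. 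With that intermediate construction supplied, your argument is the paper's proof.
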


	\begin{proof}
		The statement is local. Let $x \in (U_\cl)^G$. 
		
		By Lemma~\ref{Lem:Equiv alg are standard form}, after possibly (Zariski) equivariantly shrinking $U$ around $x$, we may reduce to the case where $U = \Spec R'$ and $R'$ corresponds to a cdga in standard form (with homological indexing) 
		\begin{align}
			R = [ \dots \lr R_{2} \lr R_{1} \lr R_0],
		\end{align}
		where $R_0 = \CC[x_1, ..., x_N]$, $G$ acts linearly on the variables $x_i$, and $R'$ is generated in each positive degree by a $G$-representation $V_i$ with variables $y_j^i$ corresponding to a basis for each $V_i$. 
		
		Let $V = \Spec R_0$. By Proposition \ref{Prop:3.27}, we then know that $U^G = \Spec S$, where $S$ is a cdga in standard form, with $S_0 = \oO_{V^G}$, and $S$ generated in each positive degree $i$ by the $G$-fixed variables of $V_i$.

		Consider now the auxiliary scheme $F^1 = \Spec S^1$, where $S^1_0 = \oO_{V^G}$ and $S^1$ is generated in degree $1$ by the $G$-fixed variables of $V_{1}$ and by all of the variables of $V_i$ for $i \geq 2$.
		
		There is a composition $U^G \to F^1 \to U$, where the morphism $U^G \to F^1$ is easily seen to be a sequence of cell attachments in degrees bigger than $1$, using the fact that $V_{2}^{\mv}$ maps to $0$ in $S_1^{1}$ by $G$-equivariance.
		
		In particular, Proposition~\ref{cell attachment in deg -2 prop} gives an isomorphism 
		\begin{align}
			(\Bl_{U^G} U)_\cl \simeq (\Bl_{F^1} U)_\cl,
		\end{align}
		
		and we are thus reduced to calculating $(\Bl_{F^1} U)_\cl$.
		
		Let $A \subseteq R$ be the subalgebra of $R$ generated by $R_0$ and the $G$-fixed variables in degree $1$. 
		
		Without loss of generality, let $x_1, ..., x_l$ be the moving variables in $R_0$ and $f_1, ..., f_n \in (x_1, ..., x_l) = \left( R_0^{\mv} \right)$ the images of the moving variables in $V_{1}$. Set 
		$$B = A / (f_1, ..., f_n),\ D = A / (x_1, ..., x_l)$$
		with the obvious cell attachment maps.
		
		Let $Z = \Spec D,\ X = \Spec B,\ Y = \Spec A$. It is clear that $B$ is the subalgebra of $R$ generated by $R_0$ and $R_{1}$ and we have a morphism $U \to X$.
		
		We then have $Z_\cl = (U_\cl)^G$, $X_\cl = U_\cl$ and a Cartesian square
		\begin{align}
			\vcenter{\xymatrix{
				F^1 \ar[d] \ar[r] & U \ar[d] \\
				Z \ar[r] & X.
			}}
		\end{align}
		
		Proposition~\ref{pullback square classical truncation blow-ups} then implies that 
		\begin{align}
			(\Bl_{F^1} U)_\cl \simeq (\Bl_Z X)_\cl.
		\end{align}
		
		But, by \eqref{formula [-1,0] case}, it immediately follows that $(\Bl_Z X)_\cl$ is the $G$-intrinsic blow-up of $X_\cl = U_\cl$, as we want, obtained using the equivariant closed embedding $U_\cl \to V$.
		
		Indeed, $V^G = \Spec R_0/(x_1,\ldots,x_l)$, since $x_1, \ldots, x_l $ are the moving variables among the $x_i$. 
		
		Letting $I$ be the ideal of the canonical inclusion $U_\cl \to V$, $I$ is then the image of $R_{1}$, hence $I^{\mv} = (f_1,\dots,f_n)$. Write $f_i = \sum_{ij} \lambda_{ij} x_j$. Thus in the Rees algebra
		\begin{align*}
			R_{V^G/V}^{G, \extd}= \frac{R_0[t^{-1},w_1,\dots,w_l]}{(t^{-1}w_1-x_1,\dots,t^{-1}w_l-x_l)}
		\end{align*}
		it holds that $\xi^{-1} \pi^{-1}f_j = \sum_{ij} \lambda_{ij} w_j$, where $\pi\colon \mathrm{bl}_{V^G}V \to V$ is the projection. Likewise, $\pi^{-1}(I^{\fix})$ is generated by $\partial V_{1}^{\fix}$ and hence equals the image $\partial A_{1}$, where $\partial$ is the boundary map.
		
		We have that the intrinsic blow-up of $U_\cl$ is the projective spectrum of the discrete quotient
		\begin{align*}
			\frac{R_0[t^{-1},w_1,\dots,w_l]}{(t^{-1}\underline{w}-\underline{x}, \partial A_{1}, \sum_{1j}\lambda_{1j}w_j,\dots, \sum_{nj} \lambda_{nj}w_j)} \simeq \frac{ \pi_0(A)[t^{-1},\underline{w} ] }{ (t^{-1}\underline{w} - \underline{x}, \sum_{ij} \lambda_{ij} w_j) }
		\end{align*}
		which we recognize as $\pi_0 R^{G, \ext}_{Z/X}$, using~\eqref{formula [-1,0] case}.
	
	\end{proof}
	
	A special case of the theorem which is worth noting is when $U$ is classical to begin with.
	
	\begin{corollary}
		Let $U$ be a classical affine scheme, of finite type and locally of finite presentation as a derived scheme, with a $G$-action. Then its classical $G$-intrinsic blow-up $U^{\intr}_{\cl}$ is the classical truncation of the derived blow-up $\Bl_{U^G} U$ of $U$ with center the \textbf{derived} fixed locus $U^G$.
	\end{corollary}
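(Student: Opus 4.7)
The plan is to invoke Theorem~\ref{intr bl is der bl thm} directly. The hypotheses of that theorem---that $U$ be a derived affine scheme, quasi-compact, and locally of finite presentation, endowed with a $G$-action---are all met by assumption. The non-emptiness hypothesis on the fixed locus costs nothing, since if $(U_\cl)^G = \emptyset$ then both $U^\intr_\cl$ and $(\Bl_{U^G}U)_\cl$ coincide with $U$ itself, each blow-up being an isomorphism away from its (here empty) center. Since $U$ is classical, $U_\cl = U$, so the $G$-equivariant isomorphism $(U_\cl)^\intr \simeq (\Bl_{U^G}U)_\cl$ produced by the theorem is exactly the statement of the corollary.

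Strictly speaking no further work remains, so the real content is conceptual rather than technical. The center $U^G$ on the derived side is the \emph{derived} fixed locus, and it will in general be genuinely derived even when $U$ is discrete. Concretely, choosing a $G$-equivariant closed embedding $U \hookrightarrow V$ into a smooth $G$-scheme $V$ with defining ideal $I = I^\fix \oplus I^\mv$, the standard form model for $U^G$ produced by Proposition~\ref{Prop:3.27} has underlying classical ring $\oO_{V^G}$ in degree zero together with a free degree-one generator indexed by $I^\fix$. It is exactly this derived cell that yields, via the Rees-algebra computation of Proposition~\ref{Prop:RZU}, the contribution $\pi^{-1}(I^\fix)$ to the ideal~\eqref{tilde I} cutting out the classical intrinsic blow-up; replacing $U^G$ by its naive classical truncation $(U^G)_\cl$ would lose this information and in general give the wrong answer. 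The only ``obstacle'', then, is to keep this derived enhancement of the center visible---which the derived framework does automatically.
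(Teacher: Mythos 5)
Your proposal is correct and matches the paper exactly: the corollary is stated there as an immediate specialization of Theorem~\ref{intr bl is der bl thm} to the case $U = U_\cl$, with no further argument given, and your handling of the empty-fixed-locus case and the observation that the hypotheses are otherwise satisfied is all that needs saying. The supplementary discussion of why the center must be the derived fixed locus (the degree-one cell from $I^{\fix}$ in the standard-form model of $U^G$ feeding into the Rees algebra) is accurate and consistent with the paper's later remarks contrasting this construction with the saturated blow-up of \cite{EdidinRydh}.
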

	
	\begin{remark} \label{Rem: lft vs lfp}
		Even though the above corollary is stated for a classical scheme of finite type and of local finite presentation as a derived scheme, this is not a restriction, and it still holds in the more general case of a scheme of finite type. This is because the proof of Lemma~\ref{Lem:Equiv alg are standard form} for a classical $G$-scheme of finite type shows that one can obtain a local model $U = \Spec R$ as a possibly infinite sequence of equivariant cell attachments, which are finitely many in each homological degree, i.e., a possibly infinite equivariant standard form. This, however, does not affect the computations, as is clear from our argument above. We elect to assume local finite presentation for simplicity. The same applies to Section~\ref{Sec:der stab red}.
	\end{remark}
	
	\begin{remark} 
		Since $U^G$ is well-defined, it is now conceptually clear why the intrinsic blow-up only depends on the $G$-scheme $U$ and not the auxiliary data used to define it, which is a priori  not obvious and had to be checked by hand in \cite{KLS}.
		
		Another non-obvious consequence of the above results is the fact that the classical truncation of the blow-up $\Bl_{U^G} U$ for a derived $G$-scheme $U$ only depends on its classical underlying $G$-scheme $U_\cl$.
	\end{remark}
	
	\subsection{A couple of examples} 
	\label{Subsec:Coupleoexample}
	We give two examples which hopefully clarify the arguments in this section and will also be useful guides for intuition. Both are derived enhancements of Example~\ref{intrinsic blow-up example} and we can directly verify that the classical truncation of the derived blow-up in each of them coincides with the intrinsic blow-up described in Example~\ref{intrinsic blow-up example}.
	
	Throughout, we put $A \coloneqq \CC[x,y]$ and $B \coloneqq A/(xy^2,yx^2)$, so that 
	\[ (U \to V) \coloneqq (\Spec B \to \Spec A) \]
	is the derived critical locus of $f = x^2y^2$, considered as map $V \to \AA^1$. Write $Z = \Spec \CC$, and consider the closed embedding $Z \to U$ given by the quotient map $B \to A/(x,y) \simeq \CC$. 
	
	\begin{example}
		\label{Ex:Firstocouple}
		Lemma \ref{functoriality of deformation spaces lemma} gives us that $D_{Z/U} \simeq D_{Z/V} \times_{D_{U/V}} D_{U/U}$, and Proposition \ref{Prop:RZU}, gives
		\[ 	R_{Z/U}^\extd \simeq \frac{A[t^{-1},w_1,w_2]}{(t^{-1}w_1-x, t^{-1}w_2 - y, y^2w_1,x^2w_2)} \simeq \frac{A[It,t^{-1}]}{(xy^2t,yx^2t)} \]
		\iffalse
		Hence, in terms of Rees algebras, we have
		\begin{align}
			\label{E:RZX-pushout-ex1}
			R^\extd_{Z/U} \simeq   B[t^{-1}] \otimes_{A[t^{-1},u,v]/(t^{-1}u - x^2y, t^{-1}v - y^2x)} A[It,t^{-1}]
		\end{align}
		where $I$ is the ideal $(x,y)$, and $u,v$ are homogeneous in degree 1. 
		
		Now consider the following commutative diagram
		\[
		\xymatrix{%
			\ZZ[p,q] \ar[r]^{p,q \mapsto 0} \ar[d]^{f} & \ZZ \ar[d] \\
			A[t^{-1},u,v] \ar[r] \ar[d]^{u,v \mapsto 0} & \frac{A[t^{-1},u,v]}{(t^{-1}u - x^2y, t^{-1}v - y^2x)} \ar[d]^{u,v \mapsto 0} \ar[r]^{g} & A[It,t^{-1}] \ar[d] \\
			A[t^{-1}] \ar[r] & \frac{A[t^{-1}]}{(xy^2,yx^2)} \ar[r] & \sR_{Z/U}^\extd }
		\]
		where $f(p) = t^{-1}u - x^2y, f(q) = t^{-1}v-y^2x$ and $g(u) = xy^2t, g(v) = yx^2t$, and the bottom right square is the coCartesian diagram exhibiting (\ref{E:RZX-pushout-ex1}). Observe that the two squares on the left together form a pushout diagram, hence the square on the bottom left is a pushout. Hence the bottom two squares together are a pushout, and thus
		\begin{align*}
			R_{Z/U}^\extd \simeq A[It,t^{-1}]/(xy^2t,yx^2t).
		\end{align*}
		\fi
	\end{example}	
	
	\begin{example} 
\label{Ex:Secondocouple}
		The Koszul complex defining the algebra $B$ looks like $A \to A \oplus A \to A$, which gives us an element $(x,-y) \in A \oplus A$ that gives rise to a nonzero element 
		\begin{align*}
			\sigma \in \pi_1 B \simeq \pi_0 \Map(\LSym(B[1]), B)
		\end{align*}
		and thus a map $f_\sigma \colon \LSym(B[1]) \to B$. Let $C$ be the pushout of $f_\sigma$ along the map $\LSym(B[1]) \to B$ induced by the zero-map $B[1] \to B$, so that $C = B/(\sigma)$. 
		
		Put $W = \Spec C$. Write $\tau$ for the element of $R^{\extd}_{Z/U}$ in homological degree $1$, corresponding to the composition $f_\sigma \colon \LSym(B[1]) \to B \to R_{Z/U}^\extd$. Then, again by Proposition \ref{Prop:RZU}, it holds
		\[ R^\extd_{Z/W} \simeq R^\extd_{Z/U}/(\tau) \] 
		In terms of the Koszul complex, $\tau$ corresponds to the element $(xt, -yt)$ in homological degree $1$ of the complex $R \to R \oplus R \to R$, where $R = A[It,t^{-1}]$ for $I=(x,y)$
		
		\iffalse
		Using the previous example, we will compute $R^\extd_{Z/W}$.
		
		As before, we have
		\begin{align*}
			D_{Z/W} \simeq D_{Z/U} \times_{D_{W/X}} D_{W/W}
		\end{align*}
		
		Now consider the following commutative diagram
		\[
		\xymatrix{%
			\ZZ[w] \ar[rr]^{w \mapsto vt^{-1}-\sigma} \ar[d]^{w \mapsto 0} && B[t^{-1},v] \ar[d] \ar[rr]^{v \mapsto 0} && B[t^{-1}] \ar[d] \\
			\ZZ \ar[rr] && \frac{B[t^{-1},v]}{(vt^{-1}-\sigma)} \ar[d]^{v \mapsto \tau} \ar[rr] && C[t^{-1}] \ar[d] \\
			&& \sR^\extd_{Z/U} \ar[rr] && \sR_{Z/W}^\extd }
		\]
		where $v,w$ are free in simplicial level 1 and homogeneous degree 1, and $\tau$ is the element corresponding to $(xt, -yt)$ in degree one of the Koszul complex $R \to R \oplus R \to R$ of $\sR^\extd_{Z/U}$, where $R = A[It,t^{-1}]$ for $I=(x,y)$, and all squares are coCartesian. This shows that
		\begin{align*}
			R^\extd_{Z/W} \simeq R^\extd_{Z/U}/(\tau).
		\end{align*}
		\fi
	\end{example}

	\section{Derived stabilizer reduction of Artin stacks} \label{Sec:der stab red}
	Let $X$ be a derived algebraic stack. Throughout this section, we assume that $X$ is quasi-compact and locally of finite presentation, and that $X_\cl$ admits a good moduli space $q \colon X_\cl \to M$. Observe that $X_\cl$ is then of finite type, hence Noetherian.  Recall that all derived stacks (hence all classical stacks, good moduli spaces, etc.) are assumed to have affine diagonal.
	\medskip
	
	Under these assumptions, an intrinsic stabilizer reduction procedure was developed in \cite{KLS, Sav}, producing a canonical sequence of classical Artin stacks
	\begin{align} \label{classical intr stab red seq}
		X_\cl^0 = X_\cl,\ X_\cl^1 \coloneqq \hat{X}_\cl^0,\ \ldots,\ X_\cl^m \coloneqq \hat{X}_\cl^{m-1}
	\end{align}
	where each $X_\cl^i$ is obtained by applying an operation called ``Kirwan blow-up'' on the preceding stack $X_\cl^{i-1}$ (denoted by the top hat), and the maximal stabilizer dimension of the stacks in the sequence is strictly decreasing, so that $X_\cl^m$ has only finite stabilizers. The Deligne--Mumford stack $\widetilde{X}_\cl \coloneqq X_\cl^m$ is called the \emph{intrinsic stabilizer reduction} of $X_\cl$.
	
	In this section, we use the results obtained so far to upgrade this construction to the derived context. More precisely, we define a derived Kirwan blow-up operation whose classical truncation is the classical Kirwan blow-up. This allows us to construct a derived stabilizer reduction procedure by a canonical sequence of derived Artin stacks
	\begin{align}
		X^0 = X,\ X^1 \coloneqq \hat{X}^0,\ \ldots,\ X^m \coloneqq \hat{X}^{m-1}
	\end{align}
	whose classical truncation is~\eqref{classical intr stab red seq}. We will define the derived Deligne--Mumford stack $\widetilde{X} \coloneqq X^m$ to be the derived stabilizer reduction of $X$.
	
	\subsection{Review of the classical case} 
	\label{Sub:KirwanClassical}
	We briefly recall the notions of intrinsic and Kirwan blow-up for a classical Artin stack $X_\cl$ with a good moduli space $q \colon X_\cl \to M$, as considered in \cite{Sav}. Let $M^{\maxlocus}$ be the good moduli space of $X_\cl^{\maxlocus}$.
	\medskip
	
	The classical intrinsic blow-up 
	$$\pi^{\intr} \colon {X}_\cl^{\intr} \to X_\cl$$ 
	of $X_\cl$ is defined by \'{e}tale descent using affine, strongly \'{e}tale morphisms $[U_\cl/G] \to X_\cl$, where $G$ runs along all reductive stabilizer groups of $X_\cl$ of maximum dimension. 
	Given such, ${X}_\cl^{\intr}$ is obtained through a corresponding affine, \'{e}tale, stabilizer-preserving cover by morphisms
	\begin{align}
		[U^{\intr}_\cl / G] \to {X}_\cl^{\intr}
	\end{align}
	glued together with the complement $X_\cl \setminus X_\cl^{\maxlocus} \subseteq X_\cl$ of $X^{\maxlocus}$, which is unaffected by the intrinsic blow-ups of the cover.
	\medskip
	
	The \emph{classical Kirwan blow-up} is the open substack of \emph{semi-stable points} $\left( X_\cl^{\intr} \right)^{\mathrm{ss}}$ of the intrinsic blow-up $X_\cl^{\intr}$, whose complement by definition is the \emph{unstable locus}
	$(X_\cl^{\intr})^{\mathrm{us}} \subseteq X_\cl^{\intr}$, defined in \cite{Sav}. 
	
	We briefly remind the reader that it is determined as follows: Let $x \in X_\cl^{\max}$ be any closed point with reductive stabilizer $G=G_x$ of maximal dimension. Let $[U_\cl/G] \to X_\cl$ be a strongly \'{e}tale morphism, whose image contains $x$, where $U_\cl$ is classical, affine and equipped with a $G$-equivariant closed embedding $U_\cl \to V$ into a smooth, affine, classical $G$-scheme $V$. Write $\pi^{\intr} \colon V^{\intr} \to V$, as usual, for the blow-up of $V$ along $V^{G^0}$, and $q \colon V \to V \git G$ for the good moduli space morphism. By definition, we have a $G$-invariant closed embedding $U^{\intr} \to V^{\intr}$. The unstable locus of $V^{\intr}$ is then the strict transform of the saturation of the closed subscheme $V^{G^0} \git G = q(V^{G^0})$ inside $V \git G$, i.e.
	\begin{align*}
	    (V^{\intr})^{\mathrm{us}} := \overline{(\pi^{\intr})^{-1} \left( q^{-1}(V^{G^0}\git G) \setminus V^{G^0} \right) },
	\end{align*}
	so that $(V^{\intr})^{\mathrm{ss}} = V^{\intr} \setminus (V^{\intr})^{\mathrm{us}}$. In \cite{EdidinRydh}, the quotient stack $ [ (V^{\intr})^{\mathrm{ss}} / G ] $ is referred to as the Reichstein transform of $[V/G]$. We then define $(U^{\intr})^{\mathrm{ss}} = U \cap (V^{\intr})^{\mathrm{ss}}$.

	One may check that this is independent of choices and hence well-defined. We then take $(X_\cl^{\intr})^{\mathrm{ss}}$ to be the open substack of $X_\cl^{\intr}$ whose points are in the image of the composition $[(U_\cl^{\intr})^{\mathrm{ss}} / G] \to [U_\cl^{\intr} / G] \to X_\cl^\intr$ for all possible choices of $x \in X_\cl^{\max}$ and strongly \'{e}tale morphisms $[U_\cl / G] \to X$ in the above. 
	\medskip
	
	An important property established in \cite{Sav} is the existence of a good moduli space $(X^\intr_\cl)^{\ss} \to \hat{M}$ for the Kirwan blow-up such that the natural induced morphism $\hat{M} \to M$ is proper. By construction, the maximum stabilizer dimension of points in $(X^\intr_\cl)^{\ss}$ is strictly lower than that of $X_\cl$.

	\subsection{Derived intrinsic and Kirwan blow-ups}
	\label{Subsec:Derived_intrinsic_and_Kirwan_blow-ups}
	If $X$ is a derived Artin stack satisfying the assumptions of this section, we have constructed earlier a canonical, closed immersion $X^{\maxlocus} \to X$, which parametrizes points in $X$ with stabilizer of maximal dimension.
	
	As in the classical case, we may thus give the following definition.
	
	\begin{definition}
		The \textit{derived intrinsic blow-up} of $X$ is defined as the stack over $X$
		\[\pi^{\intr} \colon X^{\intr} \coloneqq \Bl_{X^{\maxlocus}} X \lr X.\]
	\end{definition}
	
	As expected, the derived intrinsic blow-up is a canonical derived enhancement of the classical intrinsic blow-up (which also justifies the notation).
	
	\begin{proposition} \label{Prop:stacky intr blow-up is derived blow-up}
		The classical truncation of $X^{\intr}$ is the classical intrinsic blow-up of the classical truncation $X_\cl$ of $X$.
	\end{proposition}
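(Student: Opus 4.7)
The plan is to argue by étale descent, reducing to the local model provided by Theorem~\ref{intr bl is der bl thm}. First I would recall that, by Proposition~\ref{Prop:luna_etale_slic}, the stack $X$ is covered by affine, strongly étale morphisms $h\colon [U/G]\to X$ with $U \in \Aff^G$ and $G$ reductive. Away from $X^{\maxlocus}$ the derived blow-up $\pi^{\intr}$ is an equivalence (being a derived blow-up along its own complement), and this matches the classical construction of $X^{\intr}_\cl$ of \S\ref{Sub:KirwanClassical}, which is also the identity on $X_\cl\setminus X_\cl^{\maxlocus}$. Therefore it suffices to work over an étale neighbourhood of $X^{\maxlocus}$ and restrict attention to those $h\colon [U/G]\to X$ for which $\dim G = d$, the maximal stabilizer dimension of $X$.

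Next I would carry out the local computation. Fix such an $h$. Since derived blow-ups are stable under arbitrary base change and $X^{\maxlocus}\times_X [U/G] \simeq [U^{G^0}/G]$ by Theorem~\ref{thm existence of X max}, base-changing yields
\begin{equation*}
    X^{\intr}\times_X [U/G] \;\simeq\; \Bl_{[U^{G^0}/G]}[U/G].
\end{equation*}
Applying Proposition~\ref{Prop:G_equiv_blow-up} gives the identification
\begin{equation*}
    \Bl_{[U^{G^0}/G]}[U/G]\;\simeq\;[\Bl^G_{U^{G^0}} U \,/\, G].
\end{equation*}
Passing to classical truncations and using that $(-)_\cl$ commutes with quotients by smooth group actions, we obtain $(X^{\intr}\times_X[U/G])_\cl \simeq [(\Bl^G_{U^{G^0}} U)_\cl / G]$. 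By Theorem~\ref{intr bl is der bl thm} the classical truncation $(\Bl_{U^{G^0}} U)_\cl$ agrees $G$-equivariantly with the classical $G$-intrinsic blow-up $U^{\intr}_\cl$ of $U_\cl$ (for connected $G$; the non-connected case is handled by the same argument, since both constructions depend only on $G^0$ and its fixed locus $U^{G^0}$, with $G/G^0$-equivariance carried along formally). Since $\Bl_{U^{G^0}}U$ and $\Bl^G_{U^{G^0}}U$ agree as underlying derived schemes, this yields $(X^{\intr}\times_X[U/G])_\cl \simeq [U^{\intr}_\cl/G]$.

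Finally I would glue: by construction of \S\ref{Sub:KirwanClassical}, $X^{\intr}_\cl$ is defined by descent along exactly the same strongly étale cover $\{[U_\cl/G]\to X_\cl\}$ via the local pieces $[U^{\intr}_\cl/G]$. Since $(-)_\cl$ preserves colimits of such étale covers and the local identifications above are natural in $h$, gluing produces a canonical equivalence $(X^{\intr})_\cl \simeq X^{\intr}_\cl$.

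The main obstacle I anticipate is verifying that the local identifications assemble coherently: one needs to check that the equivalences $(X^{\intr}\times_X[U/G])_\cl\simeq [U^{\intr}_\cl/G]$ are natural with respect to morphisms between slices in the étale cover, which in turn requires the functoriality statements in Theorem~\ref{thm existence of X max} and Proposition~\ref{Prop:G_equiv_blow-up}, together with the independence of $U^{\intr}_\cl$ on the auxiliary smooth embedding established in \cite{KLS}. A secondary technical point is the extension of Theorem~\ref{intr bl is der bl thm} to non-connected $G$, which reduces to the connected case since both the derived and classical constructions use $U^{G^0}$ as the center and only record $G/G^0$-equivariance on top.
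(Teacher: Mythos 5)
Your proposal is correct and follows essentially the same route as the paper: reduce via base change along a cover by affine, strongly \'{e}tale morphisms $[U/G]\to X$ (using that $X^{\maxlocus}\times_X[U/G]\simeq[U^{G^0}/G]$ and that derived blow-ups are stable under base change), then invoke Theorem~\ref{intr bl is der bl thm} applied to $G^0$ with $G$-equivariance carried along. The paper's proof is simply a terser version of your argument; your additional remarks on the locus away from $X^{\maxlocus}$ and on gluing coherence are fine but not needed beyond what \'{e}tale descent already provides.
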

	
	\begin{proof}
		By the base change property of $X^{\maxlocus}$ along a cover by affine, strongly \'{e}tale morphisms from affine quotient stacks, we may reduce to the case of a derived affine quotient stack $[U/G]$ for which $[U/G]^{\maxlocus} = [U^{G^0} / G]$. But then the statement follows immediately by Theorem~\ref{intr bl is der bl thm} by working with the connected component $G^0$ and $G$-equivariance.
	\end{proof}
	
\begin{lemma}
	\label{Lem:Xmaxlfp}
	The derived stack $X^{\maxlocus}$ is locally of finite presentation. Consequently, the morphism $\pi^{\intr} \colon X^{\intr} \to X$ is locally of finite presentation.
\end{lemma}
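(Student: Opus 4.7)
The statement is smooth-local on $X$, so by the derived Luna \'{e}tale slice theorem (Proposition~\ref{Prop:luna_etale_slic}) we can cover $X$ by affine strongly \'{e}tale morphisms $[U/G] \to X$ with $U \in \Aff^G$ and $G$ reductive. By Theorem~\ref{thm existence of X max}, $X^{\maxlocus} \times_X [U/G] \simeq [U^{G^0}/G]$, and since $G$ is smooth and affine, it suffices to verify that each such $U^{G^0}$ is locally of finite presentation. Because $[U/G] \to X$ is \'{e}tale and $X$ is assumed locally of finite presentation, $U$ is locally of finite presentation over $\CC$; being equipped with a $G$-action, it is locally of finite $G$-presentation, as discussed just before Lemma~\ref{Lem:Equiv alg are standard form}.

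The key step is to produce a local standard form on the fixed locus. Points of $U^{G^0}$ project into the closed subscheme $(U_\cl)^{G^0}$; away from this subscheme $U^{G^0}$ is empty and the claim is vacuous. At a $G^0$-fixed point $x \in U_\cl$ (which we may take to be $G$-fixed after replacing the orbit by a single representative), Lemma~\ref{Lem:Equiv alg are standard form} provides an equivariant Zariski neighborhood of $x$ on which $U$ admits a $G$-cdga model $R$ in standard form with generating modules of the shape $M_{i+1} = R(i) \otimes_\CC W_{i+1}$ for $W_{i+1} \in \Rep^G$. Proposition~\ref{Prop:3.27} then gives an explicit standard form model $R_{G^0}$ for $U^{G^0}$ on this neighborhood, whose generating modules are the free modules $N_{i+1} = R(i)_{G^0} \otimes_\CC W_{i+1}^{\fix}$; in particular $R_{G^0}$ is finitely $G$-presented (the $G$-action descends since $G^0$ is normal in $G$), so $U^{G^0}$ is locally of finite presentation at $x$, as desired. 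This establishes the first assertion.

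For the consequence, since $X$ is locally of finite presentation, it is enough to show that $X^{\intr} = \Bl_{X^{\maxlocus}} X$ is locally of finite presentation. Working again on an affine strongly \'{e}tale chart $[U/G] \to X$, Proposition~\ref{Prop:G_equiv_blow-up} identifies $X^{\intr}$ with the quotient of the $G$-equivariant blow-up $\Bl^G_{U^{G^0}} U$. Using the standard form model $R$ as above, the ideal of $U^{G^0}$ in $U$ is cut out by a finite collection of equivariant cells (the moving parts of the generating representations), so Example~\ref{Ex:equiv_Rees} presents the equivariant extended Rees algebra $\sR^{G,\extd}_{U^{G^0}/U}$ as a finitely $G$-presented graded algebra over the (locally) finitely presented algebra underlying $U$. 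Taking the equivariant $\Proj$ preserves local finite presentation, so $\Bl^G_{U^{G^0}} U$ is locally of finite presentation, and hence so is $\pi^{\intr}$. The main effort is really the local analysis in the second paragraph; once the standard form of the fixed locus is in hand, the blow-up assertion is formal from the explicit Rees algebra presentation.
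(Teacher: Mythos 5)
Your argument is correct in outline, but it takes a genuinely different route from the paper. The paper's proof is two sentences of abstraction: it cites from \cite{Weil} that $\Bl_{X^{\maxlocus}}X$ is an open substack of the deformation space $\stD_{X^{\maxlocus}/X}$ and that Weil restrictions preserve local finite presentation; since the fixed locus $U^{G^0}$ is itself a Weil restriction (Remark~\ref{Rem:HLP}) and the deformation space is a Weil restriction by construction, both assertions follow at once from the same general principle, with no local models ever written down. You instead make everything explicit: equivariant standard forms via Lemma~\ref{Lem:Equiv alg are standard form}, the fixed-locus model of Proposition~\ref{Prop:3.27}, and the finite Rees-algebra presentations of Proposition~\ref{Prop:equivariant_quotient_formula} and Example~\ref{Ex:equiv_Rees}. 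What your approach buys is independence from the general preservation statement in \cite{Weil} and concrete finite presentations one could reuse elsewhere (indeed, your computation is essentially the one driving Theorem~\ref{intr bl is der bl thm}); what it costs is length and several local-to-global verifications the abstract argument sidesteps (covering $U^{G^0}$ by the equivariant charts, compatibility of the fixed locus with Zariski localization via Lemma~\ref{Lem:etale base change of fixed locus}, and the fact that $\Proj$ of a finitely presented graded algebra is locally of finite presentation).

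One wobble to repair: your parenthetical claim that a $G^0$-fixed point $x$ ``may be taken to be $G$-fixed after replacing the orbit by a single representative'' is not right — a $G^0$-fixed point need not lie in the $G$-fixed locus, and passing to an orbit representative changes nothing. The fix is cheap and stays inside your own argument: local finite presentation is a non-equivariant, Zariski-local property, so you may run Lemma~\ref{Lem:Equiv alg are standard form} and Proposition~\ref{Prop:3.27} with the connected reductive group $G^0$ in place of $G$, at a $G^0$-fixed point, on a $G^0$-invariant (not necessarily $G$-invariant) neighbourhood; these neighbourhoods together with the complement of $(U_\cl)^{G^0}$ cover $U$, which is all you need. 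You should also note that Theorem~\ref{thm existence of X max} describes $X^{\maxlocus}\times_X[U/G]$ only for charts with $\dim G = d$; on the remaining charts the pullback is empty by the corollary following Proposition~\ref{Prop:prop_xmax}, so the covering argument still closes.
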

	
	\begin{proof}
		In \cite{Weil} it is shown that $\Bl_{X^{\maxlocus}}X$ is an open substack of the deformation space $\stD_{X^{\maxlocus}/X}$, and that Weil restrictions preserve the property of being locally finitely presented. Since $X$ is assumed to be locally of finite presentation, it thus suffices to show that $X^{\maxlocus}$ is locally of finite presentation. 
		
		As before, taking slices and using Theorem \ref{thm existence of X max}, we reduce to the case $X= [U/G]$, where $U$ is a derived $G$-scheme, locally of finite presentation. Since $U^{G^0}$ is obtained by Weil-restricting along $BG \to *$, the claim follows.
	\end{proof}
	
Let $\sA$ be a discrete, finitely generated, quasi-coherent, graded $\sO_{X_\cl}$-algebra, and write $q \colon X_\cl \to Y$ for the good moduli space. Recall that the \emph{saturated projective spectrum} $\Proj^{q}_{X_\cl} \sA$ (relative to $q$) is the largest open substack of $\Proj_{X_\cl} \sA$ such that the natural map
\[ \Proj^{q}_{X_\cl} \sA \to \Proj_Y q_* \sA \]
is well-defined, in which case it is a good moduli space \cite[\S 3.1]{EdidinRydh}. In case $\sA$ is the Rees algebra of a closed immersion, this is called the \emph{saturated blow-up}.

\begin{definition}
	Let $\sB$ be a quasi-coherent, graded $\sO_{X}$-algebra such that $\pi_0\sB$ is finitely generated. Then the \emph{derived saturated projective spectrum} is the open substack $\Proj^q_X \sB$ of the derived projective spectrum $\Proj_X \sB$ such that
	\[ (\Proj^q_X \sB)_\cl \to (\Proj_X \sB)_\cl \cong \Proj_{X_\cl} \pi_0 \sB \]
	is the saturated projective spectrum of $\pi_0 \sB$.
\end{definition}

By Lemma \ref{Lem:Xmaxlfp} it holds that $\pi_0\sR_{X^{\maxlocus}/X}$ is finitely generated. The following therefore makes sense.

\begin{definition}
	The \textit{derived Kirwan blow-up} of $X$ is defined as
	\[ \pi \colon \hat{X} \coloneqq \Proj^q_X \sR_{X^{\maxlocus}/X}\lr X. \]
	The \emph{semi-stable locus} of $X^\intr$ is the open substack $(X^\intr)^{\ss} \subseteq X^{\intr}$ whose classical truncation is the classical semi-stable locus $(X^{\intr}_\cl)^{\ss} \subseteq X^{\intr}_\cl$ discussed in \S~\ref{Sub:KirwanClassical}.
\end{definition}

The connection with the classical Kirwan blow-up is as follows.

\begin{proposition}
	\label{Prop:KirwanSaturated}
	The derived Kirwan blow-up $\hat{X}$ coincides with the semi-stable locus $(X^\intr)^{\ss}$ as open substacks of $X^{\intr}$.
\end{proposition}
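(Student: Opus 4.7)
The plan is to reduce the equality of open substacks of the derived stack $X^{\intr}$ to an equality of their classical truncations inside $X^{\intr}_\cl$, then to the local affine case via strongly étale slices, and finally to match the two constructions by comparing their defining properties.

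First, observe that both $\hat{X}$ and $(X^{\intr})^{\ss}$ are by construction open substacks of $X^{\intr}$. Since open immersions into a derived algebraic stack $Z$ correspond bijectively to open immersions into $Z_\cl$, it suffices to show that $\hat{X}_\cl$ and $((X^{\intr})^{\ss})_\cl$ coincide as open substacks of $X^{\intr}_\cl$. By definition of the derived saturated projective spectrum, $\hat{X}_\cl = \Proj^{q_\cl}_{X_\cl} \pi_0\sR_{X^{\maxlocus}/X}$, while by definition of the semi-stable locus of $X^{\intr}$, one has $((X^{\intr})^{\ss})_\cl = (X^{\intr}_\cl)^{\ss}$, the classical Kirwan blow-up recalled in \S\ref{Sub:KirwanClassical}. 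Using Proposition~\ref{Prop:stacky intr blow-up is derived blow-up}, $X^{\intr}_\cl$ is the classical intrinsic blow-up, so both opens live inside the same ambient classical Artin stack, and the statement is purely classical.

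Next, cover $X$ by affine strongly étale morphisms $[U/G] \to X$, with $G$ reductive of dimension equal to the maximal stabilizer dimension of $X$; this is possible by Proposition~\ref{Prop:luna_etale_slic}. By Theorem~\ref{thm existence of X max}, $X^{\maxlocus}$ pulls back to $[U^{G^0}/G]$ under such slices, so by base change of Rees algebras, $\pi_0\sR_{X^{\maxlocus}/X}$ pulls back to the analogous Rees algebra on $[U_\cl/G]$. Both the classical saturated projective spectrum (via étale base change of good moduli spaces, as in \cite[\S 3]{EdidinRydh}) and the classical Kirwan blow-up (by \cite{Sav}) are compatible with such strongly étale slices. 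We therefore reduce to the affine case $X = [U/G]$ with $U$ classical affine.

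In this local setting, both $\hat{X}_\cl$ and $[(U^{\intr}_\cl)^{\ss}/G]$ are obtained from the classical intrinsic blow-up $[U^{\intr}_\cl/G]$ by removing precisely the strict transform of the saturation of $[V^{G^0}/G]$ under $[V/G] \to V \git G$: for the Kirwan blow-up this is the definition of the unstable locus recalled in \S\ref{Sub:KirwanClassical}, and for the saturated projective spectrum this is the explicit description of the complement of the largest open on which the composition to $\Proj_M q_{\cl,*} \pi_0\sR_{X^{\maxlocus}/X}$ is well-defined, as in \cite[\S 3.1]{EdidinRydh}. The main obstacle is verifying this last identification: one must check that Edidin--Rydh's saturation procedure, when applied to the Rees algebra $\pi_0\sR_{X^{\maxlocus}/X}$ (whose $\Proj$ is the intrinsic blow-up, not the ordinary blow-up), removes the same classical substack as the unstable locus of \cite{Sav}. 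This can be done by a direct local calculation using the explicit models from \S\ref{Sec:intr bl is der bl}, comparing both descriptions on an equivariant standard form and observing that the image in the good moduli space of $V\git G$ of the strict transform of $V^{G^0}$ inside $V^{\intr}_\cl$ coincides with the saturation of the image of $V^{G^0}$, independently of whether one resolves by the ordinary or the intrinsic blow-up.
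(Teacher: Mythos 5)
Your reduction to the classical statement and then to the affine quotient case $X=[U/G]$ via strongly \'{e}tale slices matches the paper's proof, which performs exactly these two reductions (citing \cite[Thm.~4.14]{Sav} and \cite[Prop.~3.6]{EdidinRydh} for the \'{e}tale-local compatibility of both constructions). The divergence, and the gap, is in the final step. You assert that in the local setting both $\hat{X}_\cl$ and $(U^{\intr}_\cl)^{\ss}$ are obtained by deleting ``the strict transform of the saturation of $V^{G^0}$,'' citing for the saturated projective spectrum ``the explicit description of the complement'' in \cite[\S 3.1]{EdidinRydh}. But that description is not a definition: the saturated $\Proj$ is defined as the largest open over which the map to $\Proj_M q_*\sA$ exists, and its identification with the complement of a strict transform of a saturation is a theorem, proved in \cite{EdidinRydh} only for the ordinary (saturated) blow-up of a \emph{smooth} scheme along a smooth center (their Prop.~4.5). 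For the graded algebra $\pi_0\sR_{X^{\maxlocus}/X}$, whose $\Proj$ is the intrinsic blow-up of a possibly singular $U_\cl$, this identification is precisely what has to be established; your proposed ``direct local calculation'' is not carried out, and the sentence describing what it would show (comparing images of strict transforms in $V\git G$) does not correctly isolate the required statement, which is a computation of $q_*$ of the intrinsic Rees algebra.

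The paper closes this gap with one further reduction that your argument is missing: the intrinsic blow-up $U^{\intr}_\cl$ sits as a closed subscheme of $V^{\intr}=\Bl_{V^{G^0}}V$ for the smooth ambient $V$ (its Rees algebra is a quotient of the restriction of $\sR_{V^{G^0}/V}$, as in the proof of Theorem~\ref{intr bl is der bl thm}), the saturated $\Proj$ is compatible with closed immersions by \cite[Prop.~3.6]{EdidinRydh}, and the semi-stable locus of \cite{Sav} is \emph{defined} as $U^{\intr}\cap (V^{\intr})^{\ss}$. This reduces everything to the smooth case, where the saturated blow-up is the Reichstein transform (\cite[Prop.~4.5]{EdidinRydh}) and agrees with the semi-stable locus of \cite[Thm.~4.5]{Sav}. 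Either supply this reduction to the smooth ambient scheme, or actually perform the local computation of $q_*\pi_0\sR_{X^{\maxlocus}/X}$ and its irrelevant ideal on an equivariant standard form; as written, the key identification is asserted rather than proved.
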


\begin{proof}
	It suffices to show that the semi-stable locus of $X^{\intr}_\cl$ discussed in \S~\ref{Sub:KirwanClassical} coincides with the saturated blow-up $\hat{X}_\cl = \Proj^{q}_{X_\cl} \pi_0 \sR_{X^{\maxlocus}_\cl/X_\cl}$. By \cite[Thm.~4.14]{Sav}, \cite[Prop.~3.6]{EdidinRydh} and since $\hat{X}_\cl, (X^{\intr}_\cl)^{\ss}$ are open substacks of $X^{\intr}_\cl$, we reduce to $X = [U/G]$ with $U$ a classical affine scheme. Then, by definition of the semi-stable locus and by \cite[Prop.~3.6]{EdidinRydh}, we further reduce to the case where $U$ is smooth. Now the claim follows from \cite[Prop.~4.5]{EdidinRydh} and \cite[Thm.~4.5]{Sav}.
\end{proof}

	We summarize the fundamental properties of the derived intrinsic and Kirwan blow-up in the following theorem, which combines the above (and Proposition \ref{Prop:prop_xmax}) with the properties of the corresponding classical blow-ups given in \cite[Thm.~4.7]{Sav}. 
	
	\begin{theorem} \label{der intr kir blow-up thm}
		Let $X$ be a derived algebraic stack satisfying the assumptions of this section: $X$ is quasi-compact and locally of finite presentation, and $X_\cl$  admits a good moduli space $q \colon X_\cl \to M$ (with affine diagonal by assumption).
		\begin{enumerate}
			\item The Kirwan blow-up $\pi \colon \widehat{X} \to X$ also satisfies the assumptions of this section.
			\item The maximum stabilizer dimension of closed points in $\hat{X}$ is strictly smaller than that of $X$.
			\item For any affine, strongly \'{e}tale morphism $[U/G] \to X$ with $U$ affine, the base change $\hat{X} \times_{X} [U/G]$ is naturally isomorphic to the derived Kirwan blow-up of $[U/G]$. 
			\item $\pi |_{\pi^{-1}(X^{\mathrm{s}})}$ is an isomorphism over the open locus $X^{\mathrm{s}}$ of properly stable points, i.e., closed points $x \in \lvert X \rvert$ with finite stabilizer such that $q^{-1}(q(x)) = \lbrace x \rbrace$.
			\item 	The classical truncations of $X^{\intr}$ and $\hat{X}$ are the classical intrinsic and Kirwan blow-ups of the classical truncation $X_\cl$ respectively.
		\end{enumerate} 
	\end{theorem}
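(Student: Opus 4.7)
The plan is to handle the five properties by reducing each, as much as possible, to the classical Kirwan blow-up of \cite[Thm.~4.7]{Sav} together with the structural results on derived blow-ups, $X^{\maxlocus}$, and saturated projective spectra developed above. I would start with (v), which is the keystone: the statement for $X^{\intr}$ is precisely Proposition~\ref{Prop:stacky intr blow-up is derived blow-up}, and the statement for $\hat X$ is built into the definition of derived saturated projective spectrum, together with Proposition~\ref{Prop:KirwanSaturated} identifying the semistable locus with the derived Kirwan blow-up. Since $(-)_\cl$ and passage to open substacks commute, one reads off $\hat{X}_\cl \simeq \hat{X_\cl}$ from these two propositions.

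Next, for (iii), the Rees algebra $\sR_{X^{\maxlocus}/X}$ is stable under arbitrary base change, and by Theorem~\ref{thm existence of X max} the closed substack $X^{\maxlocus}$ pulls back along any affine, strongly étale $[U/G]\to X$ to $[U^{G^0}/G] = [U/G]^{\maxlocus}$. Hence the derived intrinsic blow-up commutes with such base change. For the passage from $X^{\intr}$ to $\hat X$, I would invoke compatibility of the saturated projective spectrum with strongly étale base change, which at the classical level is \cite[Prop.~3.6]{EdidinRydh} combined with Proposition~\ref{Prop:KirwanSaturated}; since the base change in question is an open immersion on classical truncations determined by its classical behavior, and the derived structure is pulled back along a flat map, this lifts to the derived setting. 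Claim (i) then follows: quasi-compactness of $\hat X$ is inherited from the quasi-compactness of the cover by slices $\widehat{[U/G]}$ coming from a finite cover of $X$, local finite presentation is Lemma~\ref{Lem:Xmaxlfp} combined with the fact that open immersions and projective spectra of finitely generated algebras preserve local finite presentation, and the existence of a good moduli space on $\hat X_\cl$ is part of \cite[Thm.~4.7]{Sav} applied to $X_\cl$ via (v). Property (ii) reduces via (v) and Proposition~\ref{Prop:prop_xmax}(i) to the analogous classical statement in \cite[Thm.~4.7]{Sav}: maximum stabilizer dimension is read off from $|\hat X| = |\hat X_\cl|$.

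Finally, for (iv), I would observe that the properly stable locus $X^{\mathrm{s}}$ has finite stabilizers, hence does not meet $X^{\maxlocus}$ (which is nonempty only in dimension equal to the maximum, which is positive unless $X$ is already Deligne--Mumford, in which case the statement is vacuous). Consequently $\pi^{\intr}$ is an equivalence over $X^{\mathrm{s}}$, and passing to the semistable open substack does not alter this, since $X^{\mathrm{s}}$ consists entirely of stable points which are automatically semistable under Kirwan's construction (again by the classical statement in \cite[Thm.~4.7]{Sav} pulled back via (v)).

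The main technical obstacle I anticipate is item (iii) in the derived setting. While the Rees algebra and the intrinsic blow-up behave well under base change essentially by construction, the saturation step---passing to $(X^{\intr})^{\ss}$---is defined via its classical truncation and compatibility with the good moduli space $q$, so one needs to check that the open substack cut out on $X^{\intr}$ is genuinely compatible with strongly étale base change. This reduces to the corresponding classical fact via Proposition~\ref{Prop:KirwanSaturated}, but assembling the derived enhancement requires care that the open substack defined on $X^{\intr}$ and the one defined on $\widehat{[U/G]}$ agree after pullback; this should follow from the fact that open immersions into a derived stack are determined by their classical truncations together with the classical base change property in \cite[Prop.~3.6]{EdidinRydh}.
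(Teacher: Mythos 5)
Your proposal is correct and follows essentially the same route as the paper, which states the theorem as a summary combining Proposition~\ref{Prop:stacky intr blow-up is derived blow-up}, Proposition~\ref{Prop:KirwanSaturated}, Lemma~\ref{Lem:Xmaxlfp}, Proposition~\ref{Prop:prop_xmax} and the base-change property of $X^{\maxlocus}$ from Theorem~\ref{thm existence of X max} with the classical properties of Kirwan blow-ups in \cite[Thm.~4.7]{Sav}. Your write-up simply makes explicit the reductions that the paper leaves implicit, including the correct observation that the only delicate point is checking that the saturation (semistable) open substack is compatible with strongly \'{e}tale base change, which is handled exactly as you describe via the classical statement and the fact that open substacks of a derived stack are determined by their classical truncations.
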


\begin{remark}
	By Proposition \ref{Prop:KirwanSaturated}, we can also use \cite{EdidinRydh} to study $\hat{X}$. For example, \cite[Prop.~3.4]{EdidinRydh} produces an explicit, global description of the good moduli space $\hat{q} \colon \hat{X}_\cl \to \hat{M}$, thus circumventing the gluing argument from \cite{Sav}.
\end{remark}
	
	\begin{example}
		Let $Y \coloneqq [\AA^2/\Gm]$, where $\Gm$ acts on $\AA^2$ with weights $1,-1$, and let $Z \to Y$ be the closed substack corresponding to $\{0\} \to \AA^2$. Since $Y^{\max} = Z$, from \cite[Ex.\ 4.6]{EdidinRydh} we know that 
		\[ \hat{Y} \cong (\Bl_{Z}Y) \setminus [(D_1 \sqcup D_2)/\Gm] \]
		where $D_1,D_2$ are the strict transforms of the coordinate axes. 
	\end{example}
	
	\subsection{Derived stabilizer reduction of Artin stacks} 
	\label{Subsec:Derived_stabilizer_reduction_of_Artin_stacks}
	We are now in position to carry out the derived stabilizer reduction procedure, as described in the introductory part of the section.
	
	By Theorem~\ref{der intr kir blow-up thm}, the Kirwan blow-up $\hat{X}$ preserves the assumed properties of $X$. In particular, we may repeatedly apply it until the maximum stabilizer dimension becomes zero and we obtain a derived Deligne--Mumford stack $\widetilde{X}\coloneqq X^m$ by the canonical sequence
	\begin{align}
		X^0 = X,\ X^1 \coloneqq \hat{X}^0,\ \ldots,\ X^m \coloneqq \hat{X}^{m-1}.
	\end{align}
	
	\begin{definition}
		The derived Deligne--Mumford stack $\widetilde{X} \to X$ is called the \emph{derived stabilizer reduction} of $X$.
	\end{definition}
	
	It is clear by Theorem~\ref{der intr kir blow-up thm} that the classical truncation of $\widetilde{X}$ is the intrinsic stabilizer reduction of $X_\cl$.
	
	\begin{remark}
		By Remark~\ref{Rem: lft vs lfp}, if $X$ is a classical Artin stack of finite type, then its derived stabilizer reduction $\tilde{X}$ is well-defined, with classical truncation the intrinsic stabilizer reduction of $X_\cl$.
	\end{remark}
	
	\subsection{Connections with other approaches} We conclude with a brief discussion regarding the connections between the following stabilizer reduction approaches in the literature:
	\begin{enumerate}
		\item Kirwan's original desingularization procedure for classical, smooth quotient stacks obtained by Geometric Invariant Theory (GIT) \cite{Kirwan}.
		\item Edidin--Rydh's canonical reduction of stabilizers for classical (possibly singular) Artin stacks with good moduli spaces \cite{EdidinRydh}.
		\item The intrinsic stabilizer reduction procedure introduced in \cite{KLS, Sav} for (possibly singular) GIT quotient stacks and Artin stacks with good moduli spaces.
		\item The derived stabilizer reduction procedure of the present paper.
	\end{enumerate}
	
	In general, (ii) is a generalization of (i), while we have shown that (iv) is always a derived enhancement of (iii).
	
	Both (ii) and (iii) are sequences of two-step operations consisting of a blow-up and the deletion of certain unstable points (this two-step operation in (ii) is called a saturated blow-up, while in (iii) a Kirwan blow-up). The main difference between (ii) and (iii) is in the blow-up being used, which can now be succinctly formulated as follows, using Theorem \ref{der intr kir blow-up thm}. For a classical stack $X$, the classical blow-up $\Bl^\cl_{X_\cl^{\maxlocus}} X$ along the classical locus $X_\cl^{\maxlocus}$ is used in (ii), while the underlying classical stack $(\Bl_{X^{\maxlocus}} X)_\cl$ of the derived blow-up of $X$ (viewed as a derived stack) along the derived locus $X^{\maxlocus}$ is used in (iii). Recall here that $X^{\maxlocus}_\cl$ is both the classically defined locus and the underlying classical stack of the derived locus.
	
	For classical GIT quotient stacks that are smooth, all four constructions coincide: approaches (i)--(iii) are literally identical, while in (iv) it suffices to notice that the derived intrinsic blow-up of a classical smooth Artin stack is the same as the classical intrinsic blow-up (since the derived $X^{\maxlocus}$ is smooth and coincides with the classical $X^{\maxlocus}$) so that the derived Kirwan blow-up is classical and equals the saturated blow-up in each step. 
	
	However, in the singular case, the derived enhancement $X^{\maxlocus}$ of the classical locus $X_\cl^{\maxlocus}$ is in general not itself classical, as can be seen in the following example. Hence, the outputs of (ii) and (iv) will typically differ.
	
	\begin{example}
		Suppose that $G = \Gm =  \CC^\times$ is the one-dimensional torus acting on the affine plane $V = \CC^2_{x,y}$ with weights $1$ and $-1$ on the coordinates $x$ and $y$ respectively, and that $U \subseteq V$ is the closed $G$-invariant subscheme cut out by the ideal $I = (xy)$. Let $X \coloneqq [U/G]$. Then the classical locus $X^{\maxlocus}_\cl$ is the quotient stack $\Spec \CC[x,y]/(x,y) \times BG$. 
		
		To describe the derived locus $X^{\maxlocus}$, we can find a model $B$ for $\oO_U$ in standard form, and then apply Proposition~\ref{Prop:3.27}. In this case, it is clear that we can take $B(0) = \CC[x,y]$ and $M_0 = B(0) \otimes_{\CC} W_0$, where $W_0$ is the trivial $1$-dimensional $G$-representation and $M_0 \to B(0)$ is multiplication by $xy$. But then, $B_G$ is freely generated by $\CC[x,y]/(x,y)$ in degree $0$ and $W_0$ in degree $1$, therefore the derived $X^{\maxlocus} \simeq \Spec B_G \times BG$ is not classical and differs from its classical counterpart.
	\end{example}
	
    \section{Quasi-smooth stacks} \label{Sec:quasi-smooth}
    
    In this short section, we consider the derived stabilizer reduction of a quasi-smooth derived Artin stack $X$, satisfying the usual conditions of being quasi-compact, locally of finite presentation, and $X_\cl$ admitting a good moduli space $q \colon X_\cl \to M$.
    
    Recall that $X$ is quasi-smooth if its cotangent complex $\BL_X$ is perfect of (homological) Tor-amplitude $[-1,1]$. The main result of the section is as follows.
    
    \begin{theorem} \label{thm:stab red of quasi-smooth}
        The derived stabilizer reduction $\tilde{X}$ is a quasi-smooth Deligne--Mumford stack. In particular, the intrinsic stabilizer reduction $\tilde{X}_\cl$ of $X_\cl$ admits a natural perfect obstruction theory $\BL_{\tilde{X}}|_{\tilde{X}_\cl} \to \BL_{\tilde{X}_\cl}$ in the sense of Behrend--Fantechi \cite{BehFan} and induces virtual fundamental classes $[\tilde{X}_\cl]^\vir \in A_\ast (\tilde{X}_\cl)$ and $[\oO_{\tilde{X}_\cl}^\vir] \in K_0(\tilde{X}_\cl)$ in Chow and $K$-theory.
    \end{theorem}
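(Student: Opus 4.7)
The proof is essentially an assembly of results already established in the paper. The plan is first to propagate quasi-smoothness through the entire derived stabilizer reduction procedure, and then to extract the perfect obstruction theory and virtual classes from the derived enhancement of $\tilde{X}_\cl$.

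For the first step, I would invoke Proposition~\ref{prop:intr blowup quasi-smooth}, which asserts that the derived intrinsic blow-up and the derived Kirwan blow-up preserve quasi-smoothness. Combined with Theorem~\ref{der intr kir blow-up thm}, which guarantees that the other standing hypotheses on $X$ (quasi-compactness, local finite presentation, and existence of a good moduli space for the classical truncation) are preserved by the Kirwan blow-up, a straightforward induction on the stabilizer reduction sequence $X = X^0, X^1, \dots, X^m = \tilde{X}$ shows that each $X^i$ is quasi-smooth. Since the procedure terminates precisely when the maximal stabilizer dimension at closed points drops to zero, the terminal stack $\tilde{X}$ is a quasi-smooth derived Deligne--Mumford stack.

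For the second step, I would appeal to the standard principle that a quasi-smooth derived DM stack $Y$ canonically induces a perfect obstruction theory on its classical truncation. Writing $\iota\colon \tilde{X}_\cl \to \tilde{X}$ for the inclusion, the pullback $\iota^*\BL_{\tilde{X}}$ is a perfect complex of Tor-amplitude in $[-1,1]$ by quasi-smoothness, and the fiber sequence
\[
\iota^*\BL_{\tilde{X}} \lr \BL_{\tilde{X}_\cl} \lr \BL_{\tilde{X}_\cl/\tilde{X}},
\]
together with the fact that $\BL_{\tilde{X}_\cl/\tilde{X}}$ is concentrated in homological degrees $\geq 2$ (as the truncation map is a 1-connective closed immersion), exhibits the canonical morphism $\iota^*\BL_{\tilde{X}} \to \BL_{\tilde{X}_\cl}$ as a perfect obstruction theory in the sense of Behrend--Fantechi \cite{BehFan}. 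This yields the virtual fundamental class $[\tilde{X}_\cl]^\vir \in A_\ast(\tilde{X}_\cl)$ by Behrend--Fantechi and the virtual structure sheaf $[\oO^\vir_{\tilde{X}_\cl}] \in K_0(\tilde{X}_\cl)$ by the standard $K$-theoretic construction.

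There is no genuine obstacle at the level of the present theorem; the entire technical content is packaged into Proposition~\ref{prop:intr blowup quasi-smooth}, whose proof relies on the careful analysis of derived blow-ups, deformation spaces, and equivariant fixed loci developed in the preceding sections. Beyond invoking this input, the proof is a clean induction together with the classical extraction of a perfect obstruction theory from a quasi-smooth derived enhancement.
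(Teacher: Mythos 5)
Your proposal is correct and follows essentially the same route as the paper: the paper's proof consists precisely of invoking Proposition~\ref{prop:intr blowup quasi-smooth}, noting that the Kirwan blow-up is an open substack of the intrinsic blow-up (hence quasi-smooth), and iterating along the reduction sequence, while the ``in particular'' clause about the obstruction theory is left to the standard fact you spell out via the fiber sequence $\iota^*\BL_{\tilde{X}} \to \BL_{\tilde{X}_\cl} \to \BL_{\tilde{X}_\cl/\tilde{X}}$ with $2$-connective cofiber. The only cosmetic point is that for the Deligne--Mumford stack $\tilde{X}$ the cotangent complex is connective, so quasi-smoothness gives homological Tor-amplitude $[0,1]$ rather than $[-1,1]$, which is exactly what Behrend--Fantechi requires.
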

    
    To prove the theorem, we make use of the following proposition.
    
    \begin{proposition} \label{prop:quasi-smooth blow-up}
        Let $f \colon Z \to X$ be a closed embedding of quasi-smooth derived schemes. Then $\Bl_Z X$ is a quasi-smooth derived scheme.
    \end{proposition}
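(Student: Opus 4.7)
The plan is to verify quasi-smoothness of $\Bl_Z X$ by a local computation, combining the explicit formula for the derived Rees algebra from Proposition~\ref{Prop:RZU} with an affine-chart analysis of the projective spectrum. Since quasi-smoothness is Zariski-local on $\Bl_Z X$ and $\Bl_Z X \to X$ is an equivalence away from the exceptional divisor, it suffices to check the property Zariski-locally near a point $z \in Z_\cl$; thus we may assume $X = \Spec A$ and $Z = \Spec B$ with $A, B$ quasi-smooth.

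Next I would arrange a common smooth ambient. By Lemma~\ref{Lem:Alg are standard form}, pick smooth affine schemes $V_X = \Spec C_X$ and $V_Z = \Spec C_Z$ realizing quasi-smooth closed immersions $X = V_X(\underline{f}) \hookrightarrow V_X$ and $Z = V_Z(\underline{g'}) \hookrightarrow V_Z$. After Zariski-shrinking around $z$, smoothness of $V_Z$ allows us to lift $j_Z \colon Z \to V_Z$ along $Z \hookrightarrow X$ to a morphism $\sigma \colon X \to V_Z$. Taking $V \coloneqq V_X \times V_Z$, the embeddings $(i_X, \sigma) \colon X \to V$ and $(i_X \circ i_Z, j_Z) \colon Z \to V$ are then both quasi-smooth closed immersions, and the composition $Z \hookrightarrow X \hookrightarrow V$ agrees with $Z \hookrightarrow V$. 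Setting $C \coloneqq \sO_V$, we obtain presentations $A = C/(f_1, \dots, f_N)$ and $B = C/(g_1, \dots, g_M)$ in homological degree $0$, together with relations $f_i = \sum_j \lambda_{ij} g_j$ in $\pi_0 C$.

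Applying formula~\eqref{formula [-1,0] case} of Proposition~\ref{Prop:RZU} to the sequence $Z \to X \to V$ gives
\[
R_{Z/X}^{\extd} \simeq C[t^{-1}, w_1, \dots, w_M] / \bigl(t^{-1} w_j - g_j,\ \textstyle\sum_j \lambda_{ij} w_j\bigr),
\]
with the $w_j$ in homological degree $0$ and homogeneous weight $+1$. The derived blow-up $\Bl_Z X = \Proj R_{Z/X}$ is then covered by the affine charts $D_+(w_k)$, and substituting $u_j \coloneqq w_j/w_k$ and $t^{-1} = g_k/w_k$ identifies the coordinate ring of the $k$-th chart with
\[
C[u_1, \dots, \hat{u}_k, \dots, u_M] / \bigl(g_k u_j - g_j,\ \textstyle\sum_j \lambda_{ij} u_j\bigr),
\]
where $u_k$ is set to $1$ in the second family of relations. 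Each chart is thus a quotient of the smooth polynomial algebra $C[\underline{u}]$ by finitely many elements in homological degree $0$, hence quasi-smooth, and so is $\Bl_Z X$.

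The main technical point is the construction of the common ambient $V$ in the second step. A genuinely derived $Z$ cannot itself be made into a quasi-smooth closed immersion into the ambient $V_X$ chosen for $X$ alone, because the triangle $\BL_{V_X}|_Z \to \BL_Z \to \BL_{Z/V_X}$ forces $H_2(\BL_{Z/V_X})$ to recover the nontrivial piece $H_1(\BL_Z)$ of the derived structure of $Z$. Enlarging the ambient by additional free directions from $V_Z$ sidesteps this obstruction by providing the extra smooth coordinates along which the higher derived cells of $Z$ are recorded by degree-zero equations.
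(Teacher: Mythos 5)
Your proof is correct and takes essentially the same route as the paper's: reduce to the affine case, arrange a common smooth ambient $V$ in which both $X$ and $Z$ are cut out by degree-zero equations, and read off quasi-smoothness from the presentation of $R^{\extd}_{Z/X}$ in Proposition~\ref{Prop:RZU}; the paper simply asserts the common-ambient step and calls the conclusion ``immediate,'' whereas you usefully make both the product/graph construction and the affine-chart computation of the $\Proj$ explicit. One correction to your closing remark: for a closed immersion of $Z$ into a smooth $V_X$ one always has $H_2(\BL_{Z/V_X})=0$ (the long exact sequence gives $H_2(\BL_Z)\to H_2(\BL_{Z/V_X})\to H_1(\BL_{V_X}|_Z)$ with both outer terms zero); the actual issue is that $H_1(\BL_{Z/V_X})$, which contains $H_1(\BL_Z)$ as a submodule, need not be locally free --- and even this is not an absolute obstruction (e.g.\ $\Spec \CC[x]/(x,x)\hookrightarrow \AA^1$ is quasi-smooth of virtual codimension $2$ with $H_1(\BL_Z)\neq 0$), so the enlarged ambient is a convenient sufficient device rather than a forced one. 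This slip occurs only in a motivational aside and does not affect the validity of the argument.
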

    
    \begin{proof}
        The statement is local, so we may assume that $X = \Spec B$ and $Z = \Spec D$ are affine and $f$ is induced by a map of rings $B \to D$. Moreover, since $X$ and $Z$ are quasi-smooth, we may also assume that $B = A / (f_1, \ldots, f_n)$ and $D= A / (g_1, \ldots, g_m)$ where $A = \oO_V$ for a smooth, classical affine scheme $V$ and $f_i, g_j \in \pi_0 A$. It is then an immediate application of Proposition~\ref{Prop:RZU} that $\Bl_Z X$ is quasi-smooth as well.
    \end{proof}
    
    \begin{remark}
        The preceding proposition has also been used in recent work of Zhao \cite{Zhao1, Zhao2}.
    \end{remark}
    
    We may now deduce that quasi-smoothness is preserved by derived intrinsic blow-ups.
    
    \begin{proposition} \label{prop:intr blowup quasi-smooth}
        Suppose that $X$ is a quasi-smooth derived Artin stack. Then $X^{\max}$ and $X^{\intr} = \Bl_{X^{\max}} X$ are quasi-smooth.
    \end{proposition}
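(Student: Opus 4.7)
The plan is to reduce the statement to an affine local computation via the derived Luna \'etale slice theorem (Proposition~\ref{Prop:luna_etale_slic}) and then separately analyze the two claims. Since quasi-smoothness is \'etale-local and the formation of $X^{\maxlocus}$ and $X^{\intr}$ commutes with affine, strongly \'etale, stabilizer-preserving morphisms $[U/G] \to X$ by Theorem~\ref{thm existence of X max} and Theorem~\ref{der intr kir blow-up thm}, I can cover $X$ by such quotient models and work on each slice. The quasi-smoothness of $X$ translates to quasi-smoothness of $U$ as a derived scheme, since $\BL_{[U/G]} \simeq [\BL_U \to \fg^\vee \otimes \sO_U]$ and $\fg$ is a vector space.

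The first claim is that $X^{\maxlocus}$ is quasi-smooth. Locally this is $[U^{G^0}/G]$, so I need $U^{G^0}$ to be a quasi-smooth derived scheme. For this I invoke Proposition~\ref{Prop:CotXG}, which identifies $\BL_{U^{G^0}}$ with $(i^\ast \BL_U)^{\fix}$, where $i\colon U^{G^0} \to U$. Because $U$ is quasi-smooth, $i^\ast \BL_U$ has Tor-amplitude $[-1,0]$; by Corollary~\ref{Cor:fix_mv_cofiber} the sequence $(i^\ast \BL_U)^{\fix} \to i^\ast \BL_U \to (i^\ast \BL_U)^{\mv}$ is split in $\QCoh^{G^0}(U^{G^0})$ (using that $G^0$ is reductive and we are in characteristic zero), so $(i^\ast\BL_U)^{\fix}$ is a direct summand of a perfect complex of Tor-amplitude $[-1,0]$ and is therefore itself perfect of Tor-amplitude $[-1,0]$. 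Hence $U^{G^0}$ is quasi-smooth, and then so is $[U^{G^0}/G]$.

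The second claim is that $X^{\intr} = \Bl_{X^{\maxlocus}}X$ is quasi-smooth. By Proposition~\ref{Prop:G_equiv_blow-up} and base change, locally on the slice $X \simeq [U/G]$ one has
\[ X^{\intr} \times_X [U/G] \simeq [\Bl^G_{U^{G^0}} U / G], \]
so it suffices to prove that the $G$-equivariant derived blow-up $\Bl^G_{U^{G^0}} U$ is a quasi-smooth derived scheme. Forgetting the $G$-action, this is the derived blow-up of the closed immersion $U^{G^0} \to U$ between quasi-smooth derived schemes, to which Proposition~\ref{prop:quasi-smooth blow-up} applies directly. This gives quasi-smoothness of the underlying derived scheme and hence of $X^{\intr}$.

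The main technical point to be careful about is the compatibility of slices with the formation of $X^{\maxlocus}$ and $X^{\intr}$: one needs the étale cover to be stabilizer-preserving so that $X^{\maxlocus}$ pulls back to $[U^{G^0}/G]$ (Theorem~\ref{thm existence of X max}) and derived blow-up commutes with this base change (a standard property of $\Bl_{(-)}(-)$ as a Weil-restriction construction). Once these local identifications are in place, the argument is essentially an application of the already-proven Proposition~\ref{Prop:CotXG}, Corollary~\ref{Cor:fix_mv_cofiber}, and Proposition~\ref{prop:quasi-smooth blow-up}, with no further calculation required. The virtual fundamental class and virtual structure sheaf on $\widetilde{X}_\cl$ then follow by iterating and appealing to the standard Behrend--Fantechi construction applied to $\BL_{\widetilde{X}}|_{\widetilde{X}_\cl}$.
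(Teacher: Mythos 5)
Your proof is correct, and the second half (quasi-smoothness of $\Bl_{X^{\maxlocus}}X$ via Proposition~\ref{prop:quasi-smooth blow-up} together with the identification $\Bl_{X^{\maxlocus}}X \simeq [\Bl^G_{U^{G^0}}U/G]$ from Proposition~\ref{Prop:G_equiv_blow-up}) is exactly the paper's argument. Where you genuinely diverge is in proving that $X^{\maxlocus}$ is quasi-smooth. The paper argues via explicit presentations: by Lemma~\ref{Lem:Equiv alg are standard form} one takes $U=\Spec R$ with $R$ a $G$-cdga in standard form, observes that quasi-smoothness forces $R$ to be generated over $R_0$ by a single module $M_1=R_0\otimes_\CC W_1$ in degree~$1$, and then invokes Proposition~\ref{Prop:3.27} to conclude that $U^{G^0}$ has a standard-form model of the same shape (generated by $W_1^{\fix}$ in degree~$1$), hence is quasi-smooth. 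You instead argue intrinsically on cotangent complexes: $\BL_{U^{G^0}}\simeq (i^*\BL_U)^{\fix}$ by Proposition~\ref{Prop:CotXG}, and the splitting of Corollary~\ref{Cor:fix_mv_cofiber} exhibits this as a direct summand of the perfect complex $i^*\BL_U$, whence it inherits perfectness and the Tor-amplitude bound (note that in the paper's homological convention the relevant amplitude for a quasi-smooth derived scheme is $[0,1]$ rather than $[-1,0]$, but this does not affect the argument). Both routes are valid and rest on the same underlying input (the reductivity of $G^0$ giving the fixed/moving splitting); yours is shorter and coordinate-free, while the paper's presentation-based argument has the advantage of producing an explicit standard-form model for $U^{G^0}$ that is reused in the blow-up computations of Sections~\ref{Sec:intr bl is der bl} and~\ref{Sec:Shifted Symplectic}. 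Your closing remark about virtual classes belongs to Theorem~\ref{thm:stab red of quasi-smooth} rather than to this proposition, but it is harmless.
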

    
    \begin{proof}
        Both statements are \'{e}tale local, so by Lemma~\ref{Lem:Alg are standard form} and Proposition~\ref{Prop:luna_etale_slic} we may assume that $X = [U/G]$ where $U = \Spec R \in \Aff^G$, $R$ is a $G$-cdga in standard form and $U^{G^0}$ is non-empty. The fact that $X$ is quasi-smooth implies that $R$ is generated over $R_0$ on a finite number of generators in degree $1$, given by an $R_0$-module $M_1 = R_0 \otimes_{\CC} W_1$ for a $G$-representation $W_1 \in \Rep^G$. Proposition~\ref{Prop:3.27} then implies that $U^{G^0}$ is of the same form and hence quasi-smooth. In particular, $X^{\max} = [U^{G^0}/ G]$ is quasi-smooth. Finally, $\Bl_{U^{G^0}} U$ is quasi-smooth by Proposition~\ref{prop:quasi-smooth blow-up} and, applying Proposition~\ref{Prop:G_equiv_blow-up}, $\Bl_{X^{\max}} X = [ \Bl_{U^{G^0}} U / G]$ is quasi-smooth.
    \end{proof}
    
    \begin{proof}[Proof of Theorem~\ref{thm:stab red of quasi-smooth}]
        The derived Kirwan blow-up $\hat{X}$ is an open substack of the derived intrinsic blow-up $X^\intr$, so Proposition~\ref{prop:intr blowup quasi-smooth} implies that $\hat{X}$ is quasi-smooth. Since $\tilde{X}$ is obtained via a sequence of Kirwan blow-ups, it must be quasi-smooth.
    \end{proof}
    
    In Section~\ref{Sec: DT VW}, we will use this result to define virtual classes for moduli stacks of sheaves on surfaces.
    
    \begin{remark} \label{Rem:Khan comparison}
        Khan \cite{KhanVirtualofStacks} has defined virtual fundamental classes $[X]^\vir$ for any quasi-smooth derived Artin stack $X$. For a quasi-smooth Deligne--Mumford stack $X$, his construction agrees with the virtual fundamental classes in Chow and $K$-theory defined using the machinery of perfect obstruction theories. Writing $\pi \colon \tilde{X} \to X$ for the natural projection map, it is an interesting question to see how the virtual classes of the stabilizer reduction $\tilde{X}$ relate to those of $X$ via the map $\pi$, especially in light of the potential enumerative applications in Section~\ref{Sec: DT VW}.
    \end{remark}
	
	\section{\texorpdfstring{$(-1)$}{-1}-shifted symplectic stacks} \label{Sec:Shifted Symplectic}
	
	$(-1)$-shifted symplectic derived Artin stacks play a fundamental role in enumerative geometry and Donaldson--Thomas (DT) theory, since they naturally arise as derived moduli stacks of sheaves and perfect complexes on Calabi--Yau threefolds. In \cite{KLS, Sav}, it is shown that if $X$ is such a stack, then the intrinsic stabilizer reduction $\widetilde{X}_\cl$ admits a semi-perfect obstruction theory of virtual dimension zero and hence by \cite{LiChang} a virtual fundamental cycle $[\widetilde{X}_\cl^{\mathrm{\vir}}] \in A_0(\widetilde{X}_\cl)$, whose degree was defined to be the generalized DT invariant via Kirwan blow-ups associated to $X_\cl$. This was possible by rather cumbersome local computations.
	
	In this section, we pursue a more detailed study of the derived stabilizer reduction $\widetilde{X}$ to put this result on a much more robust footing. Namely, we will show that, by truncating the derived tangent complex of $\widetilde{X}$, there is a natural way to recover the above semi-perfect obstruction theory. As an application, we construct generalized Vafa--Witten invariants in Section~\ref{Sec: DT VW}.
	
	\subsection{Brief review of shifted symplectic geometry}
	Shifted symplectic structures were introduced by Pantev--To\"{e}n--Vaqui\'{e}--Vezzosi in \cite{PTVV}. Their definition is given in the affine case first, and then generalized by showing that the local notion satisfies smooth descent. We use the presentation from \cite{PantevVezzosiSymplPois}, but with different grading. See also \cite{RaksitHKR}.

	The local definition is as follows. Let $\CC[\epsilon]$ be the graded algebra, where $\epsilon$ is free in homogeneous degree 1 and homological degree 1, hence with $\epsilon^2 \simeq 0$. An object in $\Mod^{\ZZ}_{\CC[\epsilon]}$ is then a graded $\CC[\epsilon]$-module $\bigoplus_p M^p$, and multiplication by $\epsilon$ induces maps $M^{p} \to M^{p+1}[1]$. Then the functor $\Alg^{\ZZ}_{\CC[\epsilon]} \to \Alg$, which sends an algebra $\bigoplus_p B^p$ to $B^0$, has a left adjoint, written
	\[ \dR_{(-)} \colon \Alg \to  \Alg^\ZZ_{\CC[\epsilon]} \]
	The algebra $\dR_A$ for $A \in \Alg$ is called the \emph{derived de Rham} algebra. The underlying graded module of $\dR_A$ is 
	\[ \dR_A \simeq \bigoplus\nolimits_{p \geq 0} \Lambda^p_A (\BL_A) [p] \simeq \LSym_A(\BL_A[1](-1)) \]
	where $(-)(q)$ means twisting in homogeneous degree, so that $(M(q))^p = M^{p+q}$.
	
	The space of \emph{$n$-shifted, closed $p$-forms} on $A$ is 
	\[ \sA^{p,\cd}(A,n) \coloneqq  \Mod^{\ZZ}_{\CC[\epsilon]}(\CC(-p)[p-n], \dR_A) \] 
	An element $\omega \in \sA^{p,\cd}(A,n)$ thus induces an element in 
	\[ \dR_A^p[n-p] \simeq \Lambda^p \BL_A [n]  \]
	In particular, for $p=2$, we obtain a map $\omega \colon \CC \to  \BL_A \wedge \BL_A [n]$, hence a morphism $\omega^{\flat} \colon \BL_A^{\vee} \to \BL_A[n]$ by adjunction. We say that $\omega$ is \emph{non-degenerate} if $\omega^{\flat}$ is an equivalence. 
	
	\begin{definition}
		Suppose that $A \in \Alg^G$. Then the standard $G$-action on $\BL_A$ induces a $G$-action on $\dR_A$, and $\omega \in \dR_A$ is called \emph{$G$-invariant} if it is invariant with respect to this $G$-action.
	\end{definition}
	
	The functor $\sA^{p, \cd}(-,n)$ is a derived stack. For $X \in \Stk$, the space of $n$-shifted, closed $p$-forms on $X$ is then
	\[ \sA^{p,\cd}(X,n) \coloneqq \Stk(X,\sA^{p,\cd}(-,n)) \simeq \lim\nolimits_{\Spec A \to X} \sA^{p,\cd}(A,n) \] 
	An $n$-shifted, closed $p$-form $\omega$ on $X$ thus induces a homotopy coherent family of $\omega_A \in \sA^{p,\cd}(A,n)$, indexed over all $\Spec A \to X$. It can be shown that, for $p=2$, this again induces a morpshim $\omega^\flat \colon \BL^\vee_X \to \BL_X[n]$, and that $\omega$ is \emph{non-degenerate} if $\omega^\flat$ is an equivalence.

	\begin{definition}[{\cite[Def.~1.18]{PTVV}}, {\cite[Def.~1.10]{PantevVezzosiSymplPois}}]
		An $n$-shifted, closed $2$-form $\omega$ on $X$ is called an $n$-\textit{shifted symplectic structure} if it is non-degenerate. If such $\omega$ exists, we say that $X$ is $n$-shifted symplectic.
	\end{definition}

	\iffalse
	Let us make this more explicit in the case we are interested in, which will also give us some more structure than the general case.
	
	Namely, we let $X$ be a quasi-compact, derived algebraic stack which is locally of finite presentation. By the derived Luna \'{e}tale slice theorem, and since $X$ is quasi-compact, we have an \'{e}tale cover $\varphi \colon [U/G] \to X$, where $U$ is a derived affine scheme. By Example \ref{Ex:LXcanGact}, the the cotangent complex $\BL_X$ corresponds to the fiber of $\BL_U \to \fg^\vee \otimes \sO_U$ under the equivalence $\QCoh(X) \simeq \QCoh^G(U)$. Now an $n$-shifted, closed $2$-form 
	\[ \omega \colon X \to \sA^{2,\cd}(-,n) \] 
	induces an $n$-shifted, closed $2$-form $\omega_U$ on $U$. 
	
	HOW to continue? Can we use $\omega_U^\flat$ to describe $\omega^\flat$?
	\fi
	
	\subsection{Local structure of $(-1)$-shifted symplectic stacks} We first introduce notation that will help us denote cdga's more efficiently. Recall that we use homological indexing notation.
	
	\begin{definition}
		Let $G$ be a reductive group, $V$ a smooth $G$-scheme, $\wW_{1}$ and $\wW_{2}$ be $G$-equivariant vector bundles on $V$ and $\delta_{2} \colon \wW_{2} \to \wW_{1}$, $\delta_{1} \colon \wW_{1} \to \oO_V$ be two $G$-equivariant morphisms such that $\delta_1 \delta_2 = 0$.
		
		Then we write $A = K(G, V, \wW_{1}, \wW_{2}, \delta_{1}, \delta_{2})$ for the $G$-equivariant sheaf of cdga's generated by $\oO_V, \wW_{1}$ and $\wW_{2}$ in degrees $0, 1$ and $2$ respectively and whose differential is determined by the maps $\delta_{1}, \delta_{2}$. 
		
		For brevity we write $\wW^{i}$ for $\wW_{i}^\vee$, where $i=1,2$, and $\delta^{2}$ for $\delta_{2}^\vee$.
		
		In the case where $\wW_{i} = W_{i} \otimes_{\CC} \oO_V$ for $G$-representations $W_{i}$, we also use the shorthand notation $K(G, V, W_{1}, W_{2}, \delta_{1}, \delta_{2})$.
	\end{definition}
	
	\begin{proposition}
		Let $A = K(G, V, \wW_{1}, \wW_{2}, \delta_{1}, \delta_{2})$ where $V$ is affine and write $U = \Spec A$. Then, after possibly (Zariski) equivariantly shrinking around fixed points $x \in U^G$, $A$ can be written in the form 
		$$A=K(G, V, W_{1}, W_{2}, \delta_{1}, \delta_{2})$$ for some $G$-representations $W_{1}, W_{2}$.
	\end{proposition}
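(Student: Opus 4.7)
The plan is to reduce this directly to the last assertion of Lemma \ref{Lem:Equiv alg are standard form}, which states that any $G$-equivariant vector bundle on a derived affine $G$-scheme admits, Zariski locally around a $G$-fixed point, a $G$-equivariant trivialization by a $G$-representation. First I would observe that since $A$ sits in homological degrees $[0,2]$ with $A_0 = \oO_V$, any $G$-fixed point $x \in U^G$ descends to a $G$-fixed point $\overline{x}\in V^G$; moreover the fiber $W_i \coloneqq \wW_i|_{\overline{x}}$ carries a natural action of $G$ making it a $G$-representation of the correct dimension.

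Next I would apply the trivialization lemma to $\wW_1$: this produces a $G$-invariant Zariski open neighborhood $\overline{x} \in V_1 \subseteq V$ and a $G$-equivariant isomorphism $\wW_1|_{V_1} \simeq W_1 \otimes_\CC \oO_{V_1}$. I would then apply the same statement to the restricted bundle $\wW_2|_{V_1}$, obtaining a further $G$-invariant open $\overline{x} \in V_2 \subseteq V_1$ and a $G$-equivariant trivialization $\wW_2|_{V_2} \simeq W_2 \otimes_\CC \oO_{V_2}$. Since the composition of two $G$-invariant open immersions is a $G$-invariant open immersion and $\overline{x}$ lies in the intersection, this gives a single $G$-invariant neighborhood $V_2$ of $\overline{x}$ on which both vector bundles are equivariantly trivial.

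Finally, I would observe that the differentials transport under restriction: writing $\delta_1^{V_2} \coloneqq \delta_1|_{V_2}$ and $\delta_2^{V_2}\coloneqq \delta_2|_{V_2}$ (identified via the above trivializations with $G$-equivariant maps $W_1 \otimes_\CC \oO_{V_2} \to \oO_{V_2}$ and $W_2 \otimes_\CC \oO_{V_2} \to W_1 \otimes_\CC \oO_{V_2}$ respectively), the cocycle condition $\delta_1^{V_2} \delta_2^{V_2} = 0$ is inherited from $\delta_1\delta_2 = 0$. By construction of $K(-,\dots,-)$, the cdga $A|_{V_2}$ then has exactly the form $K(G, V_2, W_1, W_2, \delta_1^{V_2}, \delta_2^{V_2})$, which is what is required after the equivariant shrinking $V_2 \subseteq V$. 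No substantive obstacle arises here; the result is essentially a bookkeeping corollary of the trivialization statement already proved in Lemma \ref{Lem:Equiv alg are standard form}.
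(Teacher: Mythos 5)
Your argument is correct and is essentially the paper's own proof: the paper disposes of this proposition in one line by noting that Lemma~\ref{Lem:Equiv alg are standard form} (specifically its final assertion on equivariant trivialization of $G$-equivariant vector bundles near fixed points) applies verbatim, and your write-up simply unpacks that appeal into the two successive shrinkings for $\wW_1$ and $\wW_2$ together with the observation that the differentials and the relation $\delta_1\delta_2=0$ restrict.
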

	
	\begin{proof}
		Lemma~\ref{Lem:Equiv alg are standard form} applies verbatim to prove the statement.
	\end{proof}
	
	The following lemma is a strengthening of the preceding proposition in the $(-1)$-shifted symplectic case. Recall that a smooth scheme $V$ Zariski-locally admits an \'{e}tale morphism $V \to \AA^d$ for some $d$. The local sections $x_i \in \sO_V$ induced by the corresponding map $\sO_{\AA^d} \to \sO_V$ are called \emph{\'{e}tale coordinates}. For a $G$-action on $V$, we say that $G$ acts \emph{linearly} on given \'{e}tale coordinates, if there is a homomorphism $\varphi \colon G \to \GL_d$ such that $V \to \AA^d$ is $G$-equivariant for the $G$-action on $\AA^d$ induced by $\varphi$. 
	
	\begin{lemma} \label{equivariant darboux lemma}
		Let $U$ be an affine derived $G$-scheme, locally of finite presentation, such that $X = [U / G]$ is a $(-1)$-shifted symplectic derived quotient stack, and let  $x \in U^G$. Let $\fg$ be the Lie algebra of the reductive group $G$. Then after possible (Zariski) equivariant shrinking around $x$, we may assume that $\sO_U$ has a model $A=K (G, V, W_{1}, W_{2}, \delta_{1}, \delta_{2})$, where: 
		\begin{enumerate}
			\item $W_{1}$ is the vector space spanned by $\frac{\partial}{\partial x_i}$ for a set of \'{e}tale coordinates $\lbrace x_i \rbrace$ of $V$ on which $G$ acts linearly, so that $W_{1} \otimes_{\CC} \oO_V \cong T_V$ as $G$-equivariant vector bundles on $V$.
			\item Under this identification, $\delta_{1}$ maps each $\frac{\partial}{\partial x_i}$ to $\frac{\partial f}{\partial x_i} \in \oO_V$ for a $G$-invariant regular function $f \colon V \to \AA^1$.
			\item $W_{2} = \fg$ and the map $\delta_{2} \colon \fg \otimes_{\CC} \oO_V \to W_{1} \otimes_{\CC} \oO_V \cong T_V$ is the infinitesimal derivative of the $G$-action on $V$.
		\end{enumerate}
	\end{lemma}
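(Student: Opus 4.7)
The plan is to produce the desired model by combining three ingredients: the equivariant standard form of Lemma~\ref{Lem:Equiv alg are standard form}, self-duality constraints on $\BL_X$ imposed by the $(-1)$-shifted symplectic form $\omega$, and an equivariant refinement of the Brav--Bussi--Joyce Darboux theorem to extract the $G$-invariant potential $f$.

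First, I would apply Lemma~\ref{Lem:Equiv alg are standard form} around the $G$-fixed point $x$ to obtain, up to equivariant Zariski shrinking of $U$, a $G$-cdga model $R$ for $\sO_U$ in standard form that is minimal at $x$, with $R_0$ smooth admitting a $G$-linear system of \'etale coordinates $\{x_i\}$, and with the generators in each homological degree $k \geq 1$ of the form $R(k-1) \otimes_\CC W_k$ for some $W_k \in \Rep^G$. By minimality at $x$, $\BL_U|_x \simeq \bigoplus_{k \geq 1} W_k[k]$ as $G$-representations, reducing the problem to constraining the $W_k$ and the differentials $\delta_k$ using the $(-1)$-shifted symplectic structure.

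Second, I would use the fiber sequence
$$\varphi^* \BL_X \to \BL_U \to \fg^\vee \otimes \sO_U$$
from Example~\ref{Ex:LXcanGact} combined with the self-duality $\omega^\flat \colon \mathbb{T}_X \xrightarrow{\sim} \BL_X[-1]$. The latter forces $\varphi^*\BL_X$ to be a perfect complex of Tor-amplitude $[-1,2]$, self-dual up to $[-1]$, pairing degree $-1$ with degree $2$ and degree $0$ with degree $1$. Combined with the above fiber sequence --- in which $\fg^\vee \otimes \sO_U$ sits in degree $-1$ of $\varphi^*\BL_X$ --- this forces $W_k = 0$ for $k \geq 3$, yields a canonical isomorphism $W_2 \simeq \fg$ (the pairing partner of $\fg^\vee$), and identifies $W_1|_x$ with $T_xV$ (the pairing partner of $\Omega_V|_x$). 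Since $x$ is $G$-fixed and $G$ is reductive, the infinitesimal isomorphism $W_1|_x \simeq T_xV$ extends $G$-equivariantly over a Zariski neighborhood of $x$, giving $W_1 \otimes_\CC \sO_V \cong T_V$; the compatibility of $W_2 \simeq \fg$ with the fiber sequence then forces $\delta_2$ to be the infinitesimal $G$-action.

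Third, I would produce the $G$-invariant potential $f$. With $W_1 \otimes \sO_V \cong T_V$ in place, the differential $\delta_1$ corresponds to a regular $1$-form $s \in \Omega_V$ via $s(v) = \delta_1(v)$. The closedness of $\omega$, translated through the cdga model into de Rham data on $R$, imposes $d_{\dR} s = 0$; after further Zariski shrinking around $x$ we can write $s = df$ for some $f \in \sO_V$. The $G$-invariance of $\omega$ makes $s$ $G$-invariant, and reductivity of $G$ then lets us replace $f$ by its projection onto the invariant part $\sO_V^G \subset \sO_V$, so that $f$ is $G$-invariant while still satisfying $df = s$. The main obstacle will be this last step: extracting a $G$-invariant potential from the closedness of $\omega$ is, non-equivariantly, the content of the Brav--Bussi--Joyce Darboux theorem, whose proof requires translating the closure condition of a shifted symplectic form into concrete cdga data; the equivariant enhancement then uses $G$-reductivity to split $\sO_V$ as a $G$-representation and to project the resulting potential onto $\sO_V^G$ without destroying the relation $df = s$.
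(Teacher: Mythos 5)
Your proposal follows the paper's proof essentially step for step: equivariant standard form minimal at the $G$-fixed point $x$ (Lemma~\ref{Lem:Equiv alg are standard form}), the self-duality $\BL_A|_x \cong \BL_A|_x^\vee[1]$ imposed by the $(-1)$-shifted symplectic structure together with minimality to kill the generators in homological degree $\geq 3$ and identify $W_1 \cong T_V|_x$ and $W_2 \cong \fg$ (whence $\delta_2$ is the infinitesimal action, which the paper extracts from the explicit self-duality diagram of complexes with identity middle arrows), and an equivariant extension of the Brav--Bussi--Joyce Darboux argument to produce the $G$-invariant potential $f$. The one caution concerns your third step as literally written: the claim that the closed $1$-form $s$ becomes exact after Zariski shrinking is false in the algebraic category (e.g.\ $dx/x$ on $\Gm$), so the potential cannot be obtained from a Poincar\'e-lemma argument; it must be extracted from the full closure data of $\omega$ exactly as in the proof of \cite[Thm.~5.18]{JoyceSch}, which is what the paper invokes, with reductivity of $G$ making that argument equivariant throughout rather than via an after-the-fact projection onto $\sO_V^G$.
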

	
	\begin{proof}
		By Lemma~\ref{Lem:Equiv alg are standard form}, up to equivariant shrinking around $x$, we may assume that $\sO_U$ has a $G$-equivariant model $A$ in standard form, i.e. generated by $\oO_V$ in degree $0$, where $V$ is a smooth affine $G$-scheme, and $G$-equivariant vector bundles $\wW_{i} = W_i \otimes_{\CC} \oO_V$ on $V$ in positive degrees, where as usual $W_i = \wW_{i}|_x$ is the restriction to $x \in V$. We then have
		\begin{align*}
			\BL_X|_x \simeq \BL_A|_x \cong \left[ \ldots \lr \wW_{2}|_x \lr \wW_{1}|_x \lr \Omega_V |_x \lr \fg^\vee \otimes_{\CC} \oO_V \right]
		\end{align*}
		by Example \ref{Ex:LXcanGact} and the general formula for the cotangent complex of a cdga in standard form (we conflate $\BL_A$ with the explicit model from \cite[\S 2.3]{JoyceSch}).
		
		Moreover, we may assume that $\Spec A$ is minimal at $x$, meaning that the differentials of $\BL_A|_x$ vanish. Since $X$ is $(-1)$-shifted symplectic, there exists a quasi-isomorphism $\BL_A|_x \cong \BL_A|_x^\vee [1]$ and then minimality implies that we must have $\wW_{i} = 0$ for $i>2$. Moreover, we get $G$-equivariant isomorphisms $W_{1} \cong T_V|_x, W_{2} \cong \fg$, and we can pick a set of \'{e}tale coordinates on $V$ with a linear $G$-action using a basis for $\Omega_V|_x$ (as $G$-module) and lifting through $\mathrm{d} \colon \oO_V \to \Omega_V$.
		
		Consider now the smooth morphism $\Spec A \to [\Spec A / G]$. The pullback to $\Spec A$ of the shifted symplectic structure $\omega$ on $X$ is a $G$-invariant $(-1)$-shifted closed $2$-form. Since $G$ is reductive, the arguments of \cite[Subsection~5.2]{JoyceSch} and the proof of \cite[Theorem~5.18]{JoyceSch} extend verbatim in this $G$-equivariant setting to show that the differential $\delta_{1}$ maps $\frac{\partial}{\partial x_i}$ to $\frac{\partial f}{\partial x_i} \in \oO_V$ for a $G$-invariant regular function $f \in \oO_V^G$.
		
		Finally, the pullback of the quasi-isomorphism $\BL_{[A/G]} \cong \BL_{[A/G]}^\vee [1]$ to $\Spec A$ and then to $V$ (where we consider the natural morphism of graded algebras $A \to \oO_V$) gives rise to a $G$-equivariant morphism of complexes
		\begin{align*}
			\xymatrix@C=3.3em{
				\fg\otimes_{\CC} \oO_V \ar[r]^-{d\delta_{2}} \ar[d] & T_V \ar[r]^-{d\delta_{1}} \ar[d]^-{\id} & \Omega_V \ar[r]^-{\sigma} \ar[d]^-{\id} & \fg^\vee \otimes_{\CC} \oO_V \ar[d] \\
				\fg \otimes_{\CC} \oO_V \ar[r]^-{\sigma^\vee} & T_V \ar[r]^-{(d\delta_{1})^\vee} & \Omega_V \ar[r]^-{(d\delta_{2})^\vee} & \fg^\vee \otimes_{\CC} \oO_V,
			}
		\end{align*}	 
		where the two middle vertical arrows are the identity maps and, by symmetry, $(d\delta_{1})^\vee = d\delta_{1}$.
		
		The differentials of both complexes vanish at $x$ and hence it follows that the two outer vertical arrows must be isomorphisms after possible shrinking around a $G$-invariant Zariski neighbourhood of $x$. Thus we may identify $d \delta_{2}$ with the infinitesimal derivative $\sigma^\vee$ of the $G$-action. By comparing degrees, $d \delta_{2}$ determines the morphism $\delta_{2} \colon \fg \otimes_{\CC} \oO_V \to T_V$, which thus must equal $\sigma^\vee$, so we have established (i)--(iii).
	\end{proof}
	
	\begin{remark}
		In fact, we can prove a slightly stronger statement giving an equivariant version of the derived Darboux theorem of \cite{JoyceArt}. Namely, using the description of shifted symplectic structures on quotient stacks $[\Spec A / G]$ \cite{Yeung}, the $(-1)$-shifted symplectic form on $X$ can be brought to a canonical Darboux form.
	\end{remark}
	
	\begin{remark}
		We will actually only need conditions (i) and (iii) in the ensuing computations in this paper. 
	\end{remark}
	
	\subsection{An intrinsic blow-up computation} The intrinsic blow-up of a $(-1)$-shifted symplectic derived stack will in general not be $(-1)$-shifted symplectic. An explicit example can be constructed using the affine scheme $W$ in Example~\ref{Ex:Secondocouple}. Thus, in order to work with the tangent complex of the derived stabilizer reduction of a $(-1)$-shifted symplectic stack, we will need to get a better handle on the local structure of its iterated intrinsic blow-ups. To this end, we introduce further terminology.
	
	\begin{definition}
		Let $A = K(G, V, \wW_{1}, \wW_{2}, \delta_{1}, \delta_{2})$. 
		
		We say that $A$ satisfies property ($\dagger$) if
		there exists a $G$-invariant Cartier divisor $D$ on $V$ and a commutative diagram of $G$-equivariant morphisms
		\begin{align}
			\xymatrix{
				\Omega_V(-D) \ar[r] \ar[dr] & \wW^{1} \ar[d] \ar[dr]^-{\delta^{2}} & \\
				& \fg^\vee(-D) \ar[r] \ar[dr] & \wW^{2} \ar[d] \\
				& & \fg^\vee\otimes_{\CC} \oO_V
			}
		\end{align}
		where $\Omega_V(-D) \to \fg^\vee(-D)$ is induced by the derivative of the $G$-action, $\fg^\vee(-D) \to \fg^\vee\otimes_{\CC} \oO_V$ is the natural inclusion, $\wW^{2} \to \fg^\vee\otimes_{\CC} \oO_V$ is injective, and for any point in $V$ with closed $G$-orbit and reductive stabilizer $H \subseteq G$, the composition $$\wW^{1}|_{V^H} \to \fg^\vee(-D) \to \fh^\vee (-D)$$ vanishes.
		
	\end{definition}

	Let $X = [U/G]$, where $U$ is an affine derived scheme. If $U$ has a model $A$ in $G$-equivariant standard form, we write $\mathbb{T}_X|_{X_\cl}$ for the complex on the (classical) derived category of $[\Spec \pi_0 A/G]$ induced by pulling back $\BL_{[\Spec A/G]}^\vee$ to $X_\cl$ (where $\BL_{[\Spec A/G]}^\vee$ again refers to the explicit model). Furthermore, we write $\tau^{[0,1]} (-)$ for the truncation in homological degrees $[-1,0]$, which corresponds to the $[0,1]$-truncation in cohomological indexing notation. We do this to better match with existing notation on perfect obstruction theories. In particular, $(\tau^{[0,1]} E)^\vee$ lives in cohomological degree $[-1,0]$, when $\tau^{[0,1]} E$ is perfect.

	\begin{proposition} \label{truncation is perfect prop}
		Suppose that $A$ satisfies property $(\dagger)$ and $X = [\Spec A / G]$ is Deligne--Mumford. Then the truncation $E^\bullet = \tau^{[0,1]} \mathbb{T}_X|_{X_\cl}$ is a perfect two-term complex, whose dual $E_\bullet \coloneqq (E^\bullet)^\vee$ defines a perfect obstruction theory\footnote{For us, a perfect obstruction theory means either a morphism $E_\bullet \to (\BL_{X_\cl})_{\leq 1}$ or, as is more standard, a morphism $E_\bullet \to \BL_{X_\cl}$, which is an isomorphism on $h^0$ and surjective on $h^{1}$ (in homological grading). From now on, we work with the former, weaker version.} on $X_\cl$.
	\end{proposition}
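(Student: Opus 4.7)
The plan is to exhibit an explicit two-term complex of $G$-equivariant vector bundles representing $\tau^{[0,1]}\mathbb{T}_X|_{X_\cl}$, then derive the obstruction-theory statement from the closed immersion $X_\cl \hookrightarrow X$. First, by Example \ref{Ex:LXcanGact} combined with the standard formula for $\BL_A$ when $A$ is in $G$-equivariant standard form, I would identify
\[ \mathbb{T}_X \simeq \bigl[\,\fg \otimes \sO_V \xrightarrow{\,a\,} T_V \xrightarrow{\delta_1^\vee} \wW^1 \xrightarrow{\delta_2^\vee} \wW^2\,\bigr] \]
in cohomological degrees $[-1,2]$, where $a$ is the infinitesimal action. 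Since $X_\cl$ is Deligne--Mumford, the stabilizer in $V$ of any classical point of $\Spec A_\cl \subseteq V$ is finite, so $a|_{X_\cl}$ is a fibrewise-injective map of vector bundles; in particular $T_V/(\fg\otimes\sO_V)$ restricts to a $G$-equivariant vector bundle on $X_\cl$ and $h^{-1}(\mathbb{T}_X)|_{X_\cl}=0$.

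Next, I would invoke property $(\dagger)$ to show that $\ker\delta_2^\vee$ is also a vector bundle on $X_\cl$. The diagram factors $\delta_2^\vee$ as $\wW^1 \to \fg^\vee(-D) \hookrightarrow \wW^2$ with the second arrow injective, so $\ker\delta_2^\vee = \ker(\wW^1 \to \fg^\vee(-D))$. The same diagram identifies the composition $\Omega_V(-D) \to \wW^1 \to \fg^\vee(-D)$ with the $(-D)$-twist of the dual infinitesimal action $\Omega_V \to \fg^\vee$, which is fibrewise surjective on $X_\cl$ by the argument of the previous paragraph. Hence $\wW^1|_{X_\cl} \to \fg^\vee(-D)|_{X_\cl}$ is a surjection of vector bundles, and reductivity of $G$ provides a $G$-equivariant splitting exhibiting its kernel as a summand. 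This yields the two-term complex
\[ E^\bullet \simeq \bigl[\,T_V/(\fg\otimes\sO_V) \longrightarrow \ker\delta_2^\vee\,\bigr] \]
in cohomological degrees $[0,1]$. A direct cohomology computation shows $h^0(E^\bullet) = \ker\delta_1^\vee/\mathrm{im}\,a$ and $h^1(E^\bullet) = \ker\delta_2^\vee/\mathrm{im}\,\delta_1^\vee$, matching the cohomology of $\tau^{[0,1]}\mathbb{T}_X|_{X_\cl}$. The chain map $E^\bullet \to \tau^{\le 1}\mathbb{T}_X|_{X_\cl}$ given by a reductivity-chosen splitting of $T_V \twoheadrightarrow T_V/(\fg\otimes\sO_V)$ in degree $0$ and the inclusion $\ker\delta_2^\vee \hookrightarrow \wW^1$ in degree $1$ is then a quasi-isomorphism by the vanishing of cohomology outside $[0,1]$.

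For the obstruction-theory statement, the closed immersion $X_\cl \hookrightarrow X$ induces the canonical map $\BL_X|_{X_\cl} \to \BL_{X_\cl}$, which after truncation and dualization gives a morphism $E_\bullet = (E^\bullet)^\vee \to (\BL_{X_\cl})_{\leq 1}$ in homological degrees $[0,1]$. The isomorphism on $h_0$ and the surjection on $h_1$ will follow from the standard identification $h_0(\BL_{X_\cl}) \simeq \Omega_{X_\cl}$ together with the explicit description of $h_1(\BL_{X_\cl})$ in terms of the defining equations of $X_\cl$ inside the smooth ambient $[V/G]$, matched against the explicit model for $E^\bullet$ above. The main technical obstacle is establishing the surjectivity of $\wW^1|_{X_\cl} \to \fg^\vee(-D)|_{X_\cl}$, which is the crucial place where property $(\dagger)$ and the Deligne--Mumford hypothesis combine; once this is in hand, everything else reduces to $G$-equivariant linear algebra enabled by reductivity.
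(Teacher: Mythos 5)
Your argument is correct and takes essentially the same route as the paper: property $(\dagger)$ is used to factor $\delta^2$ through the injection $\fg^\vee(-D) \hookrightarrow \wW^2$, the Deligne--Mumford hypothesis makes the (dualized, twisted) infinitesimal action fibrewise surjective along $X_\cl$ so that $\ker \delta^2$ and $T_V/(\fg \otimes \sO_V)$ are vector bundles, and the obstruction theory is obtained by comparing $E_\bullet$ with the conormal presentation of $(\BL_{X_\cl})_{\leq 1}$ coming from the embedding of $X_\cl$ into the smooth stack $[V/G]$. The only place you stop short of the paper is the final comparison, which the paper makes explicit via a three-row diagram showing that the surjection $\wW_1 \to I/I^2$ (with $I = \im(\delta_1)$ the ideal of $X_\cl$ in $V$) factors through a surjection $\fF^\vee \to I/I^2$ compatibly with $\Omega_{V/G}$; the ingredients you list suffice to carry this out.
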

	
	\begin{proof}
		By assumption, the injection $\fg^\vee (-D) \to \fg^\vee\otimes_{\CC} \oO_V$ factors through the injection $\wW^{2} \to \fg^\vee\otimes_{\CC} \oO_V$ and hence $\fg^\vee(-D) \to \wW^{2}$ is injective. In particular, it follows that the kernel of $\delta^{2}$ is equal to the kernel of the morphism $\wW^{1} \to \fg^\vee (-D)$. Since $X$ is Deligne--Mumford, the dual $\Omega_V (-D) \to \fg^\vee(-D)$ of the derivative is surjective, and thus so is $\wW^{1} \to \fg^\vee (-D)$. Therefore, the kernel $\fF \coloneqq \ker \delta^{2}$ is a vector bundle. 
		
		We then have (suppressing the restriction to $X_\cl$ on the right-hand side)
		$$\mathbb{T}_X|_{X_\cl} \cong [\fg\otimes_{\CC} \oO_V \lr T_V \xrightarrow{(d\delta^{1})^\vee} \wW^{1} \xrightarrow{\delta^{2}} \wW^{2}]$$
		in homological degrees $[-2,1]$ and, writing $T_{V/G}$ for the cokernel of the derivative $\fg\otimes_{\CC} \oO_V \to T_V$,
		$$E^\bullet = \tau^{[0,1]} \mathbb{T}_X|_{X_\cl} \cong [ T_{V/G} \lr \fF ], $$
		a two-term complex of vector bundles. 
		
		To see that $E_\bullet = (E^\bullet)^\vee$ defines a perfect obstruction theory, let $I =\im(\delta_{1})$ be the ideal sheaf of $X_\cl$ inside $V$. Since $\delta^{1}$ factors through $\fF$, we have the  following commutative diagram
		\begin{align*}
			\xymatrix{
				\wW_{2} \ar[r]^-{\delta_{2}} & \wW_{1} \ar[r]^-{d\delta_{1}} \ar[d] & \Omega_V \ar[r] \ar@{=}[d] & \fg^\vee\otimes_{\CC} \oO_V \ar@{=}[d]   \\
				&  \fF^\vee \ar[r] \ar[d] & \Omega_V \ar[r] \ar@{=}[d] & \fg^\vee\otimes_{\CC} \oO_V \ar@{=}[d]  \\
				&  I/I^2 \ar[r]^{d} &  \Omega_V \ar[r] & \fg^\vee\otimes_{\CC} \oO_V,
			}
		\end{align*}
		where both left-most vertical arrows are surjective, and thus a morphism
		\[ \BL_X|_{X_\cl} \to (\BL_{X_\cl})_{\leq 1} \]
		since $\BL_X|_{X_\cl}$ is equivalent to the top row, and $(\BL_{X_\cl})_{\leq 1}$ to the bottom row in the diagram.
		
		It follows that $E_\bullet = [\fF^\vee \to \Omega_{V/G}]$ gives a perfect obstruction theory on $X_\cl$, where $\Omega_{V/G} = T_{V/G}^\vee = \ker(\Omega_V \to \fg^\vee\otimes_{\CC} \oO_V)$.
	\end{proof}
	
	Let now $X = \Spec A$, where $A = K(G, V, W_{1}, W_{2}, \delta_{1}, \delta_{2})$ satisfies property ($\dagger$), and let $x \in X$ be a point with closed $G$-orbit and (reductive) stabilizer $H$ of maximal dimension. 
	
	Using Luna's \'{e}tale slice theorem \cite{Drezet}, we take a slice $x \in S \subseteq V$ for the $G$-action on $V$ at $x$ giving rise to an affine, strongly \'{e}tale morphism $\phi \colon S \times_H G \to V$.
	
	The isomorphism $[S/H] \cong [S \times_H G/G]$ then gives an equivalence between $H$-equivariant sheaves on $S$ and $G$-equivariant sheaves on $S \times_H G$. We will freely use this identification henceforth.
	
	The first part of the following proposition follows by definition, and the second part from Proposition \ref{Prop:3.27}.
	
	\begin{proposition}
		Let $B = K(G, S \times_H G, W_{1}, W_{2}, \phi^\ast \delta_{1}, \phi^\ast \delta_{2})$ and set $U = \Spec B$. Then $B$ also satisfies property ($\dagger$) and the natural morphism $U \to X$ is $G$-equivariant with $[U/G] \to [X/G]$ strongly \'{e}tale.
		
		Moreover, if we write $W_{i}^{\fix, H}$ for the fixed part of $W_{i}$ considered as an $H$-representation, $\wW_{i}^{\fix, H}$ for the $G$-equivariant summand of $\wW_{i}$ corresponding to $W_{i}^{\fix, H}\otimes_{\CC} \oO_S$ under the isomorphism $[S / H] \cong [S \times_H G / G]$, and let 
		$$C = K \left( G, S^H \times_H G, \wW_{1}^{\fix, H}, \wW_{2}^{\fix, H}, \delta_{1}|_{S^H \times_H G}, \delta_{2}|_{S^H \times_H G} \right),$$
		then $[\Spec C / G]$ is $[U/G]^{\maxlocus}$.
	\end{proposition}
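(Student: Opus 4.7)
The plan is to verify the two assertions separately. The first is a direct translation of the relevant structure under étale base change, while the second combines Theorem~\ref{thm existence of X max} with the explicit fixed-locus formula from Proposition~\ref{Prop:3.27}.

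For the first assertion, I would verify property $(\dagger)$ for $B$ directly: since $\phi \colon S \times_H G \to V$ is $G$-equivariant and étale, pulling back the Cartier divisor $D$ and the commutative diagram witnessing $(\dagger)$ for $A$ yields the analogous data for $B$, with injectivity of $\wW^{2} \to \fg^\vee \otimes_{\CC} \sO$ and the vanishing condition on closed orbits preserved by étale pullback (using that étale pullback preserves stabilizers and reductivity of stabilizers of closed points with closed orbits). For the strongly étale assertion, the map $[U/G] \to [X/G]$ is the base change of the strongly étale map $[S \times_H G /G] \to [V/G]$ along $[X/G] \to [V/G]$, so it is itself strongly étale by the stability property recorded before Proposition~\ref{Prop:strongly etale is monodr-pres and separated} and the observation that the classical truncation is the classical strongly étale Luna slice.

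For the second assertion, I would apply Theorem~\ref{thm existence of X max} to the affine strongly étale cover $[\Spec \tilde B / H] \to [U/G]$ arising from the slice isomorphism $[S/H] \simeq [S \times_H G/G]$, where $\tilde B$ is obtained from $B$ by restricting to $S$. Since $\dim H = d$ equals the maximal stabilizer dimension of $[U/G]$, the theorem identifies $[U/G]^{\maxlocus}$ with the image of $[(\Spec \tilde B)^{H^0}/H]$ under this cover. By Proposition~\ref{Prop:3.27} applied to the $H^0$-action on $\tilde B$, the fixed locus $(\Spec \tilde B)^{H^0}$ admits a standard-form model living over $S^{H^0}$ and generated in each positive degree by $W_i^{\fix, H^0}$. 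Passing back through the slice isomorphism $[S^{H^0}/H] \simeq [S^{H^0} \times_H G / G]$ then recovers the description of $C$.

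The main point to be careful about is matching the $H^0$-fixed data delivered by Proposition~\ref{Prop:3.27} with the $H$-fixed data in the statement. The finite reductive quotient $H/H^0$ acts on both $S^{H^0}$ and on $W_i^{\fix, H^0}$; at the closed point $x$, whose full stabilizer is $H$, this action is trivial on the fibers, so after possibly shrinking equivariantly around the orbit through $x$ one has $S^{H^0} = S^H$ and $W_i^{\fix, H^0} = W_i^{\fix, H}$. This matching is the only nontrivial step and is essentially the same observation already invoked in the proof of Proposition~\ref{Prop:prop_xmax} to identify the derived locus of maximal stabilizer dimension with the $H^0$-fixed slice.
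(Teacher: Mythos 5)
Your overall route is the paper's intended one: the first assertion is essentially ``by definition'' (base change of the strongly \'{e}tale Luna slice, with property $(\dagger)$ pulled back along the flat $G$-equivariant map $\phi$), and the second assertion is Theorem~\ref{thm existence of X max} applied to the slice presentation combined with Proposition~\ref{Prop:3.27}. Those parts of your argument are sound, modulo the fact that the ``stability property'' you cite is not literally recorded before Proposition~\ref{Prop:strongly etale is monodr-pres and separated}; what you actually need is that the good moduli space of a closed substack is a closed subscheme of the good moduli space (cohomological affineness), which makes the relevant square Cartesian.

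The genuine problem is your final ``matching'' step. It is not true that the finite group $H/H^0$ acts trivially on the fibers at $x$ merely because $x$ has full stabilizer $H$: fixing a point does not mean acting trivially on the tangent space at that point or on fibers of equivariant bundles there. Consequently the asserted equalities $S^{H^0}=S^H$ and $W_i^{\fix,H^0}=W_i^{\fix,H}$ (even after equivariant shrinking around $x$) are false in general. For instance, take $H=\ZZ/2\times\Gm$ acting on $S=\AA^2_{u,v}$ with $\Gm$ acting with weight $1$ on $u$ and trivially on $v$, and $\ZZ/2$ acting by $-1$ on $v$; then $x=0$ has stabilizer $H$, but $S^{H^0}=\{u=0\}$ while $S^H=\{0\}$, and these differ in every neighbourhood of $x$; similarly a one-dimensional summand of $W_i$ on which $\Gm$ acts trivially and $\ZZ/2$ acts by $-1$ lies in $W_i^{\fix,H^0}$ but not in $W_i^{\fix,H}$. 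The correct resolution is not to identify $H$-fixed with $H^0$-fixed data but to read the superscripts in the statement according to the paper's convention for disconnected groups (as in the non-connected case of \S\ref{Sec:intr bl is der bl} and in Theorem~\ref{thm existence of X max}, where the maximal locus is always the $G^0$-, respectively $H^0$-, fixed locus): with $(-)^{H}$ understood as $(-)^{H^0}$ throughout, Theorem~\ref{thm existence of X max} directly identifies $[U/G]^{\maxlocus}$ with the $H^0$-fixed slice and Proposition~\ref{Prop:3.27} computes its standard-form model, with no further matching required.
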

	
	We are now able to explicitly compute the blow-up of $U$ along $U^G = \Spec C$, after setting up some more notation.
	
	Without loss of generality, we may assume that $x_1, \ldots, x_l$ are the $H$-moving coordinates of $S$, which lift to minimal generators of the ideal of $S^H \times_H G$ in $S \times_H G$ and on which the $H$-action is linear.
	
	Pick bases $\lbrace \underline{w}_{1}^f \rbrace$ for $W_{1}^{\fix, H}$ and $\lbrace \underline{w}_{1}^m \rbrace$ for $W_{1}^{\mv, H}$. Denote their union by $\lbrace \underline{w}_{1} \rbrace$. 
	
	By $H$-equivariance, the images of $\lbrace \underline{w}_{1}^m \rbrace$ in $S \times_H G$ under $\delta_{1}$ must land in the ideal $(x_1, \ldots, x_l)$. Write $s_i^{\mv} = \sum_k \alpha_{ik} x_k$ for the image of the $i$-th basis element $w_{1,i}^m$ and $s_i^{\fix} \in \oO_V$ for the image of the $i$-th basis element $w_{1,i}^f$.
	
	Since $\delta_{2}|_{V^H}$ vanishes (using property ($\dagger$)), picking a basis for $(W_{2})^{\mv, H}$, the images of its elements in $\wW_{1}$ must land inside $(x_1, \ldots, x_l)\wW_{1}$ by $G$-equivariance. Write $p_j^{\mv} \coloneqq \sum_{k, \ell} \beta_{jk\ell} x_\ell \cdot w_{1,k} $ for the image of the $j$-th basis element. Picking a basis for $(W_{2})^{\fix, H}$, write $p_j^{\fix}$ for the image of the $j$-th basis element $w_{2,j}^\fix$.
	
	\begin{lemma}
		Let $U=\Spec B$ and $U^G = \Spec C$ be as in the previous proposition. Then 
		\begin{align} \label{rees of slice}
			R_{U^G/U} \simeq \frac{B[t^{-1}, \underline{v}]}{(\underline{v}t^{-1} - \underline{x}, \lbrace \sum_k \alpha_{ik} v_k \rbrace, \lbrace \sum_{k, \ell} \beta_{jk\ell} v_\ell \cdot w_{1,k} \rbrace )}.
		\end{align}
	\end{lemma}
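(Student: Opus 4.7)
The proof proceeds by an explicit computation of the extended Rees algebra $R^{G,\extd}_{U^G/U}$, combining the functoriality of deformation spaces (Lemma~\ref{functoriality of deformation spaces lemma}) with the explicit cell-attachment formulas from Section~\ref{Sec:Derived Blowups} (Proposition~\ref{Prop:equivariant_quotient_formula} and the equivariant analogs of formulas \eqref{formula [-1,0] case} and \eqref{formula [-2,0] case}), and generalizes the argument in the proof of Theorem~\ref{intr bl is der bl thm}.

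The plan is to factor the closed immersion $U^G \hookrightarrow U$ through the intermediate derived $G$-scheme $U_1 = \Spec B_1$, where $B_1 = K(G, S \times_H G, W_1, \emptyset, \delta_1, 0)$ is the sub-$G$-cdga of $B$ obtained by dropping the degree-$2$ generators $W_2$. Since $B \simeq B_1/(\sigma_2)$ is a pure cell attachment in homological degree $2$ with $\sigma_2 \colon W_2 \otimes B_1[1] \to B_1$ sending $w_{2,j}$ to $p_j \in \pi_1(B_1) \simeq \ker \delta_1$, applying the equivariant analog of formula \eqref{formula [-2,0] case} yields $R^{G,\extd}_{U^G/U} \simeq R^{G,\extd}_{U^G/U_1}/(\underline{\tau})$, where $\tau_j$ is the image of $w_{2,j}$ under the canonical map $R^{G,\extd}_{U/U_1} \to R^{G,\extd}_{U^G/U_1}$ induced by functoriality.

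The Rees algebra $R^{G,\extd}_{U^G/U_1}$ is then computed by mimicking the proof of Theorem~\ref{intr bl is der bl thm}, now keeping the full Rees algebra rather than merely its classical truncation. Using Proposition~\ref{Prop:3.27} to identify $U^G$ as a derived scheme built from $V^H = \Spec \oO_{V^H}$ by attaching cells in the fixed directions $W_1^{\fix}$, and applying the equivariant analog of formula \eqref{formula [-1,0] case} to the chain $U^G \to U_1 \to V$ with $V = \Spec \oO_{S \times_H G}$, one obtains Rees variables $\underline{v}$ in homogeneous degree $1$ and homological degree $0$ corresponding to the moving coordinates $\underline{x}$, together with the first two families of relations $\underline{v} t^{-1} - \underline{x}$ and $\sum_k \alpha_{ik} v_k$ coming from the identities $s_i^{\mv} = \sum_k \alpha_{ik} x_k$. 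The third family of relations is then obtained by tracing $\tau_j$ through this description: starting from $\delta_2(w_{2,j}^{\mv}) = p_j^{\mv} = \sum_{k,\ell} \beta_{jk\ell} x_\ell w_{1,k}$ and substituting $x_\ell \mapsto t^{-1} v_\ell$ gives $\tau_j = \sum_{k,\ell} \beta_{jk\ell} v_\ell w_{1,k}$, matching the final family of relations in the claimed formula.

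The main technical obstacles lie in establishing the equivariant analogs of formulas \eqref{formula [-1,0] case} and \eqref{formula [-2,0] case}, which follow by repeating the non-equivariant arguments from Section~\ref{Sec:Derived Blowups} using Proposition~\ref{Prop:equivariant_quotient_formula} in place of its non-equivariant counterpart, and in verifying via Proposition~\ref{Prop:3.27} together with the $G$-equivariance of $\delta_1,\delta_2$ that the moving basis elements $w_{1,i}^{\mv}$ and $w_{2,j}^{\mv}$ do not contribute additional independent generators to $R^{G,\extd}_{U^G/U}$ beyond those recorded in the displayed formula.
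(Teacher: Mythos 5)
Your overall strategy---factoring $U^G \to U$ through the intermediate object $U_1$ obtained by dropping the degree-$2$ generators, reducing to $R_{U^G/U_1}$ via the analogue of \eqref{formula [-2,0] case}, and then computing $R_{U^G/U_1}$ via the analogue of \eqref{formula [-1,0] case}---is a reasonable alternative to the paper's route, which instead introduces the auxiliary schemes $Q$ (with $\oO_Q = \oO_U/(\underline{x})$) and $Y$ (with $\oO_Y = \oO_V/(\underline{s}^{\fix})$) and computes $R_{U^G/U} \simeq R_{Q/U} \otimes_{R_{Q/Y}} R_{Z/Y}$ directly from Lemma~\ref{functoriality of deformation spaces lemma} and Proposition~\ref{Prop:equivariant_quotient_formula}. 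However, there is a concrete gap in your treatment of the \emph{fixed} degree-$2$ cells. By Proposition~\ref{Prop:3.27} (and the proposition immediately preceding the lemma), $U^G = \Spec C$ is generated over $\oO_{S^H \times_H G}$ not only by $\wW_{1}^{\fix,H}$ in degree $1$ but also by $\wW_{2}^{\fix,H}$ in degree $2$ (attached by zero maps, since $\delta_{2}$ vanishes along the fixed locus by property $(\dagger)$). Your description of $U^G$ as ``built from $V^H$ by attaching cells in the fixed directions $W_1^{\fix}$'' omits these, so the chain $U^G \to U_1 \to V$ does not satisfy the hypotheses of \eqref{formula [-1,0] case}: that formula requires both closed subschemes to be cut out of $V$ by elements of $\pi_0$. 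The correct intermediate answer $R^{G,\extd}_{U^G/U_1}$ carries additional generators $\underline{u}^{\fix}$ in homological degree $1$ and homogeneous degree $1$, with relations $\underline{u}^{\fix}t^{-1}$, coming from these cells.

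This omission propagates into your first step. The quotient $R^{G,\extd}_{U^G/U_1}/(\underline{\tau})$ runs over \emph{all} of $W_{2}$, not just its moving part, so besides the relations $\tau_j^{\mv} = \sum_{k,\ell}\beta_{jk\ell} v_\ell \cdot w_{1,k}$ you must also impose the fixed relations $\tau_j^{\fix}$; for the final formula \eqref{rees of slice}---which contains no trace of $W_{2}^{\fix,H}$---to come out, one has to check that these $\tau_j^{\fix}$ exactly cancel the extra generators $\underline{u}^{\fix}$ of $R^{G,\extd}_{U^G/U_1}$. This cancellation is precisely where the vanishing of $\delta_{2}$ along $S^H \times_H G$ from property $(\dagger)$ enters, and your proposal never invokes it. Your closing remark also misplaces the difficulty: the moving basis elements of $W_{1}$ and $W_{2}$ are unproblematic (they contribute exactly the displayed relations $\sum_k\alpha_{ik}v_k$ and $\sum_{k,\ell}\beta_{jk\ell}v_\ell\cdot w_{1,k}$); the work lies in seeing that the \emph{fixed} cells of $U$ and of $U^G$ cancel against one another. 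In the paper's arrangement this cancellation is built in from the start, since the fixed $1$-cells are absorbed into $Y$ and the fixed $2$-cells appear identically in $R_{Q/Y}$ and $R_{Z/Y}$ and hence disappear in the pushout.
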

	
	\begin{proof}
		For brevity, denote (by slight abuse of notation) $V = S \times_H G$. Define:
		\begin{enumerate}
			\item $Q$ to be the derived affine $G$-scheme with 
			\begin{align} \label{loc Q}
				\oO_Q = \oO_U / (\underline{x}).
			\end{align}
			\item $Y$ to be the derived affine $G$-scheme with 
			\begin{align} \label{loc Y}
				\oO_Y = \oO_V / ( \underline{s}^f )
			\end{align}
			obtained by the $G$-equivariant cell attachment $W_{1}^{\fix,H}[0] \to \oO_V$ in degree $0$.
			Observe that we can also write 
			\begin{align} \label{loc Q 2}
				\oO_Q = \oO_Y / ( \underline{x}, \underline{s}^{\mv}, \underline{0}),
			\end{align}
			with the appropriate $G$-equivariant cell attachments corresponding to $\underline{x}, \underline{s}^{\mv}$ understood and the cell attachment $W_{2}[1] \to \oO_Y$ given by the zero morphism. This is possible due to the fact that the natural morphism $W_{2}[1] \to \oO_U$ restricts to zero after quotienting out by the ideal $(\underline{x})$ by assumption.
		\end{enumerate}
		
		By definition, we similarly have that 
		\begin{align} \label{loc Z}
			\oO_{U^G} = \oO_Y / ( \underline{x},  \underline{0}^{\fix}),
		\end{align}
		where the cell attachment for $\underline{x}$ is as above and we use the zero cell attachment $W_{2}^{\fix, H} [1] \to \oO_Y$ for $\underline{0}^{\fix}$.
		
		We have an induced commutative square of closed embeddings
		\begin{align*}
			\xymatrix{
				Z = U^G \ar[r] \ar[d] & U \ar[d] \\
				Q \ar[ur] \ar[r] & Y
			}
		\end{align*}
		and hence by Lemma~\ref{functoriality of deformation spaces lemma} it follows that
		$$R_{U^G / U} = R_{Q/U} \otimes_{R_{Q/Y}} R_{Z / Y}. $$
		
		Using~\eqref{loc Q}, \eqref{loc Y}, \eqref{loc Q 2}, \eqref{loc Z} and the ($G$-equivariant) finite quotient formula of Proposition~\ref{Prop:equivariant_quotient_formula}, we obtain
		\begin{align*}
			R_{Q/U} & = \frac{\oO_U[t^{-1}, \underline{v}]}{(\underline{v} t^{-1} - \underline{x})} \\
			R_{Z/Y} & = \frac{\oO_Y[t^{-1}, \underline{v}, \underline{u}^\fix]}{(\underline{v} t^{-1} - \underline{x},\ \underline{u}^\fix t^{-1})} \\
			R_{Q/Y} & = \frac{\oO_Y[t^{-1}, \underline{v},\ \underline{e}^{\mv}, \underline{u}]}{(\underline{v} t^{-1} - \underline{x},\ \underline{e}^{\mv}t^{-1} - \underline{s}^{\mv},\ \underline{u} t^{-1}) }.
		\end{align*}
		
		The morphism $R_{Q/Y} \to R_{Z/Y}$ maps $\underline{e}^{\mv}$ and $\underline{u}^{\mv}$ to zero, while the morphism $R_{Q/Y} \to R_{Q/U}$ maps $\underline{e}_i^{\mv}$ to $\sum_k \alpha_{ik} v_k \in R_{Q/U}$ and $u_j^{\mv}$ to $\sum_{k, \ell} \beta_{jk\ell} v_\ell \cdot w_{1,k} \in \sR_{Q/U}$.
		
		Thus, the homotopy pushout diagram
		\begin{align*}
			\xymatrix{
				\ZZ[\underline{e}^{\mv}, \underline{u}^{\mv}] \ar[r] \ar[d] & \ZZ \ar[d] \\
				R_{Q/Y} \ar[r] \ar[d] & R_{Z/Y} \ar[d] \\
				R_{Q/U} \ar[r] & R_{U^G / U}
			}
		\end{align*}
		immediately gives~\eqref{rees of slice}.
	\end{proof}
	
	We obtain the following immediate corollary.
	
	\begin{corollary} \label{Cor:6.10}
		In the above notation, write $Y = S \times_H G$ and $GY^H = S^H \times_H G$. Then $Y^{\intr} = \Bl_{GY^H} Y$ and $\pi \colon \hat{Y} = (\Bl_{GY^H} Y)^{\mathrm{ss}} \to Y$ with exceptional divisor $E$. 
		
		Consider the sheaf of cdga's $\hat{B}$ generated by $\oO_{\hat{Y}}$ in degree $0$, $\pi^\ast \wW_{1}^{\fix, H} \oplus \pi^\ast \wW_{1}^{\mv, H} (E)$ in degree $1$ and $\pi^\ast \wW_{2}^{\fix, H} \oplus \pi^\ast \wW_{2}^{\mv, H} (2E)$ in degree $2$ with differentials induced by~\eqref{rees of slice}.
		
		Then $\hat{B}$ satisfies property $(\dagger)$ and the derived Kirwan blow-up $\hat{U}$ is given by $\Spec \hat{B}$.
	\end{corollary}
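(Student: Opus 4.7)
The plan is to compute the derived Kirwan blow-up $\hat{U}$ directly from formula~\eqref{rees of slice} by taking Proj over $U$, restricting to the semistable locus, and reading off the sheaf of cdgas $\hat{B}$ explicitly. First I will observe that, since $Y = S \times_H G$ and $GY^H = S^H \times_H G$ are both smooth and classical, the classical Rees algebra $R_{GY^H/Y} \simeq \sO_Y[t^{-1}, \underline{v}]/(\underline{v}t^{-1} - \underline{x})$ is a subalgebra of~\eqref{rees of slice}, whose Proj yields the smooth classical scheme $Y^{\intr} = \Bl_{GY^H} Y$. Restricting to the semistable locus gives $\hat{Y}$, on which we have the tautological section $\xi \in \Gamma(\sO(-E))$ defining the exceptional divisor $E$, together with the identification $\pi^* x_k = \xi \cdot v_k$ where $v_k \in \Gamma(\sO(E))$.

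Next I will track the remaining derived cells to obtain the description of $\hat{B}$. The degree-$1$ fixed cells $w_{1,i}^{\fix}$ pull back untwisted to generators of $\pi^* \wW_1^{\fix, H}$ with differential $\pi^* s_i^{\fix}$. The degree-$1$ moving cells $w_{1,i}^{\mv}$, together with the Rees relation $\sum_k \alpha_{ik} v_k$ in~\eqref{rees of slice} (which arises from the derived quotient by $\pi^* s_i^{\mv}/\xi$ after the Proj substitution), are naturally rewritten in terms of twisted generators $\tilde{w}_{1,i}^{\mv} = \pi^* w_{1,i}^{\mv}/\xi \in \pi^* \wW_1^{\mv, H}(E)$, whose differentials equal $\sum_k \alpha_{ik} v_k \in \Gamma(\sO(E))$. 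For the degree-$2$ cells, the fixed $w_{2,j}^{\fix}$ remain untwisted in $\pi^* \wW_2^{\fix, H}$, while the moving generators become $\tilde{w}_{2,j}^{\mv} = \pi^* w_{2,j}^{\mv}/\xi^2 \in \pi^* \wW_2^{\mv, H}(2E)$, with differentials computed from $\pi^* p_j^{\mv}/\xi^2 = \sum_{k,\ell} \beta_{jk\ell} v_\ell w_{1,k}$. The $\sO(2E)$-twist is consistent because each fixed-$k$ term $v_\ell w_{1,k}^{\fix}$ sits in $\sO(E) \otimes \pi^*\wW_1^{\fix, H}$ and each moving-$k$ term sits in $\sO(E) \otimes \pi^* \wW_1^{\mv, H}(E) = \pi^*\wW_1^{\mv, H}(2E)$, and both embed into the degree-$1$ module of $\hat{B}$ after absorbing the appropriate power of $\xi$ from the Proj substitution $\pi^*x_\ell = \xi v_\ell$. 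This will yield the identification $\hat{U} \simeq \Spec \hat{B}$.

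Finally I will verify property $(\dagger)$ for $\hat{B}$. A natural candidate for the $G$-invariant Cartier divisor on $\hat{Y}$ is $D' = \pi^* D + E$, where $D$ is the divisor witnessing $(\dagger)$ for $B$. The factorization diagram for $\hat{B}$ is obtained by pulling back the diagram for $B$ and applying the standard relation between $\Omega_{\hat{Y}}$, $\pi^*\Omega_Y$ and the exceptional divisor (since $\pi$ is a blow-up along a smooth classical center). The map $\Omega_{\hat Y}(-D') \to \hat{\wW}^1$ is assembled from the pullback of $\Omega_Y(-D) \to \wW^1$ twisted by $\sO(-E)$ together with the contributions supported on $E$, and the factorization through $\fg^\vee(-D')$ is inherited from the analogous factorization for $B$ after absorbing the $E$-twist into the new moving generators. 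For the vanishing condition, any point of $\hat{Y}$ with reductive stabilizer $H' \subseteq G$ maps via $\pi$ to a point of $Y$ with stabilizer containing $H'$, so the vanishing of $\hat{\wW}^1|_{\hat{Y}^{H'}} \to (\fh')^\vee(-D')$ reduces to the corresponding statement for $B$, which holds by hypothesis. The main obstacle in this plan is the careful bookkeeping in the second paragraph: tracking the twists by powers of $\sO(E)$ through the derived Proj and verifying that the degree-$2$ differential lands in the correct twisted piece of the degree-$1$ part of $\hat{B}$ requires simultaneously invoking $G$-equivariance and property $(\dagger)$ of the original $B$ to control the cross-terms between fixed and moving pieces.
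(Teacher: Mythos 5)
Your route is the same one the paper takes---read $\hat{B}$ off from the Rees algebra \eqref{rees of slice} by passing to the (saturated) Proj, then check property $(\dagger)$---but the paper compresses the first step into ``differentials induced by \eqref{rees of slice}'' and imports the second wholesale from \cite[Lemma~5.3]{KLS}, whereas you try to supply the details, and the details you supply have genuine gaps. First, the twist bookkeeping is internally inconsistent: with $\pi^*x_k = \xi v_k$ and $\xi$ the canonical equation of $E$, one has $\xi \in \Gamma(\oO(E))$ and $v_k \in \Gamma(\oO(-E))$ (the opposite of what you write), and it is only with this convention that $\sum_k \alpha_{ik} v_k$ is a regular function on $\hat{Y}$ and hence a legitimate differential $\pi^*\wW_1^{\mv,H}(E) \to \oO_{\hat{Y}}$; with your convention it acquires a pole along $E$. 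More seriously, the step you yourself flag as ``the main obstacle'' is not resolved. The differential of $\tilde{w}_{2,j}^{\mv} = \pi^*w_{2,j}^{\mv}/\xi^2$ is $\pi^*p_j^{\mv}/\xi^2 = \sum_{k,\ell}\beta_{jk\ell}\, v_\ell\, \pi^*w_{1,k}/\xi$; for moving $k$ this lands in $\pi^*\wW_1^{\mv,H}(E)$ as required, but for fixed $k$ the term $\sum_\ell \beta_{jk\ell} v_\ell\, w_{1,k}^{\fix}/\xi$ is regular only if the coefficients $\sum_\ell \beta_{jk\ell}x_\ell$ vanish to order \emph{two} along the blow-up center. $G$-equivariance alone gives only order-one vanishing, so this is precisely where the factorization of $\delta^2$ through $\fg^\vee(-D)$ and the vanishing condition in $(\dagger)$ must enter; ``absorbing the appropriate power of $\xi$'' is not an argument, since multiplying a section by $\xi$ changes it. Without this, the claim that the degree-two differential lands in $\pi^*\wW_1^{\fix,H}\oplus\pi^*\wW_1^{\mv,H}(E)$, and hence the identification $\hat{U} \simeq \Spec\hat{B}$, is not established.

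Second, the verification of $(\dagger)$ for $\hat{B}$ is also off. The divisor must be $D' = \pi^*D + 2E$, not $\pi^*D + E$: the chain of injections $\fg^\vee(-D') \to \hat{\wW}^{2} \to \fg^\vee\otimes\oO$ forces $\oO(-D') \subseteq \oO(-2E)$ on the moving part, because the moving summand of the dual of the degree-two module carries the twist $(-2E)$; with $D' = \pi^*D + E$ this already fails when $D = 0$. Moreover, the reduction of the vanishing condition to the one for $B$ is not automatic: a point of $\hat{Y}$ with closed $G$-orbit and reductive stabilizer $H'$ need not map to a point of $Y$ with closed orbit, and even when it does the stabilizer of the image may be strictly larger than $H'$, so the hypothesis for $B$ does not apply verbatim with the group $H'$; one must also track how the fixed/moving decomposition relative to $H'$ interacts with the exceptional twists along $E$. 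These are exactly the computations carried out in \cite[Lemma~5.3]{KLS}, which is why the paper's proof simply invokes that argument rather than re-deriving it.
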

	
	\begin{proof}
		We only need to check that property ($\dagger$) is satisfied. But this follows verbatim from the argument used in the proof of \cite[Lemma~5.3]{KLS}.
	\end{proof}

	\subsection{Perfectness of the truncated tangent complex, virtual fundamental cycle and virtual structure sheaf} Let now $X$ be a derived Artin stack satisfying our usual assumptions, which include the existence of a good moduli space for the classical truncation $X_\cl$.
	
	\begin{lemma} \label{propagation of property B}
		Suppose that $X$ admits a strongly \'{e}tale cover by quotient stacks $[\Spec A / G]$ where $A$ satisfies property $(\dagger)$. Then the same holds for its Kirwan blow-up $\hat{X}$.
	\end{lemma}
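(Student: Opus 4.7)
The plan is to combine the strongly étale locality of the derived Kirwan blow-up (Theorem~\ref{der intr kir blow-up thm}(iii)) with the explicit local computation already carried out in Corollary~\ref{Cor:6.10}. Base-changing the given cover $\{[\Spec A/G] \to X\}$ along $\hat{X} \to X$ produces morphisms $\widehat{[\Spec A/G]} \simeq \hat{X}\times_X[\Spec A/G] \to \hat{X}$, which are themselves strongly étale (base change of strongly étale morphisms is strongly étale, since the property is encoded by a Cartesian square with good moduli spaces). Thus it suffices to refine each piece $\widehat{[Y/G]}$ with $Y=\Spec A$ by a further strongly étale cover of quotient stacks $[\Spec B'/G']$ whose $B'$ satisfies $(\dagger)$; composition then yields the required cover of $\hat{X}$.

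To cover $\widehat{[Y/G]}$, I would split it into two opens. First, the preimage of the open complement $[Y/G]\setminus [Y/G]^\maxlocus$ is isomorphic to that complement via $\pi^\intr$, so it can be covered by $G$-invariant affine opens $V''\subseteq V$ not meeting the locus classically cut out by the moving part; the restrictions $A|_{V''}=K(G,V'',\wW_1|_{V''},\wW_2|_{V''},\delta_1|_{V''},\delta_2|_{V''})$ manifestly still satisfy $(\dagger)$, since $(\dagger)$ is a local condition on the smooth base $V$. Second, for an open neighborhood of the exceptional divisor, I would apply the Luna slice theorem (Proposition~\ref{Prop:luna_etale_slic}) at each closed point $x\in Y^\maxlocus$ with reductive stabilizer $H$ of maximal dimension. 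This produces affine strongly étale morphisms $[\Spec B/G]\to [Y/G]$ with $B=K(G, S\times_H G, W_1, W_2, \phi^*\delta_1,\phi^*\delta_2)$, which inherit $(\dagger)$ by pullback from $A$. Then, by Theorem~\ref{der intr kir blow-up thm}(iii), $\widehat{[\Spec B/G]}\to \widehat{[Y/G]}$ is strongly étale, and Corollary~\ref{Cor:6.10} identifies $\widehat{[\Spec B/G]}$ with a quotient stack $[\hat{Y}/G]\supseteq (\Bl_{GY^H} Y)^{\ss}$ whose cdga $\hat{B}$ satisfies $(\dagger)$. Since there are only finitely many orbits of closed points in $Y^\maxlocus$ (up to the good moduli space), finitely many such slices suffice to cover the preimage of $[Y/G]^\maxlocus$.

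The main technical obstacle is simply bookkeeping: checking that the two families of opens (restrictions away from $[Y/G]^\maxlocus$ and Luna neighborhoods of it) together cover $\widehat{[Y/G]}$ and that every morphism in sight is strongly étale (so that the composite cover of $\hat{X}$ is strongly étale, using that strongly étale morphisms are stable under composition and base change). The actual substantive work, namely the verification that the derived blow-up operation preserves $(\dagger)$ under the explicit Rees-algebra formula, is already encapsulated in Corollary~\ref{Cor:6.10}, so no new computation is needed here.
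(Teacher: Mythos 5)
Your argument is correct and is essentially the paper's proof: the paper simply states that the lemma is ``immediate from Corollary~\ref{Cor:6.10}'', which encapsulates exactly the local computation you invoke, with the covering/base-change bookkeeping (refining by Luna slices around the maximal-stabilizer locus, the identity away from it, and strong \'{e}taleness of the base-changed cover via Theorem~\ref{der intr kir blow-up thm}(iii)) left implicit. Your write-up just makes those implicit steps explicit.
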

	
	\begin{proof}
		This is immediate from Corollary~\ref{Cor:6.10}.
	\end{proof}
	
	\begin{corollary} \label{cor 6.15}
		The derived stabilizer reduction $\widetilde{X}$ of a $(-1)$-shifted symplectic derived Artin stack $X$ admits a strongly \'{e}tale cover by quotient stacks $[\Spec A / G]$ where $A$ satisfies property $(\dagger)$.
	\end{corollary}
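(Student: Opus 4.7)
The plan is to combine the local Darboux-type description of $(-1)$-shifted symplectic stacks (Lemma \ref{equivariant darboux lemma}), the stability of property $(\dagger)$ under derived Kirwan blow-ups (Lemma \ref{propagation of property B}), and an induction on the length of the stabilizer reduction sequence. The main content is therefore a base-case verification: that the canonical local models produced by Lemma \ref{equivariant darboux lemma} already satisfy property $(\dagger)$.

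First, I would invoke Proposition \ref{Prop:luna_etale_slic} together with quasi-compactness of $X$ to cover $X$ by finitely many strongly \'{e}tale maps $[U_i/G_i]\to X$ with $U_i$ affine derived $G_i$-schemes and $G_i$ reductive. Applying Lemma \ref{equivariant darboux lemma} $G_i$-equivariantly around each closed point of $U_i^{G_i}$, after shrinking we may assume $\sO_{U_i}$ has a model $A_i = K(G_i,V_i,W_{i,1},W_{i,2},\delta_{i,1},\delta_{i,2})$ of the special form described there: $W_{i,1}\otimes\sO_{V_i}\cong T_{V_i}$, $W_{i,2}=\fg_i$, and $\delta_{i,2}$ is the infinitesimal derivative of the $G_i$-action.

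Next I would verify that each such $A_i$ satisfies $(\dagger)$ with the \emph{trivial} Cartier divisor $D=\emptyset$. Under this choice, $\Omega_{V_i}(-D)=\wW_i^1=\Omega_{V_i}$ and $\fg_i^\vee(-D)=\wW_i^2=\fg_i^\vee\otimes\sO_{V_i}$, so the diagram of $(\dagger)$ collapses: the two horizontal maps become identities, and every remaining arrow is $\delta_i^{2}$, the dual of the infinitesimal derivative of the $G_i$-action. Commutativity is then automatic and the injectivity of $\wW_i^2\to\fg_i^\vee\otimes\sO_{V_i}$ is the identity. The vanishing condition at $V_i^H$ for a point with reductive stabilizer $H$ is the transpose of the infinitesimal action map $\fh\otimes\sO_{V_i^H}\to T_{V_i}|_{V_i^H}$; since every point of $V_i^H$ is fixed by $H$, this map is zero, which gives the required vanishing.

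With the base case in hand, the induction is immediate from Lemma \ref{propagation of property B}: each derived Kirwan blow-up of a stack admitting such a cover admits another one (the resulting algebras are of the form $\hat{B}$ from Corollary \ref{Cor:6.10}, with $D$ replaced by the new exceptional divisor). Since $\widetilde{X}$ is obtained from $X$ by a finite sequence of such blow-ups, it admits the desired cover. The only nontrivial point is the base-case diagram chase above, and there the fact that $V_i^H$ is pointwise $H$-fixed makes the vanishing condition hold for free with $D=\emptyset$; the nonzero Cartier divisor is generated only once one starts blowing up.
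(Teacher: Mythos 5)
Your proposal follows the same route as the paper's proof: Luna \'{e}tale slices (Proposition \ref{Prop:luna_etale_slic}) to produce the cover, the equivariant Darboux model of Lemma \ref{equivariant darboux lemma} for the base case, and induction via Lemma \ref{propagation of property B}. Your explicit verification that the Darboux model satisfies $(\dagger)$ with $D=0$ is correct and is a detail the paper leaves implicit; in particular, the vanishing condition holds precisely because the infinitesimal action $\fh\otimes_\CC\sO_{V^H}\to T_V|_{V^H}$ is zero on the pointwise $H$-fixed locus, which is the right reason.

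The one point you gloss over is that the equivariant Zariski shrinking demanded by Lemma \ref{equivariant darboux lemma} can destroy strong \'{e}taleness: a $G$-invariant open subset of $U$ need not be saturated for the good moduli space map $U\to U\git G$, so the restricted chart $[\Spec A/G]\to X$ is still affine and \'{e}tale but need not fit into the Cartesian square with good moduli spaces required by the definition. The paper repairs this by a further shrinking using Alper's Fundamental Lemma \cite[Theorem~6.10]{AlperFundLemma}, which replaces the invariant open neighbourhood of the chosen fixed point by a saturated one. Since the corollary asserts a \emph{strongly} \'{e}tale cover---and this is the property that propagates through Kirwan blow-ups via Theorem \ref{der intr kir blow-up thm} and is needed in Theorem \ref{thm trunc perf}---this step should be made explicit; with it added, your argument is complete.
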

	
	\begin{proof}
		By Proposition~\ref{Prop:luna_etale_slic}, we know that $X$ admits a strongly \'{e}tale cover by affine quotient stacks $[\Spec A / G]$. Using Lemma~\ref{equivariant darboux lemma}, we deduce that after possible equivariant shrinking, $A$ may be taken to satisfy property $(\dagger)$. Using the Fundamental Lemma~\cite[Theorem~6.10]{AlperFundLemma}, further shrinking ensures that the morphism $[\Spec A / G] \to X$ remains strongly \'{e}tale.
		
		Since $\widetilde{X}$ is obtained by $X$ through a sequence of Kirwan blow-ups, the claim follows by Lemma~\ref{propagation of property B} and induction.
	\end{proof}
	
	This corollary together with Proposition~\ref{truncation is perfect prop} imply the main result of this section. To match up with cohomological indexing, we write $(-)^{\geq -n}$ for the truncation $\tau_{\leq n}(-)$. Likewise, we write $h^k(-) \coloneqq \pi_{-k}(-)$.

	\begin{theorem} \label{thm trunc perf}
		Let $X$ be a derived $(-1)$-shifted symplectic Artin stack of finite type and $\widetilde{X}$ its derived stabilizer reduction. Then the truncation $E^\bullet = \tau^{[0,1]} \mathbb{T}_{\widetilde{X}} |_{\widetilde{X}_\cl}$ is a perfect complex of virtual dimension $0$.
		
		Moreover, let $\lbrace U_\alpha \to \widetilde{X} \rbrace$ be a strongly \'{e}tale cover by quotient stacks $[\Spec A / G]$ where $A$ satisfies property $(\dagger)$. Write $E_\bullet \coloneqq (E^\bullet)^\vee$.
		
		Then the complexes $E_\bullet|_{(U_\alpha)_\cl}$ are part of the data of a semi-perfect obstruction theory on $\widetilde{X}_\cl$, which recovers the semi-perfect obstruction theory on $\widetilde{X}_\cl$ constructed in \cite{Sav}, and also of an almost perfect obstruction theory which recovers the one constructed in \cite{KiemSavvas}.
		
		In particular, $E_\bullet$ together with $\tilde{X}$ recover the $0$-dimensional virtual fundamental cycle $[\widetilde{X}_\cl ]^\vir \in A_0 (\widetilde{X}_\cl ) $ and virtual structure sheaf $[\oO_{\widetilde{X}_\cl}^\vir] \in K_0(\widetilde{X}_\cl)$ of the intrinsic stabilizer reduction $\widetilde{X}_\cl$, constructed in \cite{Sav, KiemSavvas}.
	\end{theorem}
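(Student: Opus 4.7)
The plan is to bootstrap Proposition~\ref{truncation is perfect prop} from local affine slices to the global statement by way of the cover supplied by Corollary~\ref{cor 6.15}. By that corollary, $\widetilde{X}$ admits a strongly étale cover $\{U_\alpha = [\Spec A_\alpha/G_\alpha]\to \widetilde{X}\}$ with each $A_\alpha$ of the form $K(G_\alpha, V_\alpha, \wW_{1,\alpha}, \wW_{2,\alpha}, \delta_{1,\alpha}, \delta_{2,\alpha})$ satisfying property $(\dagger)$, and since $\widetilde{X}$ is Deligne--Mumford each $U_\alpha$ is as well. First I would apply Proposition~\ref{truncation is perfect prop} to each slice to obtain a two-term perfect complex $E^\bullet_\alpha = [T_{V_\alpha/G_\alpha}\to \fF_\alpha]$ representing $\tau^{[0,1]}\mathbb{T}_{U_\alpha}|_{(U_\alpha)_\cl}$. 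Since truncation and pullback commute with étale restriction, these glue (up to quasi-isomorphism) to a global perfect complex $E^\bullet$ on $\widetilde{X}_\cl$, which by construction is $\tau^{[0,1]}\mathbb{T}_{\widetilde{X}}|_{\widetilde{X}_\cl}$.

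Next I would compute the virtual rank locally. By Lemma~\ref{equivariant darboux lemma} the initial chart satisfies $\mathrm{rk}\,\wW_{1} = \dim V$ and $\mathrm{rk}\,\wW_{2} = \dim G$, so the Euler characteristic of $\mathbb{T}_X|_{X_\cl}$ is $\dim \fg - \dim V + \dim V - \dim G = 0$. The iterative description in Corollary~\ref{Cor:6.10} shows that each derived intrinsic blow-up step twists $\wW_{1}^{\mv,H}$ by $\sO(E)$ and $\wW_{2}^{\mv,H}$ by $\sO(2E)$ without changing ranks; likewise $\dim V$ and $\dim G$ remain unchanged. Hence the same rank bookkeeping applies to $A_\alpha$. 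Property $(\dagger)$ together with the fact that $\widetilde{X}$ is Deligne--Mumford forces $\fg_\alpha\otimes\sO_{V_\alpha}\to T_{V_\alpha}$ to be generically injective and $\wW^1_\alpha\to \fg_\alpha^\vee(-D_\alpha)$ to be surjective, so $\mathrm{rk}\,T_{V_\alpha/G_\alpha} = \mathrm{rk}\,\fF_\alpha = \dim V_\alpha - \dim G_\alpha$, giving virtual dimension zero.

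For the comparison with the constructions of \cite{Sav} and \cite{KiemSavvas}, I would observe that the local models used in those papers to build the semi-perfect (respectively, almost perfect) obstruction theory on $\widetilde{X}_\cl$ are precisely the two-term complexes $[\fF^\vee \to \Omega_{V/G}]$ appearing in the proof of Proposition~\ref{truncation is perfect prop}, restricted to the classical truncation of the Kirwan blow-up. The morphism $E_\bullet|_{(U_\alpha)_\cl} \to (\BL_{(U_\alpha)_\cl})_{\leq 1}$ established in Proposition~\ref{truncation is perfect prop} coincides tautologically with the local obstruction theories appearing in \cite[\S 5]{Sav}. The required compatibilities on overlaps $U_\alpha\times_{\widetilde{X}} U_\beta$ follow because both families of local perfect obstruction theories arise from the single global object $\tau^{[0,1]}\mathbb{T}_{\widetilde{X}}|_{\widetilde{X}_\cl}$; the gluing data in Sav's semi-perfect obstruction theory (and in Kiem--Savvas's almost perfect enhancement thereof) is thus recovered canonically from the derived tangent complex. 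Once the identification of local data is made, functoriality of $\mathbb{T}_{\widetilde{X}}$ under the étale cover yields the unique compatibility cocycle on triple overlaps required in \cite{KiemSavvas}.

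Given these comparisons, the virtual fundamental cycle and virtual structure sheaf follow from the general machinery recalled in \cite{Sav,KiemSavvas}. The main technical obstacle I anticipate is the verification of the overlap compatibility data in the sense of semi-perfect and almost perfect obstruction theories --- specifically checking that the canonical isomorphisms $E_\bullet|_{(U_\alpha)_\cl}|_{(U_{\alpha\beta})_\cl}\cong E_\bullet|_{(U_\beta)_\cl}|_{(U_{\alpha\beta})_\cl}$ coming from the global $E^\bullet$ satisfy the cocycle condition at the level of $h^1$ required by the definition of almost perfect obstruction theory, rather than only the weaker semi-perfect compatibility. This should ultimately reduce to a diagram chase, using that both obstruction sheaves $h^1(E_\bullet|_{(U_\alpha)_\cl})$ are identified with the global sheaf $h^1(E_\bullet)$ via the étale restriction map.
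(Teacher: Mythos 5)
Your proposal is correct and follows essentially the same route as the paper: perfectness and virtual dimension from Proposition~\ref{truncation is perfect prop}, Corollary~\ref{cor 6.15} and Lemma~\ref{equivariant darboux lemma}, and the comparison with \cite{Sav, KiemSavvas} by showing the local obstruction theories $\phi_\alpha$ are controlled by the single global object $\tau^{[0,1]}\mathbb{T}_{\widetilde{X}}|_{\widetilde{X}_\cl}$. The "diagram chase" you anticipate for the overlap compatibility is exactly the paper's argument: each $\phi_\alpha^\vee$ composed with the natural map $\psi\colon E^\bullet \to \mathbb{T}_{\widetilde{X}}|_{\widetilde{X}_\cl}$ equals the globally defined $\rho$, and since $h^1(\psi|_p)$ is injective this forces $h^1(\phi_\alpha^\vee|_p)=h^1(\phi_\beta^\vee|_p)$, hence equality of obstruction assignments.
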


	\begin{proof}
		The statement on the perfectness of $E_\bullet$ follows immediately by Proposition~\ref{truncation is perfect prop} and the preceding corollary. The virtual dimension then follows from Lemma \ref{equivariant darboux lemma}.
		
		Now, by Proposition~\ref{truncation is perfect prop}, the complexes $E_\bullet|_{(U_\alpha)_\cl}$ define perfect obstruction theories $\phi_\alpha \colon E_\bullet|_{(U_\alpha)_\cl} \to \BL_{(U_\alpha)_\cl}^{\geq -1}$ on $(U_\alpha)_\cl$. We may assume that the $U_\alpha$ are affine schemes after \'{e}tale base change, since $U_\alpha$ are Deligne--Mumford.
		\smallskip
		
		Write $\mathbb{T}_{\tilde{X}_\cl}^{\leq 1} =( \BL_{\tilde{X}_\cl}^{\geq -1} )^{\vee}$. Then, by construction of the $\phi_\alpha$, we have commutative diagrams 
		\begin{align*}
			\xymatrix@C=1.2cm{
				\mathbb{T}_{\tilde{X}_\cl}^{\leq 1}|_{(U_\alpha)_\cl} \ar[r]^-{\phi_\alpha^\vee} \ar[dr]_-{\rho|_{(U_\alpha)_\cl}} & E^\bullet|_{(U_\alpha)_\cl} \ar[d]^-{\psi|_{(U_\alpha)_\cl}} \\
				& \mathbb{T}_{\tilde{X}}|_{(U_\alpha)_\cl},
			}
		\end{align*}
		where $\psi \colon E^\bullet \to \mathbb{T}_{\widetilde{X}} |_{\widetilde{X}_\cl}$ is the natural morphism, and $\rho$ is the dual of the composition $\BL_{\tilde{X}}|_{\tilde{X}_\cl} \to \BL_{\tilde{X}_\cl} \to \BL_{\tilde{X}_\cl}^{\geq -1}$. 
		
		In particular, restricting to any closed point $p \in (U_\alpha)_\cl$ and taking $h^1(-)$, we obtain commutative diagrams
		\begin{align} \label{loc 8.8}
			\xymatrix@C=1.2cm{
				h^1(\mathbb{T}_{\tilde{X}_\cl}^{\leq 1}|_p) \ar[r]^-{h^1(\phi_\alpha^\vee|_p)} \ar[dr]_-{h^1(\rho|_p)} & h^1(E^\bullet|_p) \ar[d]^-{h^1(\psi|_p)} \\
				& h^1(\mathbb{T}_{\tilde{X}}|_p),
			}
		\end{align}
		
		We have a global obstruction sheaf defined by $\Ob_{X_\cl} \coloneqq h^1(E^\bullet)$, which glues together the local obstruction sheaves $\Ob_\alpha = h^1(E^\bullet|_{U_\alpha})$ and the transition functions are the same as the ones used in \cite{Sav}, by construction.
		
		It is now a simple verification to check that the local obstruction theories satisfy the necessary compatibility condition to define a semi-perfect obstruction theory as in \cite{LiChang} and that we hence get the same virtual cycle: Let $(U_{\alpha \beta})_\cl = (U_\alpha)_\cl \times_{\tilde{X}_\cl} (U_\beta)_\cl$. In the terminology of \cite[Definition~4.2]{KLS}, consider
		\begin{align*}
			\xymatrix{
				\Delta \ar[r]^-g \ar[d]^-{\iota} & (U_{\alpha \beta})_\cl \\
				\bar{\Delta} \ar@{-->}[ur]_-{\bar{g}}
			}
		\end{align*}
		an infinitesimal lifting problem at a closed point $p \in (U_{\alpha \beta})_\cl$, where $\bar{\Delta}$ is an extension of $\Delta$ by an ideal $I$. By \cite[Lemma~2.6]{LiChang}, this induces a canonical obstruction element
		\begin{align*}
			\omega(g, \Delta, \bar{\Delta}) \in \mathrm{Ext}^1 (g^\ast \BL_{\tilde{X}_\cl}, I) = h^1(\mathbb{T}_{\tilde{X}_\cl}^{\leq 1}|_p) \otimes_{\CC} I. 
		\end{align*}
		We need to show that
		\begin{multline*}
			h^1(\phi_\alpha^\vee|_p) (\omega(g, \Delta, \bar{\Delta}) )  = h^1(\phi_\beta^\vee|_p) (\omega(g, \Delta, \bar{\Delta}))\\ \in \mathrm{Ext}^1 (g^\ast E_\bullet, I) = h^1(E^\bullet|_p) \otimes_{\CC} I.
		\end{multline*}
		
		But this follows immediately from diagram~\eqref{loc 8.8}, since $h^1(\psi|_p)$ is an injection by the definition of $E_\bullet$ and Proposition~\ref{truncation is perfect prop} and thus $h^1(\phi_\alpha^\vee|_p) = h^1(\phi_\beta^\vee|_p)$.
		
		The claim for the almost perfect obstruction theory is similar and follows the reasoning of \cite[Subsection~5.4]{KiemSavvas} identically. We leave the details to the reader.
	\end{proof}
	
	\section{Sheaf-theoretic invariants} \label{Sec: DT VW}
	
	In this section, we explore immediate applications of our results to the enumerative geometry of sheaves and complexes and Donaldson--Thomas theory in particular.
	
	Namely, we define virtual classes for moduli stacks of semi-stable sheaves and complexes on smooth, projective surfaces.
	
	Moreover, we obtain a fully derived upgrade of the construction of generalized Donaldson--Thomas invariants via Kirwan blow-ups developed in \cite{KLS, Sav}. These act as virtual counts of semi-stable sheaves and perfect complexes on smooth, projective Calabi--Yau threefolds.
	
	The robustness of derived geometry also allows us to apply the same method to define new generalized Vafa--Witten invariants, enumerating semi-stable Higgs pairs on projective surfaces.

	\subsection{Donaldson-type invariants of surfaces} Let $S$ be a smooth, projective surface and $\mM$ a derived moduli stack that parametrizes semi-stable sheaves or perfect complexes on $S$. Possible choices of stability include Gieseker or slope stability \cite{HuyLehn} for sheaves and Bridgeland stability \cite{BridgStab} for complexes. It is a standard fact that $\mM$ is then quasi-smooth.
	
	If the genus $p_g = H^0(K_S)$ of $S$ is positive or the rank of the objects in question is zero, it might be necessary to consider a modification of $\mM$, for example, a ``reduced'' projective linear version as in \cite{JoyceWC}, or fix the determinant, in order to remove certain trivial components from the cotangent complex of $\mM$. Without going into more detail, we assume that this is the case so that $\mM$ is a derived moduli stack of semi-stable sheaves or perfect complexes on a smooth, projective surface, which is of finite type with the good moduli space $M_\cl$ of $\mM_\cl$ being proper.
	
	By Theorem~\ref{thm:stab red of quasi-smooth}, the derived stabilizer reduction $\tilde{\mM}$ is a proper quasi-smooth derived Deligne--Mumford stack and we have virtual fundamental classes 
	$$[\tilde{\mM}_\cl]^\vir \in A_\ast (\tilde{\mM}_\cl),\ [\oO_{\tilde{\mM}_\cl}^\vir] \in K_0(\tilde{\mM}_\cl).$$
	
	\begin{remark}
	    Following up on Remark~\ref{Rem:Khan comparison}, it would be interesting to compare these classes with the virtual classes of $\mM$ itself, constructed by Khan \cite{KhanVirtualofStacks}, and also with the homological virtual classes constructed in \cite{JoyceWC}.
	\end{remark}
	
	We may thus define intersection-theoretic and $K$-theoretic Donaldson-type invariants as follows.
	
	\begin{definition} \label{def:Donaldson invariants}
	    For any $\gamma \in H^*(\tilde{\mM}_\cl, \QQ)$, the intersection-theoretic generalized Donaldson-type invariant associated to $\mM$ is defined by the formula
	    $$\mathrm{DT}(\mM, \gamma) = \int_{[\tilde{\mM}_\cl]^\vir} \gamma.$$
	    Similarly, for any $\beta \in K^0(\tilde{\mM}_\cl)$, the $\beta$-twisted $K$-theoretic generalized Donaldson-type invariant associated to $\mM$ is defined by the formula
	    $$\mathrm{DT}^{\mathrm{K-th},\beta}(\mM) = \chi \left( \tilde{\mM}_\cl, [\oO_{\tilde{\mM}_\cl}^\vir] \otimes_{\oO_{\tilde{\mM}_\cl}} \beta \right) .$$
	\end{definition}
	
	\subsection{Donaldson--Thomas invariants of Calabi--Yau threefolds via Kirwan blow-ups} 
	
	Let $\mM$ be a derived moduli stack parametrizing semi-stable sheaves or perfect complexes on a smooth, projective Calabi--Yau threefold $W$, as considered in \cite{Sav}. Possible choices of stability here are Gieseker stability \cite{HuyLehn} for sheaves and PT, polynomial or Bridgeland stability \cite{BridgStab, Bayer, PT1} for complexes.
	
	As in \cite{Sav}, $\mM$ is $(-1)$-shifted symplectic and its classical truncation $\mM_\cl$ admits a good moduli space $\mM_\cl \to M_\cl$ with $M_\cl$ proper.
	
	Let $\nN_\cl \coloneqq \mM_\cl \!\!\fatslash \Gm$ be the rigidification of $\mM_\cl$ by the scaling automorphisms of sheaves or complexes. In \cite{Sav}, it is shown that the intrinsic stabilizer reduction $\widetilde{\nN}_\cl$ of $\nN$ admits a natural semi-perfect obstruction theory of virtual dimension zero. The degree of the induced virtual fundamental cycle $[\widetilde{\nN}_\cl]^{\mathrm{vir}}$ is defined to be the generalized Donaldson--Thomas invariant via Kirwan blow-ups associated to $\mM$. By \cite{KiemSavvas}, the obstruction theory can be upgraded to an almost perfect obstruction theory with induced virtual structure sheaf $[\oO_{\widetilde{\nN}_\cl}^\vir] \in K_0(\widetilde{\nN}_\cl)$.
	
	Our results above allow us to obtain a canonical derived enhancement as follows.
	
	\begin{theorem}
		There exists a derived Artin stack $\nN$ with classical truncation $\nN_\cl$. The truncation $E^\bullet = \tau^{[0,1]} \mathbb{T}_{\widetilde{\nN}}|_{\widetilde{\nN}_\cl}$ of the tangent complex is perfect, with induced virtual fundamental cycle (using Theorem~\ref{thm trunc perf}) equal to $[\widetilde{\nN}_\cl]^{\mathrm{vir}} \in A_0(\widetilde{\nN}_\cl)$ and induced virtual structure sheaf equal to $[\oO_{\widetilde{\nN}_\cl}^\vir] \in K_0(\widetilde{\nN}_\cl)$.
	\end{theorem}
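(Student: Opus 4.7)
The first task is to construct the derived Artin stack $\nN$. The scaling $\Gm$-action on sheaves or perfect complexes provides, for each $T$-point of $\mM$, a central embedding $\Gm \to G_T$ into the derived automorphism group, which assembles into a morphism of derived group stacks $\Gm \times \mM \to \lL(\mM)$, i.e.\ a central inclusion $\Gm \hookrightarrow I_{\mM}$ of the trivial $\Gm$-gerbe into the derived inertia. I will define $\nN$ as the derived rigidification $\mM \!\!\fatslash \Gm$, constructed by taking the (derived) $\Gm$-gerbe quotient: since $\mM \to \nN$ is to be a $\Gm$-gerbe, $\nN$ is determined by descent from $\mM$ along this central $\Gm$. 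The existence of such a derived rigidification works identically as in the classical case, because the central $\Gm$ sits compatibly in stabilizers and the $\Gm$-gerbe quotient is an \'etale-local construction (Zariski-locally of the form $[U/G] \to [U/(G/\Gm)]$). That $\nN_\cl$ recovers the classical rigidification $\mM_\cl \!\!\fatslash \Gm$ is then immediate, since the truncation functor $(-)_\cl$ commutes with the $\Gm$-gerbe quotient.

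Next, I would verify that $\nN$ satisfies the assumptions under which Theorem~\ref{thm trunc perf} applies. Properness of the good moduli space of $\nN_\cl$ is inherited from that of $M_\cl$, which equals the good moduli space of $\nN_\cl$ since rigidification does not change the good moduli space. Quasi-compactness and local finite presentation are preserved since the gerbe morphism $\mM \to \nN$ is smooth and surjective. The key point is that $\nN$ is $(-1)$-shifted symplectic: pulling back along the smooth surjective map $\mM \to \nN$ reflects shifted symplectic structures, and the $(-1)$-shifted symplectic form on $\mM$ (from \cite{PTVV}) descends because the central $\Gm$ acts trivially on the symplectic pairing, so the form restricts to one on the cotangent complex of $\nN$. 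Concretely, the exact triangle
\begin{equation*}
  \fg_{\Gm}^\vee[0] \to \BL_{\mM/\nN} \to 0
\end{equation*}
coming from the gerbe structure contributes a trivial summand that splits off symmetrically under the symplectic pairing, leaving the pairing on $\BL_{\nN}$ non-degenerate.

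With $\nN$ satisfying the hypotheses of Theorem~\ref{thm trunc perf}, the derived stabilizer reduction $\tilde{\nN}$ is defined, its classical truncation coincides with the intrinsic stabilizer reduction $\tilde{\nN}_\cl$ of \cite{Sav} by Theorem~\ref{der intr kir blow-up thm}, and $E^\bullet = \tau^{[0,1]} \mathbb{T}_{\tilde{\nN}}|_{\tilde{\nN}_\cl}$ is a perfect $2$-term complex. Theorem~\ref{thm trunc perf} further furnishes a semi-perfect and an almost perfect obstruction theory on $\tilde{\nN}_\cl$ induced by $E_\bullet = (E^\bullet)^\vee$, and the theorem itself states that these recover the semi-perfect obstruction theory of \cite{Sav} and the almost perfect obstruction theory of \cite{KiemSavvas}, hence also the associated virtual fundamental cycle $[\tilde{\nN}_\cl]^\vir$ and virtual structure sheaf $[\oO_{\tilde{\nN}_\cl}^\vir]$ constructed there.

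The main obstacle will be the construction of $\nN$ as a derived stack with the desired properties, and the verification that the $(-1)$-shifted symplectic structure on $\mM$ descends along the $\Gm$-gerbe. Once $\nN$ is in hand as a $(-1)$-shifted symplectic derived Artin stack of finite type whose classical truncation is $\nN_\cl$, the rest of the conclusions are immediate applications of Theorem~\ref{thm trunc perf}.
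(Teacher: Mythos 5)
Your construction of $\nN$ as a derived rigidification agrees with the paper, which invokes \cite[Lem.~4.4.9]{HL} for exactly this. The genuine gap is your central claim that the $(-1)$-shifted symplectic structure on $\mM$ descends to $\nN$: this is false. For a $\Gm$-gerbe $f\colon \mM \to \nN$ one has $\BL_{\mM/\nN} \simeq \fg_{\Gm}^\vee[-1]$ in cohomological degree $+1$ (not degree $0$ as in the triangle you wrote), so rigidification deletes the one-dimensional piece of $h^1(\BL_\mM)$ dual to the central scaling automorphisms. Under the $(-1)$-shifted symplectic pairing $\BL_\mM \simeq \BL_\mM^\vee[1]$ (pointwise, the Serre duality pairing $\mathrm{Ext}^i \times \mathrm{Ext}^{3-i} \to \CC$), that piece is paired with the trace part of $\mathrm{Ext}^3$ sitting in $h^{-2}(\BL_\mM)$, which rigidification does \emph{not} remove. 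Deleting only one member of a dual pair destroys non-degeneracy: at a stable point of $\nN$ one has $h^1(\BL_\nN)=0$ but $h^{-2}(\BL_\nN)\neq 0$, so $\BL_\nN$ is not self-dual up to shift and $\nN$ is not $(-1)$-shifted symplectic. (Restoring the symmetry is precisely what fixing the determinant and imposing trace-freeness accomplish in other treatments.) Consequently Theorem~\ref{thm trunc perf} cannot be applied to $\nN$ as you propose, and the remainder of your argument does not go through as written.

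The paper routes around this: it does not claim $\nN$ is shifted symplectic, but instead observes that the strongly \'etale cover of $\mM$ by stacks $[\Spec A/G]$ with $A$ satisfying property $(\dagger)$ (produced from the symplectic structure on $\mM$ via Lemma~\ref{equivariant darboux lemma}) induces a strongly \'etale cover of $\nN$ by $[\Spec A/(G/\Gm)]$ with the \emph{same} $A$, and then checks that the arguments of \S\ref{Sec:Shifted Symplectic} — propagation of $(\dagger)$ under Kirwan blow-ups, perfectness of $\tau^{[0,1]}\mathbb{T}$ via Proposition~\ref{truncation is perfect prop}, and the comparison of obstruction theories — extend verbatim to this local form. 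If you want to salvage your plan, you should replace the descent-of-symplectic-structure step with this local-model argument (or otherwise justify perfectness of the truncated tangent complex without assuming $\nN$ carries a shifted symplectic form).
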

	
	\begin{proof}
		The existence of $\nN$ follows from \cite[Lem.~4.4.9]{HL}, which generalizes the rigidification construction to derived Artin stacks. 
		
		Now, observe that $\mM$ admits a strongly \'{e}tale cover by quotient stacks $[\Spec A / G]$ where $A$ satisfies property $(\dagger)$. Since $\nN = \mM \!\!\fatslash \Gm$, it follows that $\nN$ admits a strongly \'{e}tale cover by the quotient stacks $[\Spec A / (G / \Gm)]$.
		
		It is then clear that the arguments of Section~\ref{Sec:der stab red} extend verbatim to show that Kirwan blow-ups preserve the form of these \'{e}tale covers and, in addition, Deligne--Mumford stacks with such covers have perfect two-term truncation of their tangent complex in amplitude $[0,1]$ (with cohomological grading), as desired.
		
		The equality of virtual fundamental cycles and virtual structure sheaves follows identical arguments as for Theorem~\ref{thm trunc perf}.
	\end{proof}
	
	\begin{remark}
	    With minor modifications, the above also apply if one fixes the determinant of the sheaves or complexes under consideration.
	\end{remark}
	
	\subsection{Stacks with a torus action} Let $X \in \Stk^T$ be a derived Artin stack with a $T$-action, for a torus $T = \Gm \times \dots \times \Gm$.
	
	\begin{proposition} \label{prop 7.5}
		Let $X \in \Stk^T$ be a derived stack with a $T$-action. Then $X^{\maxlocus}$ admits a natural $T$-action and is a $T$-invariant closed substack of $X$.
	\end{proposition}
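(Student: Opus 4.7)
The plan is to promote the $T$-action on $X$ to a $T$-action on $X^{\maxlocus}$ by applying the functor of points $(-)^{\maxlocus}$ to the simplicial bar construction of the action. Encode the $T$-action as a simplicial object $B(X,T) \colon \Delta^{\op} \to \Stk$ with $B(X,T)_n = T^n \times X$, whose face and degeneracy maps are built from projections, multiplication on $T$, the unit $\ast \to T$, and the action $\sigma \colon T \times X \to X$. The closed substack property of $X^{\maxlocus} \hookrightarrow X$ is already given by Theorem~\ref{thm existence of X max}, so it remains only to construct the action and exhibit this closed immersion as $T$-equivariant.

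The key technical ingredient is a product formula: for any scheme $S$ and any derived algebraic stack $Y$ satisfying our standing assumptions with maximal stabilizer dimension $d$, there is a canonical equivalence $(S \times Y)^d \simeq S \times Y^d$; in particular, $S \times Y$ has the same maximal stabilizer dimension as $Y$. This follows directly from the functor-of-points description: since $\lL S \simeq S$ for a scheme with affine diagonal, we have $\lL(S \times Y) \simeq S \times \lL Y$, and hence for any $Z \to S \times Y$ the fiber product $Z \times_{S \times Y} \lL(S \times Y) \simeq Z \times_Y \lL Y$ depends only on the composition $Z \to Y$. Applied to $S = T^n$, this identifies $B(X,T)_n^{\maxlocus}$ with $T^n \times X^{\maxlocus} = B(X^{\maxlocus}, T)_n$ term-wise.

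Next I would apply the functoriality of $(-)^d$ in representable separated morphisms from Proposition~\ref{Prop:prop_xmax}(ii) to the structure maps of $B(X,T)$. The projections and the maps induced by operations on the affine scheme $T$ are representable and separated. The action $\sigma$ is as well, because it factors as $\pi_2 \circ \phi$, where $\phi \colon T \times X \to T \times X$, $(t,x) \mapsto (t, t \cdot x)$, is an automorphism with inverse $(t,y) \mapsto (t, t^{-1} \cdot y)$, and $\pi_2$ is the base change of the representable separated morphism $T \to \ast$. Combining this functoriality with the product formula yields the simplicial object $B(X^{\maxlocus}, T)$, encoding the desired $T$-action, and naturality of the closed immersion $X^{\maxlocus} \to X$ makes it automatically $T$-equivariant.

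The main obstacle is to verify coherence at the $\infty$-categorical level: the term-wise assignment $B(X,T)_n \mapsto B(X^{\maxlocus},T)_n$ must extend to a bona fide morphism of simplicial objects in $\Stk$. This reduces to refining $(-)^d$ to a functor from the (non-full) subcategory of $\Stk$ with representable separated morphisms to $\Stk$, and to checking that the product formula is natural in such morphisms. Both should follow from tracing through the proof of Proposition~\ref{Prop:prop_xmax}(ii): since $X^d$ is defined by a functor of points, induced morphisms and their coherences are determined by functoriality of the universal subgroup scheme $H^0 \to Z \times_X \lL X$.
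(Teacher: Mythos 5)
Your overall strategy coincides with the paper's: the proof given there is a two-line appeal to the product formula $(X\times T)^{\maxlocus}=X^{\maxlocus}\times T$ together with functoriality of $(-)^{d}$ for representable separated morphisms (Proposition~\ref{Prop:prop_xmax}(ii)), applied to the action map, which is affine because $T$ is. Your factorization of $\sigma$ through the shearing automorphism followed by the projection is exactly why the action map is affine, and the extra care you take with simplicial coherence is legitimate detail that the paper suppresses.

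The gap is in your justification of the product formula. It is not true that $\lL S\simeq S$ for a (derived) scheme with affine diagonal: $\lL S$ is the \emph{derived} self-intersection of the diagonal, which for smooth $S$ is the shifted tangent bundle $\Spec_S\,\LSym(\Omega_S[1])$; this equals $S$ only when $S$ is \'{e}tale over the point. For $S=T^n=\Gm^n$ one gets $T^n\times\Spec\CC[\epsilon_1,\dots,\epsilon_n]$ with the $\epsilon_i$ in homological degree $1$. Consequently $Z\times_{S\times Y}\lL(S\times Y)$ is \emph{not} equivalent to $Z\times_Y\lL Y$; it is only a closed substack of it inducing an isomorphism on classical truncations, with extra derived nilpotents contributed by $\lL S$. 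To repair the argument you need an additional step showing that a closed subgroup scheme $H^0\to Z\times_Y\lL Y$ that is smooth (hence flat) over $Z$ lifts essentially uniquely through this derived thickening --- an argument of the same flavor as the comparison of $H^0_X$ with $H^0_Y$ in the proof of Proposition~\ref{Prop:prop_xmax}(iii). To be fair, the paper asserts $(X\times T)^{\maxlocus}=X^{\maxlocus}\times T$ without proof, so you are filling a gap the authors left implicit; but the specific identity you invoke to fill it is false as stated.
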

	
	\begin{proof}
		Observe that the action map $X \times T \to X$ is affine, since $T$ is affine. The claim then follows from functoriality of $(-)^{d}$ for representable and separated morphisms by Proposition~\ref{Prop:prop_xmax}(ii) and $(X \times T)^{\maxlocus} = X^{\maxlocus} \times T$.
	\end{proof}

	\subsection{Vafa--Witten invariants of surfaces} Let $(S,\oO_S(1))$ be a smooth, polarized projective surface and $\pi \colon W = \mathrm{Tot}(K_S) \to S$ the total space of its canonical bundle with polarization $\pi^\ast \oO_S(1)$. Let $T = \Gm$ be the one-dimensional torus.
	
	By the results of \cite{VafaWittenI, VafaWittenII}, there exists a $(-1)$-shifted symplectic derived Artin stack $\nN$, which parametrizes Gieseker semi-stable Higgs pairs $(E, \phi \colon E \to E \otimes K_S)$ on $S$ with fixed positive rank and Chern classes, fixed determinant $\det E$ and zero trace $\mathrm{tr}\ \phi = 0$. $\nN$ is a moduli stack of compactly supported Gieseker semi-stable sheaves on $W$ with respect to the polarization $\pi^\ast \oO_S(1)$. It admits a natural $T$-action induced by scaling the fibers of the projection map $W \to S$.
	
	By construction, the classical truncation $\nN_\cl$ is a GIT quotient stack $[Q^{\mathrm{ss}} / G]$, where $Q$ is an appropriate Quot scheme on $W$ and the $T$-action is induced by a $T$-action on $Q^{\mathrm{ss}}$, which commutes with the $G$-action on $Q^{\mathrm{ss}}$. Moreover, the $T$-fixed locus $\nN_\cl^T$ has a good moduli space $N_\cl^T$ which is a projective scheme.

	We now see that, since $\nN^{\max}$ is $T$-invariant by Proposition~\ref{prop 7.5}, $\widetilde{\nN}$ admits a $T$-action such that the projection $\widetilde{\nN} \to \nN$ is $T$-equivariant. Since $N_\cl^T$ is proper, it follows that $\widetilde{\nN}_\cl$ is a proper Deligne--Mumford stack. Moreover, Lemma~\ref{propagation of property B}, Corollary~\ref{cor 6.15} and Theorem~\ref{thm trunc perf} apply $T$-equivariantly and imply that $\tau^{[0,1]}\mathbb{T}_{\widetilde{\nN}}|_{\widetilde{\nN}_\cl}$ is a $T$-equivariant perfect complex on $\widetilde{\nN}_\cl$ and the same is true for the $T$-fixed component of the restriction to the fixed locus $\widetilde{\nN}_\cl^T$, i.e., $\tau^{[0,1]} \mathbb{T}_{\widetilde{\nN}}|_{{\widetilde{\nN}_\cl}^T}^{\fix}$, which we know is the same as $\tau^{[0,1]} \mathbb{T}_{\widetilde{\nN}^T}|_{{\widetilde{\nN}_\cl}^T}$. Write $N^\vir \coloneqq \tau^{[0,1]} \mathbb{T}_{\widetilde{\nN}}|_{{\widetilde{\nN}_\cl}^T}^{\mv}$ for the moving component.
	
	In particular, by \cite{KiemLoc, KiemSavvasLoc}, the $T$-fixed locus ${\widetilde{\nN}_\cl}^T $ admits a virtual fundamental cycle $[ \widetilde{\nN}_\cl^T ]^\mathrm{vir} $ and also a virtual structure sheaf $[\oO_{\widetilde{\nN}_\cl^T}^\vir] \in K_0^T(\widetilde{\nN}_\cl^T)$. Thus, mirroring the virtual torus localization formula \cite{GrabPand, yplee}, we can give the following definition.
	
	\begin{definition} \label{Def: gen VW inv}
		The numerical generalized Vafa--Witten invariant via Kirwan blow-ups associated to $\nN$ is defined by the formula
		\begin{align*}
			\mathrm{VWK}(\nN) = \int_{ \left[ \widetilde{\nN}_\cl^T \right]^\mathrm{vir} } \frac{1}{e ( N^\vir )}.
		\end{align*}
		
		For any $\beta \in K_T^0( \widetilde{\nN}_\cl^T ) \otimes_{\mathbb{Z}[t,t^{-1}]} \mathbb{Q}( t^{\frac{1}{2}} )$, the $\beta$-twisted $K$-theoretic generalized Vafa--Witten invariant via Kirwan blow-ups associated to $\nN$ is defined by the formula
		\begin{align*}
			\mathrm{VWK}^{\mathrm{K-th},\beta}(\nN) = \chi_t \left( \widetilde{\nN}_\cl^T, \frac{[\oO_{\widetilde{\nN}^T_\cl}^\vir]}{e(N^\vir)} \otimes_{\oO_{\widetilde{\nN}_\cl^T}} \beta \right) \in \mathbb{Q}(t^{\sfrac{1}{2}}).
		\end{align*}
		Here $t$ denotes the torus parameter and $\chi_t$ has the same meaning as in \cite[Subsection~2.4]{RefinedVafaWitten}.
	\end{definition}
	
	It is interesting to investigate whether this definition satisfies the wall-crossing formulas of \cite{VafaWittenII, JoyceWC}. Deformation invariance of $\mathrm{VWK}(\nN)$ follows from a $T$-equivariant version of the arguments in \cite{Sav}.
	
	\begin{remark}
		The reason for the localization to $\mathbb{Q}(t^{\sfrac{1}{2}})$ above is to preserve consistency with the notation used in \cite{RefinedVafaWitten}, as well as the classical case, where it might be necessary to consider half-integer weights for the torus action. This happens already with the square root of the virtual canonical bundle in the classical case.
	\end{remark}
	
	\begin{remark}
		Observe that $\nN$ does not have connected stabilizers, so we have to perform the derived stabilizer reduction procedure with non-connected stabilizers in this case.
	\end{remark}

	\bibliographystyle{dary}
	\bibliography{refs}	
\end{document}